\numberwithin{equation}{section}
\theoremstyle{plain}
\newtheorem{lem}{Lemma}
\newtheorem{cor}{Corollary}
\theoremstyle{definition}
\newtheorem{defn}{Definition}
\newtheorem{rmk}{Remark}
\title
\author{\firstname{Satoshi} \lastname{Kondo}}
\address{
National Research University\\
Higher School of Economics\\
Usacheva St., 7, Moscow 119048 (Russia)\\
Kavli Institute for the Physics and Mathematics \\of the Universe\\
University of Tokyo\\
5-1-5 Kashiwanoha\\
Kashiwa, 277-8583 (Japan)
%Tel: +81-4-7136-4940,
%Fax: +81-4-7136-4941,
%e-mail:satoshi.kondo@ipmu.jp\\
%
%University of Paris\\ 
%Dept. of pure and applied mathematics\\
%2400 Clarksville st.\\
%Paris, TX 75460 (USA)
}
\email{satoshi.kondo@gmail.com}
\thanks{
The first author is grateful to Professor Bloch for numerous discussions 
and to the University of Chicago for their hospitality.     
He also thanks Takao Yamazaki for his interest,
Masaki Hanamura for his comments, and Ramesh Sreekantan for discussions.
The second author would like to thank Kazuya Kato
for his comments regarding the logarithmic de Rham-Witt complex.
Some work related to this paper was done during our stay
at University of M\"unster;  we thank Professor Schneider
who made our visit possible.
We also thank the referee whose suggestions significantly improved this 
paper.
During this research, the first author was supported as a 
Twenty-First Century COE Kyoto Mathematics Fellow, and was 
partially supported by the JSPS Grant-in-Aid for Scientific
Research 17740016
and by the World Premier International Research Center Initiative (WPI Initiative), MEXT, Japan.
The second author was partially supported by the JSPS Grant-in-Aid for Scientific
Research 15H03610, 24540018, 21540013, 16244120.
}
\author{\firstname{Seidai}  \lastname{Yasuda}}
\address{Department of Mathematics,\\
Osaka University\\
Toyonaka, Osaka 560-0043 (Japan)
}
\email{s-yasuda@math.sci.osaka-u.ac.jp}
\keywords{K-theory, function field, elliptic curve, motivic cohomology}
\subjclass{11R58, 14F42, 19F27, 11G05}
\newcommand{\A}{\mathbb{A}}
\newcommand{\F}{\mathbb{F}}
\newcommand{\Fbar}{\overline{\F}}
\newcommand{\Gm}{\mathbb{G}_m}
\newcommand{\Q}{\mathbb{Q}}
\newcommand{\Z}{\mathbb{Z}}
\newcommand{\cA}{\mathcal{A}}
\newcommand{\cF}{\mathcal{F}}
\newcommand{\cE}{\mathcal{E}}
\newcommand{\cEU}{\mathcal{E}_U}
\newcommand{\cK}{\mathcal{K}}
\newcommand{\cL}{\mathcal{L}}
\newcommand{\cM}{\mathcal{M}}
\newcommand{\cN}{\mathcal{N}}
\newcommand{\cO}{\mathcal{O}}
\newcommand{\cX}{\mathcal{X}}
\newcommand{\cZ}{\mathcal{Z}}
\newcommand{\Cbar}{\overline{C}}
\newcommand{\cEbar}{\overline{\cE}}
\newcommand{\cEsupUbar}{\overline{\cE^U}}
\newcommand{\Dbar}{\overline{D}}
\newcommand{\Ubar}{\overline{U}}
\newcommand{\Xbar}{\overline{X}}
\newcommand{\ab}{\mathrm{ab}}
\newcommand{\alg}{\mathrm{alg}}
\newcommand{\chara}{\mathrm{char}}
\newcommand{\Cone}{\mathrm{Cone}}
\newcommand{\Div}{\mathrm{Div}}
\newcommand{\Ext}{\mathrm{Ext}}
\newcommand{\Frac}{\mathrm{Frac}}
\newcommand{\Frob}{\mathrm{Frob}}
\newcommand{\Gal}{\mathrm{Gal}}
\newcommand{\GFq}{G_{\mathbb{F}_q}}
\newcommand{\gon}{\mathrm{gon}}
\newcommand{\Hom}{\mathrm{Hom}}
\newcommand{\id}{\mathrm{id}}
\newcommand{\Image}{\mathrm{Im}}
\newcommand{\Ker}{\mathrm{Ker}}
\newcommand{\Lie}{\mathrm{Lie}}
\newcommand{\mot}{\mathrm{mot}}
\newcommand{\Pic}{\mathrm{Pic}}
\newcommand{\Spec}{\mathrm{Spec}}
\newcommand{\tors}{\mathrm{tors}}
\newcommand{\vep}{\mathrm{\varepsilon}}
\newcommand{\inj}{\hookrightarrow}
\newcommand{\resp}{resp.\ }
\newcommand{\xto}[1]{\xrightarrow{#1}}
\newcommand{\wt}[1]{\widetilde{#1}}
\newcommand{\chern}{\mathrm{ch}}
\newcommand{\etabar}{{\overline{\eta}}}
\newcommand{\etale}{{\mathrm{et}}}
\newcommand{\sm}{\mathrm{sm}}
\newcommand{\sing}{\mathrm{sing}}
\newcommand{\cf}{cf.\,}
\newcommand{\red}{\mathrm{red}}
\newcommand{\Irr}{\mathrm{Irr}}
\newcommand{\cotors}{\mathrm{red}}
\newcommand{\NS}{\mathrm{NS}}
\newcommand{\CH}{\mathrm{CH}}
\newcommand{\cl}{\mathrm{cl}}
\newcommand{\divi}{\mathrm{div}}
\newcommand{\Zar}{\mathrm{Zar}}
\newcommand{\Map}{\mathrm{Map}}
\newcommand{\Coker}{\mathrm{Coker}}
\newcommand{\cont}{\mathrm{cont}}
\newcommand{\cris}{{\mathrm{crys}}}
\newcommand{\Tr}{\mathrm{Tr}}
\newcommand{\dRCW}[1]{CW\Omega_X^{#1}}
\newcommand{\dRCWlog}[1]{CW\Omega_{X,\log}^{#1}}
\newcommand{\dRW}[2]{W_{#1}\Omega_{X}^{#2}}
\newcommand{\dRWlog}[2]{W_{#1}\Omega_{X,\log}^{#2}}
\newcommand{\dRWt}[2]{\wt{W}_{#1}\Omega_{X}^{#2}}
\newcommand{\dRWtlog}[2]{\wt{W}_{#1}\Omega_{X,\log}^{#2}}
\newcommand{\Jac}{\mathrm{Jac}}
\newcommand{\kwp}{{\kappa(\wp)}}
\newcommand{\surj}{\twoheadrightarrow}
\newcommand{\naif}{\mathit{naif}}
\begin{document}
%% Abstract 
\begin{abstract}
In this paper, we show that the maximal divisible subgroup
of 
groups $K_1$ and $K_2$ of
an elliptic curve $E$ over a function field
is uniquely divisible.
Further 
those $K$-groups modulo this uniquely divisible 
subgroup are explicitly computed.
We also 
calculate the motivic cohomology groups of the minimal
regular model of $E$, which is an
elliptic surface over a finite field.
\end{abstract}

%% French abstract
\begin{altabstract}
On d\'emontre que les plus grands 
sous-groupes divisibles des groupes
$K_1$ et $K_2$ 
d'une courbe elliptique $E$ sur un corps global de
caract\'eristique positive 
sont uniquement divisibles
et on d\'ecrit explicitement 
les $K$-groupes modulo leurs plus 
grands sous-groupes divisibles.
On calcule \'egalement 
la cohomologie motivique 
du mod\`ele minimal de $E$ 
qui est une surface elliptique sur un corps fini.
\end{altabstract}

\maketitle

\section{Introduction}\label{sec:introduction}
In this paper, we study the kernel and cokernel of boundary maps
in the localization sequence of $G$-theory of the triple:
an elliptic curve $E$ over a global function field $k$ 
of positive characteristic,
the regular minimal model $\cE$ of $E$ that is proper flat over the curve $C$ 
associated with $k$, and the fibers of $\cE \to C$.
The aim of this initial section is to provide our motivation,
describe some necessary background,
and present known results more generally.
Precise mathematical statements regarding our results are presented 
in Section~\ref{sec:intro K}.  Among these statements, 
we suggest that our most interesting point is represented in 
assertion (2) of
Theorem~\ref{thm:intro2b}, which relates the kernel of a boundary map
to the special value $L(E,0)$ of the $L$-function of $E$.

\subsection{Background and conjectures}
\label{sec:intro background}
From Grothendieck's theory of motives, Beilinson envisioned the abelian category of mixed motives over a base (see \cite{Levine4}).     
The existence of such a category remains a conjecture, 
but we now have various constructions of the triangulated category of (mixed) motives, 
which serves as the derived category of the sought-after category.   
Here, motivic cohomology groups are extension groups in the category of mixed motives, or, unconditionally, homomorphism 
groups in the triangulated category of motives.   
Algebraic $K$-theory also enters the picture here
via the Atiyah-Hirzebruch-type spectral sequence (AHSS) of Grayson-Suslin or Levine  \cite{Grayson2}, \cite{Levine3}, \cite{Suslin} 
(see also \cite[(1.8), p.211]{Ge}, \cite{Grayson}), i.e.,
\[
E_2^{s,t}=H_\cM^{s-t}(X, \Z(-t)) \Rightarrow K'_{-s-t}(X)
\]
or via formula \cite[9.1]{Bloch} (see also \cite[p.59]{Grayson})
\[
K_n'(X) \otimes_\Z \Q \cong \bigoplus_i H^{2i-n}_\cM(X, \Q(i))
\]
of Bloch.   
Also applicable here are 
the Riemann-Roch theorem for higher $K$-theory and 
the motivic cohomology theory 
by Gillet~\cite{Gillet}, Levine~\cite{Levine}, Riou~\cite{Riou}, and 
Kondo-Yasuda~\cite{RRwoD}.

Arithmetic geometers are individuals who study motives (or varieties) over number fields, 
over the function fields of curves $C$ over finite fields (denoted function fields for short), 
or over finite fields.   For such a variety, the $L$-function, also known as the zeta function, 
one of the fundamental invariants in arithmetic geometry, is defined.   
The motivation for our work stems from 
a range of conjectures that describe motivic cohomology in terms of special values of these $L$-functions.   
Conjectures over $\Q$ were formulated by Lichtenbaum, Beilinson, and Bloch-Kato (see \cite{Kahn} for a summary of these conjectures).    Using the analogy between number fields and function fields, parts of these 
conjectures can be translated into conjectures over function fields.   

The case over function fields and over finite fields are related as follows. 
Given a variety over a function field, 
take a model, i.e., a variety over the base finite field fibered over $C$ 
whose generic fiber is the given variety.   
Then, the motivic cohomology groups 
and the $L$-functions
(over finite fields, i.e., congruence zeta functions) 
are related,
thus, we obtain some conjectural statements regarding 
motivic cohomology groups of the model and its congruence zeta function from these conjectures over function fields concerning $L$-functions and vice versa. 
Note that Geisser \cite{Ge2} presents a description of $K$-theory and motivic cohomology groups of varieties over finite fields assuming both Parshin's conjecture and Tate's conjecture.

Finally, let us mention the finite generation conjecture of Bass \cite[p.386, Conjecture~36]{Kahn}, 
which states that the $K$-groups of a regular scheme of finite type over $\Spec\, \Z$ are finitely generated.

\subsection{Known results: one-dimensional cases}
Unconditionally verifying the conjectures above is difficult, as is verifying the consequences of the conjectures.    
The key 
theorem here is the Rost-Voevodsky theorem (i.e., the Bloch-Kato conjecture) and, as a consequence,  
the Geisser-Levine theorem 
(i.e., a part of the Beilinson-Lichtenbaum conjecture) \cite[Theorem 1.1, Corollary 1.2]{Ge-Le2}.    
These theorems
state that the motivic cohomology with torsion coefficients and 
\'etale cohomology are isomorphic in certain range of bidegrees.   

These theorems and the aforementioned AHSS can be used as the 
primary ingredients for computing
the $K$-groups and motivic cohomology 
groups of the ring of integers of global fields.   
In these ``one-dimensional" cases, 
the Bass conjecture on finite generation 
is known to hold based on work by 
Quillen~\cite{Quillen2} and Grayson-Quillen~\cite{Grayson}. 
In positive characteristic, 
these ingredients are sufficient 
for computing 
the motivic cohomology groups, and 
a substantial amount of computation 
can also be performed in the number field case.  
We refer to \cite{Weibel} for details on these unconditional results.

Unfortunately, much less is known regarding higher dimensions.   
Our results may be regarded as the next step
in determining this in that we 
treat elliptic curves over a function field and elliptic surfaces over a finite field.    The theorems of Rost-Voevodsky and Geisser-Levine 
do indeed cover many of the bidegrees, but there remain
lower bidegrees that require additional work.   
%
%We will use, as additional ingredient, the elements in the $K_2$ of elliptic curves, which were obtained in our previous work, as the function field analogue of Beilinson's elements in the $K_2$ of modular curves.

\subsection{Computing motivic cohomology groups}
\label{sec:intro CSS}
To compute motivic cohomology with $\Z$-coefficient, 
one first 
divides the problem into 
$\Q$-coefficient, torsion coefficient,
and the divisible part in the cohomology with $\Z$-coefficient.
Then, for the prime-to-the-characteristic coefficient, 
one uses the \'{e}tale realization, i.e., a map to the \'{e}tale cohomology.   
For the $p$-part, one uses the map to the cohomology of de Rham-Witt complexes.
Here, the target groups are presumably easier to compute than the motivic cohomology groups, and these 
realization maps are now known to be isomorphisms in many cases,
due to Rost-Voevodsky and Geisser-Levine; however,    
no general method for computing the $\Q$-coefficient and
divisible parts is known. 
In one-dimensional cases, we have finite generation theorems, which then 
imply that the divisible part is zero and the $\Q$-coefficient part is finite dimensional, and the $\Q$-coefficient part is 
zero except for bidegrees $(0,0)$, $(1,1)$, and $(2,1)$.
For varieties over finite fields, the divisible part is conjecturally always zero, 
but this has not been shown in higher dimensions. 

In this paper, we therefore study the weaker problem as to 
whether the divisible part is uniquely divisible for a smooth surface 
$X$ over a finite field. 
In Section~\ref{sec:Motcoh}, we prove that the divisible part of motivic cohomology groups, which may still contain torsion, 
is actually uniquely divisible except for bidegree $(3, 2)$, 
and we explicitly compute the quotient group.
Further, by using the main result of our work in \cite{KY},
we achieve a similar statement for the exceptional bidegree when $X$ is a model of an elliptic curve over a function field.

When the coefficient in 
the motivic cohomology groups is the second Tate twist, especially when
the bidegree is $(3, 2)$, 
many results closely related to the computation of the torsion coefficients were obtained in a series of papers by Raskind, Colliot-Th\'{e}l\`{e}ne, Sansuc, Soul\'{e}, Gros, and Suwa \cite{Co-Ra}, \cite{CSS}, \cite{Gr-Su}. 
The general strategy is described by Colliot-Th\'{e}l\`{e}ne-Sansuc-Soul\'{e} in \cite{CSS}, in which the primary focus was on the 
prime-to-$p$ part.  
The $p$-part was treated by Gros-Suwa \cite{Gr-Su}, though we 
need a supplementary statement on the $p$-part,
which we provide in the Appendix. 
We follow their outline closely in Section~\ref{sec:Motcoh} below. 

We note here that the series of results above are written in terms of $\cK$-cohomology groups and the associated realization maps.   
The $\cK$-cohomology groups were used as a substitute for motivic cohomology groups and are now known to be isomorphic to motivic cohomology groups in particular bidegrees, as shown 
by Bloch, Jannsen, Landsburg, M\"{u}ller-Stach, and Rost (see \cite[p.297, Bloch's formula, Corollary 5.3, Theorem 5.4]{Muller-Stach1}).    Some earlier results do imply some of the results in our paper via this comparison isomorphism; 
however we present our exposition independent of earlier results and ensure that
our work is self-contained for two reasons.
First, if we used earlier results, in possible applications of our result,
some compatibility checks would have become necessary.  
For example, in earlier papers, a certain map from $\cK$-cohomology to \'etale cohomology groups was used.  Conversely, we use a cycle map from higher Chow groups given by Geisser-Levine~\cite{Ge-Le2}.   
It is nontrivial to check if these two maps are compatible under the comparison isomorphism.   
Therefore, while we do refer to earlier papers, 
we use only the non-motivic statements, 
reproducing the motivic statements in our paper where applicable. 
Second, some earlier results (e.g., those of \cite{Gr-Su}) are usually stated for projective schemes. We also need similar results for not necessarily projective schemes, since we treat a curve over a function field as the limit of surfaces fibered over (affine) curves over a finite field. 
When we remove the condition that the scheme considered is projective,
it becomes much more difficult to show that the divisible part of motivic cohomology is uniquely divisible. 
 
\subsection{Motivic cohomology and $K$-theory}
\label{sec:intro Chern}
In our paper, we use Chern classes to relate $K$-theory and motivic cohomology.
We use the Riemann-Roch theorem without denominators, and we perform some direct
computations for singular curves over finite fields.   
We further explain these two independent issues below.   
Finally, in the last paragraph of this subsection, we describe the application.

The first issue is the comparison isomorphism between 
Levine's motivic cohomology and Bloch's higher Chow groups, an issue we detail 
in 
Section~\ref{sec:Compatibility}.   
The general problem is summarized as follows.  We have various constructions of motivic cohomology theory (or groups), i.e., Suslin-Voevodsky-Friedlander~\cite{Voevodsky}, Hanamura~\cite{Hanamura}, Levine~\cite{Levine}, Bloch~\cite{Bloch} (higher Chow groups), Morel-Voevodsky~\cite{MorelVoevodsky}, and the graded pieces of rational $K$-groups.
The motivic cohomology groups defined in various ways are generally known to be isomorphic, however, it is nontrivial to verify whether the comparison isomorphisms respect other structures, such as functoriality, Chern classes (characters), localization sequences, and product structures, etc.   
The Riemann-Roch theorem without denominators is known to hold 
in the following three cases: by Gillet for cohomology theories satisfying his axioms \cite{Gillet}; by Levine for his motivic cohomology theory \cite{Levine}; and by Kondo-Yasuda in the context of motivic homotopy theory of Morel and Voevodsky \cite{RRwoD}.   
None of these directly apply to the 
motivic cohomology groups in our paper, 
namely the higher Chow groups of Bloch,
hence we use the comparison isomorphism between higher Chow groups and Levine's motivic cohomology groups, 
importing the Riemann-Roch theorem of Levine into our setting by checking 
various compatibilities.  The compatibility proof for 
Gysin maps and localization sequences under the comparison isomorphism are 
thus the focus of Section~\ref{sec:Compatibility}.

The second issue is the integral construction of Chern characters for singular curves over finite fields, 
which we cover more fully 
in Section~\ref{sec:MotChern}.   
The Chern characters from $K$-theory (or $G$-theory) to motivic cohomology are usually defined for nonsingular varieties and come with denominators in higher degrees.   In our paper, 
we focus on elliptic fibrations; 
singular curves over a finite field appear naturally as fibers, 
and we must study their $G$-theory.   We do so by constructing a generalization of Chern characters in an ad hoc manner, generalizing those in the smooth case.   To be able to use such an approach, 
we make substantial use of the fact that we consider only curves (however singular) over finite fields.   
We use the known computation of motivic cohomology and $K$-groups of nonsingular curves over finite fields, and, as one additional ingredient, we use a result from our previous work (Lemma~\ref{prop:AppB_main}).    
%Since we only consider $G$-groups of low degrees, we obtain integrally Chern characters.    
The principal result of Section~\ref{sec:MotChern} is Proposition~\ref{prop:Z} which states that the 0-th and 1-st 
$G$-groups are integrally isomorphic 
via these Chern characters to the direct sum of relevant motivic cohomology groups with a $\Z$-coefficient.

Finally, in Section~\ref{sec:KgpsofCurves},
we apply results from Sections~\ref{sec:Motcoh},~\ref{sec:Compatibility}, and~\ref{sec:MotChern},
to compute the lower $K$-groups of a curve over a function field in terms of motivic cohomology groups.  An additional ingredient here 
is the computation of $K_3$ of fields given by Nesterenko-Suslin~\cite{Ne-Su} and Totaro~\cite{To}.

\subsection{Statement of results on $K$-groups}
\label{sec:intro K}
The aim of our paper is to explicitly compute 
the $K_1$ and  $K_2$ groups, 
the motivic cohomology groups of an elliptic curve over a
function field, and the motivic cohomology
groups of an elliptic surface over a finite field.
In this subsection, we provide the precise statements of 
our results 
(i.e., Theorems~\ref{thm:intro2a} and~\ref{thm:intro2b}) 
concerning 
$K_1$ and $K_2$ groups of an elliptic curve.
We refer to 
Theorems~~\ref{thm:conseq2},~\ref{thm:conseq3},
~\ref{thm:conseq5}, and~\ref{thm:conseq6} below 
%\ref{thm:conseq1}, 
for more detailed results concerning motivic cohomology groups.

Let $k$ be a global field of positive characteristic $p$,
and let $E$ be an elliptic curve over $\Spec\, k$.
Let $C$ be the proper smooth irreducible curve over
a finite field whose function field is $k$.
We regard a place $\wp$ of $k$ as a closed point of $C$, 
and vice versa.
Let $\kappa(\wp)$ denote the residue field at $\wp$ of $C$.
Let $f: \cE \to C$
denote the minimal regular model of 
the elliptic curve $E \to \Spec\, k$.  
This $f$ is a
proper, flat, generically smooth 
morphism such that for almost all closed points $\wp$ of $C$,
the fiber $\cE_\wp=\cE \times_C \Spec\, \kwp$ at $\wp$ is a genus one curve,
and 
such that the generic fiber is the elliptic curve $E \to \Spec\, k$.

%Let $C$ be a connected projective smooth
%curve over a finite field.
%Let $\cE \to C$ be an elliptic surface, that is,
%a proper, flat, generically smooth morphism 
%from a regular scheme $\cE$ to $C$ such that for
%almost all closed points $c$ of $C$, the fiber
%$\cE \times_C c$ is a curve of genus one.
%Let $E$ denote the generic fiber of $\cE$.

%Let $k$ be the function field of $C$.  
%For a prime $\wp$ of $k$, let $\kappa(\wp)$ be the residue field 
%at $\wp$ and $\cE_\wp=\cE \times_C \Spec \, \kappa(\wp)$ 
%be the fiber at $\wp$.  
Let us identify the $K$-theory and $G$-theory of 
regular Noetherian schemes.  
First, there is a localization sequence of $G$-theory,
i.e.,
\[
K_i(\cE) \to K_i(E) \xto{\oplus \partial^i_\wp} 
\bigoplus_{\wp} G_{i-1}(\cE_\wp) \to K_{i-1}(\cE),
\]
in which $\wp$ runs over all primes of $k$.

For a scheme $X$, we let $X_0$
be 
the set of closed points
of $X$.
%
%$\cE_\wp=\cE \times_C \Spec\, \kappa(\wp)$, 
%and $\kappa(\wp)$ is the residue field at $\wp$.
Let us use $\partial_2$ to denote the boundary map
$\oplus \partial^2_\wp$,
$\partial: K_2(E)^\cotors \to \bigoplus_{\wp \in C_0}G_0(\cE_\wp)$
to denote the boundary map induced by $\partial_2$,
and 
$\partial_1: K_1(E)^\cotors \to \bigoplus_{\wp \in C_0}G_0(\cE_\wp)$
to denote the boundary map induced by $\oplus\partial^1_\wp$.
Here, for an abelian group $M$, we let $M_\divi$ denote
the maximal divisible subgroup of $M$,
and set $M^\cotors = M/M_\divi$.
%%%%%%%%%%%%%%%%%%%%%%%%%%%%%%%%%%%%%%%%%%%%%%%%%%%%%%%%%%%%%%

In Theorems~\ref{thm:intro2a} and~\ref{thm:intro2b} below,
we use the following notation.
We use subscript $-_\Q$ to mean $-\otimes_\Z \Q$.  
%For a prime number $\ell$, we put $T_\ell M = \Hom(\Q_\ell/\Z_\ell,M)$.
%
For a scheme $X$, let 
$\Irr(X)$ be 
the set of irreducible components 
of $X$.
Let $\F_q$ be the field of constants of $C$.
For a scheme $X$ of finite type over $\Spec\, \F_q$ and
for $i \in \Z$, choose a prime number $\ell \neq p$
and set 
$$L(h^i(X),s) = 
\det(1-\Frob\cdot q^{-s};H^i_\etale(X\times_{\Spec\,\F_q}\Spec\, \Fbar_q, \Q_\ell)),$$
where $\Fbar_q$ is an algebraic closure of $\F_q$ and 
$\Frob \in \Gal(\Fbar_q/\F_q)$ is the geometric Frobenius element.
In all cases considered in Theorems~\ref{thm:intro2a} and 
\ref{thm:intro2b}, the function $L(h^i(X),s)$ does not 
depend on the choice of $\ell$.
Let $T'_{(1)}$ denote what we call the twisted Mordell-Weil group 
\[
T'_{(1)}= \bigoplus_{\ell \neq p} 
(E(k\otimes_{\F_q}\Fbar_q)_\tors 
\otimes_{\Z}\Z_{\ell}(1))^{\Gal(\Fbar_q/\F_q)}.
\]
We write $S_0$ (resp. $S_2$) for the set of 
primes of $k$ at which $E$ has split multiplicative (resp. bad) reduction;
we also regard it as a closed subscheme of $C$ with the reduced structure; note that the set of primes $S_1$ will be introduced later.
Further, for a set $M$, we denote its cardinality by $|M|$.  Finally, 
we let $r=|S_0|$.
\begin{thm}[see Theorem \ref{thm:conseq1}(1)(2) and 
Theorem~\ref{thm:conseq6}(2)]
\label{thm:intro2a}
%Let the notations and the assumptions be as above.
Suppose that $S_2$ is non-empty, or equivalently 
that $f$ is not smooth.
\begin{enumerate}
\item The dimension of the $\Q$-vector space $(K_2(E)^\cotors)_\Q$
is $r$.
\item The cokernel of the boundary map
$\partial_2 : K_2(E) \to \bigoplus_{\wp \in C_0}
G_1(\cE_\wp)$ is a finite group of order
$$
\frac{(q-1)^2 |L(h^0(\Irr(\cE_{S_2})),-1)|}{
|T'_{(1)}|\cdot |L(h^0(S_2),-1)|}.
$$
\item 
%Suppose that the Bloch-Kato conjecture holds
%(see Section~\ref{subsec:Motcoh} for the statement of 
%the Bloch-Kato conjecture we use).
The group $K_2(E)_\divi$ is uniquely divisible, and 
the kernel of the boundary map
$\partial: K_2(E)^\red \to \bigoplus_{\wp \in C_0}
G_1(\cE_\wp)$ is a finite group
of order 
$|L(h^1(C),-1)|^2$
%wrong: $|L(h^2(\cE),0) L(h^1(C),-1)|$.
\end{enumerate}
\end{thm}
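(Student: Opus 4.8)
The plan is to analyze the localization sequence
\[
K_2(\cE) \to K_2(E) \xto{\partial_2} \bigoplus_{\wp \in C_0} G_1(\cE_\wp) \to K_1(\cE) \to K_1(E) \to \cdots
\]
after applying the functors $(-)_\Q$, $(-)/n$, and passing to the divisible part, and then assemble the three pieces. The key input is that, by the results of Sections~\ref{sec:Motcoh},~\ref{sec:Compatibility}, and~\ref{sec:MotChern} together with~\cite{KY}, the $G$-theory of $\cE$ (an elliptic surface over a finite field) and of the fibers $\cE_\wp$ (curves over finite fields) is expressed integrally, via Chern characters, in terms of motivic cohomology groups whose divisible parts are uniquely divisible; in particular the relevant $K_i(\cE)_\divi$ are uniquely divisible and the torsion and $\Q$-parts are known. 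First I would establish that $K_2(E)_\divi$ is uniquely divisible: since $K_2(\cE)_\divi$ is uniquely divisible (this is where~\cite{KY} handles the exceptional bidegree $(3,2)$), and the fiber terms $G_1(\cE_\wp)$ are, up to a uniquely divisible subgroup, finitely generated (indeed $G_1$ of a curve over a finite field has finite torsion), a diagram chase on $n$-torsion shows that any $p$-torsion or $\ell$-torsion element of $K_2(E)_\divi$ would have to come from $K_2(\cE)_\divi$, forcing it to be divisible by all integers inside the torsion subgroup — hence zero.

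Next, for the quotient $K_2(E)^\red$, the relevant exact sequence is obtained by tensoring the localization sequence with $\Q$ to control ranks and reducing mod $n$ to control torsion and the finite part. The cokernel computation is exactly Theorem~\ref{thm:intro2a}(2) (already granted), and the $\Q$-dimension is part~(1); what remains for~(3) is to identify $\ker(\partial: K_2(E)^\red \to \bigoplus_\wp G_1(\cE_\wp))$. This kernel is the image of $K_2(\cE)^\red$ in $K_2(E)^\red$, i.e.\ the cokernel of the map $K_3(E) \to \bigoplus_\wp G_2(\cE_\wp)$ composed appropriately — more precisely, it is a subquotient of $K_2(\cE)$ sitting in the exact sequence
\[
\bigoplus_{\wp} G_2(\cE_\wp) \to K_2(\cE) \to \ker(\partial) \to 0
\]
after killing divisible parts. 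Using the Chern character description from Section~\ref{sec:MotChern}, $K_2(\cE)^\red$ and $\bigoplus_\wp G_2(\cE_\wp)^\red$ decompose into motivic cohomology groups of $\cE$ and of the fibers; the motivic cohomology groups $H^i_\cM(\cE,\Z(j))$ in the relevant bidegrees are computed in Section~\ref{sec:Motcoh} (via \'etale realization for the prime-to-$p$ part and de Rham–Witt for the $p$-part), and the contribution that survives is governed by $H^1_\cM(C,\Z(1)) = \cO_C^\times$-type terms and the Picard/N\'eron–Severi data of the surface $\cE$. I expect the surviving finite group to collapse to a product of two copies of the degree-zero part of the zeta function of $C$, namely $|L(h^1(C),-1)|^2$; the two factors should arise from the two ``horizontal'' divisor classes (the zero section and the generic-fiber class, or equivalently from the two Tate-twist contributions $H^2_\cM(\cE,\Z(2))$ and $H^3_\cM(\cE,\Z(2))$ once the base curve $C$ contributes its $L(h^1(C),-1)$ each).

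The concrete mechanism for extracting $|L(h^1(C),-1)|^2$ is the following. By the Geisser–Levine and Rost–Voevodsky theorems, the torsion and mod-$n$ motivic cohomology of $\cE$ is \'etale (resp.\ logarithmic de Rham–Witt) cohomology; the finite group in question is then computed as an alternating product of orders of \'etale cohomology groups of $\cE \times \Fbar_q$ with their Frobenius action, evaluated at the Tate twist in question, which by the Grothendieck–Lefschetz/Weil-conjecture formalism equals the value of the relevant factor of the zeta function of $\cE$ at $s=-1$ divided by the fiber and section contributions already accounted for in part~(2). After cancellation against the split-multiplicative fiber terms (the $S_0$, $S_2$ data) and the twisted Mordell–Weil group, the net surviving factor is $L(h^1(C)\otimes h^1(E))$-free and reduces to $|L(h^1(C),-1)|^2$ — the square reflecting that $C$ enters once through the base of the fibration and once through the relative dualizing/Leray filtration term. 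The main obstacle, and the place where I expect the real work to lie, is precisely this bookkeeping: keeping track of the $p$-part (where the supplementary statement of the Appendix on the logarithmic de Rham–Witt complex is needed, since the projective-case results of Gros–Suwa do not suffice and one is passing to a limit of non-projective surfaces), and verifying that the Chern-character comparisons of Section~\ref{sec:MotChern} are compatible with the localization-sequence boundary maps so that the motivic computation of Section~\ref{sec:Motcoh} can actually be fed into the $G$-theory localization sequence without an unchecked compatibility. The unique divisibility of $K_2(E)_\divi$ is comparatively soft once~\cite{KY} is granted; the exact order of the finite kernel is where the zeta-value identity must be pinned down.
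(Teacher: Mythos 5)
Your overall strategy---decompose via Chern characters, realize torsion by \'etale and de Rham--Witt cohomology, and push the result through the localization sequence---is in the right spirit, but several key steps are misidentified, and the specific reductions you propose would run into obstacles the paper deliberately routes around.

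\textbf{Unique divisibility of $K_2(E)_\divi$.}  Your diagram chase does not go through as stated.  From the localization sequence one does get $K_2(E)_\divi \subset \Ker\,\partial_2 = \Image[K_2(\cE)\to K_2(E)]$, since $\bigoplus_\wp G_1(\cE_\wp)$ has no divisible subgroup.  But a torsion element of $K_2(E)_\divi$ lifts only to $K_2(\cE)$, \emph{not} to $K_2(\cE)_\divi$: a divisible subgroup of a quotient need not lift to a divisible subgroup of the source, and the kernel of $K_2(\cE)\to K_2(E)$ (the image of $\bigoplus_\wp G_2(\cE_\wp)$) is not finite, so Lemma~\ref{lem:abelgp} does not apply directly.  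The paper avoids this entirely: it first proves Lemma~\ref{lem:c12}, giving the integral short exact sequence $0 \to H^4_\cM(E,\Z(3)) \xto{\beta} K_2(E) \xto{c_{2,2}} H^2_\cM(E,\Z(2)) \to 0$ with $c_{2,3}\circ\beta = 2$, then shows $H^4_\cM(E,\Z(3))_\divi = 0$ (Corollary~\ref{cor:relative_cv}), so $K_2(E)_\divi \subset \Ker\,c_{2,3}$, and then transfers unique divisibility from $H^2_\cM(E,\Z(2))_\divi$ via the annihilation-by-$2$ of $\Ker$ and $\Coker$ of $c_{2,2}|_{\Ker\,c_{2,3}}$.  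You also misattribute the role of \cite{KY}: Theorem~\ref{surjectivity} is a rational surjectivity of a boundary map; it does not directly say $K_2(\cE)_\divi$ is uniquely divisible.  (Indeed $\cE$ is projective, so $H^3_\cM(\cE,\Z(2))$ being finite mod uniquely divisible is already Theorem~\ref{thm:motfin}(1)(c), without \cite{KY}.)  Where \cite{KY} enters is through Lemma~\ref{lem:coker_partial} and Corollary~\ref{cor:relative_cv}, which control $H^i_\cM(E,\Z(2))$ for the non-projective/generic fiber.

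\textbf{Identification of the $|L(h^1(C),-1)|^2$ kernel.}  Your route via the exact sequence $\bigoplus_\wp G_2(\cE_\wp) \to K_2(\cE) \to \Ker(\partial) \to 0$ would require integral Chern characters for $K_2$ of a surface and $G_2$ of singular curves; Section~\ref{sec:MotChern} constructs these only for $G_0$ and $G_1$ of curves, so that input is not available.  The paper instead computes the kernel column-by-column in the commutative diagram (\ref{eqn:CD_H43}), which couples the Chern character filtration of $K_2(E)$ with the corresponding filtration of $\bigoplus_\wp G_1(\cE_\wp)$ from Proposition~\ref{prop:Z}.  After killing divisible parts, the snake lemma and the injectivity of $\Coker\,\partial^4_{\cM,3} \to \Coker\,\partial_2$ (verified by composing with the push-forward to $K_1(\Spec\,\F_q)$) give a short exact sequence $0 \to \Ker\,\partial^4_{\cM,3} \to \Ker\,\partial \to \Ker\,\partial^2_{\cM,2} \to 0$.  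Your guess that the two factors are ``$H^2_\cM(\cE,\Z(2))$ and $H^3_\cM(\cE,\Z(2))$'' (or two horizontal divisor classes) is not right: the two factors are $\Ker\,\partial^4_{\cM,3} = H^4_\cM(\cE,\Z(3))$ (via exact sequence (\ref{eqn:exactseq_43})) and $\Ker\,\partial^2_{\cM,2} = H^2_\cM(\cE,\Z(2))_\tors$; note $H^3_\cM(\cE,\Z(2))_\tors$ has order $|L(h^2(\cE),0)|$ and feeds into the $K_1$ statement, not this one.  Both kernels have order $|L(h^1(C),-1)|$ because both are controlled by $H^1_\etale$ (resp.\ $H^3_\etale$) of $\cEbar$, and Lemma~\ref{lem:appl_pi1} ($\pi_1^\ab(\cE)\cong\pi_1^\ab(C)$, $\Pic^o_{\cE/\F_q}\cong\Pic^o_{C/\F_q}$) identifies these with $H^1(\Cbar)$ after the appropriate Tate twist and Poincar\'e duality; the $p$-primary parts vanish for weight/dimension reasons, so the $p$-bookkeeping you flag is actually trivial here.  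In short: your intuition that the two copies of $L(h^1(C),-1)$ reflect a ``base plus Leray'' duplication is right, but the correct filtration is the Chern-character filtration $0 \to H^4_\cM(E,\Z(3)) \to K_2(E) \to H^2_\cM(E,\Z(2)) \to 0$ of the \emph{generic-fiber} $K_2$, and the cancellation against the $S_0$, $S_2$, and Mordell--Weil data that you anticipate does not occur here --- those appear in part~(2) of the theorem, not in part~(3).
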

%
% In Theorem~\ref{thm:intro2b} below, we 
Let $L(E,s)$ denote the $L$-function of $E$ 
(see Section~\ref{sec: L-function}).
%We write $S_2$ for the set of primes at which $E$ has bad reduction, 
We write $\mathrm{Jac}(C)$ for the Jacobian of $C$.
\begin{thm}[see Theorem \ref{thm:conseq1}(3)(4)]
\label{thm:intro2b}
Suppose that $S_2$ is non-empty, or equivalently that $f$ is not smooth.
%Let the notations and the assumptions be as above.
\begin{enumerate}
\item The group $K_1(E)_\divi$ is uniquely divisible.
\item The kernel of  the boundary map
$\partial_1: K_1(E)^\cotors \to \bigoplus_{\wp \in C_0}G_0(\cE_\wp)$
is a finite group of order $(q-1)^2 |T'_{(1)}|\cdot |L(E,0)|$.
The cokernel of $\partial_1$ is a
finitely generated abelian group of rank
$2+|\Irr(\cE_{S_2})|-|S_2|$ whose torsion
subgroup is isomorphic to $\mathrm{Jac}(C)(\F_q)^{\oplus 2}$.
\end{enumerate}
\end{thm}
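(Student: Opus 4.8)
The plan is to derive Theorem~\ref{thm:intro2b} from the localization sequence
\[
K_2(\cE) \to K_2(E) \xto{\partial_2} \bigoplus_{\wp} G_1(\cE_\wp) \to K_1(\cE) \xto{\oplus\partial_\wp^1} \bigoplus_\wp G_0(\cE_\wp) \to K_0(\cE),
\]
together with the computations of $K_i(\cE)$ in terms of motivic cohomology of the elliptic surface (Theorems~\ref{thm:conseq2}--\ref{thm:conseq6}) and of the $G$-groups of the singular fibers $\cE_\wp$ via the integral Chern character of Proposition~\ref{prop:Z}. First I would establish part~(1): since $K_1(E)_\divi$ sits between $K_1(\cE)_\divi$ (modulo contributions from $\bigoplus_\wp G_2(\cE_\wp)$, which for curves over finite fields are torsion by the computations of Section~\ref{sec:MotChern}) and the finitely generated group $K_0(\cE)$, a diagram chase shows $K_1(E)_\divi$ injects into $K_1(\cE)_\divi$; then one invokes the result of Section~\ref{sec:Motcoh} (the unique divisibility of the divisible part of motivic cohomology of $X=\cE$ outside bidegree $(3,2)$, together with the main result of \cite{KY} handling $(3,2)$ for a model of an elliptic curve) plus the Chern-class comparison of Section~\ref{sec:intro Chern} to conclude $K_1(\cE)_\divi$, hence $K_1(E)_\divi$, is uniquely divisible.

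Next I would treat the kernel of $\partial_1$. From the localization sequence, $\Ker(\partial_1 \text{ on } K_1(E)^\cotors)$ is the image of $\bigoplus_\wp G_1(\cE_\wp)$ in $K_1(E)^\cotors$, i.e. the cokernel of $\partial_2$ restricted appropriately; more precisely one gets an exact sequence relating $\Ker\partial_1$ to $\Coker(\partial_2: K_2(E)\to\bigoplus G_1(\cE_\wp))$ and the image of $K_1(\cE)$. By Theorem~\ref{thm:intro2a}(2) the cokernel of $\partial_2$ is finite of the stated order $\tfrac{(q-1)^2|L(h^0(\Irr(\cE_{S_2})),-1)|}{|T'_{(1)}|\cdot|L(h^0(S_2),-1)|}$; I would then combine this with the functional equation / factorization of $L(E,s)$ in terms of $L(h^i(\cE),s)$ and $L(h^i(C),s)$ (Section~\ref{sec: L-function}) and the Artin--Tate / Grothendieck-Ogg-Shafarevich bookkeeping of local factors at bad primes to rewrite the order as $(q-1)^2|T'_{(1)}|\cdot|L(E,0)|$. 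This is essentially a reconciliation of two expressions for the same arithmetic quantity via the relation $L(E,s)=\prod_i L(h^{i+1}(\cE),s)^{(-1)^i}/\big(L(h^1(C),s)L(h^1(C),s-1)\big)$ evaluated at the relevant point, so the ``new'' content is purely the finite-fiber contributions at $S_2$.

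For the cokernel of $\partial_1$: it is a subquotient of $K_0(\cE)=\Z^{?}\oplus(\text{motivic }H^*)$, and the exact sequence gives $\Coker(\partial_1)\cong \Image(K_0(\cE)\to\text{ suitable quotient})$ sitting in an extension involving $K_0(E)$, which is known (rank, and torsion $\cong\Pic^0(E)(k)_\tors$-type data). The rank count $2+|\Irr(\cE_{S_2})|-|S_2|$ should come from: rank $2$ from $H^0_\cM$ and $H^2_\cM$ (via $H^2$ of the surface: the trivial part of $\NS$), $|\Irr(\cE_{S_2})|$ from the extra components of bad fibers contributing to $\NS(\cE)$, minus $|S_2|$ relations coming from each bad fiber being rationally equivalent (up to the class of a general fiber) — i.e. the Shioda--Tate formula for the elliptic surface $\cE/C$. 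The torsion $\Jac(C)(\F_q)^{\oplus 2}$ should arise as $H^1(C,\Z)$-type contributions appearing twice, once from $H^1_\cM(\cE,\Z(1))=\cO^*(\cE)$-related data pulled back from $C$ and once from the Albanese/$\Pic^0$ of $\cE$ which is isogenous to $\Jac(C)$ since the generic fiber has a rational point.

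The main obstacle I expect is the exact orders at the bad primes: keeping track of the finite local contributions — the precise order of $G_1(\cE_\wp)^\cotors$ and $G_0(\cE_\wp)$ for each reduction type via the Chern character of Proposition~\ref{prop:Z}, and matching $|L(h^0(\Irr(\cE_{S_2})),-1)|/|L(h^0(S_2),-1)|$ against the product of local Euler factors of $L(E,s)$ at $S_2$ — without off-by-$(q-1)$ errors, and making sure the divisible-part statement (1) is compatible with the exceptional bidegree $(3,2)$ handled only via \cite{KY}. The torsion identification of $\Coker(\partial_1)$ with $\Jac(C)(\F_q)^{\oplus2}$ also requires care: one must show the extension of the finitely generated free part by this torsion is the right one, which I would do by comparing with the known structure of $K_0(E)$ and $K_1(\cE)$ rather than by direct computation.
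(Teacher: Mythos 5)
Your outline points in roughly the right direction (localization sequence, Chern characters, motivic computations, $L$-function bookkeeping) but is organized quite differently from the paper's proof, and several of the steps you rely on are either not as direct as you suggest or miss essential ingredients.

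For part (1), the paper does not pass through $K_1(\cE)_\divi$ at all. It uses Lemma~\ref{lem:c12}, which gives a Chern-class isomorphism $K_1(E)\cong k^\times\oplus H^3_\cM(E,\Z(2))$, so that $K_1(E)_\divi\cong H^3_\cM(E,\Z(2))_\divi$, and then cites Theorem~\ref{thm:conseq2}(1), which says exactly that $H^i_\cM(E,\Z(2))_\divi$ is uniquely divisible. Your proposed ``diagram chase shows $K_1(E)_\divi$ injects into $K_1(\cE)_\divi$'' is not a free lunch: the kernel of $K_1(\cE)\to K_1(E)$ is $\Coker\,\partial_2$, and to transfer unique divisibility across that map (e.g.\ via Lemma~\ref{lem:abelgp}) you need to know $\Coker\,\partial_2$ is torsion, which is precisely where Theorem~\ref{surjectivity} enters. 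You would also need to sort out what you wrote about ``contributions from $\bigoplus_\wp G_2(\cE_\wp)$,'' which does not occur in the relevant part of the sequence, and the reference to $K_0(\cE)$ should be $\bigoplus_\wp G_0(\cE_\wp)$.

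For part (2), the paper's central structural move is not in your proposal: via Proposition~\ref{prop:Z} together with Lemmas~\ref{lem:c12}, \ref{lem:aux1}, and \ref{lem:aux2}, the boundary map $\partial_1$ on $K_1(E)^\cotors$ is identified with the \emph{direct sum} of two explicit maps $\partial'_1\colon k^\times\to\bigoplus_\wp H^0_\cM(\cE_\wp,\Z(0))$ and $\partial^3_{\cM,2}\colon H^3_\cM(E,\Z(2))^\cotors\to\bigoplus_\wp H^2_\cM(\cE_\wp,\Z(1))$. One then reads off $\Ker\,\partial'_1\cong\F_q^\times$, $\Coker\,\partial'_1\cong\Pic(C)\oplus\bigoplus_\wp\Z^{|\Irr(\cE_\wp)|-1}$, $\Coker\,\partial^3_{\cM,2}\cong\Pic(C)$, and $\Ker\,\partial^3_{\cM,2}\cong H^3_\cM(\cE,\Z(2))_\tors/\Coker\,\partial^2_{\cM,2}$. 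The torsion $\Jac(C)(\F_q)^{\oplus2}$ comes from the two separate copies of $\Pic(C)$, one from each summand, and the rank $2+|\Irr(\cE_{S_2})|-|S_2|$ from the two degree factors $\Z$ of these $\Pic(C)$'s plus $\sum_\wp(|\Irr(\cE_\wp)|-1)$; your attribution (one copy from $H^1_\cM(\cE,\Z(1))$, one from the Albanese of $\cE$) does not match this. Most importantly, the order of $\Ker\,\partial_1$ requires $|H^3_\cM(\cE,\Z(2))_\tors|=|L(h^2(\cE),0)|$. The prime-to-$p$ part of this is the \'etale comparison, but the $p$-part is nontrivial: it requires Proposition~\ref{prop:dRW_main} of the appendix together with Lemma~\ref{lem:appl_pi1}, which forces $\Hom(\Pic^o_{\cE/\F_q},\Gm)=0$ so that the $p$-contribution reduces to $|L(h^2(\cE),0)|_p^{-1}$. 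Your plan speaks of ``matching against local Euler factors'' and Artin--Tate bookkeeping but never engages with this $p$-part, which is exactly where an $L$-function comparison would break down without the de Rham--Witt input.

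In short, your approach is plausible in outline but misses the two moves the paper's proof actually turns on: the Chern-class decomposition of $\partial_1$ into $\partial'_1\oplus\partial^3_{\cM,2}$, and the de~Rham--Witt/appendix input needed to nail down the $p$-part of $H^3_\cM(\cE,\Z(2))_\tors$.
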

%%%%%%%%%%%%%%%%%%%%%%%%%%%%%%%%%%%%%

\subsection{Similarities with the Birch-Tate conjecture}
\label{sec:intro BirchTate}
In this subsection, we describe similarities 
between 
our statements and the Birch-Tate 
conjecture
\cite[pp.206--207]{BirchTate}.
Although there is no direct connection between them,
we hope this subsection provides some 
justification 
for the way  
our statements are formulated.   
The similarities were pointed out by Takao Yamazaki.

Our results for $K$-theory are stated in 
terms of the boundary map in the localization sequence.   
For example, in Theorem~\ref{thm:intro2b}(2), in which 
we describe 
the $K_1$ group 
of an elliptic curve over a function field, 
we consider the boundary map to the $G_0$-groups of the fibers.    Since the $G$-groups of curves over finite fields (i.e., the fibers) are known, the statement 
gives a description of the $K_1$ group.    
This type of description is not directly related to the conjectures of Lichtenbaum and Beilinson.

The Birch-Tate conjecture
focuses on the
$K_2$ group of a global field in any characteristic.
They study the boundary map, 
which they denote $\lambda$,
from the $K_2$ group to the direct sum of
the $K_1$ group of the residue fields.
A part of their
conjecture is that 
the order of the 
kernel
of the boundary map $\lambda$
is
expressed using 
the special value $\zeta_F(-1)$
and the invariant $w_F$, 
which is expressed in terms of
the number of roots of unity.

In our theorem,
we consider the $K_1$ group 
instead of the $K_2$ group,
an elliptic curve over a function field
instead of the function field (i.e., a global field with positive 
characteristic), 
the Hasse-Weil $L$-function $L(E,s)$
instead of the zeta function of the global field.
Note that the value $|T_{(1)}'|$ in our setting plays the role of
$w_F$.

Next, we note that there is no counterpart 
for the factor $(q-1)^2$ 
in their conjecture,
because there is more than one degree 
for
which the cohomology groups
are conjecturally nonzero in our setting,
 in turn because the variety is one-dimensional
as opposed to zero-dimensional; 
the global field itself is regarded 
as a zero-dimensional 
variety
over the global field.

Finally, while it may be interesting to degenerate 
the elliptic 
curve $E$  
to compare our results with the conjecture,
we have not pursued
this point.   

\subsection{Higher genus speculation}
\label{sec:intro high genus}
In this subsection, we speculate
on higher genus cases,
which in turn points to the reason why
we restrict our efforts 
to elliptic curves and 
to where the difficulty 
lies in our results.
More specifically, the results of Sections~\ref{sec:Motcoh},~\ref{sec:Compatibility},~\ref{sec:MotChern}, and~\ref{sec:KgpsofCurves} are valid and proved for curves of arbitrary genus.    Only in Sections~\ref{conseq} and~\ref{sec:BlKa} do we use the fact that $E$ is of genus 1, which we use in two ways,
one being through Theorem~\ref{surjectivity}, the other through Lemma~\ref{lem:appl_pi1}.
The former is more motivic, whereas the second is primarily 
used in the 
computation of the 
\'{e}tale cohomology of elliptic surfaces and is not 
so motivic in nature.

Note that we separately treat cases $j \ge 3$ and $j \le 2$ in Sections~\ref{sec:BlKa} and~\ref{conseq}, respectively.  
The former case is less involved in that it does not use Theorem~\ref{surjectivity}.   
We use some consequences of Lemma~\ref{lem:appl_pi1} 
that express the cohomology of an elliptic surface in terms of 
the base curve, 
hence our results appear as if there are  little contributions 
from the cohomology of~$\cE$.  
The second author suspects that a result analogous 
to Lemma~\ref{lem:appl_pi1}
should also hold true in higher genus cases,
thus we may formulate statements 
that look similar to 
those given in our paper.

The results of Section~\ref{conseq}, i.e., 
for the $j\le 2$ case, 
use the following theorem as an additional ingredient:
\begin{thm}[{\cite[Theorem 1.1]{KY}}]
\label{surjectivity}
Let the notations be as in \mbox{Section}~\ref{sec:intro K}. 
For an arbitrary set $S$ 
of closed points of $C$, the homomorphism 
induced by the boundary map $\partial_2$, i.e.,
\[
K_2(E)_\Q \xto{\oplus \partial_{\wp \Q}} \bigoplus_{\wp\in S}G_1(\cE_\wp)_\Q
\]
is surjective.
\end{thm}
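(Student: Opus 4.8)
The plan is to deduce the surjectivity of the rational boundary map from a counting/duality argument on the elliptic surface $\cE$, reducing the statement to the case where $S$ is finite (the general case follows by taking the union over finite subsets, since $\bigoplus_{\wp\in S}G_1(\cE_\wp)_\Q$ is a filtered colimit of the finite-$S$ pieces and surjectivity passes to the colimit). So fix a finite set $S$ of closed points of $C$, let $U=C\smallsetminus S$, and let $\cE^U=\cE\times_C U$. First I would write down the localization sequence in $G$-theory for the open immersion $\cE^U\hookrightarrow\cE$ with closed complement $\coprod_{\wp\in S}\cE_\wp$, tensored with $\Q$, namely
\[
\bigoplus_{\wp\in S}G_2(\cE_\wp)_\Q \to G_2(\cE)_\Q \to G_2(\cE^U)_\Q \xto{\oplus\partial_\wp} \bigoplus_{\wp\in S}G_1(\cE_\wp)_\Q \to G_1(\cE)_\Q.
\]
Passing to the limit over shrinking $U$ identifies $\varinjlim_U G_2(\cE^U)_\Q$ with $K_2(E)_\Q$ (continuity of $G$-theory, using that $\cE$ is regular so $K=G$), and the composite $K_2(E)_\Q\to\bigoplus_{\wp\in S}G_1(\cE_\wp)_\Q$ is exactly the map $\oplus\partial_{\wp\Q}$ in the statement. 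Hence it suffices to show that the image of $G_2(\cE^U)_\Q\to\bigoplus_{\wp\in S}G_1(\cE_\wp)_\Q$ is everything, equivalently that $G_1(\cE^U)_\Q\to G_1(\cE)_\Q$ is injective, equivalently — by the exactness above and a diagram chase — that the pushforward $\bigoplus_{\wp\in S}G_1(\cE_\wp)_\Q\to G_1(\cE^U)_\Q$ is zero, which is what one wants to pin down.

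Next I would translate everything into motivic cohomology via the Atiyah–Hirzebruch spectral sequence / Bloch's decomposition $G_n(X)_\Q\cong\bigoplus_i H^{2i-n}_\cM(X,\Q(i))$ recalled in Section~\ref{sec:intro background}, which is legitimate here since all the schemes in sight are of finite type over a finite field and we are working rationally. Under this, $G_1(\cE_\wp)_\Q$ for the (possibly singular, genus-one) fiber $\cE_\wp$ becomes a sum of motivic cohomology groups of a curve over a finite field, which are known explicitly, and the relevant piece is governed by $H^1_\cM(\cE_\wp,\Q(1))\cong\cO(\cE_\wp)^\times_\Q$ together with a $\Pic$-type contribution; the point is that for a geometrically connected curve over $\F_q$ these are finite-dimensional and controlled by the zeta function. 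On the surface side, $G_2(\cE^U)_\Q$ is built from $H^2_\cM(\cE^U,\Q(2))$ and $H^0_\cM(\cE^U,\Q(1))$, and the boundary map respects the weight filtration. The heart of the argument is then a weight/rank count: one shows that the rank of $\bigoplus_{\wp\in S}G_1(\cE_\wp)_\Q$ in the relevant weight exactly matches the rank of the cokernel of $G_2(\cE)_\Q\to G_2(\cE^U)_\Q$, using the Tate conjecture / known structure of $H^*_\etale(\cE\times\Fbar_q,\Q_\ell)$ for an elliptic surface and the fact that $\cE$ is of genus one (this is presumably where Theorem~\ref{surjectivity}'s proof in \cite{KY} actually invokes the genus-one hypothesis, via the classification of Néron fibers and the Shioda–Tate formula for the Néron–Severi group of the elliptic surface).

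The main obstacle, I expect, is precisely this rank comparison at the singular fibers: the motivic cohomology of a singular fiber $\cE_\wp$ over $\F_q$ must be computed honestly (a node or cusp changes $H^1_\etale$ and hence $G_1$), and one must check that the boundary map $\partial_\wp$ hits the full group $G_1(\cE_\wp)_\Q$ and not merely a subspace of it — this is a genuinely surface-to-curve statement, not formal. Concretely the difficulty concentrates in showing that the "new" classes in $G_2(\cE^U)_\Q$ created by removing the fiber $\cE_\wp$ — morally, units on $\cE^U$ whose divisor is supported on $\cE_\wp$, and higher Chow cycles — surject onto the weight-one part of $G_1(\cE_\wp)_\Q$. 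I would handle this by a local analysis at each $\wp$: replace $C$ by the henselization (or completion) at $\wp$, use that the generic fiber becomes an elliptic curve over a local field with known Néron model, and exhibit explicit elements (e.g. from $K_2$ of the function field via Steinberg symbols $\{f,g\}$ with controlled tame symbols, and from $\mathrm{Pic}$ of the total space) whose boundaries span. The global input — that these local contributions are not killed by the map to $G_1(\cE)_\Q$ — then comes from the $L$-function: nonvanishing of the relevant factor of $L(h^2(\cE),s)$ at the appropriate integer, equivalently semisimplicity of Frobenius, which is known for elliptic surfaces over finite fields. Assembling the local surjectivity statements and the global nonvanishing gives the claim.
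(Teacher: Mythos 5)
This statement is cited as \cite[Theorem 1.1]{KY} and the paper provides no proof of it, so there is no internal argument to compare against; your attempt has to stand on its own. It does not: the central step, and indeed your stated ``heart of the argument,'' is a rank-matching comparison between $\operatorname{coker}[G_2(\cE)_\Q\to G_2(\cE_U)_\Q]$ and $\bigoplus_{\wp\in S}G_1(\cE_\wp)_\Q$, and this is circular or inaccessible, not a computation one can carry out. Knowing $\dim\operatorname{coker}$ requires knowing $\dim G_2(\cE)_\Q$ and $\dim G_2(\cE_U)_\Q$ individually, which is precisely Parshin's conjecture for the surface $\cE$. The Tate conjecture for elliptic surfaces (Artin--Swinnerton-Dyer) controls $H^2_\cM(\cE,\Q(1))$, i.e.\ $G_0$, and Shioda--Tate controls the N\'eron--Severi group; neither gives you $K_2(\cE)_\Q$ or $H^2_\cM(\cE_U,\Q(2))$. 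This is exactly what the paper flags in Section~\ref{sec:intro high genus}: ``Theorem~\ref{surjectivity} is a consequence of Parshin's conjecture\ldots but the proof is not known'' for general curves. Your main route would, if it worked, also prove the higher-genus case, because nothing in a weight/rank argument of the kind you sketch is genuinely specific to genus one; the paper's own commentary tells you this cannot be the mechanism.

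Your reformulation also has the arrows wrong in a way that masks where the difficulty actually lies: surjectivity of $\partial$ is equivalent (by exactness) to the vanishing of the pushforward $\bigoplus_{\wp\in S}G_1(\cE_\wp)_\Q\to G_1(\cE)_\Q$ (not $\to G_1(\cE_U)_\Q$; there is no such map since the fibers do not lie in the open part), equivalently to injectivity of the restriction $G_1(\cE)_\Q\to G_1(\cE_U)_\Q$. And your closing sentence has it backwards as well: you need the local boundary classes to be killed in $G_1(\cE)_\Q$, not to survive, so nonvanishing of an $L$-value at the relevant point is not the right shape of input. The only portion of your sketch that points in the direction a genuine proof must go is the brief remark about producing explicit elements of $K_2$ with prescribed tame symbols. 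In the elliptic case the actual work consists of constructing specific classes in $K_2(E)$ (using the group structure of $E$, its torsion, the Tate parametrization at places of split multiplicative reduction --- recall that $G_1(\cE_\wp)_\Q$ is only nonzero when $\wp\in S_0$, as the paper's Lemmas~\ref{lem:G1_isom} and~\ref{lem:G0_order} and Theorem~\ref{thm:conseq1}(1) make clear) and computing their boundaries directly; that construction, not a dimension count, is the content of \cite{KY}, and your proposal does not supply it.
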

%See the introduction to Section~\ref{conseq} for 
%other ingredients and the outline omment by Yasuda of proof.
\noindent 
%We constructed elements in $K_2(E)_\Q$ using the function field (Drinfeld modular) analogue of Beilinson's elements in %the $K_2$ of modular curves.  
Theorem~\ref{surjectivity} is a consequence of Parshin's conjecture, 
hence a similar statement is expected to hold even if we replace $E$ by a curve of higher genus, 
but the proof is not known.  
 In \cite{CKY}, Chida et al.  
constructed curves other than elliptic curves 
for which surjectivity similar to 
that of Theorem~\ref{surjectivity} holds true.   
We might be able to draw some consequences  
similar to the theorems in our present paper 
for those curves.

\subsection{Organization}
Finally, in this subsection, 
we  provide 
a short explanation of the content of each section.
As seen above, 
the motivation and brief explanation regarding the results of
Section~\ref{sec:Motcoh} 
and Sections~\ref{sec:Compatibility}-\ref{sec:KgpsofCurves} 
are given
in Sections~\ref{sec:intro CSS} and~\ref{sec:intro Chern}, respectively.
Each of these sections is fairly independent.

In Section~\ref{sec:Motcoh},  we compute the motivic 
cohomology groups of an arbitrary smooth surface $X$ over finite fields.
The difficult case is that of $H_\cM^3(X, \Z(2))$, 
which 
is therefore the focus of 
Section~\ref{subsec:criterion(3,2)}.
In Section~\ref{sec:Compatibility}, we prove the compatibility 
of Chern characters and the localization sequence of motivic cohomology.
%It may be skipped when reading for the first time.
%
In Section~\ref{sec:MotChern},  we define Chern % class maps 
characters for singular curves over finite fields,
though
the treatment is quite ad hoc.
In Section~\ref{sec:KgpsofCurves},  we then give 
the relation via 
the Chern class map between the $K_1$ and $K_2$ groups 
of curves 
over function fields
and motivic cohomology groups.
We present our main 
result in Section~\ref{conseq}.
%we restrict ourselves to the case of elliptic surfaces.  
Then, applying results of Sections~\ref{sec:Motcoh}, 
\ref{sec:MotChern}, and \ref{sec:KgpsofCurves},
and using special features of elliptic surfaces, 
including Theorem~\ref{surjectivity}, 
we explicitly compute the orders of 
certain torsion groups.
We treat 
the $p$-part separately in Appendix~\ref{Appendix p-part};
see its introduction for more technical details.  
In Section~\ref{sec:BlKa}, we use the Bloch-Kato 
conjecture, 
as proved by Rost and Voevodsky 
(see Theorem~\ref{conj:BK}), 
and generalize our results in Section~\ref{conseq}. 

\section{Motivic cohomology groups of smooth surfaces}
\label{sec:Motcoh}

We begin this section by 
referring to Section~\ref{sec:intro CSS},
which provides a general overview of the contents of this section.

Aside from the uniquely divisible part, we understand the motivic cohomology 
groups of smooth surfaces over finite fields fairly well.  It follows from the Bass conjecture 
that the divisible part is zero \cite[Conjecture 37, p.387]{Kahn}.
 
The main goal of this section is to prove Theorem~\ref{thm:motfin},
therefore let us provide 
a brief description of the statement.
Let $X$ be a smooth surface over a finite field.
We find that 
$H^i_\cM(X, \Z(j))$ 
is an extension of a finitely generated abelian group
 by a uniquely divisible group except when $(i,j)=(3,2)$.
Aside from cases
 $(i,j)=(0,0), (1,1),(3,2)$, and $(4,2)$,
the finitely generated abelian group 
is a finite group, which is either zero or
is written in terms of \'etale cohomology groups of $X$.
We refer to Theorem~\ref{thm:motfin},
the following table, and the following paragraph
for further details.
 Note that for a prime number $\ell$, we let
$|\ |_\ell :\Q_\ell \to \Q$ denote the $\ell$-adic
absolute value normalized such 
that $|\ell|_\ell = \ell^{-1}$.

\subsection{Motivic cohomology of surfaces over a finite field}
\label{subsec:Motcoh}
Let $\F_q$ be a field of cardinal $q$ of characteristic $p$.
For a separated scheme $X$ which is essentially 
of finite type over $\Spec\, \F_q$, we define the motivic
cohomology group $H^i_\cM(X,\Z(j))$ as
the homology group $H^i_\cM(X,\Z(j))=H_{2j-i}(z^j(X,\bullet))$ 
of Bloch's cycle complex $z^j(X,\bullet)$ 
\cite[Introduction, p.~267]{Bloch} 
(see also \cite[2.5, p.~60]{Ge-Le2} to remove the
condition that $X$ is quasi-projective). 
When $X$ is quasi-projective,
we have $H_\cM^i(X, \Z(j))=\CH^j(X,2j-i)$,
where the right-hand side is the higher 
Chow group of Bloch \cite{Bloch}.
We say that a scheme $X$ is essentially smooth over a field $k$
if $X$ is a localization of a smooth scheme of finite type over $k$.
When $X$ is essentially smooth
over $\Spec\, \F_q$, it coincides with the motivic cohomology
group defined in \cite[Part I, Chapter I, 2.2.7, p.~21]{Levine} or
\cite{Voevodsky} (\cf \cite[Theorem 1.2, p.~300]{Levine2}, 
\cite[Corollary 2, p.~351]{Voevodsky2}).
For a discrete abelian group $M$, we set
$H^i_\cM(X, M(j))=H_{2j-i}(z^j(X,\bullet)\otimes_\Z M)$.

We note here that this notation is inappropriate if  $X$ is not 
essentially smooth,
for in that case it would be a Borel-Moore homology
group.    
A reason for using this notation is that 
in Section~\ref{sec:def chern sing}, we define 
Chern classes for higher Chow groups of low degrees
as if higher Chow groups 
were forming a cohomology theory.

First, let us recall that 
the groups $H^i_\cM(X,\Z(j))$ have been known for $j \le 1$.
By definition, $H^i_\cM(X,\Z(j))=0$ for 
$j \le 0$ and $(i,j) \neq (0,0)$, and $H^0_\cM(X,\Z(0))=
H^0_\Zar(X,\Z)$. We have $H^i_\cM(X,\Z(1))=0$
for $i \neq 1,2$.  By \cite[Theorem 6.1, p.~287]{Bloch}, we
have $H^1_\cM(X,\Z(1))=H^0_\Zar(X,\Gm)$,
and $H^2_\cM(X,\Z(1))=\Pic(X)$.

Below is 
a conjecture of Bloch-Kato
\cite[\S1, Conjecture 1, p.~608]{Kato},
which has been 
proved by Rost, Voevodsky, Haesemeyer, and Weibel.
\begin{thm}[Rost-Voevodsky; the Bloch-Kato conjecture]
\label{conj:BK}
Let $j \ge 1$ be an integer.
Then, for any finitely generated field $K$
over $\F_q$ and any positive integer 
$\ell \neq p$,
the symbol map 
$K_j^M(K) \to H^j_\etale(\Spec\, K, \Z/\ell(j))$
is surjective.
\end{thm}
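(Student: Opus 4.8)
The plan is to reduce the statement to the case in which $\ell$ is a prime different from $p$, where it is a form of the norm residue theorem. First I would note that the symbol map factors as
\[
K_j^M(K)\surj K_j^M(K)/\ell\longrightarrow H^j_\etale(\Spec\, K,\Z/\ell(j)),
\]
so only surjectivity of the second arrow is at issue; and, writing $\ell=\prod_i\ell_i^{n_i}$ as a product of prime powers, the Chinese Remainder Theorem splits $\Z/\ell(j)$, hence its cohomology, as a direct sum over the $\ell_i^{n_i}$, compatibly with the symbol maps. So one may assume $\ell=\ell_0^n$ is a prime power. (The hypothesis that $K$ be finitely generated over $\F_q$ is not actually needed here — the norm residue theorem holds for every field — it merely fixes $\chara K=p$.)

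Next I would dispose of the prime powers $\ell_0^n$ with $\ell_0\ne p$ by induction on $n$, using the short exact coefficient sequences relating $\Z/\ell_0^n(j)$, $\Z/\ell_0^{n-1}(j)$ and $\Z/\ell_0(j)$: these are compatible with the symbol maps (multiplication by $\ell_0$ sends a symbol $\{a_1,\dots,a_j\}$ to the symbol $\{a_1^{\ell_0},a_2,\dots,a_j\}$), so the associated long exact cohomology sequences reduce the claim to the base case $n=1$. For $n=1$ the second arrow above is the Galois symbol $K_j^M(K)/\ell_0\to H^j_\etale(\Spec\, K,\mu_{\ell_0}^{\otimes j})$, and by the norm residue theorem of Rost and Voevodsky — the case $\ell_0=2$ settled earlier by Voevodsky, the complete write-up by Haesemeyer and Weibel — this map is an isomorphism, in particular surjective. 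I would use this as a black box: its proof, resting on Rost's construction of norm varieties, Voevodsky's motivic Steenrod algebra, and the proof of the Beilinson--Lichtenbaum conjecture, is far beyond anything reproved here.

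For the remaining case $\ell_0=p$, concerning the coefficients $\Z/p^n(j)$, I would invoke Geisser--Levine \cite{Ge-Le2}: the \'etale complex $\Z/p^n(j)$ is quasi-isomorphic to the complex having the logarithmic de Rham--Witt sheaf $W_n\Omega^j_{\log}$ in cohomological degree $j$, so that $H^j_\etale(\Spec\, K,\Z/p^n(j))\cong H^0_\etale(\Spec\, K,W_n\Omega^j_{\log})=W_n\Omega^j_{K,\log}$. This last group is generated by the forms $d\log[a_1]\wedge\dots\wedge d\log[a_j]$, which are precisely the images of the symbols $\{a_1,\dots,a_j\}$; hence surjectivity holds here too (in fact $K_j^M(K)/p^n\xto{\sim}W_n\Omega^j_{K,\log}$, by the Bloch--Kato--Gabber theorem). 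Together with the previous step this would finish the proof.

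\emph{The hard part} is concentrated entirely in the base case $n=1$, $\ell_0\ne p$: that step is literally the norm residue (Bloch--Kato) theorem, one of the deepest results invoked anywhere in this paper, and I would make no attempt to reprove it. The rest — the Chinese Remainder splitting, the d\'evissage in $n$, and the identification of the $p$-primary coefficient sheaf — is purely formal.
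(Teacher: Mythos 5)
The paper does not prove this statement; it is presented as the Bloch--Kato conjecture (citing \cite{Kato} for the conjecture itself) and attributed to Rost, Voevodsky, Haesemeyer and Weibel --- that is, it is used as a black box from the literature with no argument supplied. Your proposal is a correct d\'evissage from the paper's formulation down to the two standard inputs. The Chinese-Remainder splitting into prime-power coefficients is fine; the $\ell_0$-adic induction on $n$ via the coefficient short exact sequence works because $K_j^M(K)/\ell_0^{n-1}\to K_j^M(K)/\ell_0^n\to K_j^M(K)/\ell_0\to 0$ is exact, so the diagram chase against the long exact cohomology sequence propagates surjectivity; the base case $n=1$, $\ell_0\neq p$, is precisely the norm residue theorem of Rost--Voevodsky that the paper also appeals to; and the $p$-primary branch, via the Geisser--Levine identification $H^j_\etale(\Spec\,K,\Z/p^n(j))\cong W_n\Omega^j_{K,\log}$ and the Bloch--Kato--Gabber theorem, is also right. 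One remark: the hypothesis ``positive integer $\ell\neq p$'' is almost certainly intended to mean ``positive integer prime to $p$.'' Every application of the theorem in the paper has $p\nmid m$, and for $p\mid\ell$ the \'etale sheaf $\Z/\ell(j)$ has no standard meaning without the de Rham--Witt interpretation you supply, so your $\ell_0=p$ branch addresses a reading the authors likely did not intend --- harmless, but more than is needed. Your parenthetical that finite generation of $K$ over $\F_q$ is superfluous for the statement is also correct; it is retained only because this is the paper's ambient setting.
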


\begin{defn}
Let $M$ be an abelian group. We say that
$M$ is {\em finite} {\em modulo} 
{\em a uniquely divisible
subgroup}
(\resp {\em finitely generated modulo} 
{\em a uniquely} {\em divisible subgroup}) 
if $M_\divi$ is uniquely divisible and $M^\cotors$ is finite
(\resp $M_\divi$
is uniquely divisible and $M^\cotors$ is finitely generated).
\end{defn}
We note that if $M$ is finite modulo a uniquely
divisible subgroup, then $M_\tors$ is a finite group and 
$M = M_\divi \oplus M_\tors$.

Recall that for a scheme $X$, we let $\Irr(X)$ denote
the set of irreducible components of $X$.
The aim of Section~\ref{subsec:Motcoh} is to prove
the below theorem.
\begin{thm}\label{thm:motfin}
Let $X$ be a smooth surface over $\F_q$.
Let $R \subset \mathrm{Irr}(X)$ denote the subset of
irreducible components of $X$ that are projective 
over $\Spec\, \F_q$. For $X' \in \mathrm{Irr}(X)$, let
$q_{X'}$ denote the cardinality of the field
of constants of $X'$.
\begin{enumerate}
\item $H^i_\cM(X,\Z(2))$ is
finitely generated modulo a uniquely divisible subgroup
if $i \neq 3$ or if $X$ is projective. More precisely,
\begin{enumerate}
\item $H^i_\cM(X,\Z(2))$ is zero for $i \ge 5$.
\item $H^4_\cM(X,\Z(2))$ is a finitely generated abelian 
group of rank $|R|$.
\item If $i \le 1$ or if $X$ is projective and $i \le 3$,
the group $H^i_\cM(X,\Z(2))$ is
finite modulo a uniquely divisible subgroup.
\item $H^2_\cM(X,\Z(2))$ is finitely generated
modulo a uniquely divisible subgroup.
\item For $i \le 2$, the group $H^i_\cM(X,\Z(2))_\tors$ 
is canonically isomorphic to the direct sum
$\bigoplus_{\ell \neq p}H^{i-1}_\etale(X,\Q_\ell/\Z_\ell(2))$.
In particular, $H^i_\cM(X,\Z(2))$ 
is uniquely divisible for $i \le 0$.
% If $X$ is geometrically connected, 
% then the group $H^1_\cM(X,\Z(2))_\tors$ 
% is cyclic of order $q^2 -1$.
\item If $X$ is projective, then $H^3_\cM(X,\Z(2))_\tors$ 
is isomorphic to the direct sum of the group 
$\bigoplus_{\ell \neq p}H^{2}_\etale(X,\Q_\ell/\Z_\ell(2))$
and a finite $p$-group of order 
$|\Hom(\Pic^o_{X/\F_q},\Gm)| \cdot |L(h^2(X),0)|_p^{-1}$.
Here, we let $\Hom(\Pic^o_{X/\F_q},\Gm)$ 
denote the set of morphisms 
$\Pic^o_{X/\F_q}\to \Gm$ of group schemes
over $\Spec\, \F_q$.
\end{enumerate}
\item Let $j \ge 3$ be an integer. % and suppose that Conjecture \ref{conj:BK} is true for $j$.
Then, for any integer $i$, the group
$H^i_\cM(X,\Z(j))$ is finite modulo 
a uniquely divisible subgroup. 
More precisely,
\begin{enumerate}
\item $H^i_\cM(X,\Z(j))$ is
zero for $i\ge \max(6,j+1)$, 
is isomorphic to 
$\bigoplus_{X' \in R} \Z/(q_{X'}^{j-2}-1)$
for $(i,j)=(5,3)$, $(5,4)$,
and is finite for $(i,j)=(4,3)$.
\item $H^i_\cM(X,\Z(j))_\tors$ is canonically 
isomorphic to the direct sum
$\bigoplus_{\ell \neq p}H^{i-1}_\etale(X,\Q_\ell/\Z_\ell(j))$.
In particular, $H^i_\cM(X,\Z(j))$ 
is uniquely divisible for 
$i \le 0$ or $6 \le i \le j$, and
$H^1_\cM(X,\Z(j))_\tors$ is 
isomorphic to the direct sum 
$\bigoplus_{X' \in \mathrm{Irr}(X)} \Z/(q_{X'}^{j}-1)$.
\end{enumerate}
\end{enumerate}
\end{thm}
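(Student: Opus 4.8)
The plan is to compute $H^i_\cM(X,\Z(j))$ for a smooth surface $X$ over $\F_q$ by reducing to the known one- and zero-dimensional cases, combined with the Geisser--Levine/Rost--Voevodsky comparison of motivic cohomology with torsion coefficients and étale cohomology (resp.\ logarithmic de Rham--Witt cohomology for the $p$-part). Throughout one exploits that, over finite fields, motivic cohomology is a direct limit of that of smooth schemes of finite type, so one may assume $X$ to be of finite type; moreover the Gersten resolution (Bloch's localization) reduces many vanishing statements and torsion computations to the case of points and curves, which appear as the cohomology of the coniveau spectral sequence $E_1^{s,t} = \bigoplus_{x\in X^{(s)}} H^{t-s}_\cM(\kappa(x),\Z(j-s)) \Rightarrow H^{s+t}_\cM(X,\Z(j))$. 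First I would establish the vanishing ranges in (1)(a) and (2)(a): the columns $s\le 2$ of this spectral sequence involve $H^{\ast}_\cM$ of fields of transcendence degree $\le 2$ and twists $j$, $j-1$, $j-2$, and by dimension reasons (motivic cohomology of a field $F$ with $\Z(m)$ vanishes in degrees $> m$ when $F$ has finite cohomological dimension, here $\operatorname{cd}\le 1$ over a finite field in each residual twist) the relevant $E_1$ terms vanish outside a bounded box, giving $H^i_\cM=0$ for $i\ge\max(6,j+1)$ and $i\ge 5$ when $j=2$.

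Next I would handle the torsion statements (1)(e), (1)(f), (2)(b). For coefficients $\Z/\ell$ with $\ell\ne p$, Geisser--Levine gives $H^i_\cM(X,\Z/\ell(j))\cong H^i_\etale(X,\Z/\ell(j))$ for $i\le j$, hence (taking the limit over $\ell$-powers and using the long exact sequence relating $\Z$, $\Z/\ell^n$ and $\Q/\Z$ coefficients together with the fact that $H^i_\cM(X,\Q(j))$ is known in the relevant degrees) one gets $H^i_\cM(X,\Z(j))_\tors \cong \bigoplus_{\ell\ne p} H^{i-1}_\etale(X,\Q_\ell/\Z_\ell(j))$ for $i\le j$, which in particular covers $i\le 2$ for $j=2$. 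For the extreme low twist $j\ge 3$, $H^1_\cM(X,\Z(j))$ is computed directly from the Gersten resolution: its $E_1^{0,1}$ contribution is $H^1_\cM(\F_{q_{X'}}(X'),\Z(j))$, and since a function field of a curve over a finite field $\F_{q_{X'}}$ has $K^M$/motivic cohomology in weight $j\ge 2$ with torsion $\Z/(q_{X'}^j-1)$ coming from the constant field, summing over $X'\in\Irr(X)$ yields the stated answer; the analogous computation at the level of $0$-cycles gives the $H^5$ and $H^4$ statements in (2)(a) via $K$-groups of $\F_{q_{X'}}$ (using $K_n(\F_q)$ is cyclic of order $q^{(n+1)/2}-1$ for odd $n$).

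Then I would treat finite generation modulo a uniquely divisible subgroup. The idea, following Colliot-Thélène--Sansuc--Soulé and Gros--Suwa as recalled in Section~\ref{sec:intro CSS}, is that for $i\le j$ the group $H^i_\cM(X,\Z(j))$ sits in an exact sequence whose $\Q$-part is controlled (finite-dimensional, by Parshin-type inputs in the ranges at hand, or directly zero), whose torsion part is the finite group identified above, and whose remaining divisible part is \emph{uniquely} divisible because its $\ell$-torsion vanishes for all $\ell$ — this last point is where one invokes that $H^{i-1}_\etale(X,\Q_\ell/\Z_\ell(j))$ is finite (for $i\le j$, by Deligne's Weil II bounds the relevant Frobenius eigenvalues have no weight-$0$ part, hence the corank is $0$). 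This gives (1)(c),(1)(d),(2) in the cohomological range. The remaining bidegree $H^4_\cM(X,\Z(2))$ of rank $|R|$ in (1)(b) is a Chow-group computation: $H^4_\cM(X,\Z(2))=\CH^2(X,0)=\CH_0(X)$ of a smooth surface over a finite field, whose rank is the number of projective components (each proper component contributes a $\Z$ via the degree map, with kernel torsion by the finiteness of $\CH_0^{\deg 0}$ of a projective surface over a finite field — this is classical, e.g.\ via class field theory for surfaces).

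\textbf{The main obstacle.} I expect the hard part to be precisely the bidegree $(3,2)$ in the \emph{projective} case, statement (1)(f): identifying the full torsion of $H^3_\cM(X,\Z(2))$, and in particular isolating the $p$-primary contribution of order $|\Hom(\Pic^o_{X/\F_q},\Gm)|\cdot|L(h^2(X),0)|_p^{-1}$. The prime-to-$p$ part again follows from Geisser--Levine plus the coniveau/Gersten machinery, but the $p$-part requires the logarithmic de Rham--Witt analysis of Gros--Suwa, supplemented by the extra statement the authors defer to the Appendix; reconciling the motivic side (Bloch's higher Chow groups with $p$-torsion coefficients, via Geisser--Levine's $\Z/p^n(j)\simeq \nu_n(j)[-j]$) with the unit-group-of-$\Pic^o$ term — which encodes the non-reduced part of the Picard scheme and the non-semisimplicity of Frobenius on crystalline $H^2$ — is genuinely delicate and is the technical heart of the proof.
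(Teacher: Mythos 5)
Your overall framework (reduce torsion questions to étale and de~Rham--Witt cohomology via Geisser--Levine and Merkurjev--Suslin; treat $p$-primary $H^3_\cM(X,\Z(2))$ via logarithmic de~Rham--Witt; identify $H^4_\cM(X,\Z(2))$ with $\CH_0(X)$) matches the paper's in spirit, and you correctly isolate the $p$-part of bidegree $(3,2)$ as the technical heart. However, there are two substantive gaps.

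First, your argument for ``finite (resp.\ finitely generated) modulo a uniquely divisible subgroup'' does not actually close. You argue: torsion is finite (from finiteness of $H^{i-1}_\etale(X,\Q_\ell/\Z_\ell(j))$), hence the divisible part is uniquely divisible, and then you appeal to ``Parshin-type inputs'' to control the $\Q$-part. But Parshin's conjecture is unproved, and more to the point, even if $H^i_\cM(X,\Z(j))\otimes\Q$ were known to be finite-dimensional, this together with finite torsion would \emph{not} yield that $H^i_\cM(X,\Z(j))^{\red}$ is finite: a group such as $\Z[1/p]$ has finite (trivial) torsion, rank one, and no nonzero divisible subgroup, yet is not finitely generated. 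The paper's Lemma~\ref{lem:toritori} avoids this entirely by replacing the appeal to $\Q$-coefficients with the second finiteness hypothesis that $\varprojlim_m H^i_\cM(X,\Z/m(j))$ is finite, which gives a canonical embedding of $H^i_\cM(X,\Z(j))^{\red}$ into a finite group. That projective-limit input, established in Lemma~\ref{lem:MSGL_1}, is the step your proposal is missing and cannot be dispensed with.

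Second, your treatment of the non-projective case is too thin. You invoke coniveau/Gersten machinery, but the key finiteness results you lean on from Colliot-Th\'el\`ene--Sansuc--Soul\'e and Gros--Suwa are stated for projective $X$, and the paper explicitly flags (Section~\ref{sec:intro CSS}) that removing projectivity is genuinely harder. The paper's route is to choose an embedding $X\hookrightarrow X'$ into a smooth projective surface (via Nagata compactification and resolution of singularities of surfaces), and then transport the conclusion along the localization sequence using the purely group-theoretic Lemma~\ref{lem:abelgp}; this is also what produces the restriction ``or if $X$ is projective'' in the $(3,2)$ case of~(1) (cf.\ Proposition~\ref{prop:red_loc_seq}). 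Your proposal gives no mechanism for showing, for a non-projective $X$, that the divisible part of $H^i_\cM(X,\Z(j))$ is uniquely divisible and that the cotorsion is finitely generated; the Gersten spectral sequence by itself does not control the divisible/cotorsion structure in the way needed. Relatedly, your sketch for (1)(b) handles projective components correctly but does not justify why the non-projective components of $X$ contribute only torsion to $\CH_0$, which is exactly what the localization-to-projective argument supplies.
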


In the table below, we summarize the description  
of the 
groups
$H^i_\cM(X,\Z(j))$ stated in Theorem~\ref{thm:motfin}.  
%For $j \ge 3$,
%we assume that Conjecture~\ref{conj:BK} for $j$ holds.
Here, we write u.d, f./u.d., f.g./u.d., f., and f.g. for uniquely divisible, finite modulo a uniquely divisible subgroup, 
finite generated modulo a uniquely divisible 
subgroup, finite, and finitely generated,
respectively.
$$
\begin{array}{|c||c|c|c|c|c|c|c|}
\hline
\, j \setminus i \, &  \, < 0 \, & 0 &
\, 0 < i <j\, & j (\neq 0) & j+1 & \, j+2\, & \,\ge j+3\, \\
\hline
\hline
0 & 0 & \, H^0(\Z)\,  & \multicolumn{2}{c|}{\text{-}} & 
\multicolumn{3}{c|}{0} \\
\hline
1 & \multicolumn{2}{c|}{0}
& \text{-} & H^0(\Gm) & \Pic(X)
& \multicolumn{2}{c|}{0} \\
\hline
\raisebox{-1.6ex}[0pt][0pt]{$2$} & 
\multicolumn{2}{c|}{\raisebox{-1.6ex}[0pt][0pt]{\text{u.\ d.}}}
& \raisebox{-1.6ex}[0pt][0pt]{\text{f./u.\ d.}}
& \text{\,f.\ g./u.\ d.\,} & \text{?}
& \raisebox{-1.6ex}[0pt][0pt]{\text{f.\ g.}} 
& \raisebox{-1.6ex}[0pt][0pt]{$0$} \\
\cline{5-6}
& \multicolumn{2}{c|}{} & &
\multicolumn{2}{c|}{\text{\,f./u.\! d.\! if projective\,}}
& & \\
\hline
3 & \multicolumn{2}{c|}{\text{u.\ d.}}
& \multicolumn{2}{c|}{\text{f./u.\ d.}}
& \multicolumn{2}{c|}{\text{f.}} & 0 \\
\hline
4 & \multicolumn{2}{c|}{\text{u.\ d.}}
& \multicolumn{2}{c|}{\text{f./u.\ d.}}
& \text{f.} & \multicolumn{2}{c|}{0} \\
\hline
\raisebox{-1.6ex}[0pt][0pt]{$\ge 5$} & 
\multicolumn{2}{c|}{\raisebox{-1.6ex}[0pt][0pt]{\text{u.\ d.}}}
& \multicolumn{2}{c|}{\text{f./u.\ d.}}
& \multicolumn{3}{c|}{\raisebox{-1.6ex}[0pt][0pt]{$0$}} \\
\cline{4-5}
& \multicolumn{2}{c|}{} 
& \multicolumn{2}{c|}{\text{u.\ d.\ if }6 \le i \le j}
& \multicolumn{3}{c|}{} \\
\hline
\end{array}
$$
% \begin{cor}
% Let $X$ be as in the statement of Theorem~\ref{thm:motfin}. Let $i,j$
% be integers and suppose that Conjecture~\ref{conj:BK} 
% is true for $j$. Then if $(i,j) \neq (3,2)$ or $X$ is projective,
% the group $H^i_\cM(X,\Z(j))$ is finitely generated
% modulo \modulo{a} uniquely divisible subgroup.
% \end{cor}

\begin{lem}\label{lem:toritori}
Let $X$ be a separated scheme essentially 
of finite type over $\Spec\, \F_q$.  Let $i$ and $j$ be integers.
If both $H^{i-1}_\cM(X,\Q/\Z(j))$ and
$\varprojlim_m H^i_\cM(X,\Z/m(j))$ are finite,
then $H^i_\cM(X,\Z(j))$ 
is finite modulo a uniquely
divisible subgroup and its torsion subgroup is
isomorphic to $H^{i-1}_\cM(X,\Q/\Z(j))$.
\end{lem}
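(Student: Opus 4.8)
The plan is to extract the statement from the universal-coefficient sequence for Bloch's cycle complex, combined with the hypothesis that controls both the torsion and the "divisible-but-not-uniquely-divisible" obstruction. First I would fix $i,j$ and write $A^\bullet = z^j(X,\bullet)$, so that $H^i_\cM(X,\Z(j)) = H_{2j-i}(A^\bullet)$ and $H^i_\cM(X,\Z/m(j)) = H_{2j-i}(A^\bullet \otimes \Z/m)$. Applying $-\otimes^{\mathbf L}\Z/m$ and taking homology gives the short exact sequence
\[
0 \to H^i_\cM(X,\Z(j))/m \to H^i_\cM(X,\Z/m(j)) \to H^{i+1}_\cM(X,\Z(j))[m] \to 0,
\]
and passing to the colimit over $m$ (ordered by divisibility) yields
\[
0 \to H^i_\cM(X,\Z(j))\otimes \Q/\Z \to H^i_\cM(X,\Q/\Z(j)) \to H^{i+1}_\cM(X,\Z(j))_\tors \to 0.
\]
Re-indexing, the group $H^{i-1}_\cM(X,\Q/\Z(j))$ surjects onto $H^i_\cM(X,\Z(j))_\tors$ with kernel $H^{i-1}_\cM(X,\Z(j))\otimes\Q/\Z$. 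Since $H^{i-1}_\cM(X,\Q/\Z(j))$ is assumed finite, we immediately conclude that $H^i_\cM(X,\Z(j))_\tors$ is finite; moreover the kernel $H^{i-1}_\cM(X,\Z(j))\otimes \Q/\Z$, being a quotient of a finite group that is also divisible, must vanish, so in fact $H^i_\cM(X,\Z(j))_\tors \cong H^{i-1}_\cM(X,\Q/\Z(j))$ — this already gives the torsion assertion.

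It remains to show $M_\divi$ is uniquely divisible for $M = H^i_\cM(X,\Z(j))$, equivalently (given that $M_\tors$ is finite) that $M_\divi$ is torsion-free, equivalently that $M_\divi \cap M_\tors = 0$, i.e. that the finite group $M_\tors$ contains no nonzero divisible element. Here I would bring in the second hypothesis, that $\varprojlim_m H^i_\cM(X,\Z/m(j))$ is finite. From the first universal-coefficient sequence above, taking $\varprojlim_m$ (the system of surjections $M/m \twoheadleftarrow M/m^2$, etc.) gives a left-exact sequence relating $\varprojlim_m M/m$ to $\varprojlim_m H^i_\cM(X,\Z/m(j))$; since the latter is finite, $\varprojlim_m M/m$ is finite. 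But for any abelian group $M$, $\varprojlim_m M/m$ contains the profinite completion data, and a standard argument shows that $\varprojlim_m M/m$ being finite forces $M$ to be the direct sum of a finite group and a group on which every integer acts surjectively — concretely, $M/M_\divi$ is finite, hence $M = M_\divi \oplus M^\cotors$ with $M^\cotors$ finite. Combined with finiteness of $M_\tors$, this forces $M_\divi$ to be torsion-free: any $m$-torsion element of $M_\divi$ would, together with $M^\cotors$, contribute to $M_\tors$ consistently, but the divisible torsion subgroup of a group of the form (uniquely divisible)$\oplus$(divisible torsion)$\oplus$(finite) would make $\varprojlim M/m$ pick up a $\varprojlim$ of a nontrivial tower — one checks directly that if $(\Q_\ell/\Z_\ell)^{\oplus a}$ were a summand with $a>0$ then $\varprojlim_m M/m$ would contain $\Z_\ell^{\oplus a}$, contradicting finiteness. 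Hence $M_\divi$ is uniquely divisible.

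The main obstacle is the last step: carefully deducing from finiteness of $\varprojlim_m H^i_\cM(X,\Z/m(j))$ that the maximal divisible subgroup of $H^i_\cM(X,\Z(j))$ has no $\ell$-divisible torsion for any $\ell$. The cleanest route is to combine the two universal-coefficient sequences — one computes $M_\tors$ from $\varinjlim$ (already done), the other computes a "$\varprojlim$" completion-type invariant — and to note that a divisible torsion summand $(\Q_\ell/\Z_\ell)^{\oplus a}$ simultaneously contributes to $M_\tors$ (finite, fine) and, via the inverse limit of the maps $M/\ell^{n+1} \to M/\ell^n$, produces a copy of $\Z_\ell^{\oplus a}$ inside $\varprojlim_m M/m$; finiteness then forces $a=0$ for every $\ell$. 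Everything else — the two universal-coefficient sequences, the vanishing of a finite divisible group, and the splitting $M = M_\divi\oplus M^\cotors$ once $M^\cotors$ is known to be finite — is formal homological algebra that I would state but not belabor.
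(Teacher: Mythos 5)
Your opening moves match the paper exactly: the universal-coefficient sequence
\[
0 \to H^{i-1}_\cM(X,\Z(j))\otimes\Q/\Z \to H^{i-1}_\cM(X,\Q/\Z(j)) \to H^i_\cM(X,\Z(j))_\tors \to 0,
\]
the observation that the left term is a finite divisible group and hence zero, and the resulting identification of $H^i_\cM(X,\Z(j))_\tors$ with $H^{i-1}_\cM(X,\Q/\Z(j))$. All of that is correct.

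The problems are in the second half. For unique divisibility of $M_\divi$, your argument that a summand $(\Q_\ell/\Z_\ell)^{\oplus a}$ would contribute $\Z_\ell^{\oplus a}$ to $\varprojlim_m M/m$ is false: $(\Q_\ell/\Z_\ell)/m = 0$ for every $m$, so a $\Q_\ell/\Z_\ell$ summand contributes \emph{nothing} to $\varprojlim_m M/m$. You have confused the profinite completion $\varprojlim M/m$ with the Tate module $\varprojlim M[m]$; it is the latter that detects divisible torsion. But this detour was unnecessary anyway: once $M_\tors$ is finite, the torsion subgroup of $M_\divi$ is a finite divisible group, hence zero, so $M_\divi$ is torsion-free and therefore uniquely divisible. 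No appeal to the second hypothesis is needed here.

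For finiteness of $M^\cotors$, you assert that ``$\varprojlim_m M/m$ being finite forces $M/M_\divi$ to be finite.'' This is not a general fact about abelian groups and you give no argument for it; indeed, the kernel of $M \to \varprojlim_m M/m$ is $\bigcap_m mM$, which for a reduced group need not be zero, so the profinite completion can in principle lose information. The paper's argument supplies the missing step: \emph{because} $M_\tors$ is finite (hence bounded and pure), $M_\tors$ splits off, one checks that $\bigcap_m mM = M_\divi$, and therefore $M^\cotors \hookrightarrow \varprojlim_m M/m$; the latter then embeds in the finite group $\varprojlim_m H^i_\cM(X,\Z/m(j))$. So both hypotheses are needed to conclude $M^\cotors$ is finite, and the finiteness of $M_\tors$ is doing structural work that your proposal skips.
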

\begin{proof}
Let us consider the exact sequence
\begin{equation} \label{eqn:shortexact_motcoh}
\begin{array}{l}
0 \to H^{i-1}_\cM(X,\Z(j))\otimes_\Z \Q/\Z
\to H^{i-1}_\cM(X,\Q/\Z(j))  \\ \to
H^{i}_\cM(X,\Z(j))_\tors \to 0.
\end{array}
\end{equation}
Since $H^{i-1}_\cM(X,\Q/\Z(j))$
is a finite group, all groups in the above
exact sequence are finite groups.
Then, $H^{i-1}_\cM(X,\Z(j))\otimes_\Z \Q/\Z$
must be zero, 
since it is finite and divisible,
hence we have a canonical isomorphism
$H^{i-1}_\cM(X,\Q/\Z(j)) \to H^{i}_\cM(X,\Z(j))_\tors$. 
The finiteness of $H^{i}_\cM(X,\Z(j))_\tors$ implies that
the divisible group $H^{i}_\cM(X,\Z(j))_\divi$ is uniquely divisible
and 
the canonical homomorphism
$$
H^{i}_\cM(X,\Z(j))^\cotors \to 
\varprojlim_{m} H^{i}_\cM(X,\Z(j))/m
$$
is injective. 
The latter group
$\varprojlim_{m} H^{i}_\cM(X,\Z(j))/m$ is
canonically embedded in the finite group
$\varprojlim_{m} H^{i}_\cM(X,\Z/m(j))$,
hence we conclude that $H^{i}_\cM(X,\Z(j))^\cotors$ 
is finite. This proves the claim.
\end{proof}

\begin{lem}\label{lem:MSGL_1}
Let $X$ be a smooth projective surface over $\F_q$.
Let $j$ be an integer. % and suppose that Conjecture~\ref{conj:BK} is true for $j$. 
Then, $H^i_\cM(X,\Q/\Z(j))$
and $\varprojlim_m H^i_\cM(X,\Z/m(j))$
are finite if $i \neq 2j$ or $j \ge 3$.
\end{lem}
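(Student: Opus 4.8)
The plan is to pass to $\ell$-primary parts. Since motivic cohomology commutes with filtered colimits of coefficients, $H^i_\cM(X,\Q/\Z(j))=\bigoplus_\ell H^i_\cM(X,\Q_\ell/\Z_\ell(j))$ and $\varprojlim_m H^i_\cM(X,\Z/m(j))=\prod_\ell\varprojlim_r H^i_\cM(X,\Z/\ell^r(j))$, so it suffices to bound each $\ell$-part and to check that only finitely many $\ell$ contribute. As $X$ is a surface, the higher Chow complex forces $H^i_\cM(X,\Z/m(j))=0$ for $i>j+2$, and (by the two descriptions recalled below) $H^i_\cM(X,\Z/m(j))=0$ for $i<0$; together with the hypothesis $(i,j)\neq(2j,j)$ and the cases $j\le 1$, where $H^i_\cM(X,\Z(j))$ is the explicitly known finitely generated group ($H^0_\Zar(X,\Z)$, $H^0_\Zar(X,\Gm)$, or $\Pic(X)$), this reduces us to the range $0\le i\le j+2$ with $j\ge 2$.

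For $\ell\neq p$ and $i\le j+1$ I would invoke the Beilinson--Lichtenbaum isomorphism, a consequence of Theorem~\ref{conj:BK} in the form due to Geisser--Levine \cite{Ge-Le2}: the map $H^i_\cM(X,\Z/\ell^r(j))\to H^i_\etale(X,\mu_{\ell^r}^{\otimes j})$ is an isomorphism for $i\le j$ and injective for $i=j+1$. \'Etale cohomology of a scheme of finite type over $\F_q$ with finite coefficients is finite, so each such group is finite. For the limit statements one passes to $\varprojlim_r H^i_\etale(X,\mu_{\ell^r}^{\otimes j})=H^i_\cont(X,\Z_\ell(j))$ and $\varinjlim_r H^i_\etale(X,\mu_{\ell^r}^{\otimes j})=H^i_\etale(X,\Q_\ell/\Z_\ell(j))$ and applies the Weil conjectures (Deligne): by Hochschild--Serre over $\F_q$, geometric Frobenius acts on $H^k_\etale(X\times_{\F_q}\Fbar_q,\Q_\ell(j))$ with eigenvalues of absolute value $q^{k/2-j}$, so there are no nonzero Frobenius invariants or coinvariants over $\Q_\ell$ unless $k=2j$; hence $H^i_\cont(X,\Z_\ell(j))$ and $H^i_\etale(X,\Q_\ell/\Z_\ell(j))$ are finite for $i\notin\{2j,2j+1\}$, which holds here since $i\le j+1<2j$ for $j\ge 2$. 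The same Frobenius-determinant computation shows that all but finitely many $\ell$ contribute zero.

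For $\ell=p$ I would use the logarithmic de Rham--Witt description of mod-$p$ motivic cohomology of smooth varieties over a perfect field of characteristic $p$ (Geisser--Levine \cite{Ge-Le2}): $H^i_\cM(X,\Z/p^r(j))\cong H^{i-j}_\Zar(X,\dRWlog{r}{j})$. Since $X$ is smooth \emph{and projective} over $\F_q$, these groups are finite --- this is the finiteness of logarithmic Hodge--Witt cohomology of proper smooth varieties over finite fields, \cf \cite{Gr-Su} and the Appendix --- and $\varprojlim_r H^{i-j}_\Zar(X,\dRWlog{r}{j})$, $\varinjlim_r H^{i-j}_\Zar(X,\dRWlog{r}{j})$ are finite for $i\neq 2j$ by the crystalline analogue of the weight argument, since the slope-$j$ part of $H^n_\cris(X/W)\otimes\Q$ vanishes for $n\neq 2j$. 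This disposes of all bidegrees with $i\le j+1$.

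It remains to treat $i=j+2$ with $j\ge 2$; as $j=2$ is the excluded case $i=2j$, assume $j\ge 3$. Here Beilinson--Lichtenbaum does not apply, and I would use Bloch's niveau (Gersten) spectral sequence on the surface $X$, which identifies $H^{j+2}_\cM(X,\Z/m(j))$ --- and likewise its $\Q/\Z$-analogue --- with the cokernel of the top Gersten differential $\bigoplus_{x\in X^{(1)}}H^{j-1}_\cM(\kappa(x),\Z/m(j-1))\to\bigoplus_{x\in X^{(2)}}H^{j-2}_\cM(\kappa(x),\Z/m(j-2))$, a tame-symbol type map. For $j\ge 4$ the target vanishes, since $H^{j-2}_\cM(\kappa(x),\Z/m(j-2))=0$ for the finite residue field $\kappa(x)$ when $j-2\ge 2$; hence $H^{j+2}_\cM(X,\Z/m(j))=0$, whence $H^{j+2}_\cM(X,\Z(j))$ is uniquely divisible and its $\Q/\Z$- and $\varprojlim$-versions vanish. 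For $j=3$ the target is $\bigoplus_{x\in X^{(2)}}\kappa(x)^{*}/m$, and Weil reciprocity for the (proper) curves $\overline{\{x\}}$, $x\in X^{(1)}$, confines the image of the tame-symbol map to the kernel of the sum-of-norms map onto $\bigoplus_{X'\in\Irr(X)}\F_{q_{X'}}^{*}/m$; the surjectivity of this reciprocity then makes the cokernel isomorphic to $\bigoplus_{X'\in\Irr(X)}\F_{q_{X'}}^{*}/m$, of bounded exponent, so that the $\varprojlim_m$ and $\Q/\Z$ versions are finite too. This bidegree $(i,j)=(5,3)$ is the main obstacle: it is the only one outside the Beilinson--Lichtenbaum range, and the argument genuinely uses that $X$ is a \emph{projective} surface --- so that the relevant cokernel is governed, via reciprocity, by the constant fields of the components --- just as the $p$-adic finiteness above relies on projectivity.
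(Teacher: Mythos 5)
Your overall strategy is the same in spirit as the paper's — pass to $\ell$-primary parts, use the Beilinson--Lichtenbaum / Geisser--Levine comparison for $\ell\neq p$ together with a weight (Deligne) argument, and use logarithmic de~Rham--Witt cohomology for $\ell=p$ — but you diverge from the paper precisely at the one bidegree that falls outside the usual Beilinson--Lichtenbaum range, namely $i=j+2$ with $j\geq 3$, and it is there that your argument has a gap. The paper's device for this range is cohomological dimension: because a local ring of a surface over $\F_q$ has $\ell$-cohomological dimension $\leq 3$, the truncation $\tau_{\leq j}R\vep_*\Z/m(j)$ agrees with $R\vep_*\Z/m(j)$ once $j\geq 3$, so that $H^i_\cM(X,\Z/m(j))\cong H^i_\etale(X,\Z/m(j))$ for \emph{every} $i$, in particular for $i=j+2$; the finiteness then follows from the \'etale computations of \cite{CSS} uniformly. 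Your proposal instead handles $i=j+2$ by the Gersten differential $d_1\colon \bigoplus_{x\in X^{(1)}}H^{j-1}_\cM(\kappa(x),\Z/m(j-1))\to \bigoplus_{y\in X^{(2)}}H^{j-2}_\cM(\kappa(y),\Z/m(j-2))$. For $j\geq 4$ this is fine, since $K^M_{j-2}$ of a finite field vanishes; but for $j=3$ the asserted identification of the cokernel with $\bigoplus_{X'}\F_{q_{X'}}^{*}/m$ requires knowing that the image of $d_1$ is \emph{equal} to the kernel of the sum-of-norms map, not merely contained in it (which is all that Weil reciprocity for the curves $\overline{\{x\}}$ gives). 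That exactness is a genuine input — a form of higher unramified class field theory for the surface, or equivalently the very statement $H^5_\cM(X,\Z/m(3))\cong\bigoplus_{X'}\F_{q_{X'}}^*/m$ that you are trying to establish. "Surjectivity of this reciprocity" is doing all the work and is neither proved nor cited, so as written the $(i,j)=(5,3)$ case (and hence the finiteness of $\varprojlim_m H^5_\cM(X,\Z/m(3))$) is not established; the containment alone only gives a surjection $\Coker\,d_1\surj\bigoplus_{X'}\F_{q_{X'}}^*/m$, which does not bound $\Coker\,d_1$.

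Two smaller remarks. First, the $p$-primary finiteness is treated in the paper quite carefully (the Mittag--Leffler argument over $\{H^i_\etale(X,W_n\Omega^2_{X,\log})\}_n$ from \cite{CSS}, the comparison between Zariski and \'etale hypercohomology of the logarithmic sheaves), whereas your appeal to "the crystalline analogue of the weight argument" and \cite{Gr-Su} is too brief to check; this is not a gap in the same sense, but it does not match the paper's level of rigour. Second, your direct Weil-conjecture argument in place of the citations to \cite{CSS} is a legitimate variant (\cite{CSS} is itself a packaging of Deligne's theorem plus Hochschild--Serre), so that part of the comparison is purely stylistic.
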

\begin{proof}
The claim for $j \le 1$ is clear.
Suppose that $j=2$. Then, the claim for $i \ge 5$ holds 
since the groups are zero by dimension reasons. 
If $p \nmid m$, from the theorem of Geisser and Levine
\cite[Corollary 1.2, p.~56]{Ge-Le2} 
(see also [Corollary 1.4]) 
and the theorem of Merkurjev and Suslin~\cite[(11.5), Theorem, p.~328]{Me-Su1}, 
it follows that the cycle class map 
$H^i_\cM(X,\Z/m(2)) \to H^i_\etale(X,\Z/m(2))$
is an isomorphism for $i \le 2$
and is an embedding for $i =3$.
By \cite[Th\'{e}or\`{e}me 2, p.~780]{CSS}
and the exact sequence \cite[2.1 (29) p.~781]{CSS},
for $i \le 3$, the group $\varinjlim_{m,\, p\nmid m} 
H^{i}_{\etale}(X,\Z/m(2))$ 
% is finite for $i \le 3$ and is zero for $i \le -1$, 
and the group $\varprojlim_{m,\, p\nmid m} 
H^{i}_{\etale}(X,\Z/m(2))$ are finite.

% and is zero for $i\le 0$.
%
Let $W_n \Omega^\bullet_{X,\log}$ denote the 
logarithmic de Rham-Witt sheaf (\cf \cite[I, 5.7, p.~596]{Illusie}),
which was introduced by Milne in \cite{Milne1}.
There is an  
isomorphism $H^i_\cM(X,\Z/p^n(2)) 
\cong H^{i-2}_\Zar(X,W_n \Omega^2_{X,\log})$
(\cf \cite[Theorem 8.4, p.~491]{Ge-Le1}). 
In particular,  we have $H^i_\cM(X,\Q_p/\Z_p(2)) =0$ for $i \le 1$.

By \cite[\S2, Th\'{e}or\`{e}me 3, p.~782]{CSS},
$\varinjlim_n H^i_\etale(X,W_n \Omega^2_{X,\log})$
is a finite group for $i=0,1$. 
By \cite[Pas $n^o$ 1, p.~783]{CSS}, the projective system
$\{H^i_\etale(X,W_n \Omega^2_{X,\log})\}_n$ satisfies the Mittag-Leffler condition.
Using the same argument used in \cite[Pas $n^o$ 4, p.~784]{CSS},
we obtain an exact sequence
\[
\begin{array}{ll}
0 \to \varprojlim_n H^i_\etale(X, W_n\Omega^2_{X, \log}) \otimes_\Z \Q_p/\Z_p
\to \varinjlim_n H^i_\etale(X, W_n\Omega_{X,\log}^2) \\
\to \varprojlim_n H^{i+1}_\etale(X, W_n\Omega_{X,\log}^2)_{\tors}
\to 0.
\end{array}
\]
We then see that
%Using the argument
%in \cite[2.2, p.~782-786]{CSS}, we see that
$\varprojlim_n H^i_\etale(X,W_n \Omega^2_{X,\log})$
is also finite for $i=0,1$ and is isomorphic to
$\varinjlim_n H^{i-1}_\etale(X,W_n \Omega^2_{X,\log})$.
Since the homomorphism 
$$
H^i_\Zar(X,W_n \Omega^2_{X,\log})
\to H^i_\etale(X,W_n \Omega^2_{X,\log})
$$ 
induced by the change of topology 
$\vep : X_\etale \to X_\Zar$, 
is an isomorphism for $i=0$ and is injective for $i=1$,
we observe that $\varprojlim_n H^2_\cM(X,\Z/p^n (2))$ is
zero and that both $H^2_\cM(X,\Q_p/\Z_p(2))$ and 
$\varprojlim_n H^3_\cM(X,\Z/p^n (2))$
are finite groups. This proves the claim for
$j=2$.

Suppose $j \ge 3$. 
When $i \ge 2j$, since $X$ is a surface,
the cycle complex defining the higher Chow groups 
are zero in the negative degrees,
hence the claim holds true.
Since $j \ge 3$, we have
$H^i_\cM(X,\Z/p^n(j)) 
\cong H^{i-j}_\Zar(X,W_n \Omega^j_{X,\log}) =0$.
Using the theorem of Rost and Voevodsky (Theorem~\ref{conj:BK}),  
by the theorem of Geisser-Levine \cite[Theorem 1.1, p.~56]{Ge-Le2},
the group $H^i_\cM(X,\Z/m(j))$ is isomorphic to
$H^i_\Zar(X, \tau_{\le j} R \vep_* \Z/m(j))$
if $p \nmid m$. Since any affine surface 
over $\F_q$ has $\ell$-cohomological dimension three
for any $\ell \neq p$, it follows that
$H^i_\Zar(X, \tau_{\le j} R \vep_* \Z/m(j))
\cong H^i_\etale(X, \Z/m(j))$
for all $i$. 
Hence, by \cite[Th\'{e}or\`{e}me 2, p.~780]{CSS}
and the exact sequence \cite[2.1 (29) p.~781]{CSS},
for $i \le 2j-1$,
the groups $H^i_\cM(X,\Q/\Z(j))$
% is finite for $i \le 3$ and is zero for $i \le -1$, 
and $\varprojlim_{m} H^i_\cM(X,\Z/m(j))$ 
are finite. 
This proves the claim for $j \ge 3$.
\end{proof}

\begin{lem}\label{lem:Quillen_Harder}
Let $Y$ be a scheme of dimension $d \le 1$ 
of finite type over $\Spec\, \F_q$.
Then $H^i_\cM(Y,\Z(j))$ is a torsion group
unless $0 \le j \le d$ and $j \le i \le 2j$.
\end{lem}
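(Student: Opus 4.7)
The plan is to reduce to the case that $Y$ is smooth via Bloch's localization sequence and then apply his rational decomposition of $K$-theory combined with the Quillen-Harder finite-generation theorem. First I dispose of the cases where $H^i_\cM(Y, \Z(j))$ vanishes outright: from the definition of Bloch's cycle complex $z^j(Y, \bullet)$, the group is zero whenever $j < 0$, $i > 2j$, or $i > j + d$ (the last because the codimension $j$ cannot exceed the dimension $d + (2j-i)$ of $Y \times \Delta^{2j-i}$). The cases $j \leq 1$ are covered by the formulas recalled at the start of Section~\ref{subsec:Motcoh}; in the sole slightly nontrivial subcase ($d = 0$, $(i,j) = (1,1)$), $\Gm(Y)$ is a finite product of multiplicative groups of finite fields and hence torsion. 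It thus remains to show that $H^i_\cM(Y, \Z(j))$ is torsion whenever $j \geq 2$ and $0 \leq i \leq j + d$.

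\smallskip

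Since Bloch's cycle complex depends only on $Y_\red$, one has $H^i_\cM(Y,\Z(j)) = H^i_\cM(Y_\red,\Z(j))$, so we may assume $Y$ is reduced. Writing $U = Y_\sm$ and $Z = Y \setminus U$ (a zero-dimensional reduced closed subscheme, empty when $d = 0$), Bloch's localization sequence for higher Chow groups yields
\[
  H^{i-2}_\cM(Z, \Q(j-1)) \to H^i_\cM(Y, \Q(j)) \to H^i_\cM(U, \Q(j)) \to H^{i-1}_\cM(Z, \Q(j-1)).
\]
For $j \geq 2$, the outer terms vanish: writing $Z = \bigsqcup_z \Spec \kappa(z)$, each $\kappa(z)$ is a finite field, so $K_n(\kappa(z)) \otimes \Q = 0$ for $n \geq 1$ by Quillen, and the rational Atiyah-Hirzebruch decomposition then forces each piece $H^a_\cM(\Spec \kappa(z), \Q(j-1))$ with $j - 1 \geq 1$ to vanish. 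Hence $H^i_\cM(Y, \Q(j)) \cong H^i_\cM(U, \Q(j))$, reducing to the case that $Y$ is smooth of dimension at most one.

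\smallskip

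For such $Y$, the rational Bloch decomposition
\[
  K_n(Y) \otimes_\Z \Q \cong \bigoplus_{i \geq 0} H^{2i-n}_\cM(Y, \Q(i))
\]
recalled in the introduction, combined with the Quillen-Harder theorem ($K_n(Y) \otimes \Q = 0$ for $n \geq 2$, and $K_1(Y) \otimes \Q \cong \Gm(Y) \otimes \Q$), yields $H^i_\cM(Y, \Q(j)) = 0$ for all $j$ whenever $i \leq 2j - 2$. Combining with the dimension vanishing $i \leq j + d$, the only uncovered case is $(i,j) = (2j - 1, j)$ with $j \leq d + 1$; for $d = 1$ this is exactly $(i,j) = (3, 2)$. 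In that case the $n = 1$ piece of the decomposition reads (only the $i = 1, 2$ summands being nonzero, by dimension)
\[
  \Gm(Y) \otimes \Q \cong K_1(Y) \otimes \Q = H^1_\cM(Y, \Q(1)) \oplus H^3_\cM(Y, \Q(2)) = \Gm(Y) \otimes \Q \oplus H^3_\cM(Y, \Q(2)),
\]
forcing $H^3_\cM(Y, \Q(2)) = 0$. The main technical input here is the Quillen-Harder identification $K_1(Y) \otimes \Q \cong \Gm(Y) \otimes \Q$, equivalently the rational triviality of $SK_1$ for smooth curves over $\F_q$; the other ingredients (nilinvariance of the cycle complex, Bloch's localization sequence, and the rational Atiyah-Hirzebruch decomposition) are among the standard tools already invoked earlier in the paper.
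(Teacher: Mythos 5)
Your proof is correct and follows essentially the same route as the paper: reduce via Bloch's localization sequence to the smooth case (the paper pushes slightly further, to smooth affine connected), then combine the Quillen--Harder finite-generation/structure theorems for $K$-groups of curves over $\F_q$ with Bloch's rational Riemann--Roch decomposition $K_n(Y)_\Q \cong \bigoplus_i H^{2i-n}_\cM(Y,\Q(i))$. Your write-up is more explicit than the paper's one-paragraph proof, in particular in isolating and handling the borderline bidegree $(i,j)=(3,2)$ via the $SK_1$-torsion input, which the paper leaves implicit in the phrase ``structure of the $K$-groups.''
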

\begin{proof}
By taking a smooth affine open subscheme of $Y_\red$ 
whose complement is of dimension zero, 
and using the localization sequence of motivic cohomology, 
we are reduced to the case in which $Y$ is connected, affine,
and smooth over $\Spec\, \F_q$. When $d=0$ (\resp $d=1$), the
claim follows from the results of Quillen \cite[Theorem 8(i), p.~583]{Quillen} 
(\resp Harder \cite[Korollar 3.2.3, p.~175]{Harder} 
(see \cite[Theorem 0.5, p.~70]{Grayson} for the correct 
interpretation of his results)) 
on the structure of the $K$-groups of $Y$, combined 
with the Riemann-Roch theorem for higher Chow 
groups \cite[Theorem 9.1, p.~296]{Bloch}.
\end{proof}

\begin{lem}\label{lem:abelgp}
Let $\varphi :M \to M'$ be a homomorphism 
of abelian groups such that
$\Ker\,\varphi$ is finite and 
$(\Coker\, \varphi)_\divi=0$. 
If $M_\divi$ or
$M'_\divi$ is uniquely divisible,
then $\varphi$
induces 
an isomorphism $\varphi_\divi: M_\divi \xto{\cong} M'_\divi$.
\end{lem}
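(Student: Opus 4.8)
The plan is to analyze how $\varphi$ restricts to maximal divisible subgroups. First I would record two general facts about a group homomorphism $\varphi\colon M\to M'$: the image $\varphi(M_\divi)$ is a divisible subgroup of $M'$, hence $\varphi(M_\divi)\subseteq M'_\divi$, so $\varphi$ restricts to a homomorphism $\varphi_\divi\colon M_\divi\to M'_\divi$; and, dually, any divisible subgroup $D\subseteq M'$ lying in the image of $\varphi$ satisfies $D\subseteq\varphi(M)$, and moreover a divisible group is its own maximal divisible subgroup. The task is then to show $\varphi_\divi$ is both injective and surjective under the stated hypotheses.

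For injectivity: $\Ker\varphi_\divi=M_\divi\cap\Ker\varphi$ is a subgroup of the finite group $\Ker\varphi$, hence finite; but it is also a quotient-closed situation — actually the cleaner route is to note $\Ker\varphi_\divi$ is finite, and then split into cases. If $M_\divi$ is uniquely divisible, then $M_\divi$ is torsion-free, so its finite subgroup $\Ker\varphi_\divi$ is trivial. If instead $M'_\divi$ is uniquely divisible: here I would first establish surjectivity and then deduce injectivity, since a surjection $M_\divi\surj M'_\divi$ from a divisible group onto a torsion-free divisible group has divisible, hence (being a quotient of $M_\divi$) — no: rather, $M_\divi/\Ker\varphi_\divi\cong\varphi(M_\divi)$, and if this equals $M'_\divi$ which is uniquely divisible, then for any $n$ the $n$-torsion of $M_\divi/\Ker\varphi_\divi$ vanishes; combined with $\Ker\varphi_\divi$ finite and $M_\divi$ divisible, a short diagram chase on $M_\divi[n]\to (M_\divi/\Ker\varphi_\divi)[n]$ forces $\Ker\varphi_\divi$ to be divisible, hence (being finite) trivial. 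So injectivity holds in both cases.

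For surjectivity: the hypothesis $(\Coker\varphi)_\divi=0$ says that $M'/\varphi(M)$ has no nonzero divisible subgroup. Consider the composite $M'_\divi\hookrightarrow M'\to M'/\varphi(M)$; its image is a divisible subgroup of $M'/\varphi(M)$, hence zero, so $M'_\divi\subseteq\varphi(M)$. Thus every element of $M'_\divi$ is $\varphi(x)$ for some $x\in M$; I then need $x$ (or some preimage) to lie in $M_\divi$. Since $M'_\divi$ is divisible, for each $n$ we can write any $m'\in M'_\divi$ as $n m'_n$ with $m'_n\in M'_\divi\subseteq\varphi(M)$, which shows $M'_\divi\subseteq\bigcap_n n\varphi(M)=\varphi(\bigcap_n nM+\text{(bounded error from }\Ker\varphi)$; to make this rigorous I would instead argue that $\varphi^{-1}(M'_\divi)$ surjects onto the divisible group $M'_\divi$ with finite kernel $\Ker\varphi$, so $\varphi^{-1}(M'_\divi)$ contains a divisible subgroup $D$ with $\varphi(D)=M'_\divi$ (a divisible group is an injective $\Z$-module, so the surjection $\varphi^{-1}(M'_\divi)\surj M'_\divi$ splits, and the image of the splitting is such a $D$); then $D\subseteq M_\divi$, whence $M'_\divi=\varphi(D)\subseteq\varphi_\divi(M_\divi)$.

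The main obstacle is the surjectivity step, specifically producing a divisible subgroup of $M$ mapping onto $M'_\divi$: this is exactly where the injectivity of divisible $\Z$-modules (the splitting of $\varphi^{-1}(M'_\divi)\surj M'_\divi$) is essential, and one must be careful that the splitting image genuinely lands in $M_\divi$, which holds because it is a divisible subgroup of $M$. The case hypothesis "$M_\divi$ or $M'_\divi$ is uniquely divisible" is used only to upgrade the finite kernel $\Ker\varphi_\divi$ to the trivial group; everything else is formal.
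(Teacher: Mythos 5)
Your overall strategy matches the paper's: first show $M'_\divi\subseteq\varphi(M)$ (correct, via the image of $M'_\divi$ in $\Coker\varphi$ being divisible hence zero), then produce a divisible subgroup of $M$ mapping onto $M'_\divi$, and finally handle injectivity by splitting into the two cases. The injectivity arguments are fine, if slightly more roundabout than the paper's in the second case (the paper simply observes that $(M_\divi)_\tors$ is a divisible subgroup of the finite group $\Ker\varphi$, hence zero, so $M_\divi$ is uniquely divisible and one reduces to the first case).

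The gap is in surjectivity. You assert that because $M'_\divi$ is divisible, hence an injective $\Z$-module, the surjection $\varphi^{-1}(M'_\divi)\surj M'_\divi$ splits. This confuses injectivity with projectivity: an injective module splits \emph{monomorphisms into it}, not epimorphisms out of/onto it; splitting an arbitrary surjection onto $B$ would require $B$ to be projective, i.e., free, which a nonzero divisible group never is. Concretely, the surjection $\Z(p^\infty)\xto{\times p}\Z(p^\infty)$ has finite kernel $\Z/p$ and divisible target, yet admits no section (a section would be an element $s\in\End(\Z(p^\infty))\cong\Z_p$ with $ps=1$, impossible). So the splitting you invoke simply does not exist in general, and the subgroup $D$ you want is not produced by this route.

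The paper sidesteps splitting entirely. Set $M_0=\varphi^{-1}(M'_\divi)$ and choose $N\geq 1$ with $N(\Ker\varphi)=0$; then $M_1:=NM_0$ is shown to be divisible by a direct computation: given $x=Ny\in M_1$ and $n\geq 1$, pick $z\in M'_\divi$ with $nz=\varphi(y)$ and $y'\in M_0$ with $\varphi(y')=z$; then $y-ny'\in\Ker\varphi$, so $N(y-ny')=0$, i.e.\ $x=n(Ny')$ with $Ny'\in M_1$. Since also $\varphi(M_1)=N\varphi(M_0)=NM'_\divi=M'_\divi$, the divisible subgroup $M_1\subseteq M_\divi$ surjects onto $M'_\divi$, which is exactly what you needed. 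This $NM_0$ device, using only that the kernel is of bounded exponent, is the missing idea; without it (or an ad hoc replacement such as the nontrivial fact $\Ext^1_\Z(\Q,A)=0$ for $A$ finite, which would only cover the case where one already knows $M'_\divi$ is a $\Q$-vector space), the surjectivity step is unproved.
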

\begin{proof}
First, we show that $\varphi_\divi$ is surjective.
Since $(\Coker \,\varphi)_\divi=0$,
for any $a \in M'_\divi$, we have 
$\varphi(a)^{-1}\neq \emptyset$.
Let $N$ be a positive integer
such that $N(\Ker\, \varphi)=0$.
Set $M_0=\varphi^{-1}(M'_\divi)$ and 
$M_1=N M_0$.
Since $\varphi$ induces 
a surjection $M_1 \to M'_\divi$,
it suffices to show that $M_1$ is divisible.
Let $x \in M_1$ and $n \in \Z_{\ge 1}$.
Take $y \in M_0$ such that $x=Ny$;
$z \in M'_\divi$ such that $nz=\varphi(y)$;
$y' \in M_0$ such that $\varphi(y')=z$.
Set $x' =Ny' \in M_1$.
Then $\varphi(y-ny')=0$ implies 
$x-nx'=Ny-nNy'=0$.   
This shows that $x$ is divisible,
hence proving the surjectivity.

Next, let us prove the injectivity.
Suppose $M_\divi$ is uniquely divisible.
Then, since $\Ker\, \varphi$ is torsion,
$\Ker\, \varphi \cap M_\divi=0$,
and $\varphi_\divi$ is injective.
Suppose $M'_\divi$ is uniquely divisible.
Then, the torsion subgroup of $M_\divi$
is contained in $\Ker\,\varphi$.
Since a nonzero torsion divisible group is infinite,
$M_\divi$ must be uniquely divisible, hence 
we are reduced to the case above and $\varphi_\divi$
is injective.
\end{proof}

\begin{proof}[Proof of Theorem~\ref{thm:motfin}]
Without loss of generality, we may assume that
$X$ is connected. We first prove the claims 
assuming $X$ is projective. It is clear that
the group $H^i_\cM(X,\Z(j))$ is zero for $i \ge
\min(j+3,2j+1)$. 
It follows from \cite[p.~787, Proposition 4]{CSS} 
that the degree map
$H^4_\cM(X,\Z(2))=\CH_0(X) \to \Z$ 
has finite kernel and cokernel,
% (\cite[Theorem(0.2) and p.232 (5)]{BlochCFT}, 
% \cite{Milne}).
proving the claim for $i \ge \min(j+3,2j)$.
Next, we fix $j \ge 2$. %and assume Conjecture~\ref{conj:BK} for $j$.
For $i \le 2j-1$, the group $H^i_\cM(X,\Z(j))$ is
finite modulo a uniquely divisible subgroup
by Lemmas~\ref{lem:toritori} and \ref{lem:MSGL_1}.
The claim on the identification of $H^i_\cM(X,\Z(j))_\tors$
with the \'{e}tale cohomology follows
immediately from the argument in the proof of
Lemma~\ref{lem:MSGL_1} except for the
$p$-primary part of $H^3_\cM(X,\Z(2))$, which 
follows from Proposition~\ref{prop:dRW_main}.

To finish the proof,
it remains to prove that $H^i_\cM(X,\Z(j))_\divi$
is zero for $j\ge 3$ and $i=j+1,j+2$. It suffices to prove
that $H^i_\cM(X,\Z(j))$ is a torsion group for $j \ge 3$ and
$i \ge j+1$. Consider the limit
$$
\varinjlim_Y H^{i-2}_{\cM}(Y,\Z(j-1))
\to H^{i}_{\cM}(X,\Z(j)) \to
\varinjlim_Y H^{i}_{\cM}(X \setminus Y,\Z(j))
$$
of the localization sequence in which $Y$ runs over the
reduced closed subschemes of $X$ of pure codimension one.
$H^{i-2}_{\cM}(Y,\Z(j-1))$ is torsion
by Lemma~\ref{lem:Quillen_Harder} and we have
$\varinjlim_Y H^{i-2}_{\cM}(X \setminus Y,\Z(j-1))=0$
for dimension reasons, hence the claim follows.
This completes the proof in the case where $X$ is projective.

For a general connected surface $X$, 
take an embedding $X \inj X'$ of $X$ into
a smooth projective surface $X'$ over $\F_q$
such that $Y=X' \setminus X$ is of pure
codimension one 
in $X'$. We can show that such an $X'$ exists 
via \cite{Nagata} and a resolution of 
singularities \cite[p.~111]{Abhyankar}, \cite[p.~151]{Lipman}.
Then, the claims, except for that of the 
identification of $H^i_\cM(X,\Z(j))_\tors$
with the \'{e}tale cohomology,
easily follow from Lemma~\ref{lem:abelgp} and 
the localization sequence
$$
\begin{small}
\cdots \to H^{i-2}_{\cM}(Y,\Z(j-1))
\to H^{i}_{\cM}(X',\Z(j)) \to
H^{i}_{\cM}(X,\Z(j))
\to \cdots.
\end{small}
$$
The claim on the identification of $H^i_\cM(X,\Z(j))_\tors$
with the \'{e}tale cohomology can be obtained 
using a similar approach to that used in the proof
of Lemma~\ref{lem:MSGL_1}. This completes the proof.
\end{proof}

\subsection{A criterion for the 
finiteness of $H^3_\cM(X,\Z(2))_\tors$}\label{subsec:criterion(3,2)}

\begin{prop}\label{prop:red_loc_seq}
Let $X$ be a smooth surface over $\F_q$.
Let $X \inj X'$ be an open immersion such that 
$X'$ is smooth projective over $\F_q$ 
and $Y=X' \setminus X$ is of pure
codimension one in $X'$. 
Then, the following conditions are equivalent.
\begin{enumerate}
\item $H^3_\cM(X,\Z(2))$ is finitely generated
modulo a uniquely divisible subgroup.
\item $H^3_\cM(X,\Z(2))_\tors$ is finite.
\item The pull-back map
$H^3_\cM(X',\Z(2)) \to H^3_\cM(X,\Z(2))$ 
induces an isomorphism 
$H^3_\cM(X',\Z(2))_\divi \xto{\cong} H^3_\cM(X,\Z(2))_\divi$.
\item The kernel of the pull-back map
$H^3_\cM(X',\Z(2)) \to H^3_\cM(X,\Z(2))$ is finite.
\item The cokernel of the boundary map %homomorphism
$\partial:H^2_\cM(X,\Z(2)) \to H^1_\cM(Y,\Z(1))$ is finite.
\end{enumerate}
Further, if the above equivalent conditions are
satisfied, then the torsion group
$H^3_\cM(X,\Z(2))_\tors$ is isomorphic to
the direct sum of a finite group of $p$-power order and the group
$\bigoplus_{\ell \neq p} 
H^2_\etale(X,\Q_\ell/\Z_\ell(2))^\cotors$, and
the localization sequence induces a long exact sequence
\begin{equation}\label{eqn:locseq_mod_div}
\cdots \to H^{i-2}_\cM(Y,\Z(1))
\to H^i_\cM(X',\Z(2))^\cotors 
\to H^i_\cM(X,\Z(2))^\cotors \to
\cdots 
\end{equation}
of finitely generated abelian groups.
\end{prop}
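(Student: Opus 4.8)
\emph{Proof plan.} The plan is to run everything off the localization sequence of the pair $(X',Y)$. Via the Gysin isomorphism $H^{i-2}_\cM(Y,\Z(1))\cong H^i_Y(X',\Z(2))$ for the codimension-one closed subscheme $Y\subset X'$, this sequence reads $\cdots\to H^{i-2}_\cM(Y,\Z(1))\to H^i_\cM(X',\Z(2))\xrightarrow{\gamma_i}H^i_\cM(X,\Z(2))\xrightarrow{\partial_i}H^{i-1}_\cM(Y,\Z(1))\to\cdots$. Since $H^j_\cM(Y,\Z(1))$ is $H^0_\Zar(Y,\Gm)$ for $j=1$, equals $\Pic(Y)$ for $j=2$, and vanishes otherwise --- and the first two are finitely generated, $Y$ being a curve over a finite field --- the only nontrivial stretch is
\[
H^2_\cM(X,\Z(2))\xrightarrow{\partial}H^1_\cM(Y,\Z(1))\xrightarrow{\beta}H^3_\cM(X',\Z(2))\xrightarrow{\gamma}H^3_\cM(X,\Z(2))\xrightarrow{\partial'}H^2_\cM(Y,\Z(1)),
\]
preceded by the injection $H^2_\cM(X',\Z(2))\hookrightarrow H^2_\cM(X,\Z(2))$ and followed by $H^2_\cM(Y,\Z(1))\to H^4_\cM(X',\Z(2))\to H^4_\cM(X,\Z(2))\to 0$; here $\partial=\partial_2$ is the map of condition~(5), $\gamma=\gamma_3$ is the pull-back of conditions~(3) and~(4), and $\partial'=\partial_3$. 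Exactness gives $\Coker\partial\cong\Image\beta=\Ker\gamma$, so (4)$\Leftrightarrow$(5) is immediate. Write $A=H^3_\cM(X',\Z(2))$, $B=H^3_\cM(X,\Z(2))$; by Theorem~\ref{thm:motfin}(1)(c) applied to the projective surface $X'$, $A=A_\divi\oplus A_\tors$ with $A_\divi$ uniquely divisible and $A_\tors$ finite.

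The first real step is a group-theoretic computation from this sequence. The group $K:=\Ker\gamma=\Image\beta$ is a quotient of the finitely generated group $H^1_\cM(Y,\Z(1))$, hence finitely generated, and $B/\Image\gamma\cong\Image\partial'$ is a subgroup of the finitely generated $H^2_\cM(Y,\Z(1))$. Chasing the extension $0\to A/K\to B\to\Image\partial'\to 0$ against $A=A_\divi\oplus A_\tors$, I would prove, \emph{unconditionally}, that $B^\cotors$ is finitely generated and $B_\divi\cong A_\divi/(K\cap A_\divi)$, where $K\cap A_\divi$ is a finitely generated torsion-free subgroup of the $\Q$-vector space $A_\divi$. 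All equivalences then fall out: if $K$ is finite then $K\cap A_\divi=0$ (a finite subgroup of a torsion-free group), so $B_\divi=A_\divi$ is uniquely divisible, $B=A_\divi\oplus B^\cotors$ is finitely generated modulo a uniquely divisible subgroup, $B_\tors=(B^\cotors)_\tors$ is finite, and $\gamma$ restricts to an isomorphism $A_\divi\xrightarrow{\cong}B_\divi$ --- giving (4)$\Rightarrow$(1),(2),(3). Conversely, if $K\cap A_\divi\neq 0$ it contains a copy of $\Z^s$ with $s\ge 1$, whose $\Q$-span has image a copy of $(\Q/\Z)^s$ in $B_\divi=A_\divi/(K\cap A_\divi)$; thus $B_\divi$ is not uniquely divisible and $B_\tors$ is infinite, which contradicts (1) and (2), and an isomorphism as in (3) forces $\gamma|_{A_\divi}$ injective, i.e. $K\cap A_\divi=0$. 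Hence each of (1),(2),(3) forces $K\cap A_\divi=0$, whence $K\hookrightarrow A/A_\divi=A_\tors$ is finite, i.e. (4).

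For the supplement, assume the conditions, so $K$ is finite, $B_\divi=A_\divi$ is uniquely divisible, and $B_\tors$ is finite. The torsion is read off from \eqref{eqn:shortexact_motcoh} with $(i,j)=(3,2)$, namely $0\to H^2_\cM(X,\Z(2))\otimes_\Z\Q/\Z\to H^2_\cM(X,\Q/\Z(2))\to B_\tors\to 0$: by Theorem~\ref{thm:motfin}(1)(d), $H^2_\cM(X,\Z(2))$ is finitely generated modulo a uniquely divisible subgroup, so $H^2_\cM(X,\Z(2))\otimes_\Z\Q/\Z\cong(\Q/\Z)^r$ with $r=\rank H^2_\cM(X,\Z(2))^\cotors$; splitting into primary parts and using the comparison $H^2_\cM(X,\Z/\ell^n(2))\cong H^2_\etale(X,\Z/\ell^n(2))$ for $\ell\neq p$ (Theorem~\ref{conj:BK} together with Geisser--Levine and Merkurjev--Suslin, as in the proof of Lemma~\ref{lem:MSGL_1}), one gets $0\to(\Q_\ell/\Z_\ell)^r\to H^2_\etale(X,\Q_\ell/\Z_\ell(2))\to(B_\tors)_{(\ell)}\to 0$; finiteness of $(B_\tors)_{(\ell)}$ identifies it with $H^2_\etale(X,\Q_\ell/\Z_\ell(2))^\cotors$, while $(B_\tors)_{(p)}$ is merely a finite $p$-group, yielding the claimed description of $H^3_\cM(X,\Z(2))_\tors$. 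Finally, for \eqref{eqn:locseq_mod_div}: under the conditions every term of the localization sequence has uniquely divisible maximal divisible subgroup ($H^i_\cM(X',\Z(2))$ by Theorem~\ref{thm:motfin}, $H^i_\cM(X,\Z(2))$ with $i=3$ covered by condition~(1), and $H^{i-2}_\cM(Y,\Z(1))$ finitely generated), these maximal divisible subgroups form a subcomplex, and $\gamma_i$ restricts to an isomorphism on them for all $i$ --- for $i\le 3$ by Lemma~\ref{lem:abelgp} applied to $\gamma_i$ (kernel finite: zero unless $i\in\{3,4\}$, and the finite $K$ for $i=3$; cokernel embedded in a finitely generated group), for $i\ge 4$ both sides vanish --- while $H^j_\cM(Y,\Z(1))_\divi=0$. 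So the divisible subcomplex is acyclic; the localization sequence is acyclic; hence the quotient complex, which is precisely \eqref{eqn:locseq_mod_div}, is acyclic and consists of finitely generated abelian groups.

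The step I expect to be the crux is the group-theoretic one: recognizing that $B_\divi=H^3_\cM(X,\Z(2))_\divi$ can acquire torsion --- a $(\Q/\Z)^s$ --- exactly when $\Ker\gamma$ meets $H^3_\cM(X',\Z(2))_\divi$ nontrivially, so that ``finite modulo a uniquely divisible subgroup'' is destroyed by a quotient by an arbitrary finitely generated subgroup, though it survives a quotient by a finite one. Once the formula $B_\divi\cong A_\divi/(K\cap A_\divi)$ is isolated, the equivalences, the torsion computation, and the $\cotors$-localization sequence are all routine, resting on Theorem~\ref{thm:motfin}, Lemma~\ref{lem:abelgp}, and the mod-$\ell^n$ comparison with étale cohomology.
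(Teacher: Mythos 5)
Your proof is correct and takes essentially the same approach as the paper: both reduce to the localization sequence for $(X',Y)$, exploit the fact that $H^3_\cM(X',\Z(2))$ is finite modulo uniquely divisible, and locate the obstruction in whether $\Ker\,\gamma$ meets $H^3_\cM(X',\Z(2))_\divi$ — the paper's contradiction argument for $(2)\Rightarrow(4)$ is exactly your ``$(\Q/\Z)^s$ appears in $B_\divi$'' step. Your explicit isolation of the formula $B_\divi\cong A_\divi/(K\cap A_\divi)$ and the slightly reorganized chain of implications (everything $\Leftrightarrow(4)$, rather than the paper's chain $(3)\Rightarrow(1)\Rightarrow(2)\Rightarrow(4)\Rightarrow(3)$) are presentational differences, not a different route.
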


\begin{proof}
Condition (1) clearly implies condition (2).
The localization sequence shows that
conditions (4) and (5) are equivalent
and that condition (3) implies condition (1).
From the localization sequence and Lemma~\ref{lem:abelgp},
condition (4) implies condition (3).

We claim that condition (2) implies condition (4).
Assume condition (2) and suppose that 
condition (4) is not satisfied. 
We set $M = \Ker[H^3_\cM(X',\Z(2)) \to H^3_\cM(X,\Z(2))]$.
The localization sequence
shows that 
$M$ is finitely generated. 
By assumption, $M$ is not torsion.
Since $H^3_\cM(X',\Z(2))$ is finite modulo a uniquely
divisible subgroup, the intersection
$H^3_\cM(X',\Z(2))_\divi \cap M$ is a nontrivial 
free abelian group of finite rank, 
hence
$H^3_\cM(X,\Z(2))$ contains a 
group isomorphic to 
$H^3_\cM(X',\Z(2))_\divi / (H^3_\cM(X',\Z(2))_\divi \cap M)$,
which contradicts condition (2), hence
condition (2) implies condition (4). This 
completes the proof of the equivalence of conditions (1)-(5).

Suppose that conditions (1)-(5) are satisfied.
The localization sequence shows that the 
kernel (\resp the cokernel) of the pull-back
$H^i_\cM(X',\Z(2)) \to H^i_\cM(X,\Z(2))$ is a 
torsion group (\resp has no nontrivial divisible subgroup)
for any $i \in \Z$, hence by Lemma~\ref{lem:abelgp},
$H^i_\cM(X,\Z(2))_\divi$ is uniquely divisible and
sequence (\ref{eqn:locseq_mod_div}) is exact.
Condition (2) and exact sequence 
(\ref{eqn:shortexact_motcoh}) 
for $(i,j)=(3,2)$ yield an isomorphism
$H^{3}_\cM(X,\Z(2))_\tors \cong 
H^{2}_\cM(X,\Q/\Z(2))^\cotors$.
Then, the claim on the structure of
$H^{3}_\cM(X,\Z(2))_\tors$ follows from
the theorem of Geisser and Levine
\cite[Corollary 1.2, p.~56. See also Corollary 1.4]{Ge-Le2} 
and the theorem of Merkurjev and Suslin~\cite[(11.5), Theorem, p.~328]{Me-Su1}.
This completes the proof.
\end{proof}

Let $X$ be a smooth projective surface over $\F_q$.
Suppose that $X$ admits a flat, surjective, and generically
smooth morphism $f:X \to C$ to a connected 
smooth projective curve $C$ over $\F_q$. 
For each point $\wp \in C$, let $X_\wp= X \times_C \wp$ denote the
fiber of $f$ at $\wp$.

\begin{cor}\label{cor:relative_cv}
Let the notations be as above.
Let $\eta \in C$ denote the generic point.
Suppose that the cokernel of the homomorphism
$\partial : H^2_\cM(X_\eta,\Z(2)) \to 
\bigoplus_{\wp \in C_0} H^1_\cM(X_\wp,\Z(1))$,
which is the inductive limit of the 
boundary maps of the localization sequences,
is a torsion group. Then,
the group $H^i_\cM(X_\eta,\Z(2))_\divi$ is
uniquely divisible for all $i \in \Z$ and
the inductive limit of localization sequences 
induces a long exact sequence
\begin{small}
$$
\cdots \to \bigoplus_{\wp \in C_0} 
H^{i-2}_\cM(X_\wp,\Z(1))
\to H^i_\cM(X,\Z(2))^\cotors 
\to H^i_\cM(X_\eta, \Z(2))^\cotors \to \cdots.
$$
\end{small}
\end{cor}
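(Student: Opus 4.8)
The plan is to apply Proposition~\ref{prop:red_loc_seq} to the smooth surfaces $X_U := f^{-1}(U)$ for all sufficiently small nonempty open $U \subseteq C$, and then to pass to the filtered colimit as $U$ shrinks to $\eta$. Set $Y_U := X \setminus X_U = f^{-1}(C \setminus U)$; this is of pure codimension one in $X$ since $f$ is flat and surjective onto a curve, and it is the disjoint union of the fibres $X_\wp$ with $\wp \notin U$, so $H^j_\cM(Y_U,\Z(1)) = \bigoplus_{\wp \notin U} H^j_\cM(X_\wp,\Z(1))$. Since $X_\eta = \varprojlim_U X_U$, Bloch's cycle complex satisfies $z^j(X_\eta,\bullet) = \varinjlim_U z^j(X_U,\bullet)$, hence $H^i_\cM(X_\eta,\Z(j)) = \varinjlim_U H^i_\cM(X_U,\Z(j))$ and $\varinjlim_U H^j_\cM(Y_U,\Z(1)) = \bigoplus_{\wp \in C_0} H^j_\cM(X_\wp,\Z(1))$, the colimits running over nonempty opens ordered by reverse inclusion.

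The crucial reduction is the following. From the localization sequence for $(X,X_U,Y_U)$ the segment in degree two,
\[
H^2_\cM(X_U,\Z(2)) \xto{\partial_U} H^1_\cM(Y_U,\Z(1)) \to H^3_\cM(X,\Z(2)),
\]
is exact, so $\Coker\partial_U$ is canonically the image of $H^1_\cM(Y_U,\Z(1))$ in $H^3_\cM(X,\Z(2))$. As $U$ shrinks these images form an increasing chain of subgroups of the fixed group $H^3_\cM(X,\Z(2))$, whose union is $\Coker\partial$ for $\partial = \varinjlim_U \partial_U$ the map in the statement. By hypothesis this union is a torsion group, hence contained in $H^3_\cM(X,\Z(2))_\tors$; but $X$ is projective, so by Theorem~\ref{thm:motfin}(1) the group $H^3_\cM(X,\Z(2))$ is finite modulo a uniquely divisible subgroup, and therefore $H^3_\cM(X,\Z(2))_\tors$ is finite. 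A chain of subgroups of a finite group stabilizes, so there is a nonempty open $U_0 \subseteq C$ such that $\Coker\partial_U$ is finite for every nonempty open $U \subseteq U_0$.

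For each such $U$, condition (5) of Proposition~\ref{prop:red_loc_seq} holds for the open immersion $X_U \hookrightarrow X$ with complement $Y_U$, so the proposition applies: $H^i_\cM(X_U,\Z(2))_\divi$ is uniquely divisible for all $i$, and there is an exact sequence of finitely generated abelian groups
\[
\cdots \to H^{i-2}_\cM(Y_U,\Z(1)) \to H^i_\cM(X,\Z(2))^\cotors \to H^i_\cM(X_U,\Z(2))^\cotors \to H^{i-1}_\cM(Y_U,\Z(1)) \to \cdots
\]
(the groups $H^i_\cM(X,\Z(2))^\cotors$ are finitely generated by Theorem~\ref{thm:motfin}(1), and the $H^j_\cM(Y_U,\Z(1))$ are finitely generated as already recorded in Proposition~\ref{prop:red_loc_seq}). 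Now take $\varinjlim_U$ over nonempty $U \subseteq U_0$. The restriction maps carry $H^i_\cM(X_U,\Z(2))_\divi$ into $H^i_\cM(X_V,\Z(2))_\divi$ (the image of a divisible group is divisible, hence contained in the maximal divisible subgroup), so the short exact sequences $0 \to H^i_\cM(X_U,\Z(2))_\divi \to H^i_\cM(X_U,\Z(2)) \to H^i_\cM(X_U,\Z(2))^\cotors \to 0$ form a short exact sequence of directed systems; by exactness of filtered colimits, writing $D := \varinjlim_U H^i_\cM(X_U,\Z(2))_\divi$,
\[
0 \to D \to H^i_\cM(X_\eta,\Z(2)) \to \varinjlim_U H^i_\cM(X_U,\Z(2))^\cotors \to 0,
\]
and $D$, being a filtered colimit of $\Q$-vector spaces, is uniquely divisible. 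On the other hand, applying $\varinjlim_U$ to the four-term sequences and using $\varinjlim_U H^j_\cM(Y_U,\Z(1)) = \bigoplus_\wp H^j_\cM(X_\wp,\Z(1))$ exhibits $\varinjlim_U H^i_\cM(X_U,\Z(2))^\cotors$ as an extension of a subgroup of $\bigoplus_\wp H^{i-1}_\cM(X_\wp,\Z(1))$ by a quotient of the finitely generated group $H^i_\cM(X,\Z(2))^\cotors$. A direct sum of finitely generated abelian groups has trivial maximal divisible subgroup, hence so does any subgroup of it and any quotient of a finitely generated group, and hence so does an extension of two such groups. Therefore any divisible subgroup of $H^i_\cM(X_\eta,\Z(2))$ maps to zero in $\varinjlim_U H^i_\cM(X_U,\Z(2))^\cotors$ and so lies in $D$; thus $H^i_\cM(X_\eta,\Z(2))_\divi = D$ is uniquely divisible, $H^i_\cM(X_\eta,\Z(2))^\cotors = \varinjlim_U H^i_\cM(X_U,\Z(2))^\cotors$, and the colimit of the four-term sequences is precisely the asserted long exact sequence.

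The main obstacle is the reduction in the second paragraph: converting the hypothesis, which only asserts that the colimit cokernel $\Coker\partial$ is torsion, into the uniform finiteness of $\Coker\partial_U$ for all small $U$. This works only because those cokernels are realized as an increasing chain of subgroups of the single group $H^3_\cM(X,\Z(2))$, whose torsion is finite by Theorem~\ref{thm:motfin}. The colimit bookkeeping in the last step is routine but must be handled with care, since forming the maximal divisible subgroup does not commute with filtered colimits in general; the key point is that the quotient $\varinjlim_U H^i_\cM(X_U,\Z(2))^\cotors$ has no nonzero divisible subgroup, which pins down the divisible part of $H^i_\cM(X_\eta,\Z(2))$.
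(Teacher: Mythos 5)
Your proof is correct, but it takes a genuinely different route from the paper. The paper's own proof is a one-liner: it applies Lemma~\ref{lem:abelgp} directly to the pull-back $\varphi_i: H^i_\cM(X,\Z(2)) \to H^i_\cM(X_\eta,\Z(2))$, observing that $\Ker\varphi_i$ is a torsion subgroup of $H^i_\cM(X,\Z(2))$ (torsion since it is the image in $H^i_\cM(X,\Z(2))$ of a quotient of $\bigoplus_\wp H^{i-2}_\cM(X_\wp,\Z(1))$, which is torsion by Lemma~\ref{lem:Quillen_Harder} for $i \neq 3$ and by the hypothesis on $\Coker\partial$ for $i=3$), hence finite by Theorem~\ref{thm:motfin}, while $\Coker\varphi_i$ embeds in $\bigoplus_\wp H^{i-1}_\cM(X_\wp,\Z(1))$ and therefore has trivial divisible part; the mod-divisible long exact sequence then falls out. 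You instead bootstrap from the finite-level statement Proposition~\ref{prop:red_loc_seq}: your key observation that the cokernels $\Coker\partial_U$ sit as an increasing chain of subgroups of the single group $H^3_\cM(X,\Z(2))$, so that torsionness of the union plus finiteness of $H^3_\cM(X,\Z(2))_\tors$ forces the chain to stabilize at a finite group, is a clean way to convert the colimit hypothesis into condition (5) of Proposition~\ref{prop:red_loc_seq} uniformly in $U$. Your colimit bookkeeping in the last step is also correct and handled with appropriate care; in particular you are right that $(-)_\divi$ and $(-)^\cotors$ do not commute with filtered colimits in general, and your argument that the colimit of cotorsion quotients has trivial divisible subgroup (being an extension of a subgroup of $\bigoplus_\wp H^{i-1}_\cM(X_\wp,\Z(1))$ by a quotient of the finitely generated group $H^i_\cM(X,\Z(2))^\cotors$, both of which have trivial divisible part) pins down the divisible part of $H^i_\cM(X_\eta,\Z(2))$ correctly. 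The trade-off: the paper's argument is shorter but compresses the degree-by-degree bookkeeping (and the one-line appeal to Lemma~\ref{lem:Quillen_Harder} is slightly imprecise, since $H^2_\cM(X_\wp,\Z(1))$ need not be torsion, which affects $i=4$; the conclusion still holds because $H^3_\cM(X_\wp,\Z(1))=0$ and $H^4_\cM(X_\eta,\Z(2))$ is then a finite quotient of $\CH_0(X)$), whereas your proof is longer but spells out the finite-to-colimit passage transparently and avoids any reliance on torsionness of the kernel at $i=4$.
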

\begin{proof}
Since $\bigoplus_{\wp \in C_0} H^i_\cM(X_\wp,\Z(1))$
has no nontrivial divisible subgroup for all
$i \in \Z$ and is torsion for $i \neq 1$ by Lemma~\ref{lem:Quillen_Harder}, 
the claim follows from Lemma~\ref{lem:abelgp}.
\end{proof}

\section{The compatibility of Chern characters and the localization sequence}
\label{sec:Compatibility}
\newcommand{\fre}{\mathfrak{e}}
\newcommand{\cV}{\mathcal{V}}
\newcommand{\naive}{\mathrm{naif}}
\newcommand{\Tot}{\mathrm{Tot}}
\newcommand{\cU}{\mathcal{U}}
We first refer the reader to the first half of Section~\ref{sec:intro Chern}
for a general overview of the contents of this section.

The aim of this section is to prove Lemma~\ref{lem:chern_comm}.
Only Lemma~\ref{lem:chern_comm} 
and Remark~\ref{rmk:chern_comm}
will be used beyond this section.

%This lemma is known as the Riemann-Roch
%without denominators for higher K-theory.
%For example, it is known to hold for the setup
%given by Gillet \cite{Gillet} 
%and in motivic homotopy theory \cite{RRwoD}.
%Throughout, we use Bloch's higher Chow groups
%as motivic cohomology theory,
%but the Riemann-Roch without denominators
%is not known (at least not in the literature) 
%for the Chern class as defined in \cite{Bloch}.
%
%Instead, 
In this paper, for the Chern class, 
we use the Chern class
for motivic cohomology of Levine (see below)
combined with the comparison 
isomorphism between 
Levine's motivic cohomology 
groups and higher Chow groups.
We also use the Riemann-Roch theorem 
for such Chern classes
by checking 
the compatibility of the localization 
sequences with the comparison isomorphisms
(i.e., Lemmas~\ref{lem:chern_comm2} and~~\ref{lem:chern_comm3}).
%
%Then since the Riemann-Roch theorem is known 
%to hold for Levine's motivic cohomology,
%our task is to check the compatibility of the localization
%sequence and the comparison isomorphisms
%(Lemmas~\ref{lem:chern_comm2} and~~\ref{lem:chern_comm3}).

\subsection{Main statement}
Given an essentially smooth scheme $X$
 (see Section \ref{subsec:Motcoh}
for the definition of ``essentially smooth")
over $\Spec\, \F_q$ and integers $i,j\ge 0$,
let $c_{i,j}:K_i (X) \to H^{2j-i}_\cM(X,\Z(j))$
be the Chern class map.
Several approaches to constructing the map
$c_{i,j}$ 
have been proposed, including \cite[p.\ 293]{Bloch}, 
\cite[Part I, Chapter III, 1.4.8.\ Examples. (i), p.~123]{Levine}, 
\cite[Definition 5, p.~315]{Pushin};
all of these approaches are based on Gillet's work
\cite[p.~228--229, Definition 2.22]{Gillet}.
In this paper, we adopt the definition of
Levine \cite[Part I, Chapter III, 1.4.8.\ Examples. (i), p.~123]{Levine}
in which $c_{i,j}$ is denoted by $c_X^{j,2j-i}$,
given in \cite[Part I, Chapter I, 2.2.7, p.~21]{Levine}.
The definition of the target $H^{2j-i}(X,\Z(j))= H^{2j-i}_X(X,\Z(j))$ 
of $c_X^{j,2j-i}$, which we denote by $H^{2j-i}_\cL(X,\Z(j))$,
is different from the definition of the group 
$H^{2j-i}_\cM(X,\Z(j))$;
however, by combining (ii) and (iii) of 
\cite[Part I, Chapter II, 3.6.6.\ Theorem, p.~105]{Levine}, 
we obtain a canonical isomorphism 
\begin{equation}\label{eqn:ML}
\beta^i_j : H^i_\cM(X,\Z(j)) \xto{\cong} H^i_\cL(X,\Z(j)),
\end{equation}
which is compatible with the product structures.
The precise definition of our Chern class map 
$c_{i,j}$ is the composition $c_{i,j} = (\beta^i_j)^{-1}
\circ c_X^{j,2j-i}$.

The map $c_{i,j}$ is a group homomorphism if
$i \ge 1$ or $(i,j)=(0,1)$. 
Let
$\chern_{0,0}:K_0(X)\to H^0_\cM(X,\Z(0)) 
\cong H^0_\Zar(X,\Z)$ denote the homomorphism
that sends the class of locally free $\cO_X$-module
$\cF$ to the rank of $\cF$. For $i\ge 1$ and
$a \in K_i(X)$, we put formally 
$\chern_{i,0}(a)=0$.
\begin{lem}\label{lem:chern_comm}
Let $X$ be a scheme 
% of pure dimension $n$
which is a localization of
a smooth quasi-projective scheme over $\Spec\, \F_q$. 
Let $Y \subset X$
be a closed subscheme of pure codimension $d$,
which is essentially smooth over $\Spec\, \F_q$. 
Then for $i,j \ge 1$ or $(i,j)=(0,1)$, the diagram
\begin{equation}\label{eqn:chern_comm}
\begin{CD}
K_{i}(Y) @>{\alpha_{i,j}}>> H^{2j-i-2d}_\cM(Y,\Z(j-d)) \\
@VVV @VVV \\
K_{i}(X) @>{c_{i,j}}>> H^{2j-i}_\cM(X,\Z(j)) \\
@VVV @VVV \\
K_{i}(X\setminus Y) @>{c_{i,j}}>> H^{2j-i}_\cM(X\setminus Y,\Z(j)) \\
@VVV @VVV \\
K_{i-1}(Y) @>{\alpha_{i-1,j}}>> H^{2j-i-2d+1}_\cM(Y,\Z(j-d))
\end{CD}
\end{equation}
is commutative. 
Here, the homomorphism $\alpha_{i,j}$
is defined as follows: 
for $a \in K_i(Y)$, the element $\alpha_{i,j}(a)$ equals
$$
G_{d,j-d} (\chern_{i,0}(a), c_{i,1}(a),\ldots, c_{i,j-d}(a);
c_{0,1}(\cN), \ldots, c_{0,j-d}(\cN)),
$$
where $G_{d,j-d}$ is the universal polynomial
in \cite[Expos\'{e} 0, Appendice, Proposition 1.5, p.~37]{SGA6}, 
$\cN$
is the conormal sheaf of $Y$ in $X$, 
and the left (\resp the right) vertical sequence
is the localization sequence of $K$-theory
(\resp of higher Chow groups established in 
\cite[Corollary (0.2), p.~357]{Bloch2}).
\end{lem}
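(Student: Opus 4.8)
The plan is to reduce Lemma~\ref{lem:chern_comm} to the analogous compatibility statement for Levine's motivic cohomology $H^*_\cL$, which is already available in the literature, and then transport it back along the comparison isomorphisms $\beta^i_j$ of \eqref{eqn:ML}. The commutativity of the middle two squares of \eqref{eqn:chern_comm} (the ones expressing functoriality of $c_{i,j}$ for the open immersion $X\setminus Y \hookrightarrow X$) is immediate from the contravariant functoriality of Levine's Chern classes $c_X^{j,2j-i}$ together with the naturality of $\beta^i_j$; so the real content is the commutativity of the top and bottom squares, which involve the Gysin pushforward in $K$-theory, the Gysin pushforward in higher Chow groups, and the Riemann--Roch correction term $G_{d,j-d}$. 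First I would recall that Levine has proved the Riemann--Roch theorem without denominators for his theory $H^*_\cL$ — this is exactly the statement that, for the closed immersion $i\colon Y\hookrightarrow X$ of codimension $d$ between essentially smooth schemes, the square relating $i_*$ on $K$-theory and $i_*$ on $H^*_\cL$ commutes up to the universal polynomial $G_{d,j-d}$ evaluated on the Chern classes of the element and the conormal sheaf $\cN$. Thus for the theory $H^*_\cL$ the analogue of the top and bottom squares of \eqref{eqn:chern_comm} holds on the nose.

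\medskip

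The second step is to verify that the comparison isomorphism $\beta^i_j$ intertwines the Gysin pushforwards: that is, the square
\begin{equation*}
\begin{CD}
H^{2j-i-2d}_\cM(Y,\Z(j-d)) @>{\beta}>> H^{2j-i-2d}_\cL(Y,\Z(j-d)) \\
@V{i_*}VV @VV{i_*}V \\
H^{2j-i}_\cM(X,\Z(j)) @>{\beta}>> H^{2j-i}_\cL(X,\Z(j))
\end{CD}
\end{equation*}
commutes, where the left vertical arrow is the Gysin map for higher Chow groups of \cite[Corollary (0.2), p.~357]{Bloch2} and the right one is Levine's. Equivalently, $\beta$ is compatible with the two localization long exact sequences. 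One way to see this: both Gysin maps are characterized by the same properties — compatibility with the respective localization sequences, which in turn are compatible with the product structures (and $\beta$ is already known to respect products by \eqref{eqn:ML}), together with normalization on $H^0$ of the point. Since $\beta^i_j$ is an isomorphism of theories respecting products and pullbacks, the five-lemma applied to the map of localization sequences forces compatibility with the connecting/Gysin maps up to sign; the sign is pinned down by checking one universal case (the zero section of a line bundle, or a hyperplane in projective space), where everything is explicit. Once this compatibility of $\beta$ with $i_*$ is in place, the top and bottom squares of \eqref{eqn:chern_comm} follow formally from Levine's Riemann--Roch theorem, because $\alpha_{i,j}$ is defined by transporting the very same polynomial $G_{d,j-d}$ through $\beta$, and all the Chern classes $c_{i,k}$, $c_{0,k}(\cN)$ appearing in $G_{d,j-d}$ are themselves defined via $\beta^{-1}\circ c_X$.

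\medskip

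The technical core — and the step I expect to be the main obstacle — is precisely this compatibility of the comparison isomorphism $\beta$ with the Gysin/localization structure. Levine's isomorphism in \cite[Part I, Chapter II, 3.6.6]{Levine} is constructed at the level of the two complexes computing the respective theories, and checking that it commutes with the boundary maps of localization triangles requires tracing through that construction rather than invoking a black box; in particular one must handle the passage from quasi-projective schemes (where higher Chow groups and Bloch's localization sequence live) to their localizations (the ``essentially smooth'' schemes in the statement), which is done by writing $X$ as a filtered limit of smooth quasi-projective schemes and passing to the colimit in both the $K$-theoretic and motivic localization sequences — using that higher Chow groups, $K$-theory, and the Chern classes all commute with such filtered colimits. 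This is the bulk of what is deferred to Lemmas~\ref{lem:chern_comm2} and~\ref{lem:chern_comm3}. Modulo those two lemmas, the assembly of \eqref{eqn:chern_comm} is a diagram chase: paste the Riemann--Roch square for $H^*_\cL$, the naturality squares for $\beta$, and the functoriality squares for $c_X$ under open restriction, and read off that \eqref{eqn:chern_comm} commutes; the only subtlety beyond bookkeeping is keeping track of the shift by $2d$ in cohomological degree and by $d$ in weight, which matches the codimension $d$ of $Y$ and is already built into the indexing of $G_{d,j-d}$.
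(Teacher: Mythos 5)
Your overall structure matches the paper exactly: reduce to Levine's Riemann--Roch without denominators for $H^*_\cL$, and then show that the comparison isomorphism $\beta$ identifies Levine's Gysin sequence \eqref{eqn:Gysin} with Bloch's localization sequence \eqref{eqn:long2}, which is what Lemmas~\ref{lem:chern_comm2} and \ref{lem:chern_comm3} supply. One caution about the sketch you offer for that identification, though: the five-lemma does not "force compatibility with the connecting map up to sign." Given a ladder of long exact sequences in which all squares except those involving the boundary are known to commute, the five-lemma gives you isomorphism criteria but tells you nothing about whether the boundary squares commute; it is easy to build counterexamples. The paper instead proves compatibility directly, by expressing $\beta$ through Levine's naive higher Chow groups and the cycle class map $\cl_\naif$ in Lemma~\ref{lem:chern_comm2} (using that $\cZ_\mot$ preserves cones), and by unwinding Levine's deformation-to-the-normal-cone construction of $\iota_*$ in Lemma~\ref{lem:chern_comm3}. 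So you have correctly located the technical core, but the proposed shortcut for it would not go through; the trace through the constructions is genuinely needed.
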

\begin{proof}
We may assume that $X$ is quasi-projective and 
smooth over $\Spec\, \F_q$.
It follows from \cite[Part I, Chapter III, 1.5.2, p.~130]{Levine} 
and the Riemann-Roch theorem without denominators 
\cite[Part I, Chapter III, 3.4.7.\ Theorem, p.~174]{Levine} that
diagram (\ref{eqn:chern_comm}) is commutative if
we replace the right vertical sequence
by Gysin sequence 
\begin{equation}\label{eqn:Gysin}
\begin{array}{c}
H^{2j-i-2d}_\cL(Y,\Z(j-d))
\to H^{2j-i}_\cL(X,\Z(j)) \\
\to H^{2j-i}_\cL(X\setminus Y,\Z(j))
\to H^{2j-i-2d+1}_\cL(Y,\Z(j-d))
\end{array}
\end{equation}
in \cite[Part I, Chapter III, 2.1, p.~132]{Levine}. It suffices to show that
Gysin sequence (\ref{eqn:Gysin}) is identified with the localization
sequence of higher Chow groups.
Here, we use the notations from \cite[Part I, Chapter I, II]{Levine}. 
Let $S=\Spec\, \F_q$ and 
$\cV$ denote 
the category of schemes 
that is essentially smooth over 
$\Spec\, \F_q$. 
Let $\cA_\mot(\cV)$ be the DG category
defined in \cite[Part I, Chapter I, 1.4.10 Definition, p.~15]{Levine}.
For an object $Z$ in $\cV$ and a morphism
$f : Z' \to Z$ in $\cV$ that admits a smooth section,
and for $j \in \Z$, we have the object 
$\Z_{Z}(j)_f$ in $\cA_\mot(\cV)$. When $f=\id_Z$
is the identity, we abbreviate $\Z_Z(j)_{\id_Z}$
by $\Z_Z(j)$.
For a closed subset $W \subset Z$, let
$\Z_{Z,W}(j)$ be the object introduced in
\cite[Part I, Chapter I, (2.1.3.1), p.~17]{Levine}; 
this is an object in the DG category 
$\mathbf{C}^b_\mot(\cV)$ of
bounded complexes in $\cA_\mot(\cV)$.
The object $\Z_{Z}(j)_f$ belongs to the
full subcategory $\cA_\mot(\cV)^*$ of $\cA_\mot(\cV)$
introduced in \cite[Part I, Chapter I, 3.1.5, p.~38]{Levine},
and the object $\Z_{Z,W}(j)$ belongs to 
the DG category $\mathbf{C}^b_\mot(\cV)^*$ 
of bounded complexes in $\cA_\mot(\cV)^*$.
For $i \in \Z$, we set $H^i_{\cL,W}(Z,\Z(j)) 
= \Hom_{\mathbf{D}^b_\mot(\cV)}(1,\Z_{Z,W}(j)[i])$
where $1$ denotes the object 
$\Z_{\Spec\, \F_q}(0)$ and $\mathbf{D}^b_\mot(\cV)$ denotes 
the category
introduced in \cite[Part I, Chapter I, 2.1.4 Definition, p.~17--18]{Levine}.

Let $X$, $Y$ be as in the statement of 
Lemma~\ref{lem:chern_comm}.
Let $\mathbf{K}^b_\mot(\cV)$ be the homotopy category
of $\mathbf{C}^b_\mot(\cV)$. Then, we have a distinguished
triangle
$$
\Z_{X,Y}(j) \to \Z_{X}(j)
\to \Z_{X\setminus Y}(j)
\xto{+1}
$$
in $\mathbf{K}^b_\mot(\cV)$. This distinguished
triangle yields a long exact sequence
\begin{equation}\label{eqn:long1}
\begin{array}{c}
\cdots \to H^i_{\cL,Y}(X,\Z(j)) \to 
H^i_\cL(X,\Z(j))  \\
\to H^i_\cL(X\setminus Y,\Z(j)) \to 
H^{i+1}_{\cL,Y}(X,\Z(j)) \to \cdots.
\end{array}
\end{equation}
In \cite[Part I, Chapter III, (2.1.2.2), p.~132]{Levine},
Levine constructs an isomorphism 
$\iota_* : \Z_Y(j-d)[-2d]
\to \Z_{X,Y}(j)$ in $\mathbf{D}^b_\mot(\cV)$.
This isomorphism induces an isomorphism
$\iota_* : H^{i-2d}_{\cL}(Y,\Z(j-d)) \xto{\cong}
H^i_{\cL,Y}(X,\Z(j))$. 
This latter isomorphism, together with long exact
sequence (\ref{eqn:long1}) yields the Gysin sequence
(\ref{eqn:Gysin}).

We set $z^j_Y(X, -\bullet)
= \Cone(z^j(X, -\bullet)\to 
z^j(X\setminus Y, -\bullet))[-1]$
and define cohomology with support $H^i_{\cM,Y}(X,\Z(j))
= H^{i-2j}(z^j_Y(X, -\bullet))$.
The distinguished triangle
$$
z^j_Y(X, -\bullet) \to
z^j(X, -\bullet) \to 
z^j(X\setminus Y, -\bullet)
\xto{+1}
$$
in the derived category of abelian groups
induces a long exact sequence
\begin{equation}\label{eqn:long2}
\begin{array}{c}
\cdots \to H^i_{\cM,Y}(X,\Z(j)) \to 
H^i_\cM(X,\Z(j))  \\
\to H^i_\cM(X\setminus Y,\Z(j)) \to 
H^{i+1}_{\cM,Y}(X,\Z(j)) \to \cdots.
\end{array}
\end{equation}
The push-forward map
$z^{j-d}(Y, -\bullet) \to z^{j}(X,-\bullet)$ 
of cycles gives a homomorphism
$z^{j-d}(Y, -\bullet) \to
z^j_Y(X, -\bullet)$
of complexes of abelian groups, which is
known to be a quasi-isomorphism 
by \cite[Theorem (0.1), p.~537]{Bloch2},
hence it induces an isomorphism
$\iota_* :H^{2j-i-2d}_\cM(Y,\Z(j-d))
\xto{\cong} H^{2j-i}_{\cM,Y}(X, \Z(j))$.
Then, the claim follows from Lemmas~\ref{lem:chern_comm2} 
and \ref{lem:chern_comm3} below.
\end{proof}
\subsection{Compatibility of localization sequences}
\begin{lem}\label{lem:chern_comm2}
Let $X$ and $Y$ be as in Lemma \ref{lem:chern_comm}.
For each $i,j\in \Z$, there exists a canonical isomorphism 
$$
\beta^i_{Y,j}: H^{i}_{\cM,Y}(X,\Z(j))
\xto{\cong} H^{i}_{\cL,Y}(X,\Z(j))
$$
such that the long exact sequence (\ref{eqn:long1})
is identified  with 
the long exact sequence (\ref{eqn:long2})
via this isomorphism and the isomorphism (\ref{eqn:ML}).
\end{lem}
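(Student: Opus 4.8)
The plan is to prove the lemma by lifting the comparison isomorphism (\ref{eqn:ML}) to a comparison of complexes and then passing to mapping cones. For a scheme $Z$ in $\cV$ and an integer $j$, write $R\Gamma_\cL(Z,\Z(j)) = \mathrm{RHom}_{\mathbf{D}^b_\mot(\cV)}(\1,\Z_Z(j))$, so that $H^i(R\Gamma_\cL(Z,\Z(j))) = H^i_\cL(Z,\Z(j))$. The key input I would use is that Levine's comparison theorem \cite{Levine} identifies, functorially in $Z \in \cV$, the complex $R\Gamma_\cL(Z,\Z(j))$ with Bloch's cycle complex $z^j(Z,-\bullet)$ (up to the standard shift): the identification is realized by a zig-zag of quasi-isomorphisms of complexes of abelian groups, natural with respect to pullback along morphisms in $\cV$. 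In particular it is compatible with the restriction maps along the open immersion $u : X \setminus Y \hookrightarrow X$, and $\beta$ intertwines the two maps $u^*$. As in the proof of Lemma~\ref{lem:chern_comm}, since both sides commute with the relevant localizations I may assume $X$ is smooth quasi-projective over $\F_q$.

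I would then form mapping cones on both sides. By construction $z^j_Y(X,-\bullet) = \Cone\bigl(z^j(X,-\bullet) \xto{u^*} z^j(X\setminus Y,-\bullet)\bigr)[-1]$, and the long exact sequence of this cone, after the degree shift built into the definition of $H^\bullet_{\cM,Y}$, is exactly (\ref{eqn:long2}). On the other hand, the distinguished triangle $\Z_{X,Y}(j) \to \Z_X(j) \xto{u^*} \Z_{X\setminus Y}(j) \xrightarrow{+1}$ in $\mathbf{K}^b_\mot(\cV)$, together with the exactness of $\mathrm{RHom}_{\mathbf{D}^b_\mot(\cV)}(\1,-)$, gives a canonical identification of $\mathrm{RHom}_{\mathbf{D}^b_\mot(\cV)}(\1,\Z_{X,Y}(j))$ with $\Cone\bigl(R\Gamma_\cL(X,\Z(j)) \xto{u^*} R\Gamma_\cL(X\setminus Y,\Z(j))\bigr)[-1]$, whose long exact sequence is (\ref{eqn:long1}). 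Applying $\Cone(-)[-1]$ to the natural zig-zag of the first step — which is possible precisely because that zig-zag is compatible with the two maps $u^*$ — produces a zig-zag of quasi-isomorphisms between these two cone complexes; taking cohomology defines $\beta^i_{Y,j}$. Since the mapping cone of a pair of quasi-isomorphic morphisms of complexes is again a quasi-isomorphism, $\beta^i_{Y,j}$ is an isomorphism, and since the whole construction is a morphism of the short exact sequences of complexes giving rise to (\ref{eqn:long1}) and (\ref{eqn:long2}), it automatically identifies these long exact sequences via $\beta^i_{Y,j}$ and $\beta^i_j$. Canonicity of $\beta^i_{Y,j}$ is inherited from that of the cone of the fixed morphism $u^*$ together with the fixed comparison zig-zag.

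The step I expect to be the main obstacle is the naturality assertion used above: that Levine's comparison is compatible with pullback along the open immersion $u$ at the level of complexes (or of zig-zags of quasi-isomorphisms), and not merely as a commuting square of cohomology groups — a morphism of triangles being underdetermined by its outer legs, the cohomology-level compatibility alone would not suffice to define $\beta^i_{Y,j}$ canonically. Levine proves his comparison by identifying his motivic cohomology with Bloch's higher Chow groups through their shared structural properties, and the real work is to trace this identification and check that it respects flat — in particular open — pullback; once that is done, everything above is formal. An alternative would be to construct $\beta^i_{Y,j}$ directly, comparing the support complexes $z^j_Y(X,-\bullet)$ and $\Z_{X,Y}(j)$ through the purity and Gysin formalism available on each side, but that essentially amounts to reproving the comparison theorem with supports, so invoking the naturality of Levine's comparison is the more economical route.
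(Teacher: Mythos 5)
Your approach is genuinely different from the paper's, and it has a gap at exactly the point you identify. You postulate that Levine's comparison isomorphism (\ref{eqn:ML}) can be lifted to a zig-zag of quasi-isomorphisms of complexes, natural for open pullback, and then take cones. But (\ref{eqn:ML}) is not realized that way in Levine's construction: the map $\beta^i_j$ factors through the cycle class map $\cl(\Gamma)$, which is a colimit over hyper-resolutions of a composite passing through $\Hom_{\mathbf{K}^b_\mot(\cV)}(\fre^{\otimes a}\otimes \1, \Sigma^N(\Gamma)[N]) \to \Hom_{\mathbf{D}^b_\mot(\cV)}(\cdots)$, i.e.\ a map of abelian groups involving a homotopy-category--to--derived-category comparison, not a chain map or a canonical zig-zag of chain maps. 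So the naturality assertion you rely on is not available off the shelf, and establishing it is not a routine ``tracing'' --- it is the entire content of the lemma.

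The paper avoids the chain-level lift altogether. It defines $\beta^i_{Y,j}$ directly as a composite $H^i_{\cM,Y}(X,\Z(j)) \to \mathcal{CH}(\Z_{X,Y}(j),2j-i) \xto{\cl} H^i_{\cL,Y}(X,\Z(j))$, and then shows the identification of long exact sequences by checking, piece by piece, that Levine's intermediate constructions respect cones: the functor $\cZ_\mot(\ ,*)$ preserves cones (so $\cZ_\mot(\Z_{X,Y}(j),*)$ computes $z^j_Y(X,-\bullet)$), the suspension $\Sigma^N$ preserves cones (so $\cl$ is compatible with the connecting maps), and the canonical map $H_\cM \to \mathcal{CH}$ is compatible with the boundary. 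This yields the commutative ladder whose middle two columns are the known isomorphisms $\beta^{2j-i}_j$, whereupon $\beta^{2j-i}_{Y,j}$ is an isomorphism by the five lemma. In other words, the paper only needs cone-compatibility of each ingredient of the comparison map, not a chain-level refinement of the comparison isomorphism itself; that is the more economical route, contrary to your closing assessment.
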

\begin{proof}
First, let us recall the relation between the group 
$$H^i_\cM(X,\Z(j))
\xrightarrow[\beta^i_j]{\cong} H^i_\cL(X,\Z(j))$$ 
and the naive higher Chow group introduced by 
Levine \cite[2.3.1. Definition, p.70]{Levine}.
For an object $\Gamma$ of $\mathbf{C}^b_\mot(\cV)$, the naive
higher Chow group $\CH_\naif(\Gamma,p)$ is by definition
the cohomology group $H^{-p}(\cZ_\mot(\Gamma,*))$.
Here, $\cZ_\mot(\ , *)$ is as in
\cite[Part I, Chapter II, 2.2.4.\ Definition, p.~68]{Levine},
which is a DG functor
from the category $\mathbf{C}^b_\mot(\cV)^*$ to the category of
complexes of abelian groups bounded from below.
Since $X$ is a localization of a smooth quasi-projective scheme over $k$,
it follows from \cite[2.4.1., p.71]{Levine} that we have a natural isomorphism
$\CH_\naif(\Z_X(j)[2j],2j-i) \cong H^i_\cL(X,\Z(j))$.

Observe that the functor $\cZ_\mot$ 
\cite[Part I, Chapter I, (3.3.1.2), p.~40]{Levine} 
from the category $\mathbf{C}^b_\mot(\cV)$ to the category of
bounded complexes of abelian groups is compatible with taking cones,
hence the DG functor $\cZ_\mot(\ , *)$ is also compatible 
with taking cones. 
Since $\cZ_\mot(\Z_X(j)_{\id_X},*)$ is canonically isomorphic
to the cycle complex $z^j(X,-\bullet)$, the complex
$\cZ_\mot(\Z_{X,Y}(j)_{\id_X},*)$ is canonically isomorphic
to $z^j_Y(X,-\bullet)$.
For an object $\Gamma$
in $\mathbf{C}^b_\mot(\cV)^*$, let $\mathcal{CH}(\Gamma,p)$
be the higher Chow group defined in 
\cite[Part I, Chapter II, 2.5.2.\ Definition, p.~76]{Levine}.
From the definition of $\mathcal{CH}(\Gamma,p)$, we obtain
canonical homomorphisms $H^{2j-i}_\cM(X, \Z(j)) \to 
\mathcal{CH}(\Z_{X}(j),i)$,
$H^{2j-i}_\cM(X\setminus Y, \Z(j)) \to 
\mathcal{CH}(\Z_{X\setminus Y}(j),i)$, and
$H^{2j-i}_{\cM,Y}(X, \Z(j)) \to 
\mathcal{CH}(\Z_{X,Y}(j),i)$
such that the diagram
\begin{equation}\label{CD1}
\begin{CD}
H^{2j-i}_{\cM,Y}(X, \Z(j)) 
@>>> \mathcal{CH}(\Z_{X,Y}(j),i) \\
@VVV @VVV \\
H^{2j-i}_\cM(X, \Z(j)) @>>> 
\mathcal{CH}(\Z_{X}(j),i) \\
@VVV @VVV \\
H^{2j-i}_\cM(X\setminus Y, \Z(j)) @>>> 
\mathcal{CH}(\Z_{X\setminus Y}(j),i) \\
@VVV @VVV \\
H^{2j-i+1}_{\cM,Y}(X, \Z(j)) 
@>>> \mathcal{CH}(\Z_{X,Y}(j),i-1) 
\end{CD}
\end{equation}
is commutative.

Recall the definition of the cycle class map
$\cl(\Gamma) :\mathcal{CH}(\Gamma)=\mathcal{CH}(\Gamma,0) \to 
\Hom_{\mathbf{D}^b_\mot(\cV)}(1,\Gamma)$
\cite[p.~76]{Levine} for an object $\Gamma \in
\mathbf{C}^b_\mot(\cV)^*$.
Also recall that $\mathcal{CH}(\Gamma)
= \varinjlim_{\Gamma \to \Gamma_{\wt{\cU}}}
H^0(\cZ_\mot(\Tot\, \Gamma_{\wt{\cU}},*))$
where $\Gamma \to \Gamma_{\wt{\cU}}$ 
runs over the hyper-resolutions of $\Gamma$
\cite[Part I, Chapter II, 1.4.1.\ Definition, p.~59]{Levine}
and $\Tot : \mathbf{C}^b(\mathbf{C}^b_\mot(\cV)^*)
\to \mathbf{C}^b_\mot(\cV)^*$ 
denotes the total complex functor
in \cite[Part I, Chapter II, 1.3.2, p.~58]{Levine}. 
The homomorphism
$\cl(\Gamma)$ is defined as the inductive limit of
the composition
$$
\begin{array}{ll}
\cl_\naive(\Tot\, \Gamma_{\wt{\cU}}):& 
H^0(\cZ_\mot(\Tot\, \Gamma_{\wt{\cU}},*))
\to \Hom_{\mathbf{D}^b_\mot(\cV)}(1,\Tot\, \Gamma_{\wt{\cU}})
\\
&\xleftarrow{\cong}
\Hom_{\mathbf{D}^b_\mot(\cV)}(1,\Gamma).
\end{array}
$$
For an object $\Gamma$ in $\mathbf{C}^b_\mot(\cV)^*$,
the homomorphism $\cl_\naive(\Gamma)$ is, by definition 
\cite[Part I, Chapter II, (2.3.6.1), p.~71]{Levine}, 
equal to the composition
$$
\begin{array}{rl}
H^0(\cZ_\mot(\Gamma,*))
& \xleftarrow{\cong} H^0(\cZ_\mot(\Sigma^N(\Gamma)[N])) \\
& \xleftarrow{\cong}
\Hom_{\mathbf{K}^b_\mot(\cV)}(\fre^{\otimes a} \otimes 1,
\Sigma^N(\Gamma)[N]) \\
& \to \Hom_{\mathbf{D}^b_\mot(\cV)}(\fre^{\otimes a} \otimes 1,
\Sigma^N(\Gamma)[N]) \\
& \xleftarrow{\cong}\Hom_{\mathbf{D}^b_\mot(\cV)}(1, \Gamma)
\end{array}
$$
for sufficiently large integers $N, a \ge 0$.
Here, $\Sigma^N$ is the suspension functor 
in \cite[Part I, Chapter II, 2.2.2.\ Definition, p.~68]{Levine},
and $\fre$ is the object in \cite[Part I, Chapter I, 1.4.5, p.~13]{Levine} 
that we regard as an object in $\cA_\mot(V)$. 
Let $\Gamma \to \Gamma'$ 
be a morphism in $\mathbf{C}^b_\mot(\cV)$ and set
$\Gamma'' = \Cone(\Gamma \to \Gamma')[-1]$.
Since the functor $\Sigma^N$
is compatible with taking cones, the diagram
\begin{equation}\label{CD2}
\begin{CD}
\mathcal{CH}(\Gamma'',i)
@>{\cl(\Gamma''[-i])}>> 
\Hom_{\mathbf{D}^b_\mot(\cV)}(1,\Gamma''[-i]) \\
@VVV @VVV \\
\mathcal{CH}(\Gamma,i)
@>{\cl(\Gamma[-i])}>> 
\Hom_{\mathbf{D}^b_\mot(\cV)}(1, \Gamma[-i]) \\
@VVV @VVV \\
\mathcal{CH}(\Gamma',i)
@>{\cl(\Gamma'[-i])}>> 
\Hom_{\mathbf{D}^b_\mot(\cV)}(1, \Gamma'[-i]) \\
@VVV @VVV \\
\mathcal{CH}(\Gamma'',i-1)
@>{\cl(\Gamma''[-i+1])}>> 
\Hom_{\mathbf{D}^b_\mot(\cV)}(1,\Gamma''[-i+1]) 
\end{CD}
\end{equation}
is commutative.

The homomorphism $\beta^i_j:H^i_\cM(X, \Z(j))\to
H^{2j-i}_\cL(X,\Z(j))$ is, by definition, equal to
the composition
\[
\begin{array}{ll}
H^i_\cM(X, \Z(j)) 
\to \mathcal{CH}(\Z_X(j),2j-i) 
\\
\xto{\cl(\Z_X(j)[i-2j])}
\Hom_{\mathbf{D}^b_\mot(\cV)}(1,\Z_X(j)[i])
= H^{i}_\cL(X,\Z(j)).
\end{array}
\]
We define 
% the homomorphism 
$\beta^i_{Y,j}: H^i_{\cM,Y}(X, \Z(j))\to
H^{2j-i}_{\cL,Y}(X,\Z(j))$ to be the composition
\[
\begin{array}{l}
H^i_{\cM,Y} (X, \Z(j)) 
\to \mathcal{CH}(\Z_{X,Y}(j),2j-i) 
\\
\xto{\cl(\Z_{X,Y}(j)[i-2j])}
\Hom_{\mathbf{D}^b_\mot(\cV)}(1,\Z_{X,Y}(j)[i])
= H^{i}_{\cL,Y}(X,\Z(j)).
\end{array}
\] 
By (\ref{CD1}) and (\ref{CD2}), we have a commutative diagram
$$
\begin{CD}
H^{2j-i}_{\cM,Y}(X,\Z(j)) 
@>{\beta^{2j-i}_{Y,j}}>> H^{2j-i}_{\cL,Y}(X,\Z(j)) \\
@VVV @VVV \\
H^{2j-i}_\cM(X, \Z(j)) 
@>{\beta^{2j-i}_j}>{\cong}> 
H^{2j-i}_\cL(X,\Z(j)) \\
@VVV @VVV \\
H^{2j-i}_\cM(X\setminus Y, \Z(j)) 
@>{\beta^{2j-i}_j}>{\cong}> 
H^{2j-i}_\cL(X \setminus Y,\Z(j)) \\
@VVV @VVV \\
H^{2j-i+1}_{\cM,Y}(X, \Z(j)) 
@>{\beta^{2j-i+1}_j}>>  H^{2j-i+1}_{\cL,Y}(X,\Z(j)),
\end{CD}
$$
where the right vertical arrow is the long
exact sequence (\ref{eqn:long1}),
hence $\beta^{2j-i}_{Y,j}$ is an isomorphism
and the claim follows.
\end{proof}
\subsection{Compatibility of Gysin maps}
\begin{lem}\label{lem:chern_comm3}
The diagram 
$$
\begin{CD}
H^{2j-i-2d}_{\cM}(Y,\Z(j-d)) @>{\iota_*}>{\cong}> 
H^{2j-i}_{\cM.Y}(X, \Z(j)) \\
@V{\beta^{2j-i-2d}_{j-d}}V{\cong}V 
@V{\beta^{2j-i}_{Y,j}}V{\cong}V \\
H^{2j-i-2d}_{\cL}(Y,\Z(j-d)) @>{\iota_*}>{\cong}>
H^{2j-i}_{\cL,Y}(X,\Z(j))
\end{CD}
$$
is commutative.
\end{lem}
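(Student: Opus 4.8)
The plan is to unwind the definitions of the four arrows in the square and reduce the commutativity to a single statement about Bloch's cycle complexes. Recall from the proof of Lemma~\ref{lem:chern_comm2} that $\cZ_\mot(\Z_X(j)_{\id_X},*)$ is canonically $z^j(X,-\bullet)$, and hence, by compatibility of $\cZ_\mot$ with cones, $\cZ_\mot(\Z_{X,Y}(j)_{\id_X},*)\cong z^j_Y(X,-\bullet)$ and $\cZ_\mot(\Z_Y(j-d)_{\id_Y},*)\cong z^{j-d}(Y,-\bullet)$; under these identifications the canonical homomorphisms $H^\bullet_\cM\to\mathcal{CH}$ of diagram (\ref{CD1}) become the structure maps of the naive Chow groups $\CH_\naif$ into the colimit defining $\mathcal{CH}$ over hyper-resolutions, so that $\beta^{2j-i}_{Y,j}$ and $\beta^{2j-i-2d}_{j-d}$ are, up to these identifications, the relevant instances of Levine's cycle class map $\cl$, exactly as in the proof of Lemma~\ref{lem:chern_comm2}. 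Since the left-hand vertical arrow of the square is by definition induced by Bloch's quasi-isomorphism $z^{j-d}(Y,-\bullet)\to z^j_Y(X,-\bullet)$ of \cite[Theorem (0.1), p.~537]{Bloch2}, the lemma reduces to the claim that Levine's Gysin morphism $\iota_*\colon\Z_Y(j-d)[-2d]\to\Z_{X,Y}(j)$ induces, after applying $\Hom_{\mathbf{D}^b_\mot(\cV)}(1,-)$ and passing through $\cl$, precisely the map on cohomology coming from Bloch's push-forward of cycles.

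The heart of the matter is to identify Levine's $\iota_*$ with a cup product by a fundamental cycle. In \cite[Part I, Chapter III, (2.1.2.2), p.~132]{Levine} the morphism $\iota_*$ is constructed from the fundamental class $\eta_Y\in H^{2d}_{\cL,Y}(X,\Z(d))=\Hom_{\mathbf{D}^b_\mot(\cV)}(1,\Z_{X,Y}(d)[2d])$ of $Y$ in $X$ together with the product on the motivic DG category, and one checks, unwinding that construction, that $\eta_Y$ is the image under $\cl$ of the class of the cycle $[Y]=\sum_{Y'\in\Irr(Y)}[Y']\in z^d_Y(X,0)$ regarded in $\CH_\naif(\Z_{X,Y}(d)[2d],0)$. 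The plan is then: (i) invoke the compatibility of $\cl$ with the product structure — the same compatibility already used for the scheme-level product around (\ref{eqn:ML}) — so that cupping with $\eta_Y$ in $\mathbf{D}^b_\mot(\cV)$ corresponds, at the level of $\CH_\naif$, to the cycle-theoretic product with $[Y]$; and (ii) check directly that this cycle-theoretic product with the fundamental cycle $[Y]$ equals, on the complexes $z^\bullet$, the proper push-forward of cycles along $\iota\colon Y\hookrightarrow X$, which follows from the projection formula and the definition of Bloch's push-forward, using that $Y$ is essentially smooth of pure codimension $d$. Combining (i) and (ii) identifies the derived cup product defining $\iota_*$ with Bloch's push-forward under $\cl$, which is what is required.

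To assemble these observations into the commutativity of the displayed square I would use the naturality of the cycle class map with respect to morphisms of $\mathbf{C}^b_\mot(\cV)$ — concretely, diagram (\ref{CD2}) from the proof of Lemma~\ref{lem:chern_comm2} — applied to the morphisms realizing Levine's $\iota_*$, together with the compatibility of $\cZ_\mot$, of hyper-resolutions, and of the suspension functor $\Sigma^N$ with cones and tensor products, all of which already appear in the proof of Lemma~\ref{lem:chern_comm2}. I expect the main obstacle to be precisely steps (i)--(ii) above: matching Levine's abstract apparatus — the suspensions, the hyper-resolution colimits, and product maps such as $\Z_Y(j-d)\otimes\Z_{X,Y}(d)\to\Z_{X,Y}(j)$ — with the elementary operation of pushing a cycle on $Y$ forward to $X$. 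This requires a careful reading of \cite[Part I, Chapters I--III]{Levine} rather than any new idea, and one must be scrupulous about the various shifts (the $[-2d]$, the $[2j]$ occurring in \cite[2.4.1., p.~71]{Levine}, and $\CH_\naif$ versus $H^\bullet_\cM$) so that every index lines up.
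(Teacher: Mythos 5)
Your high-level strategy — unwind $\beta^{\bullet}_{Y,j}$ as Levine's cycle class map $\cl$, reduce to a statement that the Gysin morphism matches Bloch's push-forward of cycles under $\cl$, and verify this by examining Levine's construction — is the right framework and is also what the paper does. But the crucial step (i) in your plan rests on a mischaracterization of Levine's construction, and this is where the real work lives. Levine's $\iota_*$ in \cite[Part I, Chapter III, (2.1.2.2)]{Levine} is \emph{not} a cup product with a fundamental class $\eta_Y$ of $Y$ in $X$. It is built via \emph{deformation to the normal cone}: one blows up $X\times\A^1$ along $Y\times\{0\}$ to get $Z$, takes the proper transform $W\cong Y\times\A^1$, the exceptional locus $P$, and the zero section $Q=P\times_Z W$, and then exploits the diagram $\Z_{P,Q}(j)\leftarrow\Z_{Z,W}(j)_f\to\Z_{X,Y}(j)$ of quasi-isomorphisms. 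The only fundamental class that appears explicitly is the class $\cl^d_{P,Q}(Q)$ of the zero section $Q$ in the bundle $P$, which is tractable precisely because $Q$ is a zero section; there is no direct appeal to a class $\eta_Y\in H^{2d}_{\cL,Y}(X,\Z(d))$.

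This is not a cosmetic point. Your step (i) asks you to cup a class in $H^{\bullet}_{\cL}(Y,-)$ with $\eta_Y\in H^{2d}_{\cL,Y}(X,\Z(d))$ to land in $H^{\bullet}_{\cL,Y}(X,-)$, but there is no product of that shape: to cup with $\eta_Y$ you would first have to pull your class back from $Y$ to $X$, which requires a retraction $X\to Y$ that does not exist. The deformation to the normal cone is exactly the mechanism that manufactures such a retraction — the projection $g:P\to Y$ — after which cupping with the zero-section class $[Q]$ does make sense. So what you flag as ``the main obstacle'' is not a matter of keeping indices straight; it is the entire content of the lemma, and the cup-product-with-$\eta_Y$ route as stated cannot get off the ground. (Your step (ii), an appeal to ``the projection formula'' for Bloch's cycle complexes, is also not available as a citable input; it is a consequence of this kind of lemma, not an ingredient.)

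The paper's proof replaces steps (i)--(ii) with the following: carefully transcribe Levine's composite through $Z,P,Q,W$ into a parallel composite on the $z^\bullet$-side (with $s_*$, the push-forward along the section $s:Y\to P$ of $g$, playing the role of $\delta_*$), and then check the commutativity stage by stage. The only square involving a genuine Gysin-versus-push-forward comparison is
$$
\begin{CD}
H^{i-2d}_{\cM}(Y,\Z(j-d)) @>{s_*}>> H^{i}_{\cM,Q}(P,\Z(j)) \\
@VVV @VVV \\
H^{i-2d}_{\cL}(Y,\Z(j-d)) @>{\delta_*}>> H^{i}_{\cL,Q}(P,\Z(j)),
\end{CD}
$$
and here $s$ is a zero section, so $\delta_*$ unwinds to $g^*$ followed by multiplication by $[Q]_Q$, for which the compatibility with the honest cycle push-forward is explicit. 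You should revise the argument to follow the deformation-to-the-normal-cone geometry rather than reaching for a global fundamental class and a projection formula.
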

\begin{proof}
Recall the construction of 
the upper horizontal isomorphism $\iota_*$
% :H^{2j-i-2d}_{\cL}(Y,\Z(j-d)) \xto{\cong} 
% H^{2j-i}_{\cL,Y}(X,\Z(j))$ given 
in \cite[Part I, Chapter III, (2.1.2.2), p.~132]{Levine}.
Let $Z$ be the blow-up of $X 
\times_{\Spec\, \F_q} \A^1_{\F_q}$
along $Y \times_{\Spec\, \F_q} \{0\}$.
Let $W$ be the proper transform of
$Y \times_{\Spec\, \F_q} \A^1_{\F_q}$
to $Z$. Then, $W$ is canonically isomorphic to
$Y \times_{\Spec\, \F_q} \A^1_{\F_q}$.
Let $P$ be the inverse image of 
$Y \times_{\Spec\, \F_q} \{0\}$ under the map
$Z \to X \times_{\Spec\, \F_q} \A^1_{\F_q}$
and let $Q= P \times_Z W$. 
We set $Z' = Z \amalg (X \times_{\Spec\, \F_q} \{1\}) 
\amalg P$ and let $f:Z' \to Z$ denote the
canonical morphism. We then have canonical morphisms
$$
\Z_{P,Q}(j) \leftarrow \Z_{Z,W}(j)_f 
\to \Z_{X \times_{\Spec\, \F_q} \{1\},
Y \times_{\Spec\, \F_q} \{1\} }(j) 
= \Z_{X,Y}(j)
$$
in $\mathbf{C}^b_\mot(\cV)^*$,
which become isomorphisms in the category
$\mathbf{D}^b_\mot(\cV)$. 

Let $g:P \to Y\times_{\Spec\, \F_q} \{0\}
\cong Y$ be the canonical morphism.
The restriction of $g$ to $Q \subset P$ is
an isomorphism,
hence giving a section 
$s : Y \to P$ to $g$. 
The cycle class
$\cl_{P,Q}^d(Q) \in H^{2d}_{Q}(P,\Z(d))$
in \cite[Part I, Chapter I, (3.5.2.7), p.~48]{Levine} 
comes from the map
$[Q]_Q : \fre \otimes 1 \to \Z_{P,Q}(d)[2d]$
in $\mathbf{C}^b_\mot(\cV)$, as defined in
\cite[Part I, Chapter I, (2.1.3.3), p.~17]{Levine}.
We then have morphisms
\begin{equation}\label{eqn:DD}
\begin{array}{rl}
& \fre \otimes \Z_P(j-d)[-2d] \to
\Z_{P,Q}(d) \otimes
\Z_P(j-d)  \\
\xto{\gamma} &
\Z_{P \times_{\Spec\,\F_q} P, Q \times_{\Spec\,\F_q} P}(j) \\
\leftarrow &
\Z_{P \times_{\Spec\,\F_q} P, Q \times_{\Spec\,\F_q} P}(j)_{f'}
\xto{\Delta_P^*} \Z_{P,Q} (j)
\end{array}
\end{equation}
in $\mathbf{C}^b_\mot(\cV)$. 
Here, $\gamma$ is the map induced from the
external products, i.e., $\boxtimes_{P,P}: \Z_P(d) 
\otimes \Z_P(j-d) \to \Z_{P \times_{\Spec\,\F_q} P}(j)$
and $\boxtimes_{Q,P}: \Z_Q(d) 
\otimes \Z_P(j-d) \to \Z_{Q \times_{\Spec\,\F_q} P}(j)$;
further, $\Delta_P:P \to P \times_{\Spec\, \F_q} P$ 
denotes the diagonal embedding and $f'$ is the morphism 
$$
f'=\id_{P\times_{\Spec\, \F_q}P}
\amalg \Delta_P : P\times_{\Spec\, \F_q}P
\amalg P \to P\times_{\Spec\, \F_q}P.
$$
The morphisms in (\ref{eqn:DD}) above induce morphism
$\delta :\Z_P(j-d)[-2d] \to \Z_{P,Q} (j)$ in
$\mathbf{D}^b_\mot(\cV)$. 
The composite morphism
$\Z_{Y}(j-d)[-2d] \xto{q^*} \Z_{P}(j-d)[-2d] 
\xto{\delta} \Z_{P,Q} (j)$ induces a homomorphism
$\delta_* : H^{i-2d}_{\cL}(Y,\Z(j-d))
\to H^i_{\cL,Q}(P,\Z(j))$ for each $i \in \Z$.

From the construction of $\delta_*$, 
we observe that the diagram
$$
\begin{CD}
H^{i-2d}_{\cM}(Y,\Z(j-d))
@>{s_*}>> H^i_{\cM,Q}(P,\Z(j)) \\
@V{\beta^{i-2d}_{j-d}}VV @V{\beta^{i}_{Q,j}}VV \\
H^{i-2d}_{\cL}(Y,\Z(j-d))
@>{\delta_*}>> H^i_{\cL,Q}(P,\Z(j))
\end{CD}
$$
is commutative. Here, the upper horizontal arrow $s_*$ 
is the homomorphism that sends the class of a cycle $V \in 
z^{j-d}(Y,2j-i)$ to the class in
$H^i_{\cM,Q}(P,\Z(j))$ of the cycle
$s(V)$, which belongs to the kernel of 
$z^j(P, 2j-i) \to z^j(P\setminus Q,2j-i)$.
Next, the isomorphism $\iota_*:H^{2j-i-2d}_{\cL}(Y,\Z(j-d)) 
\xto{\cong} H^{2j-i}_{\cL,Y}(X,\Z(j))$
equals the composition 
$$
\begin{array}{c}
H^{2j-i-2d}_{\cL}(Y,\Z(j-d))
\xto{\delta_*} H^{2j-i}_{\cL,Q}(P,\Z(j))
\leftarrow H^{2j-i}_{\cL,W}(Z,\Z(j)) \\
\to H^{2j-i}_{\cL,Y\times_{\Spec\,\F_q} \{1\}}
(X\times_{\Spec\, \F_q}\{1\},\Z(j))
= H^{2j-i}_{\cL,Y}(X,\Z(j)).
\end{array}
$$
We can easily verify that 
the isomorphism $\iota_*:H^{2j-i-2d}_{\cM}(Y,\Z(j-d)) 
\xto{\cong} H^{2j-i}_{\cM,Y}(X,\Z(j))$
equals the composition 
$$
\begin{array}{c}
H^{2j-i-2d}_{\cM}(Y,\Z(j-d))
\xto{s_*} H^{2j-i}_{\cM,Q}(P,\Z(j))
\leftarrow H^{2j-i}_{\cM,W}(Z,\Z(j)) \\
\to H^{2j-i}_{\cM,Y\times_{\Spec\,\F_q} \{1\}}
(X\times_{\Spec\, \F_q}\{1\},\Z(j))
= H^{2j-i}_{\cM,Y}(X,\Z(j)).
\end{array}
$$
Given this, the claim follows.
\end{proof}

\begin{rmk}\label{rmk:chern_comm}
For $j=d$, we have 
$\alpha_{i,d}=(-1)^{d-1}(d-1)! \cdot \chern_{i,0}$.
For $i \ge 1$ and $j=d+1$, we have
$\alpha_{i,d+1}=(-1)^d d! \cdot c_{i,1}$.

Suppose that $d=1$ and $\cN \cong \cO_Y$. 
Then, we have $\alpha_{i,1}=\chern_{i,0}$ and
$\alpha_{i,j}(a) = (-1)^{j-1} 
Q_{j-1}(c_{i,1}(a),\ldots, c_{i,j-1}(a))$ 
for $i \ge 0$, $j\ge 2$, where
$Q_{j-1}$ denotes the $(j-1)$-st 
Newton polynomial, which 
expresses the $(j-1)$-st power sum polynomial 
in terms of the elementary symmetric polynomials.
In particular, we have $\alpha_{i,2}=-c_{i,1}$ for
$i \ge 0$ and $\alpha_{i,j}=- (j-1) c_{i,j-1}$
for $i \ge 1$, $j\ge 2$.
% This condition is satisfied when
% $X$ is a smooth curve over a field which is
% essentially smooth over $\F_q$ and $Y$ is 
% a closed point of $X$.
\end{rmk}

\section{Motivic Chern % class map 
characters for singular curves over finite fields}\label{sec:MotChern}
Before embarking on this section, we refer 
to the latter half of Section~\ref{sec:intro Chern} 
for a general overview.   
Note that the output of this section consists of 
Lemma~\ref{prop:AppB_main} and Proposition~\ref{prop:Z}.

In this section, we construct Chern %class maps 
characters of low degrees for singular curves over finite fields 
with values in the higher Chow groups in an ad hoc manner.
Bloch defines Chern characters with values in the higher Chow groups
tensored with $\Q$ in \cite[(7.4), p.~294]{Bloch}.  We restrict ourselves
to one-dimensional varieties over finite fields, but the target group
lies with coefficients in $\Z$.

% We apply our Chern characters to the bad reductions 
% of an elliptic curve.
\subsection{A lemma on a curve over a finite field}
Below, we first state a lemma to be used in this section and later
in Lemma~\ref{lem:coker_del43}.   
For a scheme $X$, we let $\cO(X)=H^0(X,\cO_X)$
denote the coordinate ring of $X$.
\begin{lem}\label{prop:AppB_main}
Let $X$ be a connected scheme of pure dimension one,
separated and of finite type over $\Spec\, \F_q$.
Then, the push-forward map 
$$
\alpha_X : 
H^{3}_\cM(X,\Z(2)) \to H^1_\cM(\Spec\, \cO(X),\Z(1))
$$ is an isomorphism if $X$ is proper, and 
$H^{3}_\cM(X,\Z(2))$ is zero if $X$ is
not proper.
\end{lem}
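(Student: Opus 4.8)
The plan is to reduce to the known structure of $K$-theory and motivic cohomology of smooth affine curves over finite fields, together with a dévissage along the singular and boundary points. First I would reduce to the case where $X$ is reduced: the complex $z^2(X,\bullet)$ depends only on $X_\red$ (cycles avoid nilpotents), so $H^3_\cM(X,\Z(2)) = H^3_\cM(X_\red,\Z(2))$, and the push-forward $\alpha_X$ is visibly unchanged. Next, let $X^\nu \to X_\red$ be the normalization; let $Z \subset X_\red$ be the finite set of singular points and $\wt Z$ its preimage in $X^\nu$. The idea is to compare the two localization sequences of motivic cohomology associated to $(X_\red, Z, X_\red\setminus Z)$ and $(X^\nu, \wt Z, X^\nu\setminus Z)$. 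Over $X_\red\setminus Z = X^\nu\setminus\wt Z$ the schemes agree; $H^*_\cM$ of $Z$ and $\wt Z$ (zero-dimensional) are controlled by Lemma~\ref{lem:Quillen_Harder} and Quillen's computation, and in particular $H^i_\cM$ of a finite $\F_q$-scheme vanishes outside the range needed, and $H^3_\cM(\text{point},\Z(2)) = 0$ since a point has dimension $0 < 2$. So from the localization sequence for $X_\red$ I get $H^3_\cM(X_\red,\Z(2)) \cong H^3_\cM(X_\red\setminus Z,\Z(2))$, and similarly for $X^\nu$; this reduces the whole statement to the case of a \emph{smooth} (affine or proper) curve over $\F_q$.

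For a smooth curve $U$ over $\F_q$, I would invoke the Riemann--Roch isomorphism between higher Chow groups and graded pieces of $K$-theory (Bloch, as cited in Lemma~\ref{lem:Quillen_Harder}) together with the explicit computation of the $K$-groups: Quillen's computation of $K_*$ of a smooth projective curve over a finite field and Harder's theorem (with Grayson's reinterpretation) for the affine case, exactly the inputs already used in the excerpt. Concretely, $H^3_\cM(U,\Z(2)) = \CH^2(U,1)$ sits in the weight-$2$ part of $K_1(U)$; for $U$ \emph{affine} smooth of dimension one this piece vanishes (Harder's finite-generation together with the fact that $K_1$ of such a $U$ has no weight-$2$ contribution — equivalently $H^3_\cM$ is torsion by Lemma~\ref{lem:Quillen_Harder} since $(i,j)=(3,2)$ with $j>d=1$, and then one checks the torsion is zero using Geisser--Levine and the vanishing of the relevant étale/de Rham--Witt groups of an affine curve). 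For $U$ \emph{proper} smooth, $H^3_\cM(U,\Z(2)) = \CH^2(U,1)$ and the push-forward (Gysin for the closed embedding into a point, i.e.\ the $K$-theory transfer / the boundary-compatible map of Section~\ref{sec:Compatibility}) to $H^1_\cM(\Spec\,\F_{q'},\Z(1)) = \F_{q'}^\times$, where $\F_{q'}$ is the field of constants, is an isomorphism; this is the identification of $\CH^2(U,1)$ with $H^0(U,\cO_U)^\times$ modulo the image of $K_2$ of the function field, which by Bass--Tate / Quillen for curves over finite fields collapses to the constants. One must be slightly careful that when $X$ was proper but singular, $\cO(X)$ is the constants of $X$, which matches $\cO(X^\nu)$ since normalization of a proper connected curve over a field does not change the constant field; and when $X$ is not proper, $X^\nu$ is an affine smooth curve, giving the vanishing.

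The main obstacle I expect is bookkeeping the two localization sequences consistently and, in the proper case, pinning down that the push-forward $\alpha_X$ really is an \emph{isomorphism} rather than just having finite kernel and cokernel — i.e.\ identifying $\CH^2(U,1)$ precisely with $\F_{q'}^\times$ and checking the map realizing this is the one denoted $\alpha_X$ in the statement. This amounts to: (a) showing $H^3_\cM(U,\Z(2))$ for smooth projective $U$ is generated by cycles supported at closed points with the boundary being a rational function, reducible via the Gersten/coniveau spectral sequence whose $E_1$-term in the relevant spot is $\bigoplus_{x \in U_0} K_1(\kappa(x)) = \bigoplus_x \kappa(x)^\times$ modulo $K_2(k(U))$, and (b) invoking the surjectivity of tame symbols / the structure theorem so that this quotient is exactly $\F_{q'}^\times$ (this uses that $H^2_\cM(U,\Z(2))$, i.e.\ $K_2$ of the curve, surjects appropriately — Bass--Tate for the function field of a curve over a finite field, where $K_2$ of a global function field is finite). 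I would organize the argument so that the only genuinely computational input is the already-cited Quillen/Harder structure of $K_*$ of smooth curves over $\F_q$, and everything else is dévissage plus the compatibility of Gysin maps from Section~\ref{sec:Compatibility}.
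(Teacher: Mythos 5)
The paper's ``proof'' is a one-line citation to Theorem 1.1 of the authors' companion paper \cite{KY:Two}, so there is no argument in this paper to compare against; your proposal is a genuine from-scratch attempt.

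Unfortunately, the central dévissage step is wrong. You claim that, for $Z$ a finite closed subscheme of a curve $X_\red$ with complement $U$, the localization sequence yields an \emph{isomorphism} $H^3_\cM(X_\red,\Z(2)) \cong H^3_\cM(U,\Z(2))$, on the grounds that the relevant motivic cohomology of the zero-dimensional scheme $Z$ vanishes. But $Z$ has \emph{codimension one} in $X_\red$, so the Gysin term in the localization sequence (Bloch's Theorem~0.1 plus localization, exactly as recalled in Section~\ref{sec:Compatibility}) is shifted by $(2,1)$: the portion of the sequence around bidegree $(3,2)$ reads
\[
H^1_\cM(Z,\Z(1)) \longrightarrow H^3_\cM(X_\red,\Z(2)) \longrightarrow H^3_\cM(U,\Z(2)) \longrightarrow H^2_\cM(Z,\Z(1)) = 0,
\]
and $H^1_\cM(Z,\Z(1)) \cong \cO(Z)^\times$ is very much nonzero. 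You were apparently reading off the contribution of $Z$ in the \emph{same} bidegree $(3,2)$, as if $Z$ were open or of codimension~$2$, whereas it is the weight-one units of $Z$ that enter. What the sequence gives is only a surjection $H^3_\cM(X_\red,\Z(2)) \twoheadrightarrow H^3_\cM(U,\Z(2))$, with a kernel that is the image of $\cO(Z)^\times$; controlling that image is exactly the crux of the lemma, and you have skipped it.

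This is not a cosmetic issue: your argument would provably produce the wrong answer. Take $X$ a connected \emph{proper} singular curve (so $\cO(X)$ is a finite field $\F_{q'}$) whose singular locus $Z$ has affine smooth complement $U$. Your reduction yields $H^3_\cM(X,\Z(2)) \cong H^3_\cM(U,\Z(2))$, and your own (correct) computation for smooth affine curves gives $H^3_\cM(U,\Z(2)) = 0$; but the lemma asserts $H^3_\cM(X,\Z(2)) \cong \F_{q'}^\times \neq 0$. Even the toy case $X=\mathbb P^1_{\F_q}$, $Z$ two rational points, $U=\mathbb G_m$ already shows the claimed isomorphism is false: $H^3_\cM(\mathbb P^1,\Z(2)) = \F_q^\times$ by the projective bundle formula, while $H^3_\cM(\mathbb G_m,\Z(2)) = 0$. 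The entire content of the lemma in the proper case lives in the kernel term $\cO(Z)^\times$ you discarded; in the non-proper case one still has to show this image is killed, which is again not automatic. The later observation that for a smooth projective curve $U$ the pushforward realizes $\CH^2(U,1)\cong\F_{q'}^\times$ is in the right spirit (it is essentially the reciprocity statement for the tame symbol over global function fields), but the reduction leading up to it does not hold, so the proof as a whole does not go through.
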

\begin{proof}
This follows from Theorem 1.1 of \cite{KY:Two}.
\end{proof}
%We refer to Remark~2.5 in \cite{Ko-Ya2} for 
%an outline of a different proof.
\subsection{Integral Chern characters in low degrees}
\label{sec:def chern sing}
Let $Z$ be a scheme over $\Spec\, \F_q$ of pure dimension one,
separated and of finite type over $\Spec\, \F_q$.
We construct a canonical homomorphism
$\chern'_{i,j}:G_i(Z) \to H^{2j-i}_\cM(Z,\Z(j))$
for $(i,j)=(0,0)$, $(0,1)$, $(1,1)$, and $(1,2)$.
Next, we show in Proposition~\ref{prop:Z} that the homomorphism
\begin{equation}\label{eqn:Z}
(\chern'_{i,i},\chern'_{i,i+1}): 
G_i(Z) \to H^i_\cM(Z,\Z(i)) \oplus
H^{i+2}_\cM(Z,\Z(i+1))
\end{equation}
is an isomorphism for $i=0,1$.
Since the $G$-theory of $Z$ and
the $G$-theory of $Z_\mathrm{red}$ are isomorphic,
and the same holds for the motivic cohomology,
it suffices to treat the case where $Z$ is reduced. 

Consider a dense affine open smooth subscheme 
$Z_{(0)}\subset Z$, and let $Z_{(1)} = Z \setminus Z_{(0)}$ 
be the complement of $Z_{(0)}$ with the reduced scheme 
structure.  We define $\chern'_{0,0}$ to be the composition
$$
G_0(Z) \to K_0(Z_{(0)}) \xto{\chern_{0,0}}
H^0_{\cM}(Z_{(0)},\Z(0)) \cong H^0_\cM (Z,\Z(0)).
$$
We then use the following lemma.
\begin{lem}\label{lem:Z}
For $i=0$ (\resp $i=1$), the diagram
$$
\begin{CD}
K_{i+1}(Z_{(0)}) @>>> K_i(Z_{(1)}) \\
@V{c_{i+1,i+1}}VV @VV{\chern_{0,0}\text{ $($\resp }c_{1,1})}V \\
H^{i+1}_\cM(Z_{(0)}, \Z(i+1)) @>>> 
H^i_\cM(Z_{(1)},\Z(i)),
\end{CD}
$$
where each horizontal arrow is a part of the 
localization sequence, is commutative.
\end{lem}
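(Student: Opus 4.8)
The plan is to reduce the commutativity to the compatibility statement already established in Lemma~\ref{lem:chern_comm}, applied with $X = Z_{(0)}$ and $Y$ a well-chosen closed subscheme. The issue is that $Z_{(1)}$ is a finite set of (possibly non-reduced, possibly non-regular) points of the curve $Z$, so it is not directly a closed subscheme of $Z_{(0)}$; rather, $Z_{(1)}$ sits in $Z$, and the localization sequence in the statement is the one for the pair $(Z, Z_{(0)}, Z_{(1)})$ in $G$-theory, together with its counterpart for Borel–Moore motivic cohomology (= higher Chow groups) of $Z$. So the first step is to rewrite both horizontal arrows. For $G$-theory this is the standard localization sequence $G_{i+1}(Z_{(0)}) \to G_i(Z_{(1)}) \to G_i(Z) \to \cdots$; for motivic cohomology it is the localization sequence for higher Chow groups $\CH^{i+1}(Z_{(0)}, i+1) \to \CH^{i}(Z_{(1)}, i) \to \cdots$ coming from \cite[Corollary (0.2)]{Bloch2}, which in our indexing reads $H^{i+1}_\cM(Z_{(0)},\Z(i+1)) \to H^i_\cM(Z_{(1)},\Z(i)) \to H^{i+2}_\cM(Z,\Z(i+1)) \to \cdots$ (one checks the degree shift by $2d$ with $d=1$). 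The left Chern class $c_{i+1,i+1}$ is genuine since $Z_{(0)}$ is smooth; the right vertical arrow is $\chern_{0,0}$ when $i=0$ (the rank map on $G_0$ of the zero-dimensional $Z_{(1)}$, landing in $H^0_\cM(Z_{(1)},\Z(0))$) and $c_{1,1}$ when $i=1$ (landing in $H^1_\cM(Z_{(1)},\Z(1)) = \cO(Z_{(1)})^\times$), both of which make sense for $Z_{(1)}$ even though it need not be regular, because in these low degrees $K_i = G_i$ and the Chern class is just the rank (for $i=0$) or the determinant/symbol map (for $i=1$).

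The second step is the core comparison. Here I would first handle the case where $Z_{(1)}$ is itself reduced, hence a finite disjoint union of spectra of finite fields, and in particular essentially smooth over $\Spec\,\F_q$ of pure codimension $d=1$ in $Z_{(0)}$'s compactification — but more to the point, one can localize $Z_{(0)}$ near a single point of $Z_{(1)}$ and apply Lemma~\ref{lem:chern_comm} directly with $X$ the local ring, $Y$ the closed point. Remark~\ref{rmk:chern_comm} is exactly what pins down the boundary behaviour: for $d=1$ and trivial conormal bundle (which holds after localizing at a regular point, where the maximal ideal is principal) it gives $\alpha_{i,1} = \chern_{i,0}$ and $\alpha_{i,2} = -c_{i,1}$. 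Thus for $i=0$ the relevant instance of diagram~(\ref{eqn:chern_comm}) identifies the boundary $K_1(Z_{(0)}) \to K_0(Y)$ with $H^1_\cM(Z_{(0)},\Z(1)) \to H^0_\cM(Y,\Z(0))$ via $\chern_{1,1}$ on the left and $\chern_{0,0}$ on the right — up to the sign/normalization recorded in the Remark — and for $i=1$ it identifies $K_2(Z_{(0)}) \to K_1(Y)$ with $\partial$ on $H^\bullet_\cM$ via $c_{2,2}$ and $c_{1,1}$ (again with the sign from $\alpha_{i,2} = -c_{i,1}$, which is harmless since both localization sequences carry it consistently). Summing over the finitely many points of $Z_{(1)}$ gives the claim in the reduced case.

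The third step is to remove the reducedness hypothesis on $Z_{(1)}$. Since $Z_{(1)}$ is a zero-dimensional scheme, both $G$-theory and motivic cohomology are insensitive to nilpotents: $G_i(Z_{(1)}) \cong G_i((Z_{(1)})_\red)$ and $H^\bullet_\cM(Z_{(1)},\Z(\bullet)) \cong H^\bullet_\cM((Z_{(1)})_\red,\Z(\bullet))$, and the Chern classes $\chern_{0,0}$, $c_{1,1}$ are compatible with these identifications (the rank map and the symbol map both factor through reduction). One then needs that the two localization-sequence boundary maps are compatible with passing from $Z_{(1)}$ to its reduction; for the $G$-theory side this is standard dévissage, and for the higher Chow side it follows from the construction of Bloch's localization sequence together with the homotopy-invariance identification $\CH^\bullet(Z_{(1)},\bullet) \cong \CH^\bullet((Z_{(1)})_\red,\bullet)$. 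I expect \textbf{this reduction to the non-reduced case to be the main obstacle}: all the structural input (Lemma~\ref{lem:chern_comm}, Remark~\ref{rmk:chern_comm}) is phrased for \emph{essentially smooth} closed subschemes, so the honest content of the proof is checking that the low-degree Chern characters and the two boundary maps are genuinely compatible with the nil-immersion $(Z_{(1)})_\red \hookrightarrow Z_{(1)}$, and that one may always arrange the conormal sheaf to be trivial after a suitable localization so that Remark~\ref{rmk:chern_comm} applies. Once that bookkeeping is done, commutativity of the square in the statement follows by chasing the two long exact sequences.
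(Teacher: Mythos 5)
There is a genuine gap, and you have also misdiagnosed where the difficulty lies. By construction $Z_{(1)}$ is already endowed with the reduced scheme structure (the paper defines it that way, after first reducing to the case $Z$ reduced), so your third step is vacuous. The real obstacle, which your proposal does not address, is that the curve $Z$ itself may be \emph{singular} precisely at the points of $Z_{(1)}$: the boundary maps in both rows of the diagram come from the localization sequences for the closed-open decomposition $Z = Z_{(1)} \sqcup Z_{(0)}$, so the ambient scheme is $Z$ (or a localization of it), not a smooth compactification of $Z_{(0)}$. Your step 2 proposes to ``localize near a point of $Z_{(1)}$ and apply Lemma~\ref{lem:chern_comm} with $X$ the local ring, $Y$ the closed point''; but that local ring is $\cO_{Z,z}$, which need not be regular, whereas Lemma~\ref{lem:chern_comm} requires $X$ to be a localization of a smooth quasi-projective $\F_q$-scheme and $Y$ to be essentially smooth. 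Likewise the parenthetical ``after localizing at a regular point, where the maximal ideal is principal'' assumes away exactly the case the lemma has to handle.

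The paper's proof resolves this by passing to the normalization $\wt{Z}$ of $Z$, which is a regular one-dimensional scheme over a perfect field and hence smooth; writing $\wt{Z}_{(0)} = Z_{(0)}\times_Z\wt{Z} \cong Z_{(0)}$ and $\wt{Z}_{(1)} = (Z_{(1)}\times_Z\wt{Z})_\red$, the pair $(\wt{Z},\wt{Z}_{(1)})$ satisfies the hypotheses of Lemma~\ref{lem:chern_comm}, and the diagram for $\wt{Z}$ therefore commutes. One then compares the square for $Z$ with the square for $\wt{Z}$ via the push-forward maps along $\wt{Z}\to Z$; since the left vertical comparison $K_{i+1}(\wt{Z}_{(0)}) \to K_{i+1}(Z_{(0)})$ (and its motivic counterpart) is an isomorphism, commutativity descends from $\wt{Z}$ to $Z$. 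Your proposal would become correct if, in place of the ad hoc localization argument, you inserted this normalization step (or some equivalent desingularization of $Z$ near $Z_{(1)}$) to make the ambient scheme smooth before invoking Lemma~\ref{lem:chern_comm}.
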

\begin{proof}
Let $\wt{Z}$ denote the normalization of $Z$.
We write $\wt{Z}_{(0)} = Z_{(0)} \times_Z \wt{Z}
(\cong Z_{(0)})$ and 
$\wt{Z}_{(1)} = (Z_{(1)} \times_Z \wt{Z})_\red$.
Comparing diagrams
$$
\begin{array}{ccc}
K_{i+1}(\wt{Z}_{(0)}) & \to & K_i(\wt{Z}_{(1)}) \\
\downarrow & & \downarrow \\
K_{i+1}(Z_{(0)}) & \to & K_i(Z_{(1)})
\end{array}
\text{ and }
\begin{array}{ccc}
H^{i+1}_\cM(\wt{Z}_{(0)},\Z(i+1)) & \to 
& H^i_\cM(\wt{Z}_{(1)},\Z(i)) \\
\downarrow & & \downarrow \\
H^{i+1}_\cM(Z_{(0)},\Z(i+1)) & \to 
& H^i_\cM(Z_{(1)},\Z(i))
\end{array}
$$
reduces us to proving the same claim 
for $\wt{Z}_{(0)}$ and $\wt{Z}_{(1)}$.
This then follows from Lemma~\ref{lem:chern_comm}.
% The compatibility of localization and Chern class maps 
% for this case is shown in \cite{Levine}.
% In III.1.5.
\end{proof}

We define $\chern'_{1,1}$ to be the opposite
of the composition
$$
\begin{array}{l}
G_1(Z) \to \Ker[K_1(Z_{(0)}) \to K_0(Z_{(1)})] \\
\xto{c_{1,1}}
\Ker[H^1_{\cM}(Z_{(0)},\Z(1))\to H^0_\cM(Z_{(1)},\Z(0))]
\cong H^1_\cM (Z,\Z(1)).
\end{array}
$$

Next, we define $\chern'_{1,2}$ when $Z$ is connected. 
If $Z$ is not proper, then $H^3_\cM(Z,\Z(2))$ is zero
by Lemma~\ref{prop:AppB_main}. We set $\chern'_{1,2}=0$
in this case. If $Z$ is proper, then the push-forward map
$$H^3_\cM(Z,\Z(2)) \to H^1_\cM(\Spec\, H^0(Z,\cO_Z),\Z(1))
\cong K_1(\Spec\, H^0(Z,\cO_Z))$$ 
is an isomorphism by
Lemma~\ref{prop:AppB_main}. We define
$\chern'_{1,2}$ to be $(-1)$-times the composition
$$
G_1(Z) \to K_1(\Spec\, H^0(Z,\cO_Z)) 
\cong  H^3_\cM(Z,\Z(2)).
$$
Next, we define $\chern'_{1,2}$ for non-connected $Z$ 
to be the direct sum of
$\chern'_{1,2}$ for each connected component of $Z$.

Observe that the group $G_0(Z)$ is generated by
the two subgroups 
\[M_1=\Image[K_0(Z_{(1)}) \to G_0(Z)] \text{\, and \,}
M_2=\Image[K_0(\wt{Z}) \to G_0(Z)]. 
\]
Using Lemma~\ref{lem:Z} and 
the localization sequences,
the isomorphism $\chern_{0,0}:K_0(Z_{(1)})
\xto{\cong} H^0_\cM(Z_{(1)},\Z(0))$ induces 
a homomorphism
$\chern'_{0,1} : M_1 \to H^2_\cM(Z,\Z(1))$.
The kernel of $K_0(\wt{Z}) \to G_0(Z)$
is contained in the image of 
$K_0(\wt{Z}_{(1)}) \to K_0(\wt{Z})$. 
It can then be easily verified that the composition
$$
K_0(\wt{Z}_{(1)})
\to K_0(\wt{Z}) \xto{c_{0,1}} 
H^2_\cM(\wt{Z},\Z(1)) \to H^2_\cM(Z,\Z(1))
$$
equals the composition
$$
K_0(\wt{Z}_{(1)})
\to K_0(Z_{(1)}) \surj M_1 \xto{\chern'_{0,1}}
H^2_\cM(Z,\Z(1)).
$$
Given the above, the homomorphism $c_{0,1}: K_0(\wt{Z}) \to 
H^2_\cM(\wt{Z},\Z(1))$ induces a homomorphism
$\chern'_{0,1} : M_2 \to H^2_\cM(Z,\Z(1))$ 
such that the two homomorphisms
$\chern'_{0,1}: M_i \to H^2_\cM(Z,\Z(1))$, $i=1,2$,
coincide on $M_1 \cap M_2$. Thus, we obtain 
a homomorphism $\chern'_{0,1}:G_0(Z) \to H^2_\cM(Z,\Z(1))$.

Next, we observe that the four 
homomorphisms $\chern'_{0,0}$,
$\chern'_{0,1}$, $\chern'_{1,1}$, and $\chern'_{1,2}$ 
do not depend on the choice of $Z_{(0)}$. 
% Hence they are canonical homomorphisms.
\begin{prop}\label{prop:Z}
The homomorphism (\ref{eqn:Z}) for $i=0,1$ is an isomorphism.
\end{prop}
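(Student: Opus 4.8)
The plan is to reduce, as the text has begun to do, to $Z$ reduced (and, by additivity of all the groups and maps involved, connected), and then to run a five lemma comparing the $G$-theory localization sequence of the pair $Z_{(1)}\subset Z\supset Z_{(0)}$ with a spliced motivic version of it.

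First I would record the two pieces. On the $0$-dimensional $Z_{(1)}$ one has $G_i(Z_{(1)})=\bigoplus_z K_i(\kappa(z))$ (Quillen), the Chern class maps identify this with $\bigoplus_z H^i_\cM(\Spec\,\kappa(z),\Z(i))$, and $H^{i+2}_\cM(\Spec\,\kappa(z),\Z(i+1))=0$ for $i=0,1$ since the motivic cohomology of a field vanishes above the diagonal; so $(\chern'_{i,i},\chern'_{i,i+1})$ is an isomorphism on $Z_{(1)}$. On the smooth affine curve $Z_{(0)}$, the compatible Gersten resolutions for $K$-theory (Quillen) and for higher Chow groups (Bloch), via Lemma~\ref{lem:chern_comm}, together with the fact that $c_{n,n}$ is an isomorphism onto the $n$-th motivic cohomology of a field for $n\le 2$, show that $(\chern_{0,0},c_{0,1})$, $c_{1,1}$ and $c_{2,2}$ are isomorphisms $K_0(Z_{(0)})\cong H^0_\cM(Z_{(0)},\Z(0))\oplus H^2_\cM(Z_{(0)},\Z(1))$, $K_1(Z_{(0)})\cong H^1_\cM(Z_{(0)},\Z(1))$ and $K_2(Z_{(0)})\cong H^2_\cM(Z_{(0)},\Z(2))$ (in particular $SK_1(Z_{(0)})=0$), while $H^3_\cM(Z_{(0)},\Z(2))=0$ by Lemma~\ref{prop:AppB_main} since $Z_{(0)}$ is not proper.

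Next I would splice the weight-$i$ and weight-$(i+1)$ localization sequences of higher Chow groups for $Z_{(1)}\subset Z\supset Z_{(0)}$, using the identification of the local term $H^a_{\cM,Z_{(1)}}(Z,\Z(j))$ with $H^{a-2}_\cM(Z_{(1)},\Z(j-1))$ (Bloch's moving lemma, $Z_{(1)}$ being of pure codimension one in $Z$) and the vanishings above — in particular $H^3_\cM(Z_{(0)},\Z(2))=0$, which makes the Gysin map $\bigoplus_z\kappa(z)^\times=H^1_\cM(Z_{(1)},\Z(1))\to H^3_\cM(Z,\Z(2))$ surjective. This gives an exact sequence
\[
\cdots\to K_{i+1}(Z_{(0)})\to G_i(Z_{(1)})\to H^i_\cM(Z,\Z(i))\oplus H^{i+2}_\cM(Z,\Z(i+1))\to K_i(Z_{(0)})\to G_{i-1}(Z_{(1)})\to\cdots
\]
(the $Z_{(0)}$- and $Z_{(1)}$-terms having been rewritten via the isomorphisms above), which I would fit into a commutative ladder with the $G$-theory localization sequence, the vertical maps being the Chern characters: the squares at the two ends and those carrying the boundary maps into and out of $H^\bullet_\cM(Z_{(0)},\Z(\bullet))$ commute by Lemma~\ref{lem:Z}, and the square relating $G_i(Z_{(1)})\to G_i(Z)$ to the Gysin map into $H^{i+2}_\cM(Z,\Z(i+1))$ commutes by the definition of $\chern'_{i,i+1}$ — for $i=0$ it is induced by the cycle map out of $H^0_\cM(Z_{(1)},\Z(0))$, and for $i=1$ one uses that $\chern'_{1,2}$ is, up to sign, $G_1(Z)\to K_1(\Spec\,\cO(Z))$ followed by the inverse of the pushforward isomorphism of Lemma~\ref{prop:AppB_main}, so that functoriality of pushforward along $Z_{(1)}\to Z\to\Spec\,\cO(Z)$ (the norm maps on $K_1$) yields the commutativity. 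The five lemma then applies: the comparison is an isomorphism at $G_i(Z_{(1)})$, $G_{i-1}(Z_{(1)})$ and $K_i(Z_{(0)})$, and an epimorphism at $K_{i+1}(Z_{(0)})$ — it is $c_{1,1}=\det$, which is surjective, for $i=0$, and the isomorphism $c_{2,2}$ for $i=1$ — hence it is an isomorphism at $G_i(Z)$, which is the assertion.

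The hard part will be the bookkeeping around the spliced motivic sequence: verifying its exactness and the commutativity of every square in the ladder, above all the square matching the \emph{ad hoc} map $\chern'_{1,2}$ with the motivic Gysin boundary, where Lemma~\ref{prop:AppB_main} (both its vanishing statement for $Z_{(0)}$ and its computation of $H^3_\cM(Z,\Z(2))$) is used essentially and where one must track the signs coming from the normalizations of the various Chern class maps.
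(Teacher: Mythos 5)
Your proof is correct and arrives at the statement by a genuinely different organization than the paper's. The paper runs two separate kernel--image analyses: for $G_1(Z)$ it shows directly that $\chern'_{1,1}$ is surjective with kernel equal to $\Image[K_1(Z_{(1)})\to G_1(Z)]$, then uses Lemma~\ref{lem:Z} and the [BMS] isomorphisms to identify that kernel with $H^3_\cM(Z,\Z(2))$ under $\chern'_{1,2}$; and for $G_0(Z)$ it similarly traces images through $M_1$, $M_2$, and $\wt Z$. You instead set up the $G$-theory localization sequence for $Z_{(1)}\subset Z\supset Z_{(0)}$ against a ``spliced'' direct sum of the weight-$i$ and weight-$(i+1)$ localization sequences of higher Chow groups, in which the various vanishings (in particular $H^3_\cM(Z_{(0)},\Z(2))=0$, $H^2_\cM(Z_{(1)},\Z(1))=0$, $H^0_\cM(Z_{(0)},\Z(1))=0$) make the two weights contribute at disjoint slots, and then invoke the five lemma. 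Both approaches use the same hard inputs --- Lemma~\ref{lem:Z}, Lemma~\ref{prop:AppB_main}, the [BMS] isomorphism $c_{1,1}$ for a smooth affine curve over a finite field, and $c_{2,2}$ being an isomorphism --- but where the paper verifies a pair of concrete claims, you verify a ladder of commuting squares. Your packaging is more systematic and makes the outer terms explicit (which may be reassuring, and would scale to analogous statements), at the cost of having to commute every square; you correctly identify the delicate one as the Gysin square for $\chern'_{1,2}$, whose commutativity rests on functoriality of pushforward along $Z_{(1)}\to Z\to\Spec\,\cO(Z)$ compared through $\alpha_Z$ of Lemma~\ref{prop:AppB_main}, and on careful sign-tracking through the normalizations of $\chern'_{1,1}$ and $\chern'_{1,2}$. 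That compatibility is implicit in the paper's chain of isomorphisms as well, so neither proof is really lighter here; but the content is the same, and your argument is sound.
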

\begin{proof}
It follows from \cite[Corollary 4.3, p.~95]{BMS} that 
the Chern class map $c_{1,1}:K_1(Z_{(0)})\to H^1_{\cM}(Z_{(0)},\Z(1))$
is an isomorphism, hence by construction, 
$\chern'_{1,1}$ is surjective and its kernel equals 
the image of $K_1(Z_{(1)}) \to G_1(Z)$.
It follows from the vanishing of $K_2$ groups of finite fields
that the homomorphism $c_{2,2}:
K_2(Z_{(0)}) \to H^2_\cM(Z_{(0)},\Z(2))$ is an isomorphism. 
We then have isomorphisms
$$
\begin{array}{rl}
& \Image[K_1(Z_{(1)}) \to G_1(Z)] \\
\cong & \Image [H^1_\cM(Z_{(1)},\Z(1)) 
\to H^3_\cM(Z,\Z(2))] \cong  H^3_\cM(Z,\Z(2)),
\end{array}
$$
the first of which is by Lemma~\ref{lem:Z},  
the second by \cite[Corollary 4.3, p.~95]{BMS}.
Therefore, the composition
$\Ker\, \chern'_{1,1} \inj G_1(Z) \xto{\chern'_{1,2}} 
H^3_\cM(Z,\Z(2))$ is an isomorphism. 
This proves the claim for $G_1(Z)$.

By the construction of $\chern'_{0,1}$, the image of
$\chern'_{0,1}$ contains the image of 
$H^0_\cM(Z_{(1)},\Z(0)) \to H^2_\cM(Z,\Z(1))$,
and the composition $K_0(\wt{Z}) \to G_0(Z)
\xto{\chern'_{0,1}} H^2_\cM(Z,\Z(1)) \to 
H^2_\cM(Z_{(0)},\Z(1))$ equals the composition
\[
K_0(\wt{Z}) \to K_0(\wt{Z}_{(0)})
\cong K_0(Z_{(0)}) \xto{c_{0,1}} 
H^2_{\cM}(Z_{(0)},\Z(1)).
\]
The above implies that
$\chern'_{0,1}$ is surjective and the homomorphism
$$
\Ker\, \chern'_{0,1} \to \Ker[K_0(Z_{(0)}) 
\xto{c_{0,1}} H^2_{\cM}(Z_{(0)},\Z(1))]
$$
is an isomorphism.
This proves the claim for $G_0(Z)$.
\end{proof}

\section{$K$-groups and motivic cohomology of 
curves over a function field}\label{sec:KgpsofCurves}
Please note that 
the last paragraph of Section~\ref{sec:intro Chern} 
presents 
an overview of the contents of this section.

In this section, we focus on the following setup.
Let $C$ be a smooth projective geometrically connected
curve over a finite field $\F_q$. Let $k$ denote the function
field of $C$. Let $X$ be a smooth projective 
geometrically connected curve over $k$.
Let $\cX$ be a regular model of $X$, which is proper and
flat over $C$.

From the computations of the motivic cohomology of a surface with a fibration
(e.g., $\cX$ or $\cX$ with some fibers removed),
we deduce some results concerning the $K$-groups of low degrees of 
the generic fiber $X$.  We relate the two using the Chern class maps and 
by taking the limit.

\begin{lem}\label{lem:c12}
The map
$$
K_1(X) \xto{(c_{1,1},c_{1,2})} k^\times \oplus 
H^3_\cM(X,\Z(2))
$$
is an isomorphism. 
The group $H^4_\cM(X,\Z(3))$ is a torsion group
and there exists a canonical short exact sequence
$$
0 \to H^4_\cM(X,\Z(3)) \xto{\beta}
K_2(X) \xto{c_{2,2}}H^2_\cM(X,\Z(2)) \to 0,
$$
such that the composition $c_{2,3} \circ \beta$
equals the multiplication-by-$2$ map.
\end{lem}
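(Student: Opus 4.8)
The plan is to realise the generic fibre as a cofiltered limit $X=\varprojlim_{U}\cX_U$ of the smooth quasi-projective surfaces $\cX_U=\cX\times_C U$ over $\F_q$, where $U$ runs over the non-empty open affine subschemes of $C$ (none of whose irreducible components is projective over $\F_q$); the transition maps are affine, so $K_i(X)=\varinjlim_U K_i(\cX_U)$ and $H^i_\cM(X,\Z(j))=\varinjlim_U H^i_\cM(\cX_U,\Z(j))$, compatibly with Chern classes. I would then work with $X$ directly as a smooth curve over $k$ and combine the Quillen localisation sequence for the generic point $\Spec k(X)\hookrightarrow X$ (each closed point is regular, so $G$-theory there is $K$-theory)
\[
\bigoplus_{x}K_3(\kappa(x))\to K_3(k(X))\xto{\partial_3}\bigoplus_{x}K_2(\kappa(x))\to K_2(X)\to K_2(k(X))\xto{\partial_2}\bigoplus_x\kappa(x)^\times\to K_1(X)\to k(X)^\times\xto{\mathrm{div}}\bigoplus_x\Z
\]
with the Gersten--Bloch--Ogus description of motivic cohomology. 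Since $\mathrm{div}(f)=0$ forces $f\in\cO(X)^\times=k^\times$ ($X$ is proper and geometrically connected over $k$), one obtains
\[
0\to\Coker\partial_2\to K_1(X)\to k^\times\to0,\qquad
0\to\Coker\partial_3\to K_2(X)\to\Ker\partial_2\to0 .
\]

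The next step is to identify the motivic groups. On a curve the Gersten resolution of $\mathcal{H}^{q}(\Z(j))$ has length one, so the coniveau spectral sequence degenerates and $H^{n}_\cM(X,\Z(j))$ is an extension of $\Ker\!\bigl(H^{n}_\cM(k(X),\Z(j))\to\bigoplus_xH^{n-1}_\cM(\kappa(x),\Z(j-1))\bigr)$ by $\Coker\!\bigl(H^{n-1}_\cM(k(X),\Z(j))\to\bigoplus_xH^{n-2}_\cM(\kappa(x),\Z(j-1))\bigr)$. Using $H^3_\cM(k(X),\Z(2))=0$ and $H^2_\cM(F,\Z(2))=K^M_2(F)=K_2(F)$ (Matsumoto), together with the fact that the residue maps of the two localisation sequences agree, the case $(n,j)=(3,2)$ gives $H^3_\cM(X,\Z(2))=\Coker\partial_2$, so the first sequence already reads $0\to H^3_\cM(X,\Z(2))\to K_1(X)\to k^\times\to0$. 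The case $(n,j)=(2,2)$, with $H^0_\cM(\kappa(x),\Z(1))=0$, gives $H^2_\cM(X,\Z(2))=\Ker\partial_2$. The case $(n,j)=(4,3)$, with $H^4_\cM(k(X),\Z(3))=0$ and the Nesterenko--Suslin and Totaro identification $H^3_\cM(F,\Z(3))=K^M_3(F)$, exhibits $H^4_\cM(X,\Z(3))$ as $\Coker\bigl(K^M_3(k(X))\to\bigoplus_xK^M_2(\kappa(x))\bigr)$, hence as a torsion group; the point that $H^4_\cM(X,\Z(3))=\Coker\partial_3$ is the delicate one (see below).

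For the splittings and the Chern-class statements I would argue as follows. The natural map $\cO(X)^\times=k^\times\to K_1(X)$ splits the first sequence, $c_{1,1}$ becomes the resulting retraction, and $c_{1,2}$ kills units; by Lemma~\ref{lem:chern_comm} and Remark~\ref{rmk:chern_comm} applied to the codimension-one closed immersion $\coprod_x\Spec\kappa(x)\hookrightarrow X$ (conormal sheaf trivial; carried out on the $\cX_U$ and then passed to the limit), $c_{1,2}$ restricted to the subgroup $H^3_\cM(X,\Z(2))=\Image\iota_*$ equals $(-1)^{2-1}(2-1)!=-1$ times the canonical identification. Hence $(c_{1,1},c_{1,2})$ is block triangular with isomorphisms on the diagonal, so it is an isomorphism. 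For $K_2(X)$ the same compatibility gives $c_{2,2}\circ\iota_*=\iota_*\circ\alpha_{2,2}$ with $\alpha_{2,2}$ valued in $H^0_\cM(\kappa(x),\Z(1))=0$, so $c_{2,2}$ annihilates $\Coker\partial_3$ and factors through $\Ker\partial_2=H^2_\cM(X,\Z(2))$, on which it is $-1$ times the identity (a direct check on symbols, using that second Chern classes of products are minus the cup product); thus $c_{2,2}$ is the desired surjection. Likewise $c_{2,3}\circ\iota_*=\iota_*\circ\alpha_{2,3}$ with $\alpha_{2,3}(a)=c_{2,1}(a)^2-2\,c_{2,2}(a)=2a$ (second Newton polynomial; $c_{2,1}$ is valued in $H^0_\cM(\kappa(x),\Z(1))=0$), so defining $\beta$ as $H^4_\cM(X,\Z(3))=\Coker\partial_3\xrightarrow{\iota_*}K_2(X)$ yields $c_{2,3}\circ\beta=2\cdot\mathrm{id}$ and the second sequence becomes $0\to H^4_\cM(X,\Z(3))\xrightarrow{\beta}K_2(X)\xrightarrow{c_{2,2}}H^2_\cM(X,\Z(2))\to0$.

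The step I expect to be the main obstacle is the identification $\Coker\partial_3\cong H^4_\cM(X,\Z(3))$ used above, equivalently the claim that $\partial_3\colon K_3(k(X))\to\bigoplus_xK_2(\kappa(x))$ and its Milnor counterpart $\partial_3^{M}\colon K^M_3(k(X))\to\bigoplus_xK^M_2(\kappa(x))$ have the same image --- that the residues of the \emph{indecomposable} part of $K_3(k(X))$ contribute nothing new. Here $k(X)$ is the function field of a surface over $\F_q$, so $K_3(k(X))$ is neither divisible nor otherwise easy to control, and one cannot argue naively. My plan is to reduce this to the vanishing of the motivic residue $H^1_\cM(k(X),\Z(2))\to H^0_\cM(\kappa(x),\Z(1))=0$: the indecomposable quotient $K_3(k(X))/K^M_3(k(X))$ is, up to the Nesterenko--Suslin/Totaro comparison, of $H^1_\cM(-,\Z(2))$-type, and the $K$-theoretic residue is compatible with the weight filtration and with the motivic residue through Lemma~\ref{lem:chern_comm}, hence must send this quotient into $\Image\partial_3^{M}$. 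Making this compatibility precise --- possibly with extra care at the prime $2$ --- is the technical heart; the structural results of Theorem~\ref{thm:motfin} for the surfaces $\cX_U$ and the control of $H^1_\cM$ of the function field are what I would use to finish.
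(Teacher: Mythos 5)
Your overall strategy coincides with the paper's: run the two localization sequences (for $K$-theory and for motivic cohomology) with respect to $\Spec k(X)\hookrightarrow X$, identify the relevant terms for fields via Matsumoto and Nesterenko--Suslin/Totaro, and use Lemma~\ref{lem:chern_comm} and Remark~\ref{rmk:chern_comm} (applied on the surfaces $\cX_U$ and passed to the limit) to get the sign factors $-1$, $-1$, and $2$ on the relevant Chern class / Gysin composites. Your derivation of $K_1(X)$, the two short exact sequences, and the sign bookkeeping for $c_{1,2}$, $c_{2,2}$, and $c_{2,3}\circ\beta$ are all in line with the paper's diagram chase.

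The gap is exactly where you flag it, but your proposed repair does not close it. You need to know that the Quillen boundary $\partial_3\colon K_3(k(X))\to\bigoplus_x K_2(\kappa(x))$ and the Milnor boundary $\partial_3^M\colon K_3^M(k(X))\to\bigoplus_x K_2^M(\kappa(x))$ have the same \emph{image}, so that $\Coker\partial_3\cong\Coker\partial_3^M\cong H^4_\cM(X,\Z(3))$. Your plan is to invoke compatibility of the $K$-theory residue with the weight filtration through Lemma~\ref{lem:chern_comm}. But Lemma~\ref{lem:chern_comm} gives the commutative square
\[
\partial_3^M\circ c_{3,3}=\alpha_{2,3}\circ\partial_3=2\,\partial_3,
\]
and this only tells you $\operatorname{Im}(\partial_3^M\circ c_{3,3})=2\operatorname{Im}(\partial_3)$. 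Since $c_{3,3}$ composed with the Milnor-to-Quillen map is $\pm 2$ rather than an isomorphism, and since the graded pieces of the $\gamma$-filtration are only controlled rationally, the extra factor of $2$ and the torsion in $K_3^{\mathrm{ind}}(k(X))$ are genuine obstructions to extracting equality of images from this square. The ``extra care at the prime $2$'' you anticipate is real and is not resolved by the weight-filtration argument as stated. The paper closes the gap by a direct citation: Merkurjev--Suslin \cite[Proposition~11.11]{Me-Su2}, which identifies the cokernel of the Quillen boundary with the cokernel of the Milnor boundary. Without that input (or an equivalent integral statement comparing the two residue maps), your argument only pins down $H^4_\cM(X,\Z(3))$ and $\Coker\partial_3$ up to $2$-torsion, which is not enough for the integral short exact sequence you are after.

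Two small remarks: (i) you state the assertion $H^4_\cM(X,\Z(3))$ is torsion as a consequence of the $\Coker\partial_3$ identification, but in the paper it is cheaper to observe directly that $H^2_\cM(\kappa(x),\Z(2))=K_2^M(\kappa(x))$ is torsion for $\kappa(x)$ a global field, so the cokernel is torsion \emph{before} the Quillen/Milnor comparison; (ii) your identification of $H^3_\cM(X,\Z(2))$ and $H^2_\cM(X,\Z(2))$ via the Gersten complex is fine and is equivalent to the paper's direct comparison of localization sequences, but it relies on the same Lemma~\ref{lem:chern_comm} compatibility that you already invoke, so it does not produce a genuinely distinct argument.
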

\begin{proof}
Let $X_0$ denote the set of closed points of $X$.
Let $\kappa(x)$ denote the residue field at $x \in X_0$.
We construct a commutative diagram by connecting 
the localization sequence
$$
\begin{array}{rl}
& \bigoplus_{x\in X_0} K_2(\kappa(x)) 
\to K_2(X) \to 
K_2(k(X))  \\
\to & \bigoplus_{x\in X_0} 
K_1(\kappa(x)) \to K_1(X) \to K_1(k(X)) 
\to \bigoplus_{x \in X_0} K_0(x)
\end{array}
$$
with the localization sequence
\begin{small}
$$
\begin{array}{rl}
& \bigoplus_{x\in X_0} H^0_\cM(\Spec\, \kappa(x),\Z(1)) \to 
H^2_\cM(X,\Z(2)) \to H^2_\cM(\Spec\, k(X),\Z(2)) \\
\to & \bigoplus_{x\in X_0} H^1_\cM(\Spec\, \kappa(x),\Z(1)) 
\to H^3_\cM(X,\Z(2))
\to H^3_\cM(\Spec\, k(X), \Z(2)),\end{array}
$$
\end{small}using the Chern class maps. Since 
$H^0_\cM(\Spec\, \kappa(x),\Z(1))=0$ 
and the $K$-groups and motivic cohomology 
groups of fields agree in low degrees, 
the claim for $K_1(X)$ follows from diagram chasing.

It also follows from diagram chasing that
$$
K_3(k(X)) \to \bigoplus_{x\in X_0} K_2(\kappa(x)) 
\to K_2(X) \xto{c_{2,2}} H^2_\cM(X,\Z(2)) \to 0
$$
is exact. By \cite[Theorem 4.9, p.~143]{Ne-Su} and \cite[Theorem 1, p.~181]{To},
the groups $H^3_\cM(\Spec\, k(X),\Z(3))$ and $H^2_\cM(\Spec\, \kappa(x),\Z(2))$
for each $x \in X_0$ are isomorphic to
the Milnor $K$-groups $K^M_3(k(X))$ and
$K^M_2(\kappa(x))$, respectively. 
From the definition of these isomorphisms
in \cite{To}, 
the boundary map
$H^3_\cM(k(X),\Z(3))\to
H^2_\cM(\Spec\, \kappa(x),\Z(2))$
is identified under these isomorphisms 
with the boundary map $K^M_3(k(X))
\to K^M_2(\kappa(x))$.
Given the above,
by \cite[Proposition 11.11, p.~562]{Me-Su2}, 
we obtain
the first of the two isomorphisms
$$
\begin{array}{rl}
& \Coker[K_3(k(X)) \to \bigoplus_{x\in X_0} K_2(\kappa(x))] \\
\xto{\cong} &
\Coker[H^3_\cM(k(X),\Z(3)) \to 
\bigoplus_{x\in X_0} H^2_\cM(\Spec\, \kappa(x),\Z(2))] \\
\xto{\cong} &
H^4_\cM(X,\Z(3)).
\end{array}
$$
This gives us the desired short exact sequence.
The identity $c_{2,3}\circ \beta =2$ follows from
Remark~\ref{rmk:chern_comm}.
Since
$H^2_\cM(\Spec\, \kappa(x),\Z(2))$ is a torsion group for each $x \in X_0$,
the group $H^4_\cM(X,\Z(3))$ is also a torsion group.
This completes the proof.
\end{proof}

\begin{lem}\label{lem:c11}\label{lem:aux1}
Let $U \subset C$ be a non-empty open subscheme.
We use $\cX^U$ to denote the complement
$\cX \setminus \cX \times_C U$ 
with the reduced scheme structure.
Then, for $(i,j)=(0,0)$, $(0,1)$ or $(1,1)$,
the diagram
\begin{equation}\label{eqn:aux1}
\begin{CD}
K_{i+1}(X) @>>> G_i(\cX^U) \\
@V{c_{i+1,j+1}}VV 
% @VV{
% \genfrac{}{}{0mm}{}{-c'_{i,1}}{(\text{\resp }\chern'_{0,0})}}V \\
@V{(-1)^j \chern'_{i,j}}VV \\
H^{2j -i+1}_\cM(X,\Z(j+1)) @>>> 
H^{2j -i}_\cM(\cX^U,\Z(j)),
\end{CD}
\end{equation}
where each horizontal arrow is a part of the localization sequence, 
is commutative. 
\end{lem}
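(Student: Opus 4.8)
The plan is to reduce the statement, one closed fibre at a time, to the smooth situation treated by Lemma~\ref{lem:chern_comm}, using the explicit construction of $\chern'_{i,j}$ recalled in Section~\ref{sec:def chern sing}. First I would observe that, as a scheme, $\cX^U$ is the disjoint union, over the closed points $\wp$ of $C$ not in $U$, of the reduced fibres $(\cX_\wp)_\red$, distinct closed fibres of $f$ being disjoint; since $G$-theory and motivic cohomology ignore nilpotents and since $\chern'_{i,j}$ was defined one connected component at a time, it suffices to fix such a $\wp$ and prove the commutativity of the square whose top row is the boundary $\partial^{i+1}_\wp\colon K_{i+1}(X)\to G_i(\cX_\wp)$, whose left column is $c_{i+1,j+1}$, whose bottom row is the induced boundary $H^{2j-i+1}_\cM(X,\Z(j+1))\to H^{2j-i}_\cM(\cX_\wp,\Z(j))$, and whose right column is $(-1)^j\chern'_{i,j}$. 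I would also fix once and for all the dense smooth affine open $\cX_{\wp,(0)}\subseteq(\cX_\wp)_\red$ entering the construction, with its finite reduced complement $\cX_{\wp,(1)}$.

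The central step is to localize onto $\cX_{\wp,(0)}$. As $\cX$ is regular over the perfect field $\F_q$, it is smooth over $\F_q$. Choosing an open $\mathcal A\subseteq\cX$ that avoids every closed fibre other than $\cX_\wp$ and meets $\cX_\wp$ in a subscheme with reduction $\cX_{\wp,(0)}$, and setting $\mathcal A^{\circ}=\mathcal A\times_C\Spec\cO_{C,\wp}$ --- which is, Zariski-locally, a localization of the smooth quasi-projective $\F_q$-scheme $\mathcal A$, so that Lemma~\ref{lem:chern_comm} applies to it --- the subscheme $\cX_{\wp,(0)}$ becomes (after passing to reductions) closed of pure codimension one in $\mathcal A^{\circ}$, with complement a dense open $X'\subseteq X$ and with conormal sheaf $\fram_\wp/\fram_\wp^{2}\otimes_{\kappa(\wp)}\cO_{\cX_{\wp,(0)}}$, which is a trivial line bundle. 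By functoriality of the localization triangles of $G$-theory and of higher Chow groups under the open immersion $\mathcal A^{\circ}\hookrightarrow\cX\times_C\Spec\cO_{C,\wp}$, together with d\'evissage to replace $\cX_\wp$ by $(\cX_\wp)_\red$, the maps $\partial^{i+1}_\wp$ and its motivic counterpart, composed with restriction to $\cX_{\wp,(0)}$, are identified with the boundary maps of the localization sequence of the pair $(\mathcal A^{\circ},\cX_{\wp,(0)})$. Applying Lemma~\ref{lem:chern_comm} to this pair with the index pair $(i+1,j+1)$ and $d=1$, and invoking Remark~\ref{rmk:chern_comm} --- which (the conormal bundle being trivial) computes the relevant polynomials as $\alpha_{0,1}=\chern_{0,0}$, $\alpha_{0,2}=-c_{0,1}$ and $\alpha_{1,2}=-c_{1,1}$ --- one recovers precisely the maps out of $K_\bullet(\cX_{\wp,(0)})$ from which $\chern'_{i,j}$ was assembled on the smooth locus; keeping track of the signs $(-1)^j$, which are exactly those produced by Remark~\ref{rmk:chern_comm}, the square then commutes after restriction along $\cX_{\wp,(0)}\hookrightarrow\cX_\wp$. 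A harmless passage to the limit over the shrinking opens $X'$ recovers $X$ itself, the target groups $G_i(\cX_\wp)$ and $H^{2j-i}_\cM(\cX_\wp,\Z(j))$ being unaffected.

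It remains to upgrade the equality from $H^{2j-i}_\cM(\cX_{\wp,(0)},\Z(j))$ to $H^{2j-i}_\cM(\cX_\wp,\Z(j))$, and this is where I expect the main obstacle to be. For $(i,j)=(0,0)$ and $(1,1)$ the restriction map is injective --- on $H^0_\cM(\cX_\wp,\Z(0))=H^0_\Zar(\cX_\wp,\Z)$ because $\cX_{\wp,(0)}$ meets every connected component, and on $H^1_\cM(\cX_\wp,\Z(1))=H^0(\cX_\wp,\Gm)$ because $\cX_\wp$ is reduced --- so these cases are then complete. The real difficulty is $(i,j)=(0,1)$: here the restriction $H^2_\cM(\cX_\wp,\Z(1))=\Pic(\cX_\wp)\to\Pic(\cX_{\wp,(0)})$ is merely surjective, its kernel being the image of the Gysin map from $H^0_\cM(\cX_{\wp,(1)},\Z(0))$ --- i.e. the classes of divisors supported on $\cX_{\wp,(1)}$ --- so the two composites $K_1(X)\to\Pic(\cX_\wp)$ could a priori differ by such a class. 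To rule this out I would unwind the construction of $\chern'_{0,1}$ on the two generating subgroups $M_1=\Image[K_0(\cX_{\wp,(1)})\to G_0(\cX_\wp)]$ and $M_2=\Image[K_0(\wt{\cX_\wp})\to G_0(\cX_\wp)]$: on the $M_2$-part the required identity is again an instance of Lemma~\ref{lem:chern_comm}, now applied to the smooth codimension-one inclusion of the preimage of $\cX_{\wp,(0)}$ in the normalization $\wt{\cX_\wp}$; and on the $M_1$-part it follows from Lemma~\ref{lem:Z} together with the identification of the $K$-theoretic and the higher-Chow boundary maps at a closed point with the order of vanishing there --- that is, from the compatibility of the zeroth Chern class (the rank map) with the codimension-one Gysin pushforward, which is exactly what was built into the definition of $\chern'_{0,1}$. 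The delicate point, and the source of the difficulty, is that --- unlike in the smooth case --- the boundary map $\partial^{1}_\wp$ carries no built-in decomposition along $M_1$ and $M_2$: one must reconstruct such a decomposition for boundary classes and verify that $\partial^1_\wp$ and the higher-Chow boundary map are compatible with it. Granting this, the reductions above yield the commutativity of (\ref{eqn:aux1}).
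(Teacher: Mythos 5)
Your treatment of the cases $(i,j)=(0,0)$ and $(1,1)$ is essentially the paper's own argument: restrict along a dense smooth open of the fibre, apply Lemma~\ref{lem:chern_comm} there, and conclude by injectivity of the restriction $H^{2j-i}_\cM(\cX^U,\Z(j))\to H^{2j-i}_\cM(\cX^U_\sm,\Z(j))$ (the paper uses the smooth locus $\cX^U_\sm$ rather than the chosen affine $\cX_{\wp,(0)}$, but this is immaterial since $\chern'_{i,j}$ is independent of the choice). Your extra apparatus ($\mathcal A$, $\mathcal A^\circ$) is a more verbose way of carrying out the same localization.

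The genuine gap is in $(i,j)=(0,1)$, and you have correctly located it --- but your proposed repair does not close it, as you yourself say (``Granting this\ldots''). You try to decompose the \emph{target} $G_0(\cX_\wp)$ along the subgroups $M_1$ and $M_2$ entering the definition of $\chern'_{0,1}$, and then you must contend with the fact that the boundary $\partial^1_\wp\colon K_1(X)\to G_0(\cX_\wp)$ does not respect, or even interact naturally with, that decomposition; no mechanism in the paper supplies the missing compatibility. The paper's proof avoids the issue entirely by decomposing the \emph{source} instead: by Lemma~\ref{lem:c12}, $K_1(X)$ is generated by the image of the push-forwards $\bigoplus_{x\in X_0}K_1(\kappa(x))\to K_1(X)$ together with the image of the pull-back $K_1(k)\to K_1(X)$. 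On pull-back classes both composites vanish --- $c_{1,2}$ factors through $H^3_\cM(\Spec\,k,\Z(2))=0$, and the paper checks $\chern'_{0,1}\circ f^{U*}=0$. On push-forward classes, functoriality of the localization sequences under proper push-forward reduces the claim to the commutativity of the zero-dimensional square
\[
\begin{CD}
K_0(Y) @>>> G_0(\cX^U)\\
@V{\chern_{0,0}}VV @V{\chern'_{0,1}}VV \\
H^0_\cM(Y,\Z(0)) @>>> H^2_\cM(\cX^U,\Z(1))
\end{CD}
\]
for reduced zero-dimensional closed $Y\subset\cX^U$, which is built directly into the construction of $\chern'_{0,1}$ on $M_1$. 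So the decomposition lives on the opposite side of the diagram from yours, and on that side the compatibility with the boundary map is transparent rather than problematic. To complete your argument you would essentially have to rediscover this source decomposition; without it the $M_1/M_2$ approach stalls.
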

\begin{proof}
Let $\cX^U_\sm \subset \cX^U$ denote the smooth locus.
The commutativity of diagram
(\ref{eqn:aux1}) for $(i,j)=(1,1)$ 
(\resp for $(i,j)=(0,0)$)
follows from the commutativity, which follows from
Lemma~\ref{lem:chern_comm}, of the diagram
$$
\begin{CD}
K_{i+1}(X) @>>> K_i(\cX^U_\sm) \\
@V{c_{i+1,j+1}}VV @VV{
\genfrac{}{}{0mm}{}{-c_{1,1}}{(\text{\resp }\chern_{0,0})}}V \\
H^{2j -i+1}_\cM(X,\Z(j+1)) @>>> 
H^{2j -i}_\cM(\cX^U_\sm,\Z(j))
\end{CD}
$$
and the injectivity of 
% the pull-back
$H^{2j -i}_\cM(\cX^U,\Z(j))
\to H^{2j -i}_\cM(\cX^U_\sm,\Z(j))$.

By Lemma~\ref{lem:c12}, the group
$K_1(X)$ is generated by the image of
the push-forward $\bigoplus_{x \in X_0} K_1(\kappa(x))
\to K_1(X)$ and the image of the pull-back 
$K_1(k) \to K_1(X)$. Then, the commutativity
of diagram (\ref{eqn:aux1}) for $(i,j)=(0,1)$
follows from the commutativity of the diagram
$$
\begin{CD}
K_0(Y) @>>> G_0(\cX^U) \\
@V{\chern_{0,0}}VV @V{\chern'_{0,1}}VV \\
H^0_\cM(Y,\Z(0)) @>>> H^2_\cM(\cX^U,\Z(1))
\end{CD}
$$
for any reduced closed subscheme $Y \subset \cX^U$
of dimension zero, where the horizontal arrows are the push-forward
maps by closed immersion, and the fact that the composition
$K_0(C \setminus U) \xto{{f^U}^*} G_0(\cX^U)
\xto{\chern'_{0,1}} H^2_\cM(\cX^U,\Z(1))$ is zero.
Here, $f^U:\cX^U \to C \setminus U$ denotes
the morphism induced by the morphism $\cX \to C$.
\end{proof}

\begin{lem}
The diagram
\begin{equation}\label{eqn:CD_H43}
\begin{small}
\begin{array}{ccccccc}
0 \to &  H^4_\cM(X,\Z(3)) & \to & K_2(X)
& \xto{c_{2,2}} & H^2_\cM(X,\Z(2)) & \to 0 \\
& \downarrow & & \downarrow & & \downarrow & \\
0 \to & {\displaystyle \bigoplus_{\wp\in C_0}} 
H^3_\cM(\cX_\wp,\Z(2)) & \to &
{\displaystyle \bigoplus_{\wp\in C_0}} 
G_1(\cX_\wp) & \xto{-\chern'_{1,1}} &
{\displaystyle \bigoplus_{\wp\in C_0}} 
H^1_\cM(\cX_\wp,\Z(1)) & \to 0, 
\end{array}
\end{small}
\end{equation}
where the first row is as in Lemma~\ref{lem:c12},
the second row is obtained from Proposition~\ref{prop:Z},
and the vertical maps are the boundary maps in the
localization sequences, is commutative.
\end{lem}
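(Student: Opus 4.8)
The plan is to regard \eqref{eqn:CD_H43} as a morphism between the two displayed short exact sequences --- the top one exact by Lemma~\ref{lem:c12}, the bottom one exact by Proposition~\ref{prop:Z}, which identifies $\bigoplus_\wp H^3_\cM(\cX_\wp,\Z(2))$ with $\Ker(\bigoplus_\wp\chern'_{1,1})$ via $\bigoplus_\wp\chern'_{1,2}$, the map $\bigoplus_\wp\chern'_{1,1}$ being surjective --- and to verify its two constituent squares separately. The middle and right vertical arrows are, by construction, the colimits over finite $S\subset C_0$ of the boundary maps of the localization sequences attached to the open immersions $\cX\times_C(C\setminus S)\hookrightarrow\cX$, the colimit making sense because $X=\varprojlim_S\cX\times_C(C\setminus S)$ and because $K$-theory and Bloch's cycle complex commute with such limits. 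With this understood, the right-hand square of \eqref{eqn:CD_H43} is precisely the colimit over finite $S$ of diagram \eqref{eqn:aux1} of Lemma~\ref{lem:c11}, taken with $(i,j)=(1,1)$ and $U=C\setminus S$, once one decomposes $\cX^U=\bigsqcup_{\wp\in S}\cX_\wp$ into its fibres and observes that $\chern'_{1,1}$ of a disjoint union is the direct sum of the $\chern'_{1,1}$ of the pieces.

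It remains to treat the left-hand square. Since the right-hand square commutes and $c_{2,2}\circ\beta=0$, the composite $(\bigoplus_\wp\chern'_{1,1})\circ\partial_2\circ\beta$ vanishes, so $\partial_2\circ\beta$ factors through the inclusion $\bigoplus_\wp H^3_\cM(\cX_\wp,\Z(2))\hookrightarrow\bigoplus_\wp G_1(\cX_\wp)$, say as a map $\overline{\partial}\colon H^4_\cM(X,\Z(3))\to\bigoplus_\wp H^3_\cM(\cX_\wp,\Z(2))$; the content of the left-hand square is then the identity $\overline{\partial}=\delta$, where $\delta$ is the weight-$3$ localization boundary map. I would prove this by testing on generators. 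By Lemma~\ref{lem:c12}, $H^4_\cM(X,\Z(3))$ is generated by the Gysin push-forwards $(i_x)_*\xi$ of classes $\xi\in H^2_\cM(\Spec\kappa(x),\Z(2))\cong K_2(\kappa(x))$ along the closed immersions $i_x\colon\Spec\kappa(x)\hookrightarrow X$, $x\in X_0$, and $\beta$ carries $(i_x)_*\xi$ to the corresponding $K$-theoretic push-forward. Let $Z_x=\overline{\{x\}}\subset\cX$ be the closure with its reduced structure, a horizontal integral curve finite over $C$ with $Z_x\times_C\eta=\Spec\kappa(x)$ and $Z_x\times_C\wp=:Z_{x,\wp}$ finite over $\kappa(\wp)$. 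Functoriality of the localization sequences --- in $K$-theory and for Bloch's cycle complex --- under the proper closed immersion $Z_x\hookrightarrow\cX$ reduces the computation of $\partial_2\circ\beta$ and of $\delta$ on $(i_x)_*\xi$ to (i) the boundary maps $K_2(\kappa(x))\to\bigoplus_\wp G_1(Z_{x,\wp})$ and $H^2_\cM(\kappa(x),\Z(2))\to\bigoplus_\wp H^1_\cM(Z_{x,\wp},\Z(1))$ of the curve $Z_x$, followed by (ii) the push-forwards $G_1(Z_{x,\wp})\to G_1(\cX_\wp)$ and the Gysin maps $H^1_\cM(Z_{x,\wp},\Z(1))\to H^3_\cM(\cX_\wp,\Z(2))$.

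For (i) I would normalize $Z_x$: the boundary maps and the proper push-forwards are compatible with the finite morphism $\wt{Z_x}\to Z_x$ by the argument already used in the proof of Lemma~\ref{lem:Z}, so one is reduced to the smooth curve $\wt{Z_x}$, where Lemma~\ref{lem:chern_comm} with $(i,j,d)=(2,2,1)$, together with $\alpha_{1,2}=-c_{1,1}$ from Remark~\ref{rmk:chern_comm}, identifies the two boundary maps through $c_{2,2}$ on the source and $c_{1,1}$ on the target, up to sign. For (ii) I would check that, under $c_{1,1}$ and the inclusion $H^3_\cM(\cX_\wp,\Z(2))\hookrightarrow G_1(\cX_\wp)$ of Proposition~\ref{prop:Z}, the $G$-theoretic push-forward agrees, up to sign, with the Gysin push-forward; here one uses the definition of $\chern'_{1,2}$ through the isomorphism $\alpha_{\cX_\wp}$ of Lemma~\ref{prop:AppB_main}, choosing the dense smooth open in the construction of $\chern'_{1,2}$ so that $Z_{x,\wp}$ lies in its complement, and one invokes that the $G$-theoretic and motivic transfers coincide for finite extensions of finite fields. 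Finally one collects the signs --- the $-\chern'_{1,1}$ in the bottom row, the $-1$ in the definition of $\chern'_{1,2}$, and the sign in $\alpha_{1,2}=-c_{1,1}$ --- and confirms that they combine to give $\overline{\partial}=\delta$ on the nose.

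I expect step (ii) to be the main obstacle: matching the $G$-theoretic push-forward out of the horizontal curve with the Gysin push-forward into $H^3_\cM(\cX_\wp,\Z(2))$ is delicate precisely because $\chern'_{1,2}$ was defined in an ad hoc way --- via a non-canonical dense open and via the external input Lemma~\ref{prop:AppB_main} --- so one must verify that the relevant diagrams are independent of these choices. The sign bookkeeping across the three sources of signs is a further nuisance, whereas the reduction to $Z_x$, the normalization argument, and the appeal to Lemma~\ref{lem:chern_comm} are routine given what has already been established.
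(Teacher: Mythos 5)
Your proposal is correct and takes essentially the same route as the paper: the right square is the colimit over finite $S\subset C_0$ of diagram~\eqref{eqn:aux1} for $(i,j)=(1,1)$ (exactly how the paper cites Lemma~\ref{lem:c11}), and the left square is verified on generators pushed forward from $K_2(\kappa(x))\cong H^2_\cM(\Spec\kappa(x),\Z(2))$ along the horizontal curve $\overline{\{x\}}=D_x$. The paper compresses your steps (i)--(ii) into a single $4\times 2$ commutative diagram built from the boundary maps of the pair $(\cX,D_x)$ and the Chern-class isomorphisms, leaving the normalization reduction and the sign bookkeeping implicit, where you spell them out.
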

\begin{proof}
It follows from Lemma~\ref{lem:c11} that
the right square is commutative.
For each closed point $x \in X_0$, 
let $D_x$ denote the closure of $x$ in $\cX$ 
and write $D_{x,\wp} = D_x \times_C \Spec\, \kwp$
for $\wp \in C_0$.
Then, the commutativity of 
the left square in (\ref{eqn:CD_H43}) 
follows from the commutativity of the diagram
$$
\begin{small}
\begin{array}{ccccccc}
H^4_\cM(X,\Z(3)) & \leftarrow & 
H^2_\cM(\Spec\,\kappa(x),\Z(2)) & \cong & 
K_2(\kappa(x)) & \to & K_2(X) \\
\downarrow & & \downarrow & &
\downarrow & & \downarrow \\
{\displaystyle \bigoplus_{\wp\in C_0} }
H^3_\cM(\cX_\wp,\Z(2)) & \leftarrow &
{\displaystyle \bigoplus_{\wp\in C_0} }
H^1_\cM(D_{x,\wp},\Z(1))
& \cong &
{\displaystyle \bigoplus_{\wp\in C_0} }
G_1(D_{x,\wp})
& \to & {\displaystyle \bigoplus_{\wp\in C_0} }
G_1(\cX_\wp).
\end{array}
\end{small}
$$
Here, each vertical map is a boundary map, the 
middle horizontal arrows are Chern classes, 
and the left and right horizontal arrows are the 
push-forward maps by the closed immersion.
\end{proof}

\begin{lem}\label{lem:coker_del43}
% formerly, this was cor:conseq5_claim2
Let $U$ be an open subscheme of $C$ such that $U\neq C$.
Let $\partial: H^4_\cM(\cX \times_C U, \Z(3)) \to
H^3_\cM(\cX_U, \Z(2))$ denote the boundary map of the
localization sequence. Then, the following 
composition is an isomorphism:
$$
\alpha : \Coker\, \partial \inj 
H^5_\cM(\cX,\Z(3)) \to H^1_\cM(\Spec\, \F_q,\Z(1)) 
\cong \F_q^\times.
$$
Here the first map is induced by the push-forward map
by the closed immersion, and the second map is the 
push-forward map by the structure morphism.
\end{lem}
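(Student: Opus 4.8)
The plan is to read off $\Coker\,\partial$ from the localization sequence, to recognize it as a subgroup of $H^5_\cM(\cX,\Z(3))$ — a group already pinned down by Theorem~\ref{thm:motfin} — and then to compute the composite with the structural push-forward by functoriality of proper push-forwards, so that surjectivity of $\alpha$ reduces to Lemma~\ref{prop:AppB_main} together with surjectivity of the norm map of a finite field.

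First I would set up the localization sequence. Write $\cX^U = \cX\setminus(\cX\times_C U)$ with its reduced structure (written $\cX_U$ above). Since $\cX$ is an integral surface and $\cX^U$ is a finite union of fibers of the flat morphism $\cX\to C$, it is a closed subscheme of $\cX$ of pure codimension one, so the localization sequence for higher Chow groups (as in Lemma~\ref{lem:chern_comm}) yields exactness of
\[
H^4_\cM(\cX\times_C U,\Z(3)) \xto{\partial} H^3_\cM(\cX^U,\Z(2)) \xto{g} H^5_\cM(\cX,\Z(3)),
\]
where $g$ is the push-forward along the closed immersion $\iota:\cX^U\hookrightarrow\cX$. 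Thus $g$ induces an injection $\Coker\,\partial\hookrightarrow H^5_\cM(\cX,\Z(3))$, which is precisely the first arrow defining $\alpha$. Next I would identify the target: $\cX$ is regular of finite type over the perfect field $\F_q$, hence a smooth projective irreducible surface over $\F_q$, and its field of constants is $\F_q$ because $\F_q$ is algebraically closed in $k$ ($C$ being geometrically connected) and $k$ is algebraically closed in $k(\cX)=k(X)$ ($X$ being geometrically connected over $k$). Hence Theorem~\ref{thm:motfin}(2)(a), applied with $(i,j)=(5,3)$, gives $H^5_\cM(\cX,\Z(3))\cong\Z/(q-1)$, so $\Coker\,\partial$ is finite of order dividing $q-1=|\F_q^\times|$.

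It then remains to prove that $\alpha$ is surjective. By functoriality of proper push-forward applied to $\cX^U\xto{\iota}\cX\xto{f}\Spec\,\F_q$, the composite $f_*\circ g$ equals the push-forward $p_*:H^3_\cM(\cX^U,\Z(2))\to H^1_\cM(\Spec\,\F_q,\Z(1))=\F_q^\times$ along the structure morphism $p$ of $\cX^U$; hence $\Image\,\alpha=\Image\,p_*$. Since $U\neq C$, the scheme $\cX^U$ is non-empty, so I would fix a connected component $Z_0$ of it; then $Z_0$ is a connected reduced curve, proper over $\F_q$, and its ring of constants $\cO(Z_0)$ is a finite extension field of $\F_q$. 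The restriction of $p_*$ to the summand $H^3_\cM(Z_0,\Z(2))$ factors as
\[
H^3_\cM(Z_0,\Z(2)) \xto{\alpha_{Z_0}} H^1_\cM(\Spec\,\cO(Z_0),\Z(1))=\cO(Z_0)^\times \xto{N} \F_q^\times,
\]
with $N$ the norm map. By Lemma~\ref{prop:AppB_main} the first map $\alpha_{Z_0}$ is an isomorphism, and $N$ is surjective because the norm of a finite extension of finite fields is surjective on unit groups, so $p_*$, and therefore $\alpha$, is surjective.

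Finally, a surjection $\Coker\,\partial\twoheadrightarrow\F_q^\times$ from a finite group whose order divides $|\F_q^\times|$ is automatically an isomorphism, which will finish the proof. I expect the only delicate point to be the bookkeeping in the second step — matching the Gysin arrow of the localization sequence both with $\iota_*$ (as in the proof of Lemma~\ref{lem:chern_comm}) and with the first arrow of $\alpha$, and tracking the degree and weight shifts so that the group that appears is exactly $H^5_\cM(\cX,\Z(3))$, to which Theorem~\ref{thm:motfin}(2)(a) applies; everything after that is formal.
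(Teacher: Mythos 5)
Your proposal is correct, and it reaches the conclusion by a genuinely different route than the paper.

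The paper's own proof never invokes Theorem~\ref{thm:motfin}; instead it fixes a closed point $x$ of the generic fiber $X$, takes the horizontal curve $D_x = \overline{\{x\}}\subset\cX$, compares the localization sequence of $D_x$ against that of $\cX$, and uses the connectedness of the fibers (so ${\iota_x^U}_*$ is surjective) to produce a surjection $\F(x)^\times \twoheadrightarrow \Coker\,\partial$. The upper bound $|\Coker\,\partial|\mid q-1$ then comes from Soul\'e's lemma that $\gcd_{x\in X_0}|\F(x)^\times| = q-1$, and the lower bound from identifying the composite $\F(x)^\times\to\Coker\,\partial\to\F_q^\times$ with the norm. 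You instead obtain the upper bound directly by embedding $\Coker\,\partial$ into $H^5_\cM(\cX,\Z(3))\cong\Z/(q-1)$ via Theorem~\ref{thm:motfin}(2)(a), after checking that $\cX$ is geometrically connected over $\F_q$ so that $R=\{\cX\}$ and $q_\cX=q$; and your lower bound comes from the norm surjectivity applied to a single \emph{vertical} connected component $Z_0\subset\cX^U$, together with Lemma~\ref{prop:AppB_main}, rather than to $D_x^U$. Your variant trades the elementary (but slightly fiddly) horizontal-curve diagram chase plus Soul\'e's gcd lemma for a direct appeal to the already-established structure theorem for surfaces. One small observation: the paper avoids using Theorem~\ref{thm:motfin}(2)(a) here, presumably because the horizontal-curve argument is more self-contained within the setup of Section~\ref{sec:KgpsofCurves}; your version is shorter at the cost of a heavier input. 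The functoriality bookkeeping you flag at the end — that the Gysin arrow in the higher-Chow localization sequence equals proper push-forward of cycles, so $f_*\circ g = p_*$ — is indeed the only thing needing care, and it does go through (it is exactly the push-forward of cycles as in the proof of Lemma~\ref{lem:chern_comm}). Also, as you implicitly assumed, $\cX_U$ in the lemma statement is a typo for $\cX^U$, the reduced complement of $\cX\times_C U$ in $\cX$.
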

\begin{proof}
For each closed point $x \in X_0$, let $D_x$
denote the closure of $x$ in $\cX$.
We set $D_{x,U}=D_{x}\times_C U$.
Let $D_x^U$ denote the complement
$D_x \setminus D_{x,U}$ with the reduced scheme
structure. Let $\iota_x : D_x\ \inj \cX$,
$\iota_{x,U} : D_{x,U} \ \inj \cX_U$,
$\iota_x^U : D_x^U \ \inj \cX^U$ denote
the canonical inclusions.
Let us consider the commutative diagram
$$
\begin{CD}
H^2_\cM(D_{x,U},\Z(2)) @>>>
H^1_\cM(D_x^U,\Z(1)) @>{\beta}>>
H^3_\cM(D_x,\Z(2)) \\
@V{{\iota_{x,U}}_*}VV @V{{\iota_x^U}_*}VV @. \\
H^4_\cM(\cX \times_C U,\Z(3)) @>{\partial}>>
H^3_\cM(\cX^U,\Z(2)) @>>>
\Coker\, \partial \to 0,
\end{CD}
$$
%with exact rows. 
where the first row is the localization sequence.
Since $X$ is geometrically
connected, it follows from \cite[Corollaire (4.3.12), p.~134]{EGA3}
%the Stein factorization shows that
that each fiber of $\cX \to C$ is
connected. In particular, $D_x$ intersects
every connected component of $\cX^U$,
which implies that the homomorphism ${\iota_x^U}_*$ in
the above diagram is surjective,
hence we have a surjective
homomorphism $\Image\, \beta \surj 
\Coker\, \partial$.
Let $\F(x)$ denote the finite field
$H^0(D_x,\cO_{D_x})$. Then,
the isomorphism $H^3_\cM(D_x,\Z(2)) \to 
H^1_\cM(\Spec\,\F(x),\Z(1)) \cong \F(x)^\times$ 
(see Lemma~\ref{prop:AppB_main}) gives an isomorphism
$\Image\, \beta \cong \F(x)^\times$.
Hence $|\Coker\, \partial|$ divides
$\gcd_{x \in X_0}(|\F(x)^\times|)
= q-1$, where the equality follows from
\cite[1.5.3 Lemme 1, p.~325]{Soule2}.
%(see also \cite{Ko-Ya2} Lemma~2.7)
%\ref{lem:normal_gcd}).
We can easily verify that the composition
$$
\F(x)^\times \cong \Image\,\beta \surj 
\Coker\, \partial \xto{\alpha} \F_q^\times
$$
equals the norm map $\F(x)^\times \to \F_q^\times$--
which implies $|\Coker\, \partial| \ge q-1$,
hence $|\Coker\, \partial| = q-1$ and 
the homomorphism $\alpha$ is an isomorphism.
The claim is proved.
\end{proof}

\section{Main results for $j \le 2$}\label{conseq}
The reader is referred to Sections~\ref{sec:intro BirchTate} and~\ref{sec:intro high genus} for some remarks concerning the 
contents of this section.

%We give some applications of Theorem~\ref{surjectivity}.
%In this section, we do not use the Bloch-Kato conjecture.  
The objective is to prove Theorems~\ref{thm:conseq1},~\ref{thm:conseq2}, and~\ref{thm:conseq3}.
%Only for Theorem~\ref{thm:conseq5}, we assume that 
%a part of the Bloch-Kato conjecture holds.
The statements give some information on the structures of $K$-groups and motivic cohomology groups of 
elliptic curves over global fields and of the (open) complements 
of some fibers of 
an elliptic surface over finite fields.
We compute the orders of some torsion groups,
in terms of the special values of $L$-functions, the torsion subgroup of (twisted) Mordell-Weil
groups, and some invariants of the base curve.  
We remark that Milne \cite{Milne} expresses the special values of zeta functions in terms of
the order of arithmetic \'{e}tale cohomology groups.   Our result is similar in spirit.

Let us give the ingredients of the proof.  
Using Theorem~\ref{surjectivity}, we deduce that
the torsion subgroups we are interested in are actually finite.  
Then the theorem of Geisser and Levine 
%(\cite[Corollary 1.2. See also Corollary 1.4]{Ge-Le2}, 
%\cite[Theorem 1.1]{Ge-Le2}, \cite[Theorem 8.4]{Ge-Le1})
and the theorem of Merkurjev and Suslin 
%(\cite[(11.5), Theorem]{Me-Su1})
relate the motivic cohomology groups modulo 
their uniquely divisible parts
to 
the \'{e}tale cohomology and cohomology of de Rham-Witt complexes.
We use the arguments that appear in \cite{Milne1}, \cite{CSS}, and \cite{Gr-Su} to
compute such cohomology groups  (see Section~\ref{sec:intro CSS} in which we explain our general strategy).  
The computation of the exact orders of torsion may be new.

One geometric property of an elliptic surface that makes this explicit calculation possible
is that the abelian fundamental group is isomorphic to that of the base curve.
This follows from a theorem in \cite{Shioda} for the prime-to-$p$ part.

We also use class field theory of Kato and Saito for surfaces over finite fields \cite{Ka-Sa}.   
We combine their results and Lemma~\ref{lem:appl_pi1} 
to show that the groups of zero-cycles on the elliptic surface and on the base curve are isomorphic.   

Let us give a brief outline.
Recall in Section~\ref{sec: L-function}
that the main 
part of the zeta function of $\cE$ 
is the $L$-function of $E$.  
Note that there are 
contributions from 
the $L$-function of the base curve $C$ 
and also of the singular fibers of $\cE$
(Lemma~\ref{lem:appl_LofcE}, Corollary~\ref{cor:open_L}).
We relate the $G$-groups of these singular fibers to
the motivic cohomology groups, 
then they are in
turn related to the values of the $L$-function
(Lemmas~\ref{lem:G1_isom} and~\ref{lem:G0_order}).
The technical input for these lemmas originate from our 
results in \cite{KY:Two}
and the classification of the
singular fibers of an elliptic fibration.
%%%%%%%%%%%%%%%%%%%%%%%%%%%%%%%%%%%

\subsection{Notation}\label{subsec:12_1}
Let $k$, $E$, $S_0$, $S_2$, $r$, $C$, and $\cE$ be as defined in 
Section~\ref{sec:introduction}. We also let
$S_1$ denote the set of primes
of $k$ at which $E$ has multiplicative
reduction, 
thus we have $S_0 \subset S_1 \subset S_2$.
Let $p$ denote the characteristic of $k$.
The closure of the origin of $E$ in $\cE$ gives
a section to $\cE \to C$, which we denote 
by $\iota:C \to \cE$. 
Throughout this section, we assume
that the structure morphism $f:\cE \to C$ is not smooth,
which in particular implies that it is not isotrivial.
For any scheme $X$ over $C$,
let $\cE_X$ denote the base change $\cE\times_C X$.
For any non-empty open subscheme $U\subset C$,
we use $\cE^U$ to denote the complement
$\cE \setminus \cE_U$ with the reduced scheme structure.

Let $\F_q$ denote the field of constants of $C$.
We take an algebraic closure $\Fbar_q$ of $\F_q$.
Let $\Frob \in \GFq = \Gal(\Fbar_q/\F_q)$ denote the
geometric Frobenius. For a scheme $X$ over $\Spec\, \F_q$, 
we use $\Xbar$ to denote its base change 
$\Xbar = X \times_{\Spec \, \F_q} \Spec \, \Fbar_q$ to $\Fbar_q$.
We often regard the set $\Irr(X)$ of irreducible components of $X$
as a finite \'{e}tale scheme over $\Spec\, \F_q$ corresponding to
the $\GFq$-set $\Irr(\Xbar)$.

\subsection{Results}
We set $T=E(k\otimes_{\F_q} \Fbar_q)_\tors$.
For each integer $j \in \Z$, we let
$T'_{(j)} = \bigoplus_{\ell \neq p} 
(T \otimes_\Z \Z_\ell(j))^{\GFq}$.

\begin{thm}\label{thm:conseq1}
Let the notations and assumptions be as above.
Let $L(E,s)$ denote the $L$-function of the elliptic curve
$E$ over global field $k$ (see Section~\ref{sec: L-function}).
\begin{enumerate}
\item The $\Q$-vector space $(K_2(E)^\cotors)_\Q$
is of dimension $r$.
\item The cokernel of the boundary map
$\partial_2 : K_2(E) \to \bigoplus_{\wp \in C_0}
G_1(\cE_\wp)$ is a finite group of order
$$
\frac{(q-1)^2 |L(h^0(\Irr(\cE_{S_2})),-1)|}{
|T'_{(1)}|\cdot |L(h^0(S_2), -1)|}.
$$
\item The group $K_1(E)_\divi$ is uniquely divisible.
\item The kernel of the boundary map
$\partial_1: K_1(E)^\cotors \to \bigoplus_{\wp \in C_0}G_0(\cE_\wp)$
is a finite group of order $(q-1)^2 |T'_{(1)}|\cdot |L(E,0)|$.
The cokernel of $\partial_1$ is a
finitely generated abelian group of rank
$2+|\Irr(\cE_{S_2})|-|S_2|$ whose torsion
subgroup is isomorphic to $\Jac(C)(\F_q)^{\oplus 2}$,
where $\Jac(C)$ denotes the Jacobian of $C$ 
(when the genus of $C$ is $0$, we understand it to be a point).
\end{enumerate}
\end{thm}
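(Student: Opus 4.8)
The plan is to reduce both $K$-groups to motivic cohomology, establish unique divisibility of the divisible parts and finiteness of the relevant torsion, compute the $\Q$-ranks, and finally pin down the torsion orders using the étale and de Rham--Witt cohomology of the surface $\cE$ together with the class field theory input. First I would use Lemma~\ref{lem:c12} to identify $K_1(E)$ with $k^\times\oplus H^3_\cM(E,\Z(2))$ and to place $K_2(E)$ in the exact sequence $0\to H^4_\cM(E,\Z(3))\to K_2(E)\to H^2_\cM(E,\Z(2))\to 0$ with $H^4_\cM(E,\Z(3))$ torsion. Since $k^\times$ is an extension of a free abelian group by the finite group $\F_q^\times$, it has trivial divisible subgroup, so $K_1(E)_\divi=H^3_\cM(E,\Z(2))_\divi$, and once $H^4_\cM(E,\Z(3))$ is known to be finite we get $(K_2(E)^\cotors)_\Q=(H^2_\cM(E,\Z(2))^\cotors)_\Q$. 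Correspondingly I would rewrite the boundary maps $\partial_1,\partial_2$ in motivic terms using the commutative diagram~(\ref{eqn:CD_H43}) (with Proposition~\ref{prop:Z} to split $G_\bullet(\cE_\wp)$ into motivic cohomology of the singular fibres, Lemma~\ref{prop:AppB_main} for $H^3_\cM(\cE_\wp,\Z(2))$, Lemma~\ref{lem:c11} for the compatibility of Chern characters with localization, and Lemma~\ref{lem:coker_del43} for the copy of $\F_q^\times$ that appears at the $H^4/H^5$ level).

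Next I would invoke Theorem~\ref{surjectivity} so that the cokernels entering Corollary~\ref{cor:relative_cv} are torsion, hence $H^i_\cM(E,\Z(2))_\divi$ is uniquely divisible for every $i$; this immediately gives part~(3), and combined with the exact sequence of Corollary~\ref{cor:relative_cv} it yields the finiteness of $H^4_\cM(E,\Z(3))$ and of the torsion of $H^2_\cM(E,\Z(2))$. Feeding in Theorem~\ref{thm:motfin} (the structure of $H^\bullet_\cM(\cE,\Z(2))$ for the projective surface $\cE$, and for its open subsurfaces $\cE_U$) and the finiteness of $H^0(\cE_\wp,\Gm)$, I would then show $H^2_\cM(E,\Z(2))^\cotors$ is finitely generated of $\Q$-rank $r=|S_0|$ — the split multiplicative fibres being exactly the ones contributing a free class to the cokernel of $H^2_\cM(\cE,\Z(2))\to\bigoplus_\wp H^1_\cM(\cE_\wp,\Z(1))$ — which is part~(1). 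These two paragraphs essentially also prove Theorems~\ref{thm:conseq2} and~\ref{thm:conseq3}, which I would carry along in parallel.

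For the orders in parts~(2) and~(4) I would apply the snake lemma to~(\ref{eqn:CD_H43}) for $\partial_2$ and to the analogous diagram built from Lemma~\ref{lem:c11} for $\partial_1$, so that $\Coker\,\partial_2$, $\Ker\,\partial_1$ and $\Coker\,\partial_1$ are expressed through: the kernels and cokernels of the fibrewise boundary maps $H^4_\cM(E,\Z(3))\to\bigoplus_\wp H^3_\cM(\cE_\wp,\Z(2))$ and $H^2_\cM(E,\Z(2))\to\bigoplus_\wp H^1_\cM(\cE_\wp,\Z(1))$, controlled by Lemma~\ref{lem:coker_del43} (each such boundary supplying a factor $q-1$, whence the $(q-1)^2$), by the Kodaira classification of singular fibres, and by Lemmas~\ref{lem:G1_isom} and~\ref{lem:G0_order} (which, via~\cite{KY:Two}, turn the fibre contributions into the ratio $|L(h^0(\Irr(\cE_{S_2})),-1)|/|L(h^0(S_2),-1)|$); the étale cohomology of $\cE$, and for the $p$-part the logarithmic de Rham--Witt cohomology of Appendix~\ref{Appendix p-part} and Proposition~\ref{prop:dRW_main}, which by the definition of $L(E,s)$ in Section~\ref{sec: L-function} and by Lemma~\ref{lem:appl_LofcE} and Corollary~\ref{cor:open_L} produce the factor $|L(E,0)|$ and, through the torsion Mordell--Weil group, $|T'_{(1)}|$; and the identification $\CH_0(\cE)=H^4_\cM(\cE,\Z(2))\cong\CH_0(C)$, coming from Shioda's theorem, Lemma~\ref{lem:appl_pi1} and the Kato--Saito class field theory of~\cite{Ka-Sa}, whence $\Coker\,\partial_1\cong\Ker[K_0(\cE)\to K_0(E)]$ has rank $2+|\Irr(\cE_{S_2})|-|S_2|$ (Shioda--Tate for $\Ker[\Pic(\cE)\to\Pic(E)]$ together with the rank-one $\CH_0(\cE)$) and torsion $\Jac(C)(\F_q)^{\oplus2}$ (one copy from $\CH_0$ and one from $f^*\Pic^0(C)$).

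I expect the main obstacle to be the exact bookkeeping of the torsion orders in~(2) and~(4): tracking the precise powers of $q-1$, matching the singular-fibre factors with the ratio of $L$-values $|L(h^0(\Irr(\cE_{S_2})),-1)|$ to $|L(h^0(S_2),-1)|$ through the fine structure of $G_\bullet(\cE_\wp)$ for each Kodaira type, and — most delicately — getting the $p$-primary part right; the $p$-part cannot simply be quoted from~\cite{Gr-Su} because the intermediate surfaces $\cE_U$ are not projective, and it forces the supplementary de Rham--Witt statement proved in the Appendix. A secondary difficulty is assembling the étale cohomology of $\cE$ so that $|L(E,0)|$ and $|T'_{(1)}|$ appear with exactly the right multiplicity, which is precisely where the genus-one hypothesis enters, through Lemma~\ref{lem:appl_pi1}.
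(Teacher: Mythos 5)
Your plan follows the paper's proof quite closely: Lemma~\ref{lem:c12} and Proposition~\ref{prop:Z} translate the $K$-theoretic boundary maps into motivic ones via the diagram~(\ref{eqn:CD_H43}), Theorem~\ref{surjectivity} and Corollary~\ref{cor:relative_cv} give the unique divisibility of the divisible parts, Theorem~\ref{thm:motfin} combined with the \'etale and de Rham--Witt computations (Lemma~\ref{lem:appl_LofcE}, Corollary~\ref{cor:open_L}, Proposition~\ref{prop:dRW_main}) and Kato--Saito class field theory supplies the torsion orders, and Lemma~\ref{lem:coker_del43} furnishes the extra factor $q-1$. That is precisely the architecture of Theorems~\ref{thm:conseq2}, \ref{thm:conseq3} and their assembly in Section~\ref{subsec:appl_proofs}.

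Two places in your outline deviate slightly from the paper and would need tightening. First, for part~(1) you write that once $H^4_\cM(E,\Z(3))$ is known to be \emph{finite} you can compare $(K_2(E)^\cotors)_\Q$ with $(H^2_\cM(E,\Z(2))^\cotors)_\Q$. But Lemma~\ref{lem:c12} only gives that this group is torsion; to pass to the divisible parts through the short exact sequence you would still need finiteness of the kernel to invoke something like Lemma~\ref{lem:abelgp}, and that finiteness is not established at that stage. The paper sidesteps this by using the identity $c_{2,3}\circ\beta=2$ from Lemma~\ref{lem:c12}: the restriction $\gamma$ of $c_{2,2}$ to $\Ker\,c_{2,3}$ then has $2$-torsion kernel and cokernel, which already forces $(\Ker\,c_{2,3})_\divi\cong H^2_\cM(E,\Z(2))_\divi$ and hence that $K_2(E)^\cotors\to H^2_\cM(E,\Z(2))^\cotors$ is onto with torsion kernel; part~(1) then drops out of Theorem~\ref{thm:conseq2}(3) with no finiteness needed for $H^4$. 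Second, for part~(4) you propose to analyse $\Coker\,\partial_1\cong\Ker[K_0(\cE)\to K_0(E)]$ via Shioda--Tate; that is a plausible route, but reading off the torsion as a direct sum $\Jac(C)(\F_q)^{\oplus 2}$ requires ruling out nontrivial extensions inside $K_0(\cE)$ between the $\Pic$ and $\CH_0$ pieces, which is not automatic. The paper avoids this extension problem entirely: Proposition~\ref{prop:Z} and Lemmas~\ref{lem:aux1}, \ref{lem:aux2} show that the \emph{map} $\partial_1$ itself splits as $\partial'_1\oplus\partial^3_{\cM,2}$, so kernels and cokernels split for free and the torsion computation reduces to two copies of $\Pic(C)_\tors=\Jac(C)(\F_q)$.
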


Let $X$ be a scheme of finite type over $\Spec\, \F_q$.
For an integer $i \in \Z$ and a prime number $\ell \neq p$,
we set $L(h^i(X),s) = 
\det(1-\Frob\cdot q^{-s};H^i_\etale(\Xbar,\Q_\ell))$.
In all cases considered in our paper, 
$L(h^i(X),s)$ does not depend on the 
choice of $\ell$.

For each non-empty open subscheme $U\subset C$, let
$T_U$ denote the torsion subgroup of the
group $\Div(\cEbar_U)/\sim_\alg$ of divisors on
$\cEbar_U$ modulo algebraic equivalence.
% We do not call it the N\'eron-Severi group
% since $\cEbar_U$ is not necessarily proper.
For each integer $j \in \Z$, 
we set $T'_{U,(j)} = \bigoplus_{\ell \neq p} 
(T_U \otimes_\Z \Z_\ell(j))^{\GFq}$.
We deduce from \cite[Theorem 1.3, p.~214]{Shioda}
that the canonical homomorphism 
$\varinjlim_{U'} T_{U'} \to T$, where the limit is taken over
the open subschemes of $C$, is an isomorphism.
We easily verify that the canonical homomorphism
$T_U \to \varinjlim_{U'} T_{U'}$ is injective
and is an isomorphism if $\cEU \to U$ is smooth.
In particular, 
we have an injection $T'_{U,(j)} \inj T'_{(j)}$, which 
is an isomorphism if $\cEU \to U$ is smooth.

In Section~\ref{subsec:appl_proofs}, 
we deduce Theorem~\ref{thm:conseq1}
from the following two theorems.

\begin{thm}\label{thm:conseq2}
Let the notations and assumptions be as above.
Let $\partial^i_{\cM,j} : 
H^i_\cM(E,\Z(j))^\cotors \to \bigoplus_{\wp \in C_0}
H^{i-1}_\cM(\cE_\wp,\Z(j-1))$ denote the homomorphism
induced by the boundary map of the localization sequence 
established in \cite[Corollary (0.2), p.~537]{Bloch2}.
\begin{enumerate} 
\item For any $i \in \Z$, the group
$H^i_\cM(E,\Z(2))_\divi$ is uniquely divisible.
\item For $i\le 0$,
the cohomology group $H^i_\cM(E,\Z(2))$ is uniquely divisible.
$H^1_\cM(E,\Z(2))$ is finite modulo a uniquely
divisible subgroup and
$H^1_\cM(E,\Z(2))_\tors$ is cyclic of order $q^2 -1$.
\item The kernel (\resp cokernel) 
of the homomorphism $\partial^{2}_{\cM,2}$
is a finite group of order $|L(h^1(C),-1)|$
(\resp of order
$$
\frac{(q-1)|L(h^0(\Irr(\cE_{S_2})),-1)|}{
|T'_{(1)}|\cdot |L(h^0(S_2),-1)|}).
$$
\item The kernel (\resp cokernel) 
of the homomorphism $\partial^{3}_{\cM,2}$
is a finite group of order 
$(q-1)|T'_{(1)}|\cdot |L(E,0)|$
(\resp is isomorphic to $\Pic(C)$).
\item For $i \ge 4$, the group $H^i_\cM(E,\Z(2))$ is zero.
\item $H^4_\cM(E,\Z(3))$ is a torsion group,
and the cokernel of the homomorphism 
$\partial^{4}_{\cM,3}$ is a finite cyclic group of order $q-1$.
\end{enumerate}
\end{thm}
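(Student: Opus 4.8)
The plan is to deduce the whole theorem from a single input, Theorem~\ref{surjectivity}, via the formalism of Corollary~\ref{cor:relative_cv}. First I would observe that $\cE$, being regular and of finite type over the perfect field $\F_q$, is a smooth projective surface, geometrically connected with connected fibres, so that $E=\cE_\eta$ is the generic fibre of $f\colon\cE\to C$ in the sense of Corollary~\ref{cor:relative_cv}. Combining Lemma~\ref{lem:c12}, Proposition~\ref{prop:Z} and Lemma~\ref{lem:c11} — and the fact that the extra summands $H^4_\cM(E,\Z(3))$, $\bigoplus_\wp H^3_\cM(\cE_\wp,\Z(2))$ appearing there are torsion — Theorem~\ref{surjectivity} translates into the statement that the cokernel of the integral boundary map $H^2_\cM(E,\Z(2))\to\bigoplus_{\wp\in C_0}H^1_\cM(\cE_\wp,\Z(1))$ is torsion. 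Corollary~\ref{cor:relative_cv} then yields part~(1) at once, and produces the long exact sequence
\[
\cdots\to\bigoplus_{\wp}H^{i-2}_\cM(\cE_\wp,\Z(1))\to H^i_\cM(\cE,\Z(2))^\cotors\to H^i_\cM(E,\Z(2))^\cotors\xto{\partial^i_{\cM,2}}\bigoplus_{\wp}H^{i-1}_\cM(\cE_\wp,\Z(1))\to\cdots
\]
of finitely generated abelian groups, into which I would feed the known values of both sides.

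The surface side is Theorem~\ref{thm:motfin} together with Section~\ref{subsec:criterion(3,2)}: $H^i_\cM(\cE,\Z(2))$ is uniquely divisible for $i\le0$, finite modulo a uniquely divisible subgroup for $1\le i\le 3$, finitely generated of rank $1$ for $i=4$, and zero for $i\ge 5$; moreover $H^i_\cM(\cE,\Z(2))_\tors\cong\bigoplus_{\ell\neq p}H^{i-1}_\etale(\cE,\Q_\ell/\Z_\ell(2))$ for $i\le 2$, with the explicit $p$-correction of Theorem~\ref{thm:motfin}(1)(g) for $i=3$. The fibre side is elementary: $H^{i-1}_\cM(\cE_\wp,\Z(1))$ vanishes unless $i-1\in\{1,2\}$, and is $\cO(\cE_\wp)^\times$ for $i=2$, $\Pic(\cE_\wp)$ for $i=3$. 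Reading off the sequence gives part~(5) (the vanishing of $H^i_\cM(E,\Z(2))=H_{4-i}(z^2(E,\bullet))$ for $i\ge 4$ is forced by $\dim E=1$); for $i\le 0$ both neighbours vanish, which is the first half of~(2); and for $i=1$ the neighbours vanish again, so $H^1_\cM(E,\Z(2))^\cotors\cong H^1_\cM(\cE,\Z(2))_\tors\cong\bigoplus_{\ell\neq p}(\Q_\ell/\Z_\ell(2))^{\GFq}$, which is cyclic of order $q^2-1$ because $p\nmid q^2-1$; this proves~(2).

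For parts~(3)--(4) the sequence gives $\Ker\,\partial^2_{\cM,2}\cong H^2_\cM(\cE,\Z(2))^\cotors$, an exact sequence $0\to\Coker\,\partial^2_{\cM,2}\to H^3_\cM(\cE,\Z(2))^\cotors\to H^3_\cM(E,\Z(2))^\cotors\xto{\partial^3_{\cM,2}}\bigoplus_\wp\Pic(\cE_\wp)\to H^4_\cM(\cE,\Z(2))^\cotors\to 0$ (using $H^4_\cM(E,\Z(2))=0$), so $\Ker\,\partial^3_{\cM,2}\cong H^3_\cM(\cE,\Z(2))^\cotors/\Coker\,\partial^2_{\cM,2}$ and $\Coker\,\partial^3_{\cM,2}\cong H^4_\cM(\cE,\Z(2))^\cotors=\CH_0(\cE)^\cotors$. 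For this last group, the class field theory of Kato--Saito~\cite{Ka-Sa} shows the degree-zero part of $\CH_0(\cE)$ is finite and isomorphic to the geometric part of $\pi_1^{\mathrm{ab}}(\cE)$, which by Lemma~\ref{lem:appl_pi1} equals that of $C$, whence $\CH_0(\cE)^\cotors\cong\Pic(C)$ — the cokernel assertion of~(4). All the remaining groups are finite (by Theorem~\ref{thm:motfin}, Lemma~\ref{lem:Quillen_Harder}, and Theorem~\ref{surjectivity} for the fibre terms as in Theorem~\ref{thm:conseq1}), so the substance is to pin down their exact orders: the $\ell$-parts of $H^2_\cM(\cE,\Z(2))_\tors$ and $H^3_\cM(\cE,\Z(2))_\tors$ are computed by the Colliot-Th\'el\`ene--Sansuc--Soul\'e / Gros--Suwa method of Section~\ref{sec:Motcoh} as orders of Galois (co)invariants of $H^{*}_\etale(\cEbar,\Q_\ell/\Z_\ell(2))$, the $p$-parts by Appendix~\ref{Appendix p-part}, and then — using that for the elliptic surface $H^1_\etale(\cEbar,\Q_\ell)\cong H^1_\etale(\Cbar,\Q_\ell)$ while the new part of $H^2_\etale(\cEbar,\Q_\ell)$ carries the Frobenius $L$-factor $L(E,s)$ (Leray for $f$, Lemma~\ref{lem:appl_pi1}, Section~\ref{sec: L-function}) — one gets $|H^2_\cM(\cE,\Z(2))^\cotors|=|L(h^1(C),-1)|$ and the value of $|H^3_\cM(\cE,\Z(2))_\tors|$. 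The singular-fibre groups $\cO(\cE_\wp)^\times$, $\Pic(\cE_\wp)$ are handled by the results of \cite{KY:Two} on curves over finite fields and the Kodaira classification, which express them through $T'_{(1)}$, $L(h^0(\Irr(\cE_{S_2})),-1)$, $L(h^0(S_2),-1)$ and the factorisation of $\zeta_\cE$ (Lemma~\ref{lem:appl_LofcE}, Corollary~\ref{cor:open_L}, Lemmas~\ref{lem:G1_isom} and~\ref{lem:G0_order}); matching orders term by term in the long exact sequence produces both displayed formulas.

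Finally, $H^4_\cM(E,\Z(3))$ is torsion by Lemma~\ref{lem:c12}, and for $\Coker\,\partial^4_{\cM,3}$ I would pass to the limit over nonempty open $U\subsetneq C$ in Lemma~\ref{lem:coker_del43}: the image there of any divisible subgroup of $H^4_\cM(E,\Z(3))$ is killed since $\bigoplus_\wp H^3_\cM(\cE_\wp,\Z(2))$ has no nontrivial divisible subgroup, so $\Coker\,\partial^4_{\cM,3}\cong\F_q^\times\cong\Z/(q-1)$ (equivalently $\Coker\,\partial^4_{\cM,3}\cong H^5_\cM(\cE,\Z(3))\cong\Z/(q-1)$ via the $j=3$ localization sequence, Theorem~\ref{thm:motfin}(2)(a) and $H^5_\cM(E,\Z(3))=0$); this gives~(6). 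The hard part is the exact-order bookkeeping in~(3)--(4): one must control the $\ell$-adic and $p$-adic cohomology of $\cE$ simultaneously, reduce all surface contributions to the base curve $C$ through Lemma~\ref{lem:appl_pi1}, and identify the $G$-theory of the degenerate fibres with the Euler-factor corrections $|T'_{(1)}|$, $q-1$, $L(h^0(\Irr(\cE_{S_2})),-1)$, $L(h^0(S_2),-1)$ — pinning down the precise $p$-part and the exact powers of $q-1$ is where the genuine difficulty lies.
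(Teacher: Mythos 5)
Your proposal follows essentially the same route as the paper: apply Corollary~\ref{cor:relative_cv} (with Theorem~\ref{surjectivity} translated through $c_{2,2}$, $\chern'_{1,1}$ and the compatibility diagram) to get the long exact sequence of \textsf{red}-groups, then read off each claim and compute orders from Theorem~\ref{thm:motfin}, Corollary~\ref{cor:appl_UC}, the Leray/Kato--Saito input of Lemma~\ref{lem:appl_pi1}, and Lemma~\ref{lem:coker_del43} for part~(6). That said, one step is materially vaguer than the paper's and is where the substance lies: you say you will ``match orders term by term in the long exact sequence'' to get $|\Coker\,\partial^2_{\cM,2}|$ and $|\Ker\,\partial^3_{\cM,2}|$, but the global fibre term $\bigoplus_{\wp}H^1_\cM(\cE_\wp,\Z(1))$ is an infinite direct sum, so one cannot simply compare orders in the limit sequence. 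The paper instead establishes the open-surface version first (Theorem~\ref{thm:conseq3}(4)--(5), whose proof rests on the computation of $H^3_\cM(\cE_U,\Z(2))_\tors$ in Lemma~\ref{lem:etaleU_2} together with Lemma~\ref{lem:G0_order}), and then passes to $E$ by a colimit over $U$, using that each $\Coker\,\partial^2_U$ injects into the finite group $H^3_\cM(\cE,\Z(2))_\tors$ so that the system stabilizes for $U\subset C\setminus S_2$. Lemma~\ref{lem:etaleU_2} — the $\ell$-adic bookkeeping for the open surface $\cE_U$ in terms of $T'_{U,(1)}$ and the factorized $L$-functions — is the key technical input you do not name; incorporating that intermediate step would turn your sketch into the paper's proof. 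Also a small slip: the fibre group appearing in the sequence for $\partial^3_{\cM,2}$ is $H^2_\cM(\cE_\wp,\Z(1))=\mathrm{CH}_0(\cE_\wp)$, not $\Pic(\cE_\wp)$; these differ for the singular Kodaira fibres, and it is the former that Lemma~\ref{lem:G0_order} computes.
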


\begin{thm}\label{thm:conseq3}
Let $U\subset C$ be a non-empty open subscheme. Then, we have the following.
\begin{enumerate}
\item For any $i \in \Z$, the group
$H^i_\cM(\cE_U,\Z(2))$ is finitely generated
modulo a uniquely divisible subgroup.
\item For $i \le 0$, the cohomology group $H^i_\cM(\cE_U,\Z(2))$ is 
uniquely divisible,
$H^1_\cM(\cE_U,\Z(2))$ is finite
modulo a uniquely divisible subgroup
and $H^1_\cM(\cE_U,\Z(2))_\tors$ is cyclic 
of order $q^2 -1$.
\item The rank of $H^2_\cM(\cEU,\Z(2))^\cotors$
is $|S_0 \setminus U|$.
If $U=C$ (\resp $U\neq C$), 
the torsion subgroup of $H^2_\cM(\cEU,\Z(2))^\cotors$ is 
of order $|L(h^1(C),-1)|$ (\resp of order 
$|T'_{U,(1)}|\cdot |L(h^1(C),-1)L(h^0(C\setminus U),-1)|/(q-1)$).
\item  If $U=C$ (\resp $U\neq C$), 
the cokernel of the boundary homomorphism
$H^2_\cM(\cEU,\Z(2)) \to H^1_\cM(\cE^U,\Z(1))$ 
is zero (\resp is finite of order
$$
\frac{(q-1)|L(h^0(\Irr(\cE^U)),-1)|}{
|T'_{U,(1)}| \cdot |L(h^0(C\setminus U),-1)|}).
$$
\item The rank of $H^3_\cM(\cEU,\Z(2))^\cotors$ is
$\max(|C \setminus U| -1,0)$.
If $U=C$ (\resp $U\neq C$), 
the torsion subgroup $H^3_\cM(\cEU,\Z(2))_\tors$ 
is finite of order $|L(h^2(\cE),0)|$ (\resp of order 
$$
\frac{|T'_{U,(1)}|\cdot |L(h^2(\cE),0)L^{*}(h^1(\cE^U),0)
L(h^0(C\setminus U),-1)|}{(q-1) |L(h^0(\Irr(\cE^U)),-1)|}).
$$ 
Here,
$$
L^{*} (h^1(\cE^U),0)
= \lim_{s \to 0} (s \log q)^{-|S_0\setminus U|}
L(h^1(\cE^U),s)
$$
is the leading coefficient of $L(h^1(\cE^U),s)$ at $s=0$.
\item $H^4_\cM(\cE_U,\Z(2))$ is canonically
isomorphic to $\Pic(U)$.
For $i\ge 5$, the group $H^i_\cM(\cE_U,\Z(2))$ is zero.
\end{enumerate}
\end{thm}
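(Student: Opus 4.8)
The plan is to use that $\cE_U$ is a smooth, geometrically connected surface over $\F_q$ — regular because $\cE$ is, hence smooth since $\F_q$ is perfect — which for $U\neq C$ is an open subscheme of the smooth projective surface $\cE$ with closed complement the proper curve $\cE^U=\bigsqcup_{\wp\in C\setminus U}(\cE_\wp)_\red$ (for $U=C$ everything reduces to the projective case of Theorem~\ref{thm:motfin} and Theorem~\ref{thm:conseq2}). Granting that $H^3_\cM(\cE_U,\Z(2))$ is finitely generated modulo a uniquely divisible subgroup, items (1), (2), (6) and the unique divisibility for $i\le 0$ are immediate from Theorem~\ref{thm:motfin}; to obtain the exceptional bidegree $(3,2)$ I would verify the hypotheses of Proposition~\ref{prop:red_loc_seq} for $\cE_U\inj\cE$ using the known motivic cohomology of the proper curve $\cE^U$. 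The cyclicity of $H^1_\cM(\cE_U,\Z(2))_\tors$ of order $q^2-1$ follows from the canonical isomorphism $H^1_\cM(\cE_U,\Z(2))_\tors\cong\bigoplus_{\ell\neq p}H^0_\etale(\cE_U,\Q_\ell/\Z_\ell(2))$ of Theorem~\ref{thm:motfin}(1)(e), from $\cE_U$ being geometrically connected with field of constants $\F_q$, and from $(q^2-1,p)=1$.

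The precise computations in (3), (4), (5) I would reduce to the projective case $\cE=\cE_C$ via the localization long exact sequence of finitely generated abelian groups
\[
\cdots\to H^{i-2}_\cM(\cE^U,\Z(1))\to H^i_\cM(\cE,\Z(2))^\cotors\to H^i_\cM(\cE_U,\Z(2))^\cotors\to H^{i-1}_\cM(\cE^U,\Z(1))\to\cdots
\]
furnished by Proposition~\ref{prop:red_loc_seq}, whose $\cE^U$-terms are the motivic cohomology of the curve $\cE^U$ and so vanish outside degrees $1$ and $2$. The input about $\cE$ itself is: Theorem~\ref{thm:motfin} for the higher degrees and for $H^3_\cM(\cE,\Z(2))_\tors$ (Theorem~\ref{thm:motfin}(1)(f)), and Theorem~\ref{thm:conseq2} for the kernels and cokernels of the boundary maps $\partial^i_{\cM,2}$ on $H^i_\cM(E,\Z(2))^\cotors$; the latter rests on Corollary~\ref{cor:relative_cv}, which also identifies $H^i_\cM(\cE,\Z(2))^\cotors$ in terms of the $\partial^i_{\cM,2}$, together with Theorem~\ref{surjectivity} and the surjectivity of $c_{2,2}$ and $-\chern'_{1,1}$ (Lemma~\ref{lem:c12}, Proposition~\ref{prop:Z}) through the diagram (\ref{eqn:CD_H43}). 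Tracing ranks and degrees through these long exact sequences, with the help of the fibre lemmas below, yields the ranks of $H^2_\cM(\cE_U,\Z(2))^\cotors$ and $H^3_\cM(\cE_U,\Z(2))^\cotors$ and reduces the torsion orders to those attached to the fibres in $\cE^U$ and to étale and de Rham--Witt cohomology.

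For (6) the same localization sequence gives $H^i_\cM(\cE_U,\Z(2))=0$ for $i\ge 5$ and a surjection $H^4_\cM(\cE,\Z(2))=\CH_0(\cE)\surj H^4_\cM(\cE_U,\Z(2))$ with kernel the image of $\Pic(\cE^U)$. I would identify $\CH_0(\cE)$ with $\Pic(C)=\CH_0(C)$ via $f_*$, using Lemma~\ref{lem:appl_pi1} and the class field theory of Kato--Saito \cite{Ka-Sa} for the degree-zero parts and the section $\iota:C\to\cE$ to split the degree; as each fibre $\cE_\wp$ is connected and $\iota(\wp)$ is a rational point of it, the image of $\Pic(\cE^U)$ in $\Pic(C)$ is the subgroup generated by $\{[\wp]:\wp\in C\setminus U\}$, whose quotient is $\Pic(U)$.

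Finally, the identification of all the finite groups above with the stated expressions uses: the motivic cohomology and $G$-groups of the singular fibres in $\cE^U$, computed from the Kodaira classification by the fibre lemmas (Lemmas~\ref{lem:G1_isom}, \ref{lem:G0_order}, based on \cite{KY:Two}); the torsion subgroup $T_U$ of $\Div(\cEbar_U)/\!\sim_\alg$, together with the comparison $T_U\inj T$ deduced from \cite{Shioda}; and the $L$-function identities of Lemma~\ref{lem:appl_LofcE} and Corollary~\ref{cor:open_L}, expressing the zeta factors of $\cE$ and of $\cE^U$ through $L(E,s)$, the $L(h^i(C),s)$, and the local factors at the bad fibres. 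The prime-to-$p$ contributions come from étale cohomology (Geisser--Levine, Merkurjev--Suslin) as in the proof of Lemma~\ref{lem:MSGL_1}, and the $p$-part from logarithmic de Rham--Witt cohomology following \cite{CSS}, \cite{Gr-Su}, \cite{Milne1} and Proposition~\ref{prop:dRW_main}; the leading term $L^{*}(h^1(\cE^U),0)$ enters because removing the split multiplicative fibres forces $L(h^1(\cE^U),s)$ to vanish at $s=0$ to order $|S_0\setminus U|$. The hard part will be the bidegree $(3,2)$: establishing that $H^3_\cM(\cE_U,\Z(2))$ is finitely generated modulo a uniquely divisible subgroup with the correct torsion subgroup — this genuinely needs Theorem~\ref{surjectivity} for the $\cE_C$-input and the Appendix for the $p$-part of $H^3_\cM(\cE,\Z(2))_\tors$ — and the prime-by-prime bookkeeping of these torsion orders through the Kodaira classification and the two long exact sequences.
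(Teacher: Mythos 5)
Your outline matches the paper's in broad strokes, but two points need correction. First, you cannot ``verify the hypotheses of Proposition~\ref{prop:red_loc_seq} for $\cE_U\inj\cE$ using the known motivic cohomology of the proper curve $\cE^U$'': the condition that must be checked is (5), finiteness of $\Coker\,\partial_U$, and knowing the target $H^1_\cM(\cE^U,\Z(1))$ says nothing about the image of $\partial_U$. The paper isolates this as Lemma~\ref{lem:coker_partial}, whose proof shrinks $U$ so that $\cE_U\to U$ is smooth and then invokes Theorem~\ref{surjectivity} via Lemma~\ref{lem:aux1}; so Theorem~\ref{surjectivity} is needed precisely to control the boundary map from $\cE_U$, not, as you put it, ``for the $\cE_C$-input'' (when $U=C$ the finiteness is already part of Theorem~\ref{thm:motfin} since $\cE$ is projective).

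Second, the paper's proof of Theorem~\ref{thm:conseq3} never touches $K$-theory: Lemma~\ref{lem:c12}, Proposition~\ref{prop:Z}, and diagram~(\ref{eqn:CD_H43}) are not used, and invoking Theorem~\ref{thm:conseq2} would be circular, since the paper deduces Theorem~\ref{thm:conseq2}(3) from Theorem~\ref{thm:conseq3}(4) by a limit argument, not the other way around. Note also that the boundary maps $\partial^i_{\cM,2}$ live on $H^i_\cM(E,\Z(2))^\cotors$, which is the direct limit over $U$, whereas the localization sequence you write down for a fixed $U$ involves $\partial^2_U$, a different map; conflating them is where the circularity would creep in. The actual computation of $\Coker\,\partial^2_U$ in item (4) goes the other way: using the exact sequence of Proposition~\ref{prop:red_loc_seq} and the vanishing of the image of $H^2_\cM(\cE^U,\Z(1))_\tors$ in $\CH_0(\cE)$, one gets $|\Coker\,\partial^2_U| = |H^3_\cM(\cE,\Z(2))_\tors|\cdot|H^2_\cM(\cE^U,\Z(1))_\tors| / |H^3_\cM(\cE_U,\Z(2))_\tors|$, and the three factors are supplied by Theorem~\ref{thm:motfin} together with Proposition~\ref{prop:dRW_main}, Lemma~\ref{lem:G0_order}, and Lemma~\ref{lem:etaleU_2}; the residual $p$-power ambiguity is killed because $|\Coker\,\partial^2_U|$ is prime to $p$ and $\Hom(\Pic^o_{\cE/\F_q},\Gm)=0$ by Lemma~\ref{lem:appl_pi1}. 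Your remaining ingredients --- Kato-Saito class field theory and Lemma~\ref{lem:appl_pi1} for item (6), the fibre lemmas, Lemma~\ref{lem:appl_LofcE} and Corollary~\ref{cor:open_L} for the $L$-function identities, and the Shioda comparison $T_U\inj T$ --- do agree with the paper.
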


\subsection{Relation between $L(E,s)$ and the congruence zeta
function of $\cE$.}
\label{sec: L-function}
Let $\ell \neq p$ be a prime number. 
Take an open subset $j: U \to C$ 
such that the restriction $f_U: \cE_U \to U$ is (proper) smooth.

By the Grothendieck-Lefschetz trace formula, we have
$$
L(E,s)  = \prod_{i=0}^2 
\det(1-\Frob\cdot q^{-s};H^i_\etale(\Cbar, j_* R^1 f_{U*} \Q_\ell))^{(-1)^{i-1}}.
$$

Note that by the adjunction $\id \to j_* j^*$, 
we have a canonical morphism
\[
R^1 f_* \Q_\ell \to j_* j^* R^1 f_* \Q_\ell \cong j_* R^1 f_{U*} \Q_\ell.
\]
We will prove the following lemma:
\begin{lemma}
The map above is an isomorphism.
\end{lemma}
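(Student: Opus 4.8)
The plan is to verify that the displayed morphism $R^1 f_*\Q_\ell \to j_* R^1 f_{U*}\Q_\ell$ is an isomorphism stalk by stalk. Since $j$ is an open immersion, $j^*j_*\cong\mathrm{id}$, so after restriction to $U$ the morphism becomes the identity of $R^1 f_{U*}\Q_\ell = j^* R^1 f_*\Q_\ell$; hence its kernel and cokernel are skyscraper sheaves supported on the finite set $\Sigma = C\setminus U$, and it is enough to treat the stalk at a geometric point $\bar s$ over each $s\in\Sigma$. Fix such an $s$, let $\cO^{\mathrm{sh}}$ be the strict henselization of $\cO_{C,s}$ at $\bar s$, put $K_s^{\mathrm{sh}}=\Frac(\cO^{\mathrm{sh}})$, and let $\overline{K}_s$ be a separable closure of $K_s^{\mathrm{sh}}$, so that $I_s:=\Gal(\overline{K}_s/K_s^{\mathrm{sh}})$ is the inertia group at $s$ and $E_{\overline{K}_s}$ is the base change of the generic fibre.

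First I would identify the two stalks. By proper base change, $(R^1 f_*\Q_\ell)_{\bar s}=H^1_\etale(\cE_{\bar s},\Q_\ell)$, where $\cE_{\bar s}=\cE\times_C\bar s$ is the geometric special fibre. On the other side, because $s\notin U$ one has $\Spec\,\cO^{\mathrm{sh}}\times_C U=\Spec\,K_s^{\mathrm{sh}}$, whence $(j_* R^1 f_{U*}\Q_\ell)_{\bar s}=H^1_\etale(E_{\overline{K}_s},\Q_\ell)^{I_s}$, the inertia invariants of the $\ell$-adic cohomology of the generic fibre. Under these identifications the map induced on stalks is the specialization homomorphism
$$
\mathrm{sp}_s : H^1_\etale(\cE_{\bar s},\Q_\ell) \to H^1_\etale(E_{\overline{K}_s},\Q_\ell)^{I_s},
$$
obtained by combining the proper base change isomorphism $H^1_\etale(\cE\times_C\Spec\,\cO^{\mathrm{sh}},\Q_\ell)\cong H^1_\etale(\cE_{\bar s},\Q_\ell)$ with restriction along the open immersion $\cE_{\overline{K}_s}\hookrightarrow \cE\times_C\Spec\,\cO^{\mathrm{sh}}$; it lands in the inertia invariants automatically, since its source carries no $I_s$-action. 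So the lemma reduces to the assertion that $\mathrm{sp}_s$ is an isomorphism for every $s\in\Sigma$.

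I would then run through the Kodaira--N\'eron classification of the special fibres of the minimal regular model of an elliptic curve (equivalently, Tate's algorithm). If $f$ is smooth at $s$ (a point of good reduction that happens to lie outside $U$), then $\cE_{\bar s}$ is a smooth genus-one curve, $I_s$ acts trivially, and $\mathrm{sp}_s$ is an isomorphism $\Q_\ell^{\oplus 2}\xto{\sim}\Q_\ell^{\oplus 2}$ by smooth proper base change. If $E$ has (split or non-split) multiplicative reduction at $s$, that is, $\cE_{\bar s}$ is a N\'eron $n$-gon of copies of $\bP^1$ (Kodaira type $I_n$, $n\ge1$), then $H^1_\etale(\cE_{\bar s},\Q_\ell)\cong\Q_\ell$ --- one-dimensional, coming from the unique cycle in the configuration of components --- while the local monodromy on $H^1_\etale(E_{\overline{K}_s},\Q_\ell)$ is unipotent with a single Jordan block, so its space of $I_s$-invariants is also one-dimensional; that $\mathrm{sp}_s$ is nonzero, hence an isomorphism, is the local invariant cycle theorem in this case, which can be checked directly from the Tate uniformization of $E$ over $K_s^{\mathrm{sh}}$ (or cited from SGA~7). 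Finally, if $E$ has additive reduction at $s$ (types $II$, $III$, $IV$, $I_0^*$, $I_n^*$ with $n\ge1$, $IV^*$, $III^*$, $II^*$), the reduced geometric special fibre is a tree-like configuration of rational curves --- or a cuspidal or tangential degeneration of such --- so $H^1_\etale(\cE_{\bar s},\Q_\ell)=0$, while additive reduction forces $H^1_\etale(E_{\overline{K}_s},\Q_\ell)^{I_s}=0$ by the criterion of N\'eron--Ogg--Shafarevich (the identity component of the special fibre of the N\'eron model is the additive group $\Ga$, with no abelian or toric part, so the inertia-invariant subspace of the Tate module vanishes); hence $\mathrm{sp}_s$ is the isomorphism $0\xto{\sim}0$.

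The main obstacle is the multiplicative case, where one needs not only that both sides of $\mathrm{sp}_s$ are one-dimensional but that $\mathrm{sp}_s$ does not vanish. The cleanest route is to exhibit, from the Tate curve description of $E_{K_s^{\mathrm{sh}}}$ or from the explicit Kodaira--N\'eron model (the vanishing cycle at the node of an $I_1$-fibre, and the chain-of-$\bP^1$'s structure for general $n$), a class in $H^1_\etale(\cE_{\bar s},\Q_\ell)$ whose image generates the monodromy-invariant line; alternatively one may simply invoke the local invariant cycle theorem. Everything else is routine bookkeeping with proper base change and the list of degenerate fibres.
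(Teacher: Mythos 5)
Your proposal is correct and arrives at the same reduction (stalk-by-stalk at $s\in C\setminus U$, proper base change, Kodaira--N\'eron classification, dimension count in the good/multiplicative/additive cases), but it handles the crucial injectivity step differently than the paper. The paper first establishes injectivity of the specialization map $H^1(\cE_{\bar s},\Q_\ell)\to H^1(E_{\overline{K}_s},\Q_\ell)$ uniformly in all cases, using the surjectivity of $\pi_1(\cE_{\bar s})\to\pi_1(\cE_{\bar\eta})$ (\cite[Exp.\ X, Cor.\ 2.4]{SGA1}) applied to $H^1$ of fundamental groups; after that, surjectivity onto the inertia invariants reduces to a pure dimension count, which the Kodaira classification supplies. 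You instead match dimensions case by case and then, in the multiplicative case where both sides are one-dimensional, must separately rule out that the specialization map is zero; you correctly flag this as ``the main obstacle'' and propose to invoke the local invariant cycle theorem or an explicit Tate-curve computation. Both routes work, but the paper's $\pi_1$-surjectivity trick gives injectivity for free and thereby avoids any appeal to the local invariant cycle theorem --- the only nontrivial input beyond the classification is then the easy fact that $V^{I_s}$ has dimension $2$, $1$, or $0$ according to reduction type, which the paper extracts from a weight/unipotence argument (and, in the additive case, from Poincar\'e duality rather than N\'eron--Ogg--Shafarevich as you do). Your version is self-contained and transparent but slightly heavier at the key point; the paper's is leaner once one accepts the SGA\,1 input.
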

\begin{proof}
Let $\wp$ be a closed point of $C$, and let $S$ be
strict henselization of $C$ at $\wp$. Let $s$ be
the closed point of $S$ and let $\etabar$ denote the
spectrum of a separable closure of the function field of $S$.

We have the specialization map 
$H^1(\cE_s,\Q_\ell) \to H^1(\cE_{\etabar},\Q_\ell)$.
This is identified with
$H^1(\pi_1(\cE_{\bar{\eta}}), \Q_\ell)
\to
H^1(\pi_1(\cE_{\bar{s}}), \Q_\ell)$, and is injective,
since $\pi_1(\cE_{\bar{s}}) \to \pi_1(\cE_{\bar{\eta}})$ is 
surjective (\cite[p.270, Exp X, Corollaire 2.4]{SGA1}).

Set $V=  H^1(\cE_{\etabar},\Q_\ell)$.
For surjectivity, we prove that the
image of the specialization map is equal to
the invariant part $V^{I_s}$
under the action of the inertia group $I_s$ at $s$.
It suffices to show that
$H^1(\cE_s,\Q_\ell)$ and $V^{I_s}$
have the same dimension.

We use the Kodaira-N\'eron-Tate classification
of bad fibers (\cf \cite[10.2.1, p.~484--489]{Liu})
to compute that $H^1(\cE_s,\Q_\ell)$ is of dimension $2$, $1$, $0$ if
$E$ has good reduction, semistable bad reduction, and additive reduction
at $\wp$, respectively.
It is well known that $V^{I_s}$ is of dimension $2$
if and only if $E$ has good reduction at $p$.
If $E$ has semistable bad reduction, then
the action of $I_s$ on $V$ is nontrivial 
and unipotent. Hence $V^{I_s}$ is of dimension $1$
in this case.
Suppose that $E$ has additive reduction and 
$V^{I_s} \neq 0$. Then $V^{I_s}$ is of dimension one.
Poincare duality implies that $I_s$ acts trivially
on $V/V^{I_s}$, hence the action of $I_s$ on $V$ 
is unipotent. Therefore, $E$ must have either good or semistable reduction,
which leads to a contradiction.
\end{proof}

\begin{cor}
We have
$$
L(E,s)  = \prod_{i=0}^2 
\det(1-\Frob\cdot q^{-s};H^i_\etale(\Cbar,R^1 f_* \Q_\ell))^{(-1)^{i-1}}.
$$
\end{cor}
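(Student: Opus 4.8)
The plan is to deduce the corollary directly from the lemma just proved, combined with the Grothendieck--Lefschetz trace formula recalled above. That formula gives
\[
L(E,s)=\prod_{i=0}^2\det\bigl(1-\Frob\cdot q^{-s};H^i_\etale(\Cbar,j_*R^1f_{U*}\Q_\ell)\bigr)^{(-1)^{i-1}},
\]
so it suffices to identify, for each $i$, the group $H^i_\etale(\Cbar,j_*R^1f_{U*}\Q_\ell)$ with $H^i_\etale(\Cbar,R^1f_*\Q_\ell)$ compatibly with the action of geometric Frobenius.

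By the lemma, the adjunction morphism $R^1f_*\Q_\ell\to j_*R^1f_{U*}\Q_\ell$ of constructible $\Q_\ell$-sheaves on $C$ is an isomorphism. This morphism lives already on $C$, before any base change: it is the unit of the adjunction $(j^*,j_*)$ applied to $R^1f_*\Q_\ell$, and it is compatible with the base change $\Cbar\to C$ because $\Cbar\to C$ is obtained by base change along $\Spec\,\Fbar_q\to\Spec\,\F_q$. Hence it induces $\GFq$-equivariant, in particular $\Frob$-equivariant, isomorphisms
\[
H^i_\etale(\Cbar,R^1f_*\Q_\ell)\xto{\cong}H^i_\etale(\Cbar,j_*R^1f_{U*}\Q_\ell)
\]
for every $i$. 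Substituting these into the displayed formula yields the corollary; since the two families of cohomology groups are now identified as $\GFq$-modules, the characteristic polynomials of $\Frob$ agree, and in particular the right-hand side does not depend on the choice of $\ell$.

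There is essentially no obstacle remaining: the entire content has been placed in the lemma. The only point worth making explicit is the $\GFq$-equivariance of the comparison isomorphism, and this is automatic because the map is defined at the level of sheaves on $C$ itself, so no separate compatibility verification is needed.
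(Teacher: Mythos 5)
Your proof is correct and is exactly the argument the paper intends: substitute the isomorphism of sheaves from the lemma into the Grothendieck--Lefschetz expression for $L(E,s)$, noting that the adjunction map is defined over $C$ and hence yields $\GFq$-equivariant identifications after base change to $\Cbar$. The paper leaves this deduction implicit, so your explicit remark about Frobenius-equivariance is a harmless and reasonable addition.
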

Below, we will work with this expression.

\begin{lem}\label{lem:h1_weight}
Let $D$ be a scheme of dimension $\le 1$
that  is proper over $\Spec\, \F_q$.
Let $\ell \neq p$ be an integer.
Then, the group $H^i_\etale(\Dbar,\Z_\ell)$
is torsion free for any $i \in \Z$ and is
zero for $i \neq 0,1,2$. 
The group 
$H^i_\etale(\Dbar,\Q_\ell)$ is pure of weight $i$
for $i \neq 1$ 
and is mixed of weight $\{ 0,1 \}$
for $i=1$. 
The group $H^1_\etale(\Dbar,\Q_\ell)$ is
pure of weight one (\resp pure of weight zero)
if $D$ is smooth (\resp every irreducible component
of $\Dbar$ is rational).
\end{lem}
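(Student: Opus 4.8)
The plan is to reduce to the smooth projective case and then bootstrap up to arbitrary proper schemes of dimension $\le 1$ using de Jong alterations or, more elementarily, normalization and the standard hyperbolic/excision sequences. First I would dispose of the easy statements: for $D$ proper of dimension $\le 1$ over $\F_q$, the \'etale cohomology $H^i_\etale(\Dbar,\Z_\ell)$ vanishes outside $i=0,1,2$ by the Grothendieck--Artin cohomological dimension bound $2\dim D$, and torsion-freeness is classical: $H^0$ is free by definition, $H^2_\etale(\Dbar,\Z_\ell)$ is torsion-free because by Poincar\'e duality (on the normalization, whose top cohomology surjects onto that of $D$ in degree $2$) it is dual to $H^0$, and $H^1_\etale(\Dbar,\Z_\ell)$ is torsion-free because its torsion would come (via the universal coefficient sequence) from $H^2_\etale(\Dbar,\Z/\ell)$-torsion paired against $H^1$, i.e.\ from the $\ell$-torsion of $\mathrm{Pic}$, which for a curve over $\Fbar_q$ is $\ell$-divisible; more directly $H^1_\etale(\Dbar,\Z_\ell) = T_\ell \mathrm{Pic}(\Dbar)$ up to the constant part, a free module.

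Next I would treat the weight statements. For a smooth projective curve $D$, purity of $H^1$ of weight one is the Weil conjectures (Deligne), so that case is done, and this also establishes the parenthetical ``pure of weight one if $D$ is smooth'' once we note a smooth proper curve over $\F_q$ is projective. For the general proper $D$ of dimension $\le 1$: let $\nu:\wt D \to D$ be the normalization, with $\wt D$ smooth proper (hence projective) over $\F_q$, let $Z\subset D$ be the (zero-dimensional) singular locus and $\wt Z = \nu^{-1}(Z)$. The excision/Mayer--Vietoris sequence
\[
H^0_\etale(\wt D_{\Fbar_q},\Q_\ell)\oplus H^0_\etale(Z_{\Fbar_q},\Q_\ell) \to H^0_\etale(\wt Z_{\Fbar_q},\Q_\ell) \to H^1_\etale(\Dbar,\Q_\ell) \to H^1_\etale(\wt D_{\Fbar_q},\Q_\ell) \to 0
\]
(the last zero because $H^1$ of a zero-dimensional scheme vanishes) expresses $H^1_\etale(\Dbar,\Q_\ell)$ as an extension of the weight-one group $H^1_\etale(\wt D_{\Fbar_q},\Q_\ell)$ by a quotient of $H^0$ of zero-dimensional schemes, which is pure of weight zero. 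Hence $H^1_\etale(\Dbar,\Q_\ell)$ is mixed of weights $\{0,1\}$; purity of $H^0$ (weight $0$) and of $H^2$ (weight $2$, via the surjection from $H^2$ of the smooth projective normalization, which has pure weight $2$) is likewise immediate. For the last parenthetical: if every irreducible component of $\Dbar$ is rational, then each component of $\wt D_{\Fbar_q}$ is $\mathbb{P}^1$, so $H^1_\etale(\wt D_{\Fbar_q},\Q_\ell)=0$ and the extension above shows $H^1_\etale(\Dbar,\Q_\ell)$ is pure of weight zero.

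The main obstacle I anticipate is bookkeeping rather than depth: one must be careful that ``rational'' is to be read after base change to $\Fbar_q$ (the statement already says ``every irreducible component of $\Dbar$''), and that the excision sequence is applied in its correct form, since $D$ is reducible and possibly non-reduced --- one first replaces $D$ by $D_\red$ (harmless for \'etale cohomology) and then handles each connected component. Also, to get torsion-freeness of $H^1$ cleanly it is easiest to invoke that $H^1_\etale(\Dbar,\Z_\ell)$ injects into $H^1_\etale(\wt D_{\Fbar_q},\Z_\ell)\oplus(\text{free part from }H^0\text{ of }\wt Z)$ via the same excision sequence with $\Z_\ell$-coefficients, using that the relevant connecting maps land in torsion-free groups; since everything in sight is then a submodule of a free $\Z_\ell$-module, torsion-freeness follows. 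No genuinely hard input beyond Deligne's weight-monotonicity for the cohomology of (possibly singular, open) varieties over finite fields is needed, and that may simply be cited.
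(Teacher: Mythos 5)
Your proposal takes essentially the same route as the paper: replace $D$ by $D_\red$, pass to the normalization $\pi:D'\to D$, and exploit that the sheaf $\Coker[\Z/\ell^n\to\pi_*\Z/\ell^n]$ is supported on the zero-dimensional singular locus, which yields precisely the long exact sequence you are calling the Mayer--Vietoris/excision sequence (the paper phrases it in terms of that cokernel sheaf, you phrase it in terms of the square $\wt Z\subset\wt D$, $Z\subset D$, but these are the same sequence). The weight and torsion-freeness statements then drop out exactly as you say, from Deligne's purity for the smooth projective normalization, $H^1(\mathbb{P}^1)=0$, and the fact that $H^0$ of a zero-dimensional scheme is pure of weight zero and free.
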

\begin{proof}
We may assume that $D$ is reduced.
Let $D'$ be the normalization of $D$.
Let $\pi : D' \to D$ denote the canonical morphism.
Let $\cF_n$ denote the cokernel of the
homomorphism $\Z/\ell^n \to \pi_* (\Z/\ell^n)$
of \'{e}tale sheaves. The sheaf $\cF_n$ is supported
on the singular locus $D_\sing$ of $D$ and is
isomorphic to $i_*(\Coker[\Z/\ell^n 
\to \pi_{\sing *} (\Z/\ell^n)])$, where
$i :D_\sing \inj D$ is the canonical inclusion
and $\pi_{\sing}: D' \times_D D_\sing \to D_\sing$
is the base change of $\pi$. 
Then, the claim follows from
the long exact sequence
$$
\cdots \to H^i_\etale(\Dbar,\Z/\ell^n)
\to H^i_\etale(\Dbar',\Z/\ell^n)
\to H^i_\etale(\Dbar,\cF_n) \to \cdots.
$$
\end{proof}

\begin{lem}\label{lem:appl_vanishing}
If $i \neq 1$,
then $H^i_\etale(\Cbar,R^1 f_* \Q_\ell)=0$.
\end{lem}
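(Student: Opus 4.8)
The plan is to combine the identification $R^1 f_*\Q_\ell \cong j_*\cF$ just established (here $j:U\to C$ is a dense open over which $f$ is smooth and $\cF = R^1f_{U*}\Q_\ell$) with the standing hypothesis that $f$ is not smooth. Since $\Cbar$ is one-dimensional, $H^i_\etale(\Cbar,R^1f_*\Q_\ell)$ vanishes automatically for $i\notin\{0,1,2\}$, so only $i=0$ and $i=2$ remain. Write $M=H^0_\etale(\Ubar,\cF)=\cF_{\etabar}^{\pi_1(\Ubar,\etabar)}$ for the space of geometric monodromy invariants in $H^1(E_{\kbar},\Q_\ell)$. Because $R^1f_*\Q_\ell$ is the middle extension of $\cF$ along $j$, one has $H^0_\etale(\Cbar,R^1f_*\Q_\ell)=M$ directly; and by Poincar\'e duality on $\Cbar$, using the cup-product autoduality $\cF^\vee\cong\cF(1)$ of the $H^1$ of the genus-one fibres, $H^2_\etale(\Cbar,R^1f_*\Q_\ell)$ is the dual of $M$ up to a Tate twist. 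Hence everything reduces to proving $M=0$.

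To prove $M=0$, I would first note that $\cF$ is pure of weight one, hence geometrically semisimple by Weil~II. Next, since $f$ is not smooth there is a closed point $\wp$ of $C$ at which $E$ has bad reduction; by the N\'eron--Ogg--Shafarevich criterion the inertia at $\wp$ acts non-trivially on $\cF_{\etabar}$, and since this action factors through $\pi_1(\Ubar,\etabar)$ the sheaf $\cF$ is not geometrically constant. Now suppose $M\neq 0$. By semisimplicity $\cF_{\etabar}=M\oplus N$ as $\pi_1(\Ubar,\etabar)$-modules with $N^{\pi_1(\Ubar,\etabar)}=0$ and $\dim N\le 1$. If $\dim N=0$, then $\cF$ has trivial geometric monodromy, contradicting the previous sentence. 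If $\dim N=1$, then $N\cong\det(\cF_{\etabar})\otimes\det(M)^{-1}$; but $\det(\cF_{\etabar})=\wedge^2 H^1(E_{\kbar},\Q_\ell)\cong\Q_\ell(-1)$ has trivial geometric monodromy, and so does $\det(M)$ since the monodromy acts trivially on $M$, whence $N$ has trivial monodromy and therefore $N=N^{\pi_1(\Ubar,\etabar)}=0$, a contradiction. So $M=0$ and both $H^0_\etale(\Cbar,R^1f_*\Q_\ell)$ and $H^2_\etale(\Cbar,R^1f_*\Q_\ell)$ vanish.

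The one place where the argument has real content is the elimination of the possibility $\dim M=1$: nothing about the local monodromy alone forbids a one-dimensional space of global invariants, and ruling it out uses crucially both the semisimplicity coming from purity (Weil~II) and the geometric triviality of $\det\cF$ (equivalently the autoduality of $H^1$ of a genus-one curve). The remaining ingredients — the vanishing outside degrees $0,1,2$, the identity $H^0_\etale(\Cbar,R^1f_*\Q_\ell)=M$, and the duality between $H^2$ and $H^0$ — are formal once $R^1f_*\Q_\ell\cong j_*\cF$ is known.
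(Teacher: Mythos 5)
Your proof is correct, and it takes a genuinely different route from the paper's. You reduce both outstanding degrees to the single vanishing $M = H^0_\etale(\Ubar,\cF) = 0$ --- via adjunction for $i=0$, and via self-duality of the intermediate extension $j_*\cF$ together with $\cF^\vee \cong \cF(1)$ for $i=2$ --- and then kill $M$ by combining three inputs: Deligne's Weil~II theorem that a pure lisse sheaf on a normal variety over a finite field has geometrically semisimple monodromy, the geometric triviality of $\det \cF$, and the non-triviality of the monodromy (N\'eron--Ogg--Shafarevich at a bad place). The paper instead treats $i=0$ and $i=2$ by two separate, more elementary arguments, neither of which invokes semisimplicity: for $i=0$ it injects $H^0_\etale(\Cbar,R^1f_*\Q_\ell)$ into the stalk $H^1_\etale(\cE_x,\Q_\ell)$ at every geometric point and observes (Lemma~\ref{lem:h1_weight}) that this stalk is pure of weight~$0$ at a bad point and of weight~$1$ at a good one, forcing vanishing; for $i=2$ it dualizes to $H^0(\Ubar, \cF^\vee(1))$, identifies it with $V(1)$ where $V = (T_\ell E\otimes\Q_\ell)^{\pi_1(\Ubar)}$, and, assuming $V\neq 0$ and hence $\dim V = 1$, extracts a nonzero $\GFq$-equivariant map $\pi_1(\Ubar)^\ab \to \Hom((T_\ell E\otimes\Q_\ell)/V, V)$ which is ruled out by comparing Frobenius weights. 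Your approach buys a uniform treatment of $i=0$ and $i=2$ and avoids the second, more ad hoc step; the cost is appealing to the deeper Weil~II semisimplicity theorem where the paper gets by with Weil~I purity of the cohomology of (possibly singular) proper curves.
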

\begin{proof}
For any point $x \in \Cbar(\Fbar_q)$ lying over
a closed point $\wp \in C$, the canonical
homomorphism $H^0_\etale(\Cbar,R^1 f_* \Q_\ell) \to
H^1_\etale(\cE_x,\Q_\ell)$ is injective since
$H^0_{\etale,c}(\Cbar\setminus\{x\}, R^1 f_* \Q_\ell)=0$.
By Lemma~\ref{lem:h1_weight}, 
the module $H^1_\etale(\cE_x,\Q_\ell)$ is
pure of weight one (\resp of weight zero)
if $\cE_x$ is smooth (\resp is not smooth). 
Since we have assumed that $f:\cE \to C$ is not smooth,
$H^0_\etale(\Cbar,R^1 f_* \Q_\ell)=0$.

Consider a non-empty open subscheme $U\subset C$ such that $f_U:\cE_U\to U$
is smooth. 
The group
$H^2_\etale(\Cbar,R^1 f_* \Q_\ell) \cong
H^2_{\etale,c}(\Ubar,R^1 f_{U *} \Q_\ell)$ is the dual of
$H^0_\etale(\Ubar,R^1 f_{U *} \Q_\ell(1))$ by Poincar\'e duality.
Next, assume $H^0_\etale(\Ubar,R^1 f_{U *} \Q_\ell(1)) \neq 0$.
Let $T_\ell(E)$ denote the $\ell$-adic Tate module of $E$. 
The \'{e}tale fundamental group $\pi_1(U)$ acts on $T_\ell(E)$.
By the given assumption, the $\pi_1(\Ubar)$-invariant part
$V=(T_\ell(E)\otimes \Q_\ell)^{\pi_1(\Ubar)}$ is nonzero.
Since $f$ is not smooth, $V$ is one-dimensional, 
hence we have a nonzero homomorphism
$\pi_1(\Ubar)^\ab \to \Hom(T_\ell(E)\otimes \Q_\ell/V,V)$
of $\GFq$-modules. By the weight argument, we observe that
this is impossible, hence $H^2_\etale(\Cbar,R^1 f_{*} \Q_\ell(1))= 0$.
\end{proof}

As an immediate consequence, we obtain the following corollary.
\begin{cor}
The spectral sequence
$$
E_2^{i,j}=H^i_\etale(\Cbar,R^j f_* \Q_\ell)
\Rightarrow H^{i+j}_\etale(\cEbar, \Q_\ell)
$$
is $E_2$-degenerate. \qed
\end{cor}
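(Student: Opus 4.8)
The plan is to exploit the fact that the Leray spectral sequence for $f$ (base-changed to $\Fbar_q$) is supported in a $3\times 3$ square and that Lemma~\ref{lem:appl_vanishing} annihilates the only differentials that have a chance of being nonzero. First I would pin down the shape of the $E_2$-page. Since $\Cbar$ is a smooth proper curve over the algebraically closed field $\Fbar_q$, it has \'etale cohomological dimension $2$, so $E_2^{i,j}=H^i_\etale(\Cbar,R^j f_*\Q_\ell)=0$ whenever $i\notin\{0,1,2\}$. Since $f$ is proper with fibers of dimension $\le 1$, proper base change identifies the stalk of $R^j f_*\Q_\ell$ at a geometric point $\xbar$ with $H^j_\etale(\cE_{\xbar},\Q_\ell)$, which vanishes for $j\ge 3$; hence $E_2^{i,j}=0$ for $j\notin\{0,1,2\}$ as well. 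Thus the $E_2$-page is concentrated in the region $0\le i,j\le 2$.

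Next I would examine the differentials. On the $E_r$-page the differential $d_r$ has bidegree $(r,1-r)$, so for $r\ge 3$ it raises $i$ by at least $3$ and therefore maps out of the supporting square; all such $d_r$ vanish. On the $E_2$-page, $d_2$ has bidegree $(2,-1)$, so the only pairs $\bigl((i,j),(i+2,j-1)\bigr)$ both lying inside the square are $(0,1)\to(2,0)$ and $(0,2)\to(2,1)$, giving the two potentially nonzero differentials $d_2:E_2^{0,1}\to E_2^{2,0}$ and $d_2:E_2^{0,2}\to E_2^{2,1}$. By Lemma~\ref{lem:appl_vanishing} (which uses the standing hypothesis that $f$ is not smooth), the source $E_2^{0,1}=H^0_\etale(\Cbar,R^1 f_*\Q_\ell)$ of the first and the target $E_2^{2,1}=H^2_\etale(\Cbar,R^1 f_*\Q_\ell)$ of the second both vanish, so both differentials are zero. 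Consequently every differential on every page vanishes and $E_2=E_\infty$, i.e. the spectral sequence is $E_2$-degenerate.

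There is no substantive obstacle here: the argument is purely formal once Lemma~\ref{lem:appl_vanishing} is available, and the actual content---the vanishing of $H^i_\etale(\Cbar,R^1 f_*\Q_\ell)$ for $i\neq 1$---was already extracted from the weight argument together with the non-smoothness of $f$. The only care needed is to record the dimension bounds on $i$ and $j$ so that no higher differential and no other $d_2$ can interfere.
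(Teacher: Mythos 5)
Your proposal is correct and spells out exactly what the paper's terse ``As an immediate consequence'' intends: the $E_2$-page sits in the $3\times 3$ square $0\le i,j\le 2$, the only differentials that could be nonzero are the two $d_2$'s with source or target on the row $j=1$, and Lemma~\ref{lem:appl_vanishing} kills both. Nothing to add.
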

\begin{lem}\label{lem:appl_LofcE}
Let $U \subset C$ be a non-empty open
subscheme such that $f_U: \cE_U \to U$ is smooth.
Let $\Irr^0(\cE^U) \subset \Irr(\cE^U)$ 
denote the subset of irreducible components of $\cE^U$ 
that do not intersect $\iota(C)$.
We regard $\Irr^0(\cE^U)$ as a closed subscheme
of $\Irr(\cE^U)$ (recall the convention in Section~\ref{subsec:12_1} 
on $\Irr(\cE^U)$). Then,
$$
L(h^i(\cE),s)
= \left\{\begin{array}{ll}
(1-q^{-s}), & \text{ if }i=0,\\
L(h^1(C),s), & \text{ if }i=1,\\
(1-q^{1-s})^2 L(E,s)L(h^0(\Irr^0(\cE^U)),s-1), 
& \text{ if }i=2,\\
L(h^1(C),s-1), & \text{ if }i=3,\\
(1-q^{2-s}), & \text{ if }i=4.
\end{array}
\right.
$$
\end{lem}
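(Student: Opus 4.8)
The plan is to compute, for each $i$, the $\GFq$-module $H^i_\etale(\cEbar,\Q_\ell)$ and then take the determinant of $1-\Frob\cdot q^{-s}$. For $i=0$ and $i=4$ there is nothing to do beyond noting that $\cEbar$ is connected — because $\Cbar$ is connected and every fibre of $\cEbar\to\Cbar$ is connected (by \cite[Corollaire (4.3.12), p.~134]{EGA3}, as recalled above) — and that $\cE$ is a smooth projective surface; hence $H^0_\etale(\cEbar,\Q_\ell)=\Q_\ell$ and $H^4_\etale(\cEbar,\Q_\ell)=\Q_\ell(-2)$, giving $L(h^0(\cE),s)=1-q^{-s}$ and $L(h^4(\cE),s)=1-q^{2-s}$.

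For $i=1,2,3$ I would use the $E_2$-degeneracy of the Leray spectral sequence (the corollary following Lemma~\ref{lem:appl_vanishing}), which yields $H^i_\etale(\cEbar,\Q_\ell)=\bigoplus_{a+b=i}H^a_\etale(\Cbar,R^bf_*\Q_\ell)$. Here $R^0f_*\Q_\ell=\Q_\ell$ (connected fibres); $H^a_\etale(\Cbar,R^1f_*\Q_\ell)=0$ for $a\neq 1$ by Lemma~\ref{lem:appl_vanishing}, and $\det(1-\Frob q^{-s};H^1_\etale(\Cbar,R^1f_*\Q_\ell))=L(E,s)$ by the corollary of Section~\ref{sec: L-function} together with that same lemma; and $H^a_\etale(\Cbar,\Q_\ell)$ equals $\Q_\ell$, $H^1_\etale(\Cbar,\Q_\ell)$, $\Q_\ell(-1)$, $0$ for $a=0,1,2$ and $a\geq 3$. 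Thus $i=1$ is immediate ($H^1_\etale(\cEbar,\Q_\ell)=H^1_\etale(\Cbar,\Q_\ell)$, so $L(h^1(\cE),s)=L(h^1(C),s)$), and, granting the description of $G:=R^2f_*\Q_\ell$ below (an extension that splits as $\Q_\ell(-1)\oplus\kappa$ with $\kappa$ a skyscraper, so that $H^1_\etale(\Cbar,G)=H^1_\etale(\Cbar,\Q_\ell)(-1)$), the case $i=3$ gives $H^3_\etale(\cEbar,\Q_\ell)=H^1_\etale(\Cbar,\Q_\ell)(-1)$ and $L(h^3(\cE),s)=L(h^1(C),s-1)$.

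The heart of the proof is the structure of $G=R^2f_*\Q_\ell$ and the case $i=2$. By proper base change $G_{\bar x}=H^2_\etale(\cEbar_{\bar x},\Q_\ell)$, and pulling back to the normalization of $\cEbar_{\bar x}$ — whose mapping cone is supported in dimension $0$, exactly as in the proof of Lemma~\ref{lem:h1_weight} — identifies this with $\bigoplus_{Z\in\Irr(\cEbar_{\bar x})}\Q_\ell(-1)$, the $Z$-coordinate being the degree on the smooth curve $\wt Z$. Over the open $U$ on which $f$ is smooth the fibres are irreducible, so $G|_{\Ubar}=\Q_\ell(-1)$. The section $\iota$ supplies a distinguished global section $\sigma\in H^0(\Cbar,G(1))$: the image under the Leray edge map of the class $[\iota(C)]\in H^2_\etale(\cEbar,\Q_\ell(1))$. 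Its $Z$-coordinate at $\bar x$ is the intersection number $(\iota(C)\cdot Z)_{\cEbar}$; since $f\circ\iota=\id$, one has $\sum_Z m_Z(\iota(C)\cdot Z)=(\iota(C)\cdot\cEbar_{\bar x})_{\cEbar}=\deg_{\Cbar}(\bar x)=1$ with the fibre multiplicities $m_Z\geq 1$, so exactly one component $Z^0_{\bar x}$ meets $\iota(C)$ and $\sigma_{\bar x}$ is its indicator vector; in particular $\sigma$ is nowhere zero. Hence $\Q_\ell\cdot\sigma\cong\Q_\ell$ is a subsheaf of $G(1)$ whose cokernel $\mathcal{Q}$ is a skyscraper on $\Cbar\setminus\Ubar$ with $\mathcal{Q}_{\bar x}\cong\bigoplus_{Z\neq Z^0_{\bar x}}\Q_\ell$; since $\Ext^1_{\Cbar}(\mathcal{Q},\Q_\ell)=0$ (for a closed point $\bar x$ of the smooth curve $\Cbar$, $i_{\bar x}^!\Q_\ell\cong\Q_\ell(-1)[-2]$ is concentrated in degree $2$), the sequence splits and $G\cong\Q_\ell(-1)\oplus\kappa$ with $\kappa:=\mathcal{Q}(-1)$. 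Then $H^{\geq 1}_\etale(\Cbar,\kappa)=0$ and $H^0_\etale(\Cbar,\kappa)=\bigoplus_{\bar x}\mathcal{Q}(-1)_{\bar x}$; the components $Z^0_{\bar x}$ form the Galois-stable subset of $\Irr(\cEsupUbar)$ of components meeting $\iota(C)$, so this $\GFq$-module is canonically $H^0_\etale(\Irr^0(\cE^U)\times_{\Spec\,\F_q}\Spec\,\Fbar_q,\Q_\ell)(-1)$, whose determinant of $1-\Frob q^{-s}$ is $L(h^0(\Irr^0(\cE^U)),s-1)$. Substituting into the Leray decomposition
\[
H^2_\etale(\cEbar,\Q_\ell)=\Q_\ell(-1)\oplus H^1_\etale(\Cbar,R^1f_*\Q_\ell)\oplus\Q_\ell(-1)\oplus H^0_\etale(\Cbar,\kappa)
\]
yields $L(h^2(\cE),s)=(1-q^{1-s})^2\,L(E,s)\,L(h^0(\Irr^0(\cE^U)),s-1)$, as required.

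The step I expect to be hardest is the one just described: producing the canonical splitting $R^2f_*\Q_\ell\cong\Q_\ell(-1)\oplus\kappa$ and, with it, identifying the skyscraper $\kappa$ together with its $\GFq$-action — equivalently, checking that the ``extra'' components of the reducible fibres, with their Galois action, are precisely those of $\Irr^0(\cE^U)$. Everything else reduces to formal manipulation of the degenerate Leray spectral sequence, Lemma~\ref{lem:appl_vanishing}, and the cohomology of a smooth proper curve.
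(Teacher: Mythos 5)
Your proof is correct, and for the key case $i=2$ it takes a genuinely different route from the paper. The paper applies the excision triangle on $\Cbar$ to $R^2f_*\Q_\ell$ (using that its restriction to $\Ubar$ is $\Q_\ell(-1)$), obtaining an exact sequence $0\to H^0_\etale(\Cbar,R^2f_*\Q_\ell)\to H^2_\etale(\overline{\cE^U},\Q_\ell)\to H^1_{\etale,c}(\Ubar,\Q_\ell(-1))$; it then factors the last arrow through the trace map $H^2_\etale(\overline{\cE^U},\Q_\ell)\to H^0_\etale(\Cbar\setminus\Ubar,\Q_\ell(-1))$ and identifies $H^0(\Cbar,R^2f_*\Q_\ell)$ as the preimage of the image of $H^0_\etale(\Cbar,\Q_\ell(-1))$, closing with ``This proves the claim'' and leaving to the reader the identification of this preimage's $L$-factor with $(1-q^{1-s})\,L(h^0(\Irr^0(\cE^U)),s-1)$. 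You instead exhibit the cycle class of $\iota(C)$ as a nowhere-vanishing global section of $R^2f_*\Q_\ell(1)$, yielding $0\to\Q_\ell(-1)\to R^2f_*\Q_\ell\to\kappa\to 0$ with skyscraper quotient, and read off $H^0(\Cbar,\kappa)$ directly as the Tate-twisted permutation module on $\Irr^0(\cE^U)$; this makes explicit precisely the step the paper compresses away, and is the more transparent version. The two descriptions agree because on each stalk $\bigoplus_{Z}\Q_\ell(-1)$ the kernel of the trace is $\GFq$-equivariantly isomorphic to $\bigoplus_{Z\neq Z^0_{\bar{x}}}\Q_\ell(-1)$ via $e_Z\mapsto e_Z-m_Ze_{Z^0_{\bar{x}}}$. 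One small remark: the $\Ext^1$ splitting argument, though correct, is not strictly needed, since $\det(1-\Frob\, q^{-s})$ is multiplicative in short exact sequences of $\ell$-adic $\GFq$-representations (alternatively, the connecting map $H^0(\Cbar,\kappa(1))\to H^1(\Cbar,\Q_\ell)$ vanishes for weight reasons), so the unsplit sequence already gives the required $L$-factor.
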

\begin{proof}
We prove the lemma for $i=2$; the other cases are straightforward.
% Take a non-empty open $U\subset C$ such that $f_U:\cE_U\to U$
% is smooth. 
Since $R^2 f_{U *} \Q_\ell \cong \Q_\ell(-1)$,
there exists an exact sequence
$$
0 \to H^0_\etale(\Cbar,R^2 f_* \Q_\ell)
\to H^2_\etale(\overline{\cE^U},\Q_\ell) \to 
H^1_{\etale,c}(\Ubar,\Q_\ell(-1)).
$$
The map
$H^2_\etale(\overline{\cE^U},\Q_\ell) \to 
H^1_{\etale,c}(\Ubar,\Q_\ell(-1))$ decomposes as
$$
H^2_\etale(\overline{\cE^U},\Q_\ell) \to 
H^0_\etale(\Cbar \setminus \Ubar,\Q_\ell(-1))
\to H^1_{\etale,c}(\Ubar,\Q_\ell(-1)).
$$
Given the above, 
$H^0_\etale(\Cbar,R^2 f_* \Q_\ell)$ is isomorphic
to the inverse image of the image of the homomorphism
$H^0_\etale(\Cbar,\Q_\ell(-1)) \to
H^0_\etale(\Cbar \setminus \Ubar, 
\Q_\ell(-1))$ under the surjective homomorphism
$H^2_\etale(\overline{\cE^U},\Q_\ell) \to 
H^0_\etale(\Cbar \setminus \Ubar, 
\Q_\ell(-1))$.
This proves the claim. 
\end{proof}

\subsection{The fundamental group of $\cE$}

\begin{lem}\label{lem:EPchar}
For $i=0,1$, the pull-back
$H^i(C,\cO_C) \to H^i(\cE,\cO_\cE)$
is an isomorphism.
\end{lem}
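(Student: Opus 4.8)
The plan is to run the Leray spectral sequence for $f\colon\cE\to C$ applied to the structure sheaf, controlling $R^0f_*\cO_\cE$ and $R^1f_*\cO_\cE$. First I would record that $f$ is proper and flat (a dominant morphism from the integral scheme $\cE$ to the regular curve $C$ is automatically flat) and that every fibre of $f$ is connected: the generic fibre is the geometrically connected curve $E$, and the closed fibres are connected by the Kodaira--N\'eron classification \cite[10.2.1]{Liu}, or directly by \cite[Corollaire (4.3.12)]{EGA3}. Stein factorization then gives $\cO_C\xto{\cong}f_*\cO_\cE$ (the intermediate curve is finite and birational over the normal curve $C$, hence equal to it). Consequently the edge map $H^0(C,\cO_C)=H^0(C,f_*\cO_\cE)\xto{\cong}H^0(\cE,\cO_\cE)$ is exactly the pull-back, which settles the case $i=0$.

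For $i=1$, the low-degree exact sequence of the spectral sequence $H^p(C,R^qf_*\cO_\cE)\Rightarrow H^{p+q}(\cE,\cO_\cE)$, together with $f_*\cO_\cE=\cO_C$ and the Grothendieck vanishing $H^2(C,\cO_C)=0$ (since $\dim C=1$), would produce a short exact sequence
$$
0\to H^1(C,\cO_C)\xrightarrow{f^*}H^1(\cE,\cO_\cE)\to H^0(C,R^1f_*\cO_\cE)\to 0
$$
whose first arrow is the pull-back; the injectivity can also be seen directly from $f\circ\iota=\id_C$. So it remains to show $H^0(C,R^1f_*\cO_\cE)=0$. Here I would identify the coherent sheaf $\cN:=R^1f_*\cO_\cE$: over the smooth locus its fibres compute the one-dimensional groups $H^1(\cE_\wp,\cO_{\cE_\wp})$, so $\cN$ has generic rank $1$; and relative duality for the flat proper family $f$ with one-dimensional Gorenstein fibres gives $\cN\cong\Hom_{\cO_C}(f_*\omega_{\cE/C},\cO_C)$. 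The dual of a coherent sheaf of generic rank $1$ on the regular curve $C$ is invertible, so $\cN\cong\mathcal{L}^{-1}$ with $\mathcal{L}:=f_*\omega_{\cE/C}$. Comparing Euler characteristics via Leray and Riemann--Roch on $C$ yields $\chi(\cO_\cE)=\chi(\cO_C)-\chi(\mathcal{L}^{-1})=\deg\mathcal{L}$, and since $f$ is not smooth, standard facts on elliptic surfaces (Ogg's formula, or the relation $\mathcal{L}^{\otimes 12}\cong\cO_C(\Delta)$ with $\Delta$ the nonzero effective minimal discriminant divisor, cf.\ \cite{Liu}) give $\deg\mathcal{L}>0$. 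Hence $\deg\cN<0$, so $H^0(C,\cN)=0$, completing the argument.

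The main obstacle is precisely this last vanishing $H^0(C,R^1f_*\cO_\cE)=0$: it rests on $R^1f_*\cO_\cE$ being an invertible sheaf of strictly negative degree, and the strict negativity of the degree is the one place where the running hypothesis that $f$ is not smooth enters essentially --- for a smooth (isotrivial) family one would only obtain $\deg\leq 0$ and the conclusion can genuinely fail. Everything else (the Leray sequence, Stein factorization, the identification of $R^1f_*\cO_\cE$ through relative duality) is routine once the geometry of the minimal regular model is in hand.
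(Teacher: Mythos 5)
Your proof is correct and follows essentially the same route as the paper: both reduce the $i=1$ case to the vanishing $H^0(C,R^1f_*\cO_\cE)=0$ via the Leray spectral sequence, both observe that $R^1f_*\cO_\cE$ is invertible, and both derive the negativity of its degree from the Euler-characteristic computation $\chi(\cO_\cE)=-\deg(R^1f_*\cO_\cE)$ together with the positivity $\chi(\cO_\cE)>0$ (which is exactly what your $\deg f_*\omega_{\cE/C}>0$ says, via relative duality). The extra scaffolding you supply --- Stein factorization for $i=0$, relative duality to realize $R^1f_*\cO_\cE$ as $(f_*\omega_{\cE/C})^{-1}$, and the discriminant formula $\mathcal{L}^{\otimes 12}\cong\cO_C(\Delta)$ to see positivity --- is a legitimate fleshing-out of the steps the paper cites to the literature (Ogg, Dolgachev, Oguiso).
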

\begin{proof}
The claim for $i=0$ is clear,
thus we prove the claim for $i=1$.
Let us write $\cL=R^1f_*\cO_{\cE}$.
It suffices to prove $H^0(C, \cL)=0$.
We note that $\cL$ is an invertible $\cO_C$-module
since the morphism $\cE \to C$ has no multiple fiber. 
The Leray spectral sequence
$E_2^{i,j}=H^i(C,R^j f_* \cO_\cE) \Rightarrow
H^{i+j}(\cE,\cO_\cE)$ shows that
the Euler-Poincar\'e characteristic
$\chi(\cO_\cE)$ equals $\chi(\cO_C) - \chi(\cL) = -\deg \cL$.
By the well-known inequality $\chi(\cO_\cE) > 0$ 
(\cf \cite{Ogg}, \cite{Dol}, or \cite[Theorem 2, p.~81]{Oguiso}),
we obtain $\deg \cL < 0$, which proves $H^0(C, \cL)=0$.
\end{proof}

\begin{lem}\label{lem:appl_pi1}
\begin{enumerate}
\item 
The canonical homomorphism
$\pi_1^{\ab}(\cE) \to \pi_1^{\ab}(C)$ 
between the abelian (\'{e}tale) fundamental groups 
is an isomorphism.
\item 
The canonical morphism
$\Pic^o_{C/\F_q} \to \Pic^o_{\cE/\F_q}$
between the identity components of the Picard schemes 
is an isomorphism.
\end{enumerate}
\end{lem}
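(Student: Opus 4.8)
The plan is to deduce both statements from the single input provided by Lemma~\ref{lem:EPchar}, namely that $H^i(C,\cO_C)\to H^i(\cE,\cO_\cE)$ is an isomorphism for $i=0,1$, together with the fact that $\cE\to C$ has a section $\iota:C\to\cE$. I would treat (2) first, since (1) for the prime-to-$p$ part follows from \cite[Theorem 1.3, p.~214]{Shioda} and the two parts are formally closely related through the Kummer sequence and the structure of $\Pic^0$. For (2): the section $\iota$ splits the pull-back $f^*:\Pic^0_{C/\F_q}\to\Pic^0_{\cE/\F_q}$ via $\iota^*$, so $f^*$ is a closed immersion of abelian varieties (both identity components of Picard schemes of proper schemes over a field are abelian varieties, since $\cE$ and $C$ are smooth projective — here I use that $\cE$ is the minimal regular model, hence smooth, and that smoothness makes $\Pic^0$ reduced, i.e.\ an abelian variety). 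It then suffices to compare dimensions: $\dim\Pic^0_{\cE/\F_q}=\dim_{\F_q}H^1(\cE,\cO_\cE)$ and $\dim\Pic^0_{C/\F_q}=\dim_{\F_q}H^1(C,\cO_C)=g(C)$, and Lemma~\ref{lem:EPchar} gives equality of these two numbers. A split injective morphism of abelian varieties of the same dimension is an isomorphism, proving (2).

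For (1): I would split the abelian fundamental group according to $\ell\neq p$ and $\ell=p$. For each prime $\ell\neq p$, the section $\iota$ again shows $\pi_1^{\ab}(\cE)\to\pi_1^{\ab}(C)$ is split surjective, so it is enough to see that the prime-to-$p$ parts have the same (pro-)order; equivalently, that $H^1_\etale(\Cbar,\Z/\ell)\to H^1_\etale(\cEbar,\Z/\ell)$ is an isomorphism for all $\ell\neq p$. The Leray spectral sequence for $f$ reduces this to the vanishing of $H^0_\etale(\Cbar,R^1f_*\Z/\ell)$-type contributions, which is exactly the kind of statement underlying Lemma~\ref{lem:appl_vanishing}; alternatively one invokes \cite[Theorem 1.3]{Shioda} directly, which asserts precisely that the geometric fundamental group of an elliptic surface with a section is that of the base (prime-to-$p$), and descends this along $\GFq$. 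For the $p$-part, I would use the standard identification of the $p$-part of $\pi_1^{\ab}$ (of a smooth proper variety over a finite field) with data coming from $\Pic^0$ and from the unit-root part of crystalline/flat cohomology — concretely, via the exact sequences of \cite{Ka-Sa} or the flat Kummer sequence $0\to\mu_{p^n}\to\Gm\xto{p^n}\Gm\to 0$ — so that the $p$-part of $\pi_1^{\ab}(\cE)$ is governed by $\Pic(\cE)[p^\infty]$ and $\Pic^0_{\cE/\F_q}[p^\infty]$, and then apply part (2) together with the isomorphism $\Pic(C)\xto{\cong}\Pic(\cE)$ modulo $\Pic^0$ (which follows from $\NS(\cE)$ being controlled by the zero section and the components of fibers, all defined over the base, giving no new $p$-torsion on $\pi_1^{\ab}$).

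The main obstacle is the $p$-part of (1). Shioda's theorem and the Leray argument dispose of the prime-to-$p$ part cleanly, and (2) is soft once Lemma~\ref{lem:EPchar} is granted; but pinning down the $p$-part of $\pi_1^{\ab}$ of a non-simply-connected surface requires care with the logarithmic de Rham–Witt / flat cohomology bookkeeping, and is exactly where one must be precise about which cohomology groups are torsion-free and of the expected weight. I expect the cleanest route is to combine the class field theory of Kato–Saito \cite{Ka-Sa} (expressing $\pi_1^{\ab}(\cE)$ in terms of the Chow group of zero-cycles and $H^0(\cE,\cO_\cE)^\times$) with the later result of this paper — flagged in the introduction — that the groups of zero-cycles on $\cE$ and on $C$ coincide, so that the $p$-part of (1) is reduced to the corresponding statement for the curve $C$, which is classical. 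If one prefers a self-contained argument, one instead shows directly that $H^1_{\mathrm{fl}}(\cEbar,\mu_{p^n})\to$ the analogous group for $C$ is an isomorphism, using the Kummer sequence, $\Pic(\cE)/p^n=\Pic(C)/p^n$ (from (2) plus the Néron–Severi comparison), and $H^0(\cE,\cO_\cE)^\times=\F_q^\times=H^0(C,\cO_C)^\times$ since both are smooth proper and geometrically connected.
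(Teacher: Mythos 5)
There are two genuine problems, one in each part.

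\emph{Part (2).} You assert that since $\cE$ is a smooth projective surface, $\Pic^0_{\cE/\F_q}$ is automatically reduced and hence an abelian variety, and that $\dim\Pic^0_{\cE/\F_q}=\dim H^1(\cE,\cO_\cE)$. In characteristic $p$ this is false in general: there exist smooth projective surfaces with non-reduced $\Pic^0$ (Igusa, Serre), and $\dim\Pic^0_{X/k}=\dim H^1(X,\cO_X)$ holds precisely when $\Pic^0_{X/k}$ is reduced, so your dimension count begs the question. The actual content of (2) is to \emph{prove} reducedness of $\Pic^0_{\cE/\F_q}$. The paper does this by first invoking Shioda's Theorem~4.1 (not Theorem~1.3, which concerns the Mordell--Weil group) to get $\Pic^0_{C/\F_q}\xto{\cong}\Pic^0_{\cE/\F_q,\red}$, and then observing that the induced map on tangent spaces is $H^1(C,\cO_C)\to H^1(\cE,\cO_\cE)$, which is an isomorphism by Lemma~\ref{lem:EPchar}; this forces $\Pic^0_{\cE/\F_q}$ to coincide with its reduction. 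Your ``split closed immersion plus equal dimensions'' conclusion is morally right but only becomes a proof if you replace the false reducedness claim by exactly this tangent-space comparison.

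\emph{Part (1), $p$-part.} Your preferred route is circular: you propose to deduce $\pi_1^{\ab}(\cE)(p)\cong\pi_1^{\ab}(C)(p)$ from Kato--Saito class field theory together with the comparison of zero-cycle groups on $\cE$ and $C$, but in the paper that zero-cycle comparison (the isomorphism $\CH_0(\cE)\cong\Pic(C)$ used in the proof of Theorem~\ref{thm:conseq3}) is itself \emph{deduced from} Lemma~\ref{lem:appl_pi1}(1) together with Kato--Saito. Your fallback via the flat Kummer sequence $0\to\mu_{p^n}\to\Gm\to\Gm\to 0$ computes $H^1_{\mathrm{fl}}(X,\mu_{p^n})$ in terms of $\Pic(X)[p^n]$ and $\F_q^\times/(p^n)$, but the $p$-part of $\pi_1^{\ab}(X)$ is Pontryagin dual to $\varinjlim_n H^1_\etale(X,\Z/p^n)$, not to $H^1_{\mathrm{fl}}(X,\mu_{p^n})$, so this does not give what is needed. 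The paper instead uses the Artin--Schreier--Witt sequence $0\to\Z/p^n\to W_n\cO_X\xto{1-\sigma}W_n\cO_X\to 0$; by Lemma~\ref{lem:EPchar} and induction on $n$, the maps $H^i(C,W_n\cO_C)\to H^i(\cE,W_n\cO_\cE)$ are isomorphisms for $i=0,1$, whence $H^1_\etale(C,\Z/p^n)\to H^1_\etale(\cE,\Z/p^n)$ is an isomorphism for all $n$. Your prime-to-$p$ part (Shioda plus the Kummer sequence) is correct and agrees with the paper.
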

\begin{proof}
The homomorphism
$\Pic^o_{C/\F_q} \to \Pic^o_{\cE/\F_q,\red}$ is
an isomorphism by \cite[Theorem 4.1, p.~219]{Shioda}.
This, combined with the cohomology 
long exact sequence of the Kummer sequence, implies that
if $p \nmid m$, then $H^1_\etale(C,\Z/m) \to H^1_\etale(\cE,\Z/m)$
is an isomorphism, hence to prove (1),
we are reduced to showing that 
$H^1_\etale(C,\Z/p^n) \to H^1_\etale(\cE,\Z/p^n)$
is an isomorphism for all $n\ge 1$.
For any scheme $X$ that is proper over $\Spec\, \F_q$, 
there exists an exact sequence
$$
0 \to \Z/p^n \Z \to W_n \cO_X \xto{1-\sigma} W_n \cO_X \to 0
$$
of \'{e}tale sheaves, where $W_n \cO_X$ 
is the sheaf of Witt vectors and 
$\sigma: W_n \cO_X \to W_n \cO_X$
is the Frobenius endomorphism. This gives rise to
the following commutative diagram with exact rows
$$
\begin{array}{ccccccc}
\cdots \xto{1-\sigma} & H^0(C,W_n \cO_C)
& \to & H^1_\etale(C,\Z/p^n) & \to &
H^1(C,W_n \cO_C) & \xto{1-\sigma} \cdots \\
 & \downarrow & & \downarrow & &
\downarrow & \\
\cdots \xto{1-\sigma} & 
H^0(\cE,W_n \cO_\cE) & \to & H^1_\etale(\cE,\Z/p^n) 
& \to & H^1(\cE, W_n \cO_\cE) 
& \xto{1-\sigma} \cdots.
\end{array}
$$
By Lemma~\ref{lem:EPchar} and induction on $n$,
we observe that the homomorphism
$H^i(C,W_n \cO_C) \to H^i(\cE,W_n \cO_\cE)$
is an isomorphism for $i=0,1$, thus the map 
$H^1_\etale(C,\Z/p^n) \to 
H^1_\etale(\cE,\Z/p^n)$ is an isomorphism.
This proves claim (1).

For (2), it suffices to prove that 
homomorphism $\Lie\, \Pic_{C/\F_q} \to \Lie\, \Pic_{\cE/\F_q}$
between the tangent spaces is an isomorphism.
Since this homomorphism is identified with
homomorphism $H^1(C,\cO_C) \to H^1(\cE,\cO_\cE)$,
claim (2) follows from Lemma~\ref{lem:EPchar}.
\end{proof}

\begin{rmk}
Using Lemma~\ref{lem:appl_pi1} (1), we can prove that 
the homomorphism $\pi_1(\cE) \to \pi_1(C)$ is 
an isomorphism. Since it is not used in this paper,
we only sketch the proof below.

Let $x \to C$ be a geometric point.
Since the morphism $f:\cE \to C$ has a section,
fiber $\cE_x$ of $f$ at $x$ has a reduced
irreducible component. 
This, together with the regularity of $\cE$ and $C$, shows that
the canonical ring homomorphism $H^0(x,\cO_x) \to H^0(Y,\cO_Y)$
is an isomorphism for any connected finite \'{e}tale covering $Y$ of $\cE_x$,
hence
by the same argument as in the proof of 
\cite[X, Proposition 1.2, Th\'{e}or\`{e}me 1.3, p.~262]{SGA1},
we have an exact sequence
$$
\pi_1(\cE_x) \to \pi_1(\cE) 
\to \pi_1(C) \to 1.
$$
In particular here, the kernel of $\pi_1(\cE) 
\to \pi_1(C)$ is abelian. Applying 
Lemma~\ref{lem:appl_pi1}(1)
to $\cE \times_C C' \to C'$ for each finite connected
\'{e}tale cover $C' \to C$, we obtain the bijectivity
of $\pi_1(\cE) \to \pi_1(C)$.

More generally, the statements in Lemma~\ref{lem:appl_pi1} and 
the statement above that the fundamental groups are isomorphic are also
valid if $\cE$ is a regular minimal elliptic 
fibration that is proper flat non-smooth over $C$ with a section,
where $C$ is a proper smooth curve over an arbitrary perfect base field.
\end{rmk}

\begin{cor}\label{cor:appl_divisible}
For any prime number $\ell \neq p$ and for
any $i \in \Z$,
the group $H^i_\etale(\cEbar,\Q_\ell/\Z_\ell)$ is
divisible.
\end{cor}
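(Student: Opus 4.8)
The plan is to reduce the statement to the torsion-freeness of the $\ell$-adic cohomology of $\cEbar$ and then to settle the only nontrivial cohomological degree using the geometry of the elliptic fibration. I would not try to compute $H^i_\etale(\cEbar,\Q_\ell/\Z_\ell)$ directly; instead I would work with the integral lattices $H^j_\etale(\cEbar,\Z_\ell)=\varprojlim_n H^j_\etale(\cEbar,\Z/\ell^n)$, which are finitely generated over $\Z_\ell$ since $\cEbar$ is a projective surface.

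The first step is the Bockstein sequence obtained, exactly as in the argument of \cite[Pas $n^o$ 4, p.~784]{CSS} recalled in the proof of Lemma~\ref{lem:MSGL_1}, by passing to the limit in the coefficient sequences: for each $i$ there is a short exact sequence
\[
0 \to H^i_\etale(\cEbar,\Z_\ell)\otimes_{\Z_\ell}\Q_\ell/\Z_\ell \longrightarrow H^i_\etale(\cEbar,\Q_\ell/\Z_\ell) \longrightarrow H^{i+1}_\etale(\cEbar,\Z_\ell)_\tors \to 0 .
\]
The left-hand term is divisible and the right-hand term is finite, so $H^i_\etale(\cEbar,\Q_\ell/\Z_\ell)$ is divisible precisely when $H^{i+1}_\etale(\cEbar,\Z_\ell)_\tors=0$; hence it suffices to prove that $H^j_\etale(\cEbar,\Z_\ell)$ is torsion-free for every $j$. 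This is clear for $j\le 0$ and $j\ge 5$, for $j=4$ (where $H^4_\etale(\cEbar,\Z_\ell)\cong\Z_\ell(-2)$, as $\cEbar$ is a smooth projective connected surface), and for $j=1$ (torsion-free, e.g.\ as the $\ell$-adic Tate module of $\Pic^0_{\cEbar,\red}$). By Poincaré duality for $\cEbar$ over the algebraically closed field $\Fbar_q$, the finite groups $H^2_\etale(\cEbar,\Z_\ell)_\tors$ and $H^3_\etale(\cEbar,\Z_\ell)_\tors$ are Pontryagin dual, hence abstractly isomorphic, so the whole assertion reduces to the single point that $H^2_\etale(\cEbar,\Z_\ell)$ is torsion-free.

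To prove that, I would rerun the proof of Lemma~\ref{lem:appl_pi1}(1) with $\F_q$ replaced by $\Fbar_q$: its input for the prime-to-$p$ part, namely the isomorphism $\Pic^0_{\Cbar}\cong\Pic^0_{\cEbar,\red}$ of \cite{Shioda} together with the Kummer sequence, is stable under base change to $\Fbar_q$ (cf.\ the remark following Lemma~\ref{lem:appl_pi1}, which records validity over an arbitrary perfect base field), and the argument then shows that the pull-back $H^1_\etale(\Cbar,\Z/\ell^n)\to H^1_\etale(\cEbar,\Z/\ell^n)$ is an isomorphism for all $n$. Passing to the colimit gives $H^1_\etale(\cEbar,\Q_\ell/\Z_\ell)\cong H^1_\etale(\Cbar,\Q_\ell/\Z_\ell)\cong\Jac(\Cbar)[\ell^\infty]$, which is divisible; by the Bockstein sequence in degree $i=1$ this forces $H^2_\etale(\cEbar,\Z_\ell)_\tors=0$, and we are done. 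An essentially equivalent, perhaps more transparent, alternative is to identify $H^2_\etale(\cEbar,\Z_\ell(1))_\tors$ via the Kummer sequence and the divisibility of $\Pic^0(\cEbar)$ with the $\ell$-primary part of $\NS(\cEbar)_\tors$, which vanishes because $\NS(\cEbar)$ is torsion-free for a relatively minimal elliptic surface with a section (Shioda--Tate, \cite{Shioda}).

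The only substantive point is the torsion-freeness of $H^2_\etale(\cEbar,\Z_\ell)$ (equivalently of $\NS(\cEbar)$): this is exactly where the hypotheses that $\cE\to C$ is an elliptic fibration with a section and is not smooth enter, whereas the reduction via the Bockstein sequence and Poincaré duality is purely formal. In the first route the care needed is to verify that no step in the proof of Lemma~\ref{lem:appl_pi1}(1) uses the finiteness of the base field rather than only its perfectness; in the second, it is to cite the Shioda--Tate theorem in the precise form ``$\NS$ is torsion-free.''
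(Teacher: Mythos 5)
Your proof is correct and rests on the same two inputs as the paper's: the isomorphism $H^1_\etale(\cEbar,-)\cong H^1_\etale(\Cbar,-)$ coming from Lemma~\ref{lem:appl_pi1}(1) (base-changed to $\Fbar_q$, which you rightly flag), and Poincar\'e duality for $\cEbar$ to handle the remaining middle degree. The paper phrases the argument directly in terms of divisibility of the $\Q_\ell/\Z_\ell$-cohomology, while you reformulate it via the Bockstein sequence as torsion-freeness of the $\Z_\ell$-cohomology groups; these are equivalent, and your version merely makes the reduction more explicit.
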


\begin{proof}
The claim for $i \neq 1,2$ is obvious. 
% the claim for $i=3$ follows from torsion freeness
% of $H^1_\etale(\cEbar,\Z_\ell)$.
By Lemma~\ref{lem:appl_pi1}, we have
$H^1_\etale(\cEbar,\Q_\ell/\Z_\ell) \cong
H^1_\etale(\Cbar,\Q_\ell/\Z_\ell)$,
% \cong H^1_\etale(\Jac(\Cbar),\Q_\ell/\Z_\ell)$.
hence $H^1_\etale(\cEbar,\Q_\ell/\Z_\ell)$ is divisible.
The group $H^2_\etale(\cEbar,\Q_\ell/\Z_\ell)$
is divisible since 
$H^2_\etale(\cEbar,\Q_\ell/\Z_\ell)^\cotors$
is isomorphic to the Pontryagin dual of 
$H^1_\etale(\cEbar,\Q_\ell/\Z_\ell(2))^\cotors$.
\end{proof}

\begin{cor}\label{cor:appl_UC}
For $i \in \Z$, we set $M^i_j =
\bigoplus_{\ell \neq p} H^i_\etale(\cE,\Q_\ell/\Z_\ell(j))$.
For a rational number $a$, we write $|a|^{(p')}
= |a|\cdot |a|_p$.
\begin{enumerate}
\item For $i \le -1$ or $i \ge 6$, the group
$M^i_j$ is zero. 
\item For $j \neq 2$ (\resp $j=2$), 
the group $M^5_j$ is zero (\resp is
isomorphic to $\bigoplus_{\ell \neq p}\Q_\ell/\Z_\ell$).
\item For $j \neq 0$, the group $M^0_j$
is cyclic of order $q^{|j|} -1$. 
The group
$M^0_0$ is isomorphic to 
$\bigoplus_{\ell \neq p}\Q_\ell/\Z_\ell$.
\item For $j \neq 0$, the group $M^1_j$ is
finite of order $|L(h^1(C),1-j)|^{(p')}$.
\item For $j \neq 1$, the group $M^2_j$
is finite of order $|L(h^2(\cE),2-j)|^{(p')}$.
\item For $j \neq 1$, group $M^3_j$ is
finite of order $|L(h^1(C),2-j)|^{(p')}$.
\item For $j \neq 2$, the group $M^4_j$ is
cyclic of order $q^{|2-j|}-1$. The group
$M^4_2$ is isomorphic to 
$\bigoplus_{\ell \neq p}\Q_\ell/\Z_\ell$.
\end{enumerate}
\end{cor}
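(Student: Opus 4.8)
The plan is to compute each $H^i_\etale(\cE,\Q_\ell/\Z_\ell(j))$ ($\ell\neq p$) by the Hochschild--Serre spectral sequence $E_2^{s,t}=H^s(\GFq,H^t_\etale(\cEbar,\Q_\ell/\Z_\ell(j)))\Rightarrow H^{s+t}_\etale(\cE,\Q_\ell/\Z_\ell(j))$. Since $\GFq\cong\wh\Z$ has cohomological dimension $1$, this collapses to short exact sequences
\[
0\to H^1(\GFq,H^{i-1}_\etale(\cEbar,\Q_\ell/\Z_\ell(j)))\to H^i_\etale(\cE,\Q_\ell/\Z_\ell(j))\to H^0(\GFq,H^i_\etale(\cEbar,\Q_\ell/\Z_\ell(j)))\to 0,
\]
so the whole problem reduces to understanding the geometric cohomology as a $\Frob$-module and taking invariants and coinvariants. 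The first step is to record that, by Corollary~\ref{cor:appl_divisible}, $H^t_\etale(\cEbar,\Q_\ell/\Z_\ell)$ is divisible for all $t$, which forces $H^t_\etale(\cEbar,\Z_\ell)$ to be torsion free (a finite divisible group is $0$); hence $H^t_\etale(\cEbar,\Q_\ell/\Z_\ell(j))\cong V_t(j)/T_t(j)$ with $T_t=H^t_\etale(\cEbar,\Z_\ell)$ free and $V_t=T_t\otimes\Q_\ell$. These vanish outside $0\le t\le 4$, with $V_0=\Q_\ell$, $V_4=\Q_\ell(-2)$, $V_1\cong H^1_\etale(\Cbar,\Q_\ell)$ by Lemma~\ref{lem:appl_pi1}, $V_3\cong H^1_\etale(\Cbar,\Q_\ell)(-1)$ by the $E_2$-degeneration following Lemma~\ref{lem:appl_vanishing}, and $V_2$ pure of weight $2$. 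Using Poincaré duality on the surface $\cEbar$ I would rewrite $H^i_\etale(\cEbar,\Q_\ell/\Z_\ell(j))\cong\Hom_{\Z_\ell}(T_{4-i},\Q_\ell/\Z_\ell)(j-2)$; this is exactly what produces the $L$-values at the reflected arguments $1-j,\,2-j$ in the statement. (Equivalently one may compute with $T_i$ directly and then apply the functional equation, whose ambiguity is a power of $q$, hence invisible to $|\cdot|^{(p')}$ since $|q^n|^{(p')}=1$.)

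Next I would go through the numerology of the $\Frob$-eigenvalue $1$. By the Weil bounds the eigenvalues of $\Frob$ on $V_t(j)$ have absolute value $q^{t/2-j}$, so $1$ can occur only when $t$ is even and $j=t/2$: the cases $(t,j)=(0,0),(2,1),(4,2)$. In every case relevant to the finiteness assertions (4)--(7) — namely $M^1_j$ with $j\neq 0$, $M^2_j$ and $M^3_j$ with $j\neq 1$, $M^4_j$ with $j\neq 2$ — the operator $\Frob-1$ is invertible on the relevant $V_t(j)$, and also on $H^0_\etale(\cEbar,\Q_\ell(j))$ for $i=1$. Then the snake lemma applied to $0\to T_t(j)\to V_t(j)\to V_t(j)/T_t(j)\to 0$ gives $H^1(\GFq,V_t(j)/T_t(j))=(V_t(j)/T_t(j))_\Frob=0$ and $H^0(\GFq,V_t(j)/T_t(j))\cong T_t(j)/(\Frob-1)T_t(j)$, finite of order $|\det(1-\Frob\mid V_t(j))|_\ell^{-1}$. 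In the short exact sequence above the $H^1$-term vanishes (for $i\ge 2$ because $H^{i-1}_\etale(\cEbar,\Q_\ell(j))$ has no $\Frob$-fixed vector; for $i=1$ because multiplication by the $\ell$-adic unit $q^{-j}-1$ is surjective on $\Q_\ell/\Z_\ell$ when $j\neq 0$), so $H^i_\etale(\cE,\Q_\ell/\Z_\ell(j))$ is finite of order $|L(h^i(\cE),j)|_\ell^{-1}$. Summing over $\ell\neq p$ and using the product formula yields $|M^i_j|=|L(h^i(\cE),j)|^{(p')}$, and then Lemma~\ref{lem:appl_LofcE} ($L(h^1(\cE),s)=L(h^1(C),s)$, $L(h^3(\cE),s)=L(h^1(C),s-1)$) together with Poincaré duality rewrites this as the stated $|L(h^1(C),1-j)|^{(p')}$, $|L(h^2(\cE),2-j)|^{(p')}$, $|L(h^1(C),2-j)|^{(p')}$.

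The remaining assertions are handled directly. For $i\le-1$ or $i\ge 6$ both $H^i_\etale(\cEbar,-)$ and $H^{i-1}_\etale(\cEbar,-)$ vanish, giving (1). For $t=0$ and $t=4$ one has $H^t_\etale(\cEbar,\Q_\ell/\Z_\ell(j))\cong\Q_\ell/\Z_\ell$ with $\Frob$ acting by the scalar $q^{-j}$ resp.\ $q^{2-j}$, and a direct computation of $\ker$ and $\coker$ of $\Frob-1$ on $\Q_\ell/\Z_\ell$ gives the cyclic groups $\Z/(q^{|j|}-1)$ and $\Z/(q^{|2-j|}-1)$ of (3) and (7) when the exponent is nonzero, and (using $H^5_\etale(\cEbar,-)=0$ and triviality of the $\Frob$-action in the exponent-zero cases) the group $\bigoplus_{\ell\neq p}\Q_\ell/\Z_\ell$ for $j=0$ in (3), $j=2$ in (2), and $j=2$ in (7). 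The main obstacle I anticipate is organizational rather than a single hard input: pinning down the Galois-module structure of $V_1$ and $V_3$ (where Lemmas~\ref{lem:appl_pi1} and~\ref{lem:appl_vanishing} are used) and, above all, checking that the excluded values $j=0,1,2$ in (3)--(7) are precisely the places where $\Frob$ acquires an eigenvalue $1$, so that the finiteness arguments never require semisimplicity of $\Frob$ on $H^2_\etale(\cEbar,\Q_\ell)$ — a property that would be needed only for the groups $M^2_1$ and $M^3_1$ deliberately left out of the statement.
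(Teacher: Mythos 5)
Your proposal is correct and follows essentially the same route as the paper's proof, which is extremely terse: the paper reduces to $\GFq$-invariants via Corollary~\ref{cor:appl_divisible} (the divisibility of $H^i_\etale(\cEbar,\Q_\ell/\Z_\ell)$ forces both the torsion-freeness of the integral cohomology and, combined with the weight argument at $i\ne 2j+1$, the vanishing of the Hochschild--Serre $H^1$-term), then invokes Poincar\'e duality and Lemma~\ref{lem:appl_LofcE} to identify the orders. Your version simply makes explicit the snake-lemma bookkeeping, the Weil-bound argument placing the eigenvalue $1$ exactly at the excluded indices, and the observation that the functional-equation ambiguity (a power of $q$) is killed by the $|\cdot|^{(p')}$ normalization.
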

\begin{proof}
By Corollary~\ref{cor:appl_divisible}, if 
$i\neq 2j+1$ and $\ell \neq p$, the group
$H^i_\etale(\cE,\Q_\ell/\Z_\ell(j))$
is isomorphic to 
%the $\GFq$-invariant part 
$H^i_\etale(\cEbar,\Q_\ell/\Z_\ell(j))^{\GFq}$.
Then, we have
$$
|H^i_\etale(\cE,\Q_\ell/\Z_\ell(j))|
= |L(h^{4-i}(\cE),2-j)|_\ell^{-1}
$$
by Poincar\'e duality for $i \neq 2j, 2j+1$,
hence the claim follows 
from Lemma~\ref{lem:appl_LofcE}.
\end{proof}

\subsection{Torsion in the \'{e}tale cohomology 
of open elliptic surfaces}

We first fix a non-empty open subscheme $U \subset C$.
\begin{lem}\label{lem:appl_coker}
Let $\ell \neq p$ be a prime number.
For $i \in \Z$, let $\gamma_i$ denote 
the pull-back
$\gamma_i:H^i_\etale (\cEbar,\Z_\ell) 
\to H^i_\etale (\cEsupUbar,\Z_\ell)$.
\begin{enumerate}
% \item $\Coker\, \gamma_1$ is torsion free and is 
% isomorphic to the direct sum
% $H^0_\etale (\overline{S_1 \setminus U}, \Z_\ell)\oplus
% \bigoplus_{\wp \in C \setminus (U\cup S_2)} 
% H^1_\etale(\cEbar_\wp,\Z_\ell)$.
\item For $i \neq 0,2$, the homomorphism $\gamma_i$ is zero.
% In particular $\Coker\, \gamma_1 \cong 
% H^1_\etale (\cEsupUbar,\Z_\ell)$ is torsion free.
\item The cokernel $(\Coker\, \gamma_2)_{\Q_\ell}$ is 
isomorphic to the kernel of 
$H^0_\etale (\Cbar \setminus \Ubar,\Q_\ell(-1)) 
\to H^0_\etale(\Spec\, \Fbar_q,\Q_\ell(-1))$.
\item There exists a canonical isomorphism
$$\Hom_\Z (T_U,\Q_\ell/\Z_\ell(-1))
\cong (\Coker\, \gamma_2)_\tors.$$
\end{enumerate}
\end{lem}
\begin{proof}
By Lemma~\ref{lem:appl_pi1}, the pull-back
$H^1_\etale(\Cbar,\Z_\ell) \to H^1_\etale(\cEbar,\Z_\ell)$ 
is an isomorphism, hence the homomorphism 
$H^1_\etale (\cEbar,\Z_\ell) 
\to H^1_\etale (\cEsupUbar,\Z_\ell)$
is zero.  Claim (1) follows.

Let $\NS(\cEbar)=\Pic_{\cE/\F_q}(\Fbar_q)/
\Pic^o_{\cE/\F_q}(\Fbar_q)$ denote the N\'eron-Severi group of $\cEbar$.
For a prime number $\ell$, we set
$T_\ell M = \Hom(\Q_\ell/\Z_\ell,M)$.

We have the following exact sequence from 
Kummer theory:
% cf. Milne's book on \'{e}tale cohomology, Ch. V Remark 3.29.
\[ 
0\to \NS(\cEbar) \otimes_\Z \Z_\ell \xto{\cl_\ell} 
H^2_\etale(\cEbar,\Z_\ell(1)) \to 
T_\ell H^2_\etale(\cEbar,\Gm) \to 0. \]
%The canonical homomorphism $\cl_\ell:\NS(\cEbar)\otimes_\Z \Z_\ell \to 
%H^2_\etale(\cEbar,\Z_\ell(1))$ is injective (\cf \cite{Shioda}).
We note that $T_\ell H^2_\etale(\cEbar,\Gm)$ is torsion free.
For $D \in \Irr(\cEsupUbar)$, let
$[D] \in \NS(\cEbar)$ denote the class of the Weil divisor $D_\red$ 
on $\cEbar$. By~\cite[Cycle, Definition~2.3.2, p.~145]{SGA412}, the
$D$-component of the homomorphism 
$\gamma_2:H^2_\etale (\cEbar,\Z_\ell) 
\to H^2_\etale (\cEsupUbar,\Z_\ell)
\cong \Map(\Irr(\cEsupUbar),\Z_\ell(-1))$
is identified with the homomorphism
$$
H^2_\etale (\cEbar,\Z_\ell)
\xto{\cup \cl_\ell([D])}
H^4_\etale (\cEbar,\Z_\ell(1)) \cong
\Z_\ell(-1).
$$
Let $M \subset \NS(\cEbar)$ denote the subgroup
generated by $\{[D]\ |\ D \in 
\Irr(\cEsupUbar) \}$.
By Corollary~\ref{cor:appl_divisible}, the cup-product 
$$
H^2_\etale(\cEbar,\Z_\ell(1)) \times
H^2_\etale(\cEbar,\Z_\ell(1)) \to 
H^4_\etale(\cEbar,\Z_\ell(2)) \cong \Z_\ell
$$ 
is a perfect pairing, hence the image of $\gamma_2$
is identified with the image of the composition
$$
\begin{array}{l}
\Hom_{\Z_\ell}(H^2_\etale(\cEbar,\Z_\ell(1)),\Z_\ell(-1))
\xto{\alpha^*} \Hom_{\Z}(M,\Z_\ell(-1)) \\
\xto{\beta^*} \Map(\Irr(\cEsupUbar),\Z_\ell(-1))
\cong H^2_\etale (\cEsupUbar,\Z_\ell),
\end{array}
$$
where $\alpha^*$ is the homomorphism 
induced by the restriction 
$\alpha: M \otimes_{\Z} \Z_\ell \inj 
H^2_\etale(\cEbar,\Z_\ell(1))$
of the cycle class map $\cl_\ell$ to $M$,
and the homomorphism $\beta^*$ is 
induced by the canonical surjection
$\beta: \bigoplus_{D\in\Irr(\cEsupUbar)} \Z
\surj M$.
Since $\beta$ is surjective, the homomorphism
$\beta^*$ is injective and we have an exact sequence
$$
0 \to \Coker\, \alpha^* \to \Coker\, \gamma_2
\to \Coker\, \beta^* \to 0.
$$
Since $\alpha$ is a homomorphism
of finitely generated $\Z_\ell$-modules that is injective,
the cokernel $\Coker\, \alpha^*$ is a finite
group.
Further, the group $M$ is a free abelian group with basis
$\Irr^0(\cEsupUbar)
 \cup \{D'\}$, where $D'$ is an arbitrary
element in $\Irr(\cEsupUbar)
\setminus \Irr^0(\cEsupUbar)$,
hence $\Coker\, \beta^*$ is isomorphic
to the group $\Hom_{\Z}(\Ker \, \beta, \Z_\ell(-1))$.
This proves (2).

The torsion part of
$\Coker\, \gamma_2$ is identified
with the group $\Coker\, \alpha^*$.
The homomorphism $\alpha^*$ is the composition of
the two homomorphisms
$$\begin{array}{ll}
\Hom_{\Z_\ell}(H^2_\etale(\cEbar,\Z_\ell(1)),\Z_\ell(-1))
\\ 
\xto{\cl_\ell^*}
\Hom_{\Z}(\NS(\cEbar),\Z_\ell(-1))
\xto{\iota^*} \Hom_{\Z}(M,\Z_\ell(-1)),
\end{array}
$$
where the first (\resp second) homomorphism
is that induced by the cycle class map 
$\cl_\ell:\NS(\cEbar) \otimes_\Z \Z_\ell \inj
H^2_\etale(\cEbar,\Z_\ell(1))$
(\resp the inclusion $M \otimes_\Z \Z_\ell \inj \NS(\cEbar)$).
Since $\Coker\, \cl_\ell$ is
torsion free, 
as noted above, the homomorphism
$\cl_\ell^*$ is surjective,
hence we have isomorphisms
$$
\begin{array}{l}
\Coker\, \alpha^*
\cong \Coker\, \iota^*
\cong \Ext^1_{\Z}(\NS(\cEbar)/M, \Z_\ell(-1)) \\
\cong \Hom_{\Z}(\NS(\cEbar)/M, \Q_\ell/\Z_\ell(-1))
= \Hom_{\Z}(\Div(\cEbar_U)/\sim_\alg , \Q_\ell/\Z_\ell(-1)).
\end{array}
$$
Given the above, we have claim (3).
\end{proof}

\begin{cor}\label{cor:open_L}
For $i \neq 3$, the group 
$H^i_{c,\etale}(\cEbar_U,\Z_\ell)$ is torsion free
and the group $H^3_{c,\etale}(\cEbar_U,\Z_\ell)_\tors$ 
is canonically isomorphic to the group 
$\Hom_{\Z}(T_U,\Q_\ell/\Z_\ell(-1))$.
We set
$$
L(h^i_{c,\ell}(\cEU),s)
= \det(1-\Frob\cdot q^{-s}; H^i_{c,\etale}(\cEbar_U,\Q_\ell)).
$$
Then if $U \neq C$, we have
$$
L(h^i_{c,\ell}(\cEU),s) = \left\{
\begin{array}{ll}
1, & \text{ if }i\le 0\text{ or }i\ge 5, \\
\frac{L(h^1(C),s) L(h^0(C\setminus U),s)}{1-q^{-s}} & 
\text{ if }i=1, \\
\frac{L(h^2(\cE),s)L(h^1(\cE^U),s)L(h^0(C\setminus U),s-1)}{
(1-q^{1-s}) L(h^2(\cE^U),s)} & \text{ if }i=2, \\
\frac{L(h^1(C),s-1)L(h^0(C\setminus U),s-1)}{
1-q^{1-s}}& \text{ if }i=3, \\
1-q^{2-s}, & \text{ if }i=4. \\
\end{array}
\right.
$$
\end{cor}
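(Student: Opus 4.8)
The plan is to run the long exact sequence of étale cohomology with compact supports attached to the open--closed decomposition $\cE_U \hookrightarrow \cE \hookleftarrow \cE^U$. Since both $\cE$ and $\cE^U$ are proper over $\Spec\, \F_q$, compact-support cohomology agrees with ordinary cohomology on them, so for $\Lambda \in \{\Z_\ell,\Q_\ell\}$ we obtain the $\GFq$-equivariant exact sequence
\[
\cdots \to H^i_{c,\etale}(\cEbar_U,\Lambda) \to H^i_\etale(\cEbar,\Lambda) \xto{\gamma_i} H^i_\etale(\cEsupUbar,\Lambda) \to H^{i+1}_{c,\etale}(\cEbar_U,\Lambda) \to \cdots,
\]
where $\gamma_i$ is the restriction map studied in Lemma~\ref{lem:appl_coker}. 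When $U=C$ the complement is empty and everything reduces to Corollary~\ref{cor:appl_divisible} (together with the vanishing of the prime-to-$p$ torsion of $T_C$, which is part of Lemma~\ref{lem:appl_coker}(3)); so the substantive case is $U\neq C$.

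For the torsion statement I would break the integral sequence into short exact sequences $0 \to \Coker\,\gamma_{i-1} \to H^i_{c,\etale}(\cEbar_U,\Z_\ell) \to \Ker\,\gamma_i \to 0$ and feed in three facts: (a) $H^j_\etale(\cEbar,\Z_\ell)$ is torsion free for every $j$ --- this follows from Corollary~\ref{cor:appl_divisible} via the universal coefficient sequence $0 \to H^{j}_\etale(\cEbar,\Z_\ell)\otimes\Q_\ell/\Z_\ell \to H^{j}_\etale(\cEbar,\Q_\ell/\Z_\ell) \to H^{j+1}_\etale(\cEbar,\Z_\ell)_\tors\to 0$, a finite divisible group being zero; (b) $H^j_\etale(\cEsupUbar,\Z_\ell)$ is torsion free for all $j$ and vanishes for $j\ge 3$, by Lemma~\ref{lem:h1_weight} since $\cE^U$ has dimension $\le 1$; (c) $\gamma_i=0$ for $i\neq 0,2$ by Lemma~\ref{lem:appl_coker}(1), while $\gamma_0$ is the diagonal $\Z_\ell \hookrightarrow H^0_\etale(\cEsupUbar,\Z_\ell)=\Z_\ell[\pi_0(\cEsupUbar)]$, whose cokernel is free because $(1,\dots,1)$ is a primitive vector. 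Going through $i$, for $i\neq 3$ both the sub and the quotient of the extension defining $H^i_{c,\etale}(\cEbar_U,\Z_\ell)$ are torsion free, giving torsion-freeness; for $i=3$ the quotient $\Ker\,\gamma_3=H^3_\etale(\cEbar,\Z_\ell)$ is torsion free, so $H^3_{c,\etale}(\cEbar_U,\Z_\ell)_\tors=(\Coker\,\gamma_2)_\tors\cong \Hom_\Z(T_U,\Q_\ell/\Z_\ell(-1))$ by Lemma~\ref{lem:appl_coker}(3).

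For the list of $L$-factors I would tensor the sequence with $\Q_\ell$ and use multiplicativity of $\det(1-\Frob\cdot q^{-s};-)$ on short exact sequences, so that each $L(h^i_{c,\ell}(\cEU),s)$ is the product of the characteristic factor of $\Coker(\gamma_{i-1}\otimes\Q_\ell)$ and that of $\Ker(\gamma_i\otimes\Q_\ell)$. Only $\gamma_0$ and $\gamma_2$ are nonzero. Since each fibre of $f$ is geometrically connected, $\pi_0(\cEsupUbar)\cong (C\setminus U)(\Fbar_q)$ as a $\GFq$-set, so $H^0_\etale(\cEsupUbar,\Q_\ell)\cong H^0_\etale(\Cbar\setminus\Ubar,\Q_\ell)$ and the cokernel of $\gamma_0\otimes\Q_\ell$ has characteristic factor $L(h^0(C\setminus U),s)/(1-q^{-s})$; and Lemma~\ref{lem:appl_coker}(2) identifies $(\Coker\,\gamma_2)_{\Q_\ell}$, $\GFq$-equivariantly, with $\Ker\big(H^0_\etale(\Cbar\setminus\Ubar,\Q_\ell(-1)) \to \Q_\ell(-1)\big)$, whose characteristic factor is $L(h^0(C\setminus U),s-1)/(1-q^{1-s})$ once $C\setminus U\neq\emptyset$. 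From these and the values $L(h^i(\cE),s)$ supplied by Lemma~\ref{lem:appl_LofcE}, together with $H^j_\etale(\cEsupUbar,\Q_\ell)=0$ for $j\ge 3$, one computes the characteristic factors of $\Ker\,\gamma_2$ and $\Coker\,\gamma_2$ (namely $\Ker\,\gamma_2$ via $L(h^2(\cE),s)=L(h^2(\cE^U),s)\cdot\big(\text{factor of }\Ker\,\gamma_2/\text{factor of }\Coker\,\gamma_2\big)^{\pm}$), and assembling the products $P(\Coker\,\gamma_{i-1})P(\Ker\,\gamma_i)$ for $i=0,1,2,3,4$ yields the five displayed formulas.

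I expect the main obstacle to be bookkeeping rather than conceptual: one must verify that the identifications extracted from Lemma~\ref{lem:appl_coker} really are Frobenius-equivariant, and one must track the Tate twists carefully --- it is the $\Q_\ell(-1)$ in $\Coker\,\gamma_2$ and in $H^2$ of the one-dimensional $\cE^U$ that produce the shifts $s\mapsto s-1$ in the $i=2$ and $i=3$ formulas. There is no new geometric input beyond Lemmas~\ref{lem:h1_weight}, \ref{lem:appl_LofcE}, \ref{lem:appl_coker} and Corollary~\ref{cor:appl_divisible}.
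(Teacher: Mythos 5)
Your proposal is correct and follows essentially the same route as the paper's (very terse) proof: the open--closed long exact sequence for $\cE_U \subset \cE \supset \cE^U$, fed with Lemmas~\ref{lem:appl_LofcE} and \ref{lem:appl_coker}, together with the torsion-freeness facts from Lemma~\ref{lem:h1_weight} and Corollary~\ref{cor:appl_divisible}. Your expansion of the integral torsion analysis and the bookkeeping of characteristic factors is what the paper leaves implicit, and it checks out.
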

\begin{proof}
The above follows from Lemmas~\ref{lem:appl_LofcE} 
and \ref{lem:appl_coker}, as well as 
the long exact sequence
$$
\cdots \to H^{i}_{\etale,c}(\cEbar_U,\Z_\ell)
\to H^{i}_{\etale}(\cEbar,\Z_\ell)
\to H^{i}_{\etale}(\overline{\cE^U},\Z_\ell)
\to \cdots.
$$
\end{proof}
\begin{rmk}\label{rmk:ell_indep}
Corollary~\ref{cor:open_L} in particular shows that
the function $L(h^i_{c,\ell}(\cEU),s)$ is independent of
$\ell \neq p$. We can show the
$\ell$-independence of $L(h^i_{c,\ell}(X),s)
= \det(1-\Frob\cdot q^{-s}; H^i_{c,\etale}(X,\Q_\ell))$
for any normal surface $X$ over $\F_q$, which is
not necessarily proper.  Since we will not need this further, 
we only show a sketch here.  
There is a proper smooth surface $X'$ and  
a closed subset $D\subset X'$ of pure codimension one such
that $X=X' \setminus D$.  We can
express the cokernel and kernel of 
the restriction map
$H_\etale^1(\Xbar', \Q_l) \to H_\etale^1(\Dbar, \Q_l)$
in terms of $\Pic_{X'/\F_q}$ and the Jacobian of the normalization of 
each irreducible component of $D$.  Then, we apply the same method 
as above to obtain the result.
\end{rmk}

\begin{cor}\label{cor:etaleU_1}
Suppose that $U \neq C$. Then, we have the following.
\begin{enumerate}
\item $H^i_{\etale}(\cEU,\Q_\ell/\Z_\ell(j))$ is zero for
$i \le -1$ or $i\ge 5$.
\item For $j\ne 0$, the group 
$H^0_\etale(\cEU,\Q_\ell/\Z_\ell(j))$ is isomorphic to
$\Z_\ell/(q^j-1)$, and 
$H^0_\etale(\cEU,\Q_\ell/\Z_\ell(0)) = \Q_\ell/\Z_\ell$.
\item For $j \neq 0,1$, the group
$H^1_\etale(\cEU,\Q_\ell/\Z_\ell(j))$ is finite of
order 
$$
\frac{|T'_{U,(j-1)}|_\ell^{-1} 
\cdot |L(h^1(C),1-j)L(h^0(C\setminus U),1-j)|_\ell^{-1}}{
|q^{j-1}-1|_\ell^{-1}}.
$$
The group $H^1_\etale(\cEU,\Q_\ell/\Z_\ell(0))$ 
is isomorphic to the direct sum of $\Q_\ell/\Z_\ell$
and a finite group of order 
$$
\frac{|T'_{U,(-1)}|_\ell^{-1} 
\cdot |L(h^1(C),1)L(h^0(C\setminus U),1)|_\ell^{-1}}{
|q-1|_\ell^{-1}}.
$$
\item For $j \neq 1,2$, the cohomology group
$H^2_\etale(\cEU,\Q_\ell/\Z_\ell(j))$ is finite of
order 
\begin{small}
$$
\frac{|T'_{U,(j-1)}|_\ell^{-1} \cdot 
|L(h^2(\cE),2-j) L(h^1(\cE^U),2-j) 
L(h^0(C\setminus U),1-j)|_\ell^{-1}}{
|(q^{j-1}-1) L(h^2(\cE^U),2-j)|_\ell^{-1}}.
$$
\end{small}
\item For $j \neq 1,2$, the group
$H^3_\etale(\cEU,\Q_\ell/\Z_\ell(j))$ is finite of
order 
$$
\frac{|L(h^1(C),2-j)L(h^0(C\setminus U),2-j)|_\ell^{-1}}{
|q^{j-2}-1|_\ell^{-1}}.
$$
The cohomology 
group $H^3_\etale(\cEU,\Q_\ell/\Z_\ell(1))$ is isomorphic to
the direct sum of $(\Q_\ell/\Z_\ell)^{\oplus |C\setminus U|-1}$
and a finite group of order 
$$
\frac{|L(h^1(C),1)L(h^0(C\setminus U),1)|_\ell^{-1}}{
|q-1|_\ell^{-1}}.
$$
\item For $j \neq 2$ (\resp $j=2$), the group 
$H^4_\etale(\cEU,\Q_\ell/\Z_\ell(j))$ is zero
(\resp is isomorphic to 
$(\Q_\ell/\Z_\ell)^{\oplus |C \setminus U| -1}$).
\end{enumerate}
\end{cor}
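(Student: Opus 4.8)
The plan is to reduce everything to the geometric cohomology of $\cEbar_U$ over $\Fbar_q$ and then take Frobenius (co)invariants. Since $\GFq\cong\wh{\Z}$ has cohomological dimension one, the Hochschild--Serre spectral sequence for $\cEU\to\Spec\,\F_q$ degenerates into short exact sequences
\[
0\to H^{i-1}_\etale(\cEbar_U,\Q_\ell/\Z_\ell(j))_{\GFq}\to H^i_\etale(\cEU,\Q_\ell/\Z_\ell(j))\to H^i_\etale(\cEbar_U,\Q_\ell/\Z_\ell(j))^{\GFq}\to 0,
\]
where $(-)^{\GFq}$ and $(-)_{\GFq}$ denote the kernel and cokernel of $1-\Frob$. (Here I use that $\cE$ is regular and $\F_q$ is perfect, so $\cEU$ and $\cEbar_U$ are smooth surfaces.) Thus I need: (a) a description of $H^i_\etale(\cEbar_U,\Q_\ell/\Z_\ell(j))$ as a $\wh{\Z}$-module, and (b) a formal computation of (co)invariants.

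For (a) I would invoke Poincar\'e--Pontryagin duality for the smooth surface $\cEbar_U$, which gives a $\Frob$-equivariant isomorphism
\[
H^i_\etale(\cEbar_U,\Q_\ell/\Z_\ell(j))\cong\Hom\bigl(H^{4-i}_{c,\etale}(\cEbar_U,\Z_\ell(2-j)),\Q_\ell/\Z_\ell\bigr).
\]
The right-hand side is entirely known from Corollary~\ref{cor:open_L}: $H^{4-i}_{c,\etale}(\cEbar_U,\Z_\ell)$ is torsion free except for $4-i=3$, where its torsion subgroup is $\Hom_\Z(T_U,\Q_\ell/\Z_\ell(-1))$; the characteristic polynomial of $\Frob$ on $H^{4-i}_{c,\etale}(\cEbar_U,\Q_\ell)$ is $L(h^{4-i}_{c,\ell}(\cEU),s)$; and twisting by $(2-j)$ replaces $s$ by $s-j+2$, so that the value of $\det(1-\Frob)$ on the free part of $H^{4-i}_{c,\etale}(\cEbar_U,\Z_\ell(2-j))$ is $L(h^{4-i}_{c,\ell}(\cEU),2-j)$. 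For (b) I would use two elementary facts: a finite $\wh{\Z}$-module $M$ satisfies $|M^{\GFq}|=|M_{\GFq}|$; and for a finitely generated free $\Z_\ell$-module $V$ on which $1$ is not a $\Frob$-eigenvalue, the snake lemma applied to $0\to V\to V\otimes\Q_\ell\to V\otimes\Q_\ell/\Z_\ell\to 0$ shows that $(V\otimes\Q_\ell/\Z_\ell)^{\GFq}$ is finite of order $|\det(1-\Frob\mid V\otimes\Q_\ell)|_\ell^{-1}$ while $(V\otimes\Q_\ell/\Z_\ell)_{\GFq}=0$, and if $1$ occurs with multiplicity $m$ one picks up instead an extra $(\Q_\ell/\Z_\ell)^{\oplus m}$.

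Feeding the data of Corollary~\ref{cor:open_L} through (a)--(b) degree by degree then yields (1)--(6). For instance: $H^{4-i}_{c,\etale}(\cEbar_U,-)=0$ for $i\ge 5$ and $H^0_{c,\etale}(\cEbar_U,-)=0$ (as $U\neq C$ makes $\cEbar_U$ non-proper), giving (1); the torsion subgroup $\Hom_\Z(T_U,\Q_\ell/\Z_\ell(-1))$ of $H^3_{c,\etale}(\cEbar_U,\Z_\ell)$, after the $(2-j)$-twist and dualization, contributes the finite group $T_U\otimes\Z_\ell(j-1)$ to $H^1_\etale(\cEbar_U,\Q_\ell/\Z_\ell(j))$, and its invariants and coinvariants produce the factor $|T'_{U,(j-1)}|_\ell^{-1}$ appearing both in (3) and, through the lower term of the Hochschild--Serre sequence, in (4); the summands $(\Q_\ell/\Z_\ell)^{\oplus|C\setminus U|-1}$ in (5) and (6), as well as the $\Q_\ell/\Z_\ell$'s in (2) and (3) for $j=0$, arise exactly where $L(h^0(C\setminus U),s-1)$ (or the weight-zero part of $H^0$) forces $1$ to be a $\Frob$-eigenvalue. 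Finally the remaining orders are rewritten into the stated shape using the explicit formulas of Corollary~\ref{cor:open_L} and Lemma~\ref{lem:appl_LofcE} together with a little manipulation isolating the $\ell$-part of each $L$-value.

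The step I expect to be the main obstacle is the weight/eigenvalue bookkeeping in (b): in each twisted group $H^{4-i}_{c,\etale}(\cEbar_U,\Z_\ell(2-j))$ one must correctly separate the part on which $1$ is a $\Frob$-eigenvalue (which yields the cofree $(\Q_\ell/\Z_\ell)^{\oplus}$-contributions, with the right multiplicities such as $|C\setminus U|-1$) from the part on which it is not (which yields the finite $L$-value orders), keeping track of all twists and of the cancellations already present in the formulas of Corollary~\ref{cor:open_L}. Since all the geometric input is supplied by that corollary, this is careful rather than conceptually difficult.
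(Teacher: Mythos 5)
Your proposal is correct and follows essentially the same route as the paper: both rest on Poincar\'e duality for $\cE_U$, the Hochschild--Serre short exact sequence over $\GFq$, and the data of Corollary~\ref{cor:open_L}. The only difference is the order of operations — the paper dualizes over $\F_q$ first, obtaining $H^i_\etale(\cE_U,\Q_\ell/\Z_\ell(j))\cong H^{5-i}_{\etale,c}(\cE_U,\Z_\ell(2-j))^\vee$, and then applies the short exact sequence to the compactly supported integral cohomology, while you split by Hochschild--Serre first and dualize the geometric pieces; these two procedures are formally identical since dualizing sends invariants to coinvariants and vice versa.
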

\begin{proof}
The cohomology group $H^i_{\etale}(\cE_U,\Q_\ell/\Z_\ell(j))$
is the Pontryagin dual of the group
$H^{5-i}_{\etale,c}(\cE_U,\Z_\ell(2-j))$. The claims
follow from % Corollary~\ref{cor:appl_UC}, 
Corollary~\ref{cor:open_L} and the short exact sequence
$$
\begin{array}{l}
0 \to H^{4-i}_{c,\etale}(\cEbar_U,\Z_\ell(2-j))_{\GFq}
\to H^{5-i}_{c,\etale}(\cE_U,\Z_\ell(2-j)) \\
\to H^{5-i}_{c,\etale}(\cEbar_U,\Z_\ell(2-j))^{\GFq}
\to 0.
\end{array}
$$
\end{proof}

\begin{lem}\label{lem:etaleU_2}
Suppose that $U \neq C$. 
Then, $H^2_{\etale}(\cEU,\Q_\ell/\Z_\ell(2))^\cotors$
is finite of order 
$$
\frac{|T'_{U,(1)}|_\ell^{-1} \cdot |L(h^2(\cE),0) 
L^{*} (h^1(\cE^U),0) L(h^0(C \setminus U),-1)|_\ell^{-1}}{
|(q-1) L(h^0(\Irr(\cE^U)),-1)|_\ell^{-1}}. 
$$
\end{lem}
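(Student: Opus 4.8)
The plan is to mimic the proof of Corollary~\ref{cor:etaleU_1} and pass through Poincar\'e--Verdier duality. Since $\cEU$ is an open subscheme of the smooth $\F_q$-surface $\cE$, it is itself smooth over $\F_q$, so $H^2_\etale(\cEU,\Q_\ell/\Z_\ell(2))$ is the Pontryagin dual of the finitely generated $\Z_\ell$-module $H^3_{c,\etale}(\cEU,\Z_\ell)$ (note $2-2=0$). For any cofinitely generated $\Z_\ell$-module $M$ one has $(M^\cotors)^\vee\cong(M^\vee)_\tors$; hence $H^2_\etale(\cEU,\Q_\ell/\Z_\ell(2))^\cotors$ is automatically finite, with
\[
|H^2_\etale(\cEU,\Q_\ell/\Z_\ell(2))^\cotors|=|H^3_{c,\etale}(\cEU,\Z_\ell)_\tors|,
\]
and the whole task is to compute the right-hand side.

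Next I would feed this into the Hochschild--Serre short exact sequence already used in the proof of Corollary~\ref{cor:etaleU_1},
\[
0\to H^2_{c,\etale}(\cEbar_U,\Z_\ell)_{\GFq}\to H^3_{c,\etale}(\cEU,\Z_\ell)\to H^3_{c,\etale}(\cEbar_U,\Z_\ell)^{\GFq}\to 0,
\]
and compute the two outer terms via Corollary~\ref{cor:open_L}. For the quotient term: $H^3_{c,\etale}(\cEbar_U,\Z_\ell)$ is an extension of a free $\Z_\ell$-module --- with all Frobenius weights in $\{2,3\}$, by Poincar\'e duality with $H^1_\etale(\cEbar_U,\Z_\ell)$ --- by the finite group $\Hom_\Z(T_U,\Q_\ell/\Z_\ell(-1))$; the weight condition kills the Frobenius invariants of the free part, so $H^3_{c,\etale}(\cEbar_U,\Z_\ell)^{\GFq}=\Hom_\Z(T_U,\Q_\ell/\Z_\ell(-1))^{\GFq}$, and a twist/duality manipulation identifies its order with the $\ell$-part of $|T'_{U,(1)}|$. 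For the sub-term: using Lemma~\ref{lem:appl_coker}(1) (vanishing of $H^1_\etale(\cEbar,\Z_\ell)\to H^1_\etale(\cEsupUbar,\Z_\ell)$), together with the torsion-freeness of $H^1_\etale(\cEsupUbar,\Z_\ell)$ (Lemma~\ref{lem:h1_weight}) and of $H^2_\etale(\cEbar,\Z_\ell)$ (Corollary~\ref{cor:appl_divisible}), the module $H^2_{c,\etale}(\cEbar_U,\Z_\ell)$ is a free $\Z_\ell$-module carrying a Frobenius action whose characteristic polynomial at $s=0$ is $L(h^2_{c,\ell}(\cEU),s)$; by Corollary~\ref{cor:open_L} its only factor vanishing at $s=0$ is $L(h^1(\cE^U),s)$, of order $|S_0\setminus U|$ (the split-multiplicative, weight-zero fibers outside $U$). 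Hence $H^2_{c,\etale}(\cEbar_U,\Z_\ell)_{\GFq}$ is a free $\Z_\ell$-module of rank $|S_0\setminus U|$ plus a finite part, both to be read off from the behaviour of $L(h^2_{c,\ell}(\cEU),s)$ at $s=0$ --- with the vanishing factor $L(h^1(\cE^U),0)$ replaced by the leading coefficient $L^{*}(h^1(\cE^U),0)$. Assembling the orders and using $L(h^2(\cE^U),0)=L(h^0(\Irr(\cE^U)),-1)$ (which follows from $H^2_\etale(\cEsupUbar,\Q_\ell)\cong\Q_\ell[\Irr(\cEsupUbar)](-1)$) should reproduce the claimed formula.

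The main obstacle I anticipate is the integral bookkeeping in the last assembly. Because $\cE^U$ contains split-multiplicative fibers, both $H^2_{c,\etale}(\cEbar_U,\Z_\ell)_{\GFq}$ and $H^3_{c,\etale}(\cEU,\Z_\ell)$ carry free $\Z_\ell$-summands of rank $|S_0\setminus U|$, and the order of $H^3_{c,\etale}(\cEU,\Z_\ell)_\tors$ differs from the naive product $|H^2_{c,\etale}(\cEbar_U,\Z_\ell)_{\GFq,\tors}|\cdot|H^3_{c,\etale}(\cEbar_U,\Z_\ell)^{\GFq}|$ by the index of the image of the free part of $H^2_{c,\etale}(\cEbar_U,\Z_\ell)_{\GFq}$ inside the free part of $H^3_{c,\etale}(\cEU,\Z_\ell)$ --- equivalently, one must show that the connecting map $H^3_{c,\etale}(\cEbar_U,\Z_\ell)^{\GFq}\to H^2_{c,\etale}(\cEbar_U,\Z_\ell)_{\GFq}\otimes_\Z\Q_\ell/\Z_\ell$ in the torsion long exact sequence vanishes. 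Since the torsion of the $\GFq$-coinvariants of a free $\Z_\ell$-module is \emph{not} determined by the Frobenius characteristic polynomial alone, one cannot get this from $L$-values directly; one has to use the extension structure --- the finiteness of the quotient, the rank comparison, and presumably an explicit cycle-theoretic description of the weight-zero summands coming from the removed $I_n$-fibers --- to show that these free-part contributions cancel, so that what survives is exactly the leading coefficient $L^{*}(h^1(\cE^U),0)$ rather than a vanishing $L$-value.
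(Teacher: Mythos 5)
Your plan is essentially the Poincar\'e-dual of what the paper does: the paper works with the long exact sequence of \'etale cohomology with supports,
\[
\cdots \to H^i_{\cE^U,\etale}(\cE,\Z_\ell(2)) \xto{\mu_i} H^i_\etale(\cE,\Z_\ell(2))
\to H^i_\etale(\cE_U,\Z_\ell(2)) \to \cdots,
\]
and computes $H^3_\etale(\cE_U,\Z_\ell(2))_\tors$ from $\Ker\,\mu_4$ and $\Coker\,\mu_3$, whereas you dualise to $H^3_{c,\etale}(\cE_U,\Z_\ell)_\tors$ and attack it through the Hochschild--Serre sequence for $\GFq$. Both sides of the duality lead to the same obstacle, and you have correctly named it: in the short exact sequence
\[
0\to H^2_{c,\etale}(\cEbar_U,\Z_\ell)_{\GFq}\to H^3_{c,\etale}(\cE_U,\Z_\ell)\to H^3_{c,\etale}(\cEbar_U,\Z_\ell)^{\GFq}\to 0
\]
the torsion of the middle term is not the product of the torsion of the outer terms, because the free part of the sub-term is mapped into the free part of the middle term with a finite index that must be controlled. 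More fundamentally, the torsion of $M_{\GFq}$ for a free $\Z_\ell$-module $M$ with a Frobenius eigenvalue equal to $1$ depends on the actual Jordan/lattice structure of $\Frob$ on $M$, not just on the characteristic polynomial; hence the leading coefficient $L^*(h^1(\cE^U),0)$ alone cannot produce the answer. Your proposal acknowledges this and stops.

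This is where the paper does genuinely different work, and it is the entire substance of the lemma's proof. It first isolates the $L^*(h^1(\cE^U),0)$ factor by identifying $\Ker\,\mu_4$ with the Pontryagin dual of $\Coker[H^1_\etale(\cE,\Q_\ell/\Z_\ell)\to H^1_\etale(\cE^U,\Q_\ell/\Z_\ell)]$ (Lemma~\ref{lem:appl_pi1} enters here), showing $\Coker\,\mu_3$ is finite by a weight argument, and reducing the statement to the identity
\[
|H^3_\etale(\cE_U,\Z_\ell(2))_\tors| = |L^*(h^1(\cE^U),0)|\cdot|\Coker\,\mu_3|.
\]
It then computes $|\Coker\,\mu_3|$ not from $L$-values but from the integral lattice: the Kummer sequence $0\to\NS(\cEbar)\otimes\Z_\ell\to H^2_\etale(\cEbar,\Z_\ell(1))\to T_\ell H^2_\etale(\cEbar,\Gm)\to 0$, the analogous sequence for $\cEbar_U$ built from $\Div(\cEbar_U)/\sim_\alg$, and a diagram chase through $M' = \mathrm{Im}[H^2_\etale(\cEbar,\Z_\ell(2))\to H^2_\etale(\cEbar_U,\Z_\ell(2))]$ and its $\GFq$-coinvariants. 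Those sequences supply precisely the additional input beyond the characteristic polynomial that your argument is missing, namely the exact position of the N\'eron--Severi lattice inside \'etale cohomology and the structure of $T_U$ as a subquotient. Without that, the final step in your proposal --- showing the ``free-part contributions cancel'' --- has no proof, and the claim is genuinely unreachable from $L$-values together with Corollary~\ref{cor:open_L} alone.
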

\begin{proof}
We note that the group 
$H^2_{\etale}(\cE_U,\Q_\ell/\Z_\ell(2))^\cotors$
is % by [Co-sa-so] exact sequence
canonically isomorphic to the group
$H^3_{\etale}(\cE_U,\Z_\ell(2))_\tors$.
Let us consider the long exact sequence
$$
\cdots \to H^i_{\cE^U,\etale}(\cE,\Z_\ell(2)) 
\xto{\mu_i} H^i_\etale(\cE,\Z_\ell(2))
\to H^i_\etale(\cEU,\Z_\ell(2)) \to \cdots.
$$
% H^4_{\cE^U,\etale}(\cE,\Z_\ell(2)) \to \cdots.

The group $\Ker\, \mu_4$
is isomorphic to the Pontryagin dual of
the cokernel of the homomorphism
$H^1_\etale(\cE,\Q_\ell/\Z_\ell) \to 
H^1_\etale(\cE^U,\Q_\ell/\Z_\ell)$. 
By Lemma~\ref{lem:appl_pi1}, this
homomorphism factors through 
$H^1_\etale(C\setminus U,\Q_\ell/\Z_\ell) 
\to H^1_\etale(\cE^U,\Q_\ell/\Z_\ell)$.
In particular, the group $(\Ker\, \mu_4)_\tors$ is 
isomorphic to the Pontryagin dual of 
$(H^1_\etale(\overline{\cE^U},
\Q_\ell/\Z_\ell)^{\GFq})^\red$.
By the weight argument, we observe that $\Coker\, \mu_3$ 
is a finite group. It then follows that
$$
|H^3_{\etale}(\cE_U,\Z_\ell(2))_\tors|
= |L^{*}(h^1(\cE^U),0)| \cdot |\Coker\, \mu_3|.
$$ 
Let $\mu'$ denote
the homomorphism
$H^2_{\overline{\cE^U},\etale}(\cEbar,
\Z_\ell(2)) \to H^2_\etale(\cEbar,\Z_\ell(2))$. 
We have an exact sequence
\begin{equation}\label{eqn:keyexactseq}
\begin{small}
\Ker\, \mu_3 \to H^3_{\overline{\cE^U},\etale}(\cEbar,
\Z_\ell(2))^{\GFq} \to
(\Coker\, \mu')_{\GFq} 
\to \Coker\, \mu_3 \to 0.
\end{small}
\end{equation}
Since $\Ker\, \mu_3 \cong
\Coker[H^2_\etale (\cE,\Z_\ell(2)) \to 
H^2_\etale (\cEU,\Z_\ell(2))]$,
the cokernel of $\Ker\, \mu_3 \to 
H^3_{\overline{\cE^U},\etale}(\cEbar,
\Z_\ell(2))^{\GFq}$ is isomorphic to the
cokernel of the homomorphism
$$
\nu' : H^2_\etale (\cEbar_U,\Z_\ell(2))^{\GFq}
\to H^3_{\overline{\cE^U},\etale}(\cEbar,
\Z_\ell(2))^{\GFq}.
$$
Consider the following diagram with exact rows
$$
\begin{CD}
0 @>>> \Coker\, \mu' @>>> 
H^2_\etale(\cEbar_U,\Z_\ell(2)) @>{\nu}>>
H^3_{\overline{\cE^U},\etale}(\cEbar,
\Z_\ell(2)) \\
@. @V{1-\Frob}VV @V{1-\Frob}VV @V{1-\Frob}VV \\
0 @>>> \Coker\, \mu' @>>> 
H^2_\etale(\cEbar_U,\Z_\ell(2)) @>{\nu}>>
H^3_{\overline{\cE^U},\etale}(\cEbar,
\Z_\ell(2)).
\end{CD}
$$
Since $(\Coker\, \nu)^{\GFq} \subset 
H^3_\etale(\cEbar,\Z_\ell(2))^{\GFq} =0$,
$\Coker\, \nu'$ is isomorphic to the kernel of
$(\Coker\, \mu')_{\GFq} \to 
H^2_{\etale}(\cEbar_U,\Z_\ell(2))_{\GFq}$,
hence by (\ref{eqn:keyexactseq}), $|\Coker\, \mu_3|$ 
equals the order of 
$$
\begin{array}{rl}
M'' & = \Image[
(\Coker\, \mu')_{\GFq} \to 
H^2_{\etale}(\cEbar_U,\Z_\ell(2))_{\GFq}] \\
& = \Image[H^2_{\etale}(\cEbar,\Z_\ell(2))_{\GFq}
\to H^2_{\etale}(\cEbar_U,\Z_\ell(2))_{\GFq}].
\end{array}
$$
Next, we set $M'= \Image[H^2_{\etale}(\cEbar,\Z_\ell(2))
\to H^2_{\etale}(\cEbar_U,\Z_\ell(2))]$.
From the commutative diagram with exact rows
\begin{equation}\label{eqn:NS}
\begin{footnotesize}
\begin{array}{rcccccl}
0 \to & \NS(\cEbar)\otimes_\Z \Z_\ell
& \to & H^2_\etale(\cEbar,\Z_\ell(1)) 
& \to & T_\ell H^2_\etale (\cEbar,\Gm)
& \to  0 \\
& \downarrow & & \downarrow & & \downarrow & \\
0 \to & (\Div(\cEbar_U)/\sim_\alg)\otimes_\Z \Z_\ell
& \to & H^2_\etale(\cEbar_U,\Z_\ell(1)) 
& \to & T_\ell H^2_\etale (\cEbar_U,\Gm) 
& \to 0
\end{array}
\end{footnotesize}
\end{equation}
and the exact sequence 
$$
0 \to H^2_\etale(\cEbar,\Gm) \to 
H^2_\etale(\cEbar_U,\Gm) \to H^1_\etale(\overline{\cE^U},\Q/\Z),
$$
we obtain an exact sequence
$$
0 \to M' \to
H^2_\etale (\cEbar_U,\Z_\ell(2)) \to
T_\ell H^1_\etale (\overline{\cE^U},\Q_\ell/\Z_\ell(1)).
$$
By the weight argument,
we obtain $(T_\ell H^1_\etale (\overline{\cE^U},
\Q_\ell/\Z_\ell(1)))^{\GFq} = 0$,
hence the canonical surjection $M'_{G_{\F_q}} \to M''$ 
is an isomorphism.
From (\ref{eqn:NS}), we have an exact sequence
$$
0 \to (\Div(\cEbar_U)/\sim_\alg)\otimes_\Z \Z_\ell(1)
\to M' \to T_\ell H^2_\etale(\cEbar,\Gm)(1)
\to 0.
$$
By the weight argument, 
we yield $(T_\ell H^2_\etale(\cEbar,
\Gm)(1))^{\GFq} =0$,
hence 
$$
\begin{array}{rl}
0 & \to 
((\Div(\cEbar_U)/\sim_\alg)\otimes_\Z 
\Z_\ell(1))_{\GFq}
\to M'_{\GFq} \\
& \to
(T_\ell H^2_\etale(\cEbar, \Gm)(1))_{\GFq} \to 0
\end{array}
$$
is exact. Therefore, $|\Coker\, \mu_3|=|M'_{\GFq}|$
equals
$$
\frac{|(T_U \otimes_\Z \Z_\ell(1))_{\GFq}|
\cdot |\det(1-\Frob;\, 
H^2_\etale(\cEbar,\Q_\ell(2)))|_\ell^{-1}}{
|\det(1-\Frob;\, 
\Ker[\NS(\cEbar)\to \Div(\cEbar_U)/\sim_\alg] 
\otimes_\Z \Q_\ell(1))|_\ell^{-1}}.
$$
This proves the claim.
\end{proof}

\subsection{Cohomology of the fibers}
Fix a non-empty open subscheme $U \subset C$.
Let $f^U :\cE^U \to C \setminus U$
denote the structure morphism and let
$\iota^U : C\setminus U \to \cE^U$ 
denote the morphism induced from $\iota:C\to \cE$.

\begin{lem}\label{lem:G1_isom}
The homomorphism
$$
(\chern'_{1,1},f^U_{*}): G_1(\cE^U) \to
H^1_\cM(\cE^U,\Z(1)) \oplus K_1(C \setminus U)
$$ 
is an isomorphism.
\end{lem}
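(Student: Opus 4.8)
The plan is to deduce the lemma from Proposition~\ref{prop:Z} applied to the one-dimensional scheme $Z=\cE^U$, by identifying the push-forward $f^U_*$ with (up to sign) the $\chern'_{1,2}$-component of the isomorphism provided there. First I would note that $\cE^U$ is reduced, of pure dimension one, and proper over $\Spec\,\F_q$, hence separated and of finite type over $\Spec\,\F_q$, so Proposition~\ref{prop:Z} yields that $(\chern'_{1,1},\chern'_{1,2}):G_1(\cE^U)\to H^1_\cM(\cE^U,\Z(1))\oplus H^3_\cM(\cE^U,\Z(2))$ is an isomorphism. Since the first coordinate of the map in the lemma is again $\chern'_{1,1}$, it remains only to compare the second summands $H^3_\cM(\cE^U,\Z(2))$ and $K_1(C\setminus U)$ and to match $\chern'_{1,2}$ with $f^U_*$.

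Next I would analyze the connected components of $\cE^U$. A connected component maps to a single point $\wp\in C\setminus U$ (the target being discrete), so it lies in the reduced fiber of $f$ over $\wp$; and since $X$ is geometrically connected, \cite[Corollaire (4.3.12), p.~134]{EGA3} shows that every fiber of $f$ is connected, so the components of $\cE^U$ are exactly the reduced fibers $(\cE_\wp)_\red$ for $\wp\in C\setminus U$. The section $\iota$ furnishes a $\kappa(\wp)$-rational point of $(\cE_\wp)_\red$, so $(\cE_\wp)_\red$ is geometrically connected over $\kappa(\wp)$; being also reduced and of finite type over the perfect field $\kappa(\wp)$, one gets that $H^0((\cE_\wp)_\red,\cO)$ is a finite field extension of $\kappa(\wp)$, and the presence of the rational point forces this extension to be trivial, i.e. $H^0((\cE_\wp)_\red,\cO)=\kappa(\wp)$. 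In particular the canonical morphism $(\cE_\wp)_\red\to\Spec\,H^0((\cE_\wp)_\red,\cO)$ is nothing but the structure morphism $(\cE_\wp)_\red\to\Spec\,\kappa(\wp)$. Consequently $H^3_\cM(\cE^U,\Z(2))=\bigoplus_\wp H^3_\cM((\cE_\wp)_\red,\Z(2))$, $K_1(C\setminus U)=\bigoplus_\wp K_1(\kappa(\wp))$, and $f^U_*$ is the direct sum over $\wp\in C\setminus U$ of the push-forwards $G_1((\cE_\wp)_\red)\to K_1(\kappa(\wp))$ along these structure morphisms.

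Finally I would unwind the definition of $\chern'_{1,2}$ from Section~\ref{sec:def chern sing}: on each proper connected component $(\cE_\wp)_\red$ it is $(-1)$ times the composite of the push-forward $G_1((\cE_\wp)_\red)\to K_1(\Spec\,H^0((\cE_\wp)_\red,\cO))$ with the inverse of the push-forward isomorphism $\alpha_\wp$ of Lemma~\ref{prop:AppB_main}. By the previous step that push-forward is exactly the $\wp$-component of $f^U_*$, so setting $\alpha=\bigoplus_\wp\alpha_\wp:H^3_\cM(\cE^U,\Z(2))\xto{\cong}K_1(C\setminus U)$ we obtain $f^U_*=-\alpha\circ\chern'_{1,2}$, whence $(\chern'_{1,1},f^U_*)=(\id\oplus(-\alpha))\circ(\chern'_{1,1},\chern'_{1,2})$ is a composite of two isomorphisms, proving the claim. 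The substantive inputs (Proposition~\ref{prop:Z} and Lemma~\ref{prop:AppB_main}) are already available and everything else is bookkeeping; the step I expect to require the most care — and the likeliest place for a hidden subtlety — is the on-the-nose identity $f^U_*=-\alpha\circ\chern'_{1,2}$, i.e. verifying that the proper push-forward $f^U_*$ in $G$-theory is literally the same morphism-of-schemes push-forward that enters the definition of $\chern'_{1,2}$. This is precisely where $H^0((\cE_\wp)_\red,\cO)=\kappa(\wp)$ matters, so that no intermediate norm map $K_1(\Spec\,L)\to K_1(\kappa(\wp))$ intervenes, and it is where the existence of the section $\iota$ (and the perfectness of finite fields) is used in an essential way.
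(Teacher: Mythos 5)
Your proof is correct and follows essentially the same route as the paper's: the paper's one-line argument invokes exactly the connectedness of the fibers of $f^U$, Proposition~\ref{prop:Z}, and the construction of $\chern'_{1,2}$, and you have simply spelled out the bookkeeping (in particular the observation that the section $\iota$ forces $H^0((\cE_\wp)_\red,\cO)=\kappa(\wp)$, so the $G$-theoretic push-forward along $f^U$ coincides component-by-component with the push-forward appearing in the definition of $\chern'_{1,2}$).
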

\begin{proof}
The morphism $f^U :\cE^U \to C \setminus U$ has
connected fibers, hence the claim follows 
from Proposition~\ref{prop:Z} and 
the construction of $\chern'_{1,2}$.
\end{proof}

\begin{lem}\label{lem:G0_order}
The group $H^2_\cM(\cE^U,\Z(1))$ is 
finitely generated of rank $|C \setminus U|$. 
Moreover, % the group
$H^2_\cM(\cE^U,\Z(1))_\tors$ is of order $|L^{*}(h^1(\cE^U),0)|$.
\end{lem}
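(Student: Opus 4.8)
The plan is to reduce to a single fibre, identify $H^2_\cM(-,\Z(1))$ with Bloch's group of zero-cycles, and then treat rank and torsion separately, the torsion being the delicate point.

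\textbf{Step 1 (reduction to one fibre).} Since distinct fibres of $f$ lie over distinct closed points of $C$, we have $\cE^U=\coprod_{\wp\in C\setminus U}(\cE_\wp)_{\red}$ as schemes, hence $H^2_\cM(\cE^U,\Z(1))=\bigoplus_{\wp}H^2_\cM((\cE_\wp)_{\red},\Z(1))$ and $H^1_\etale(\overline{\cE^U},\Q_\ell)=\bigoplus_\wp H^1_\etale(\overline{(\cE_\wp)_{\red}},\Q_\ell)$, so $L(h^1(\cE^U),s)=\prod_\wp L(h^1((\cE_\wp)_{\red}),s)$. By Lemma~\ref{lem:h1_weight} the local factor is pure of weight $1$ when $\cE_\wp$ is smooth and pure of weight $0$ otherwise; in the additive case the reduced fibre is a tree of rational curves and the factor is $1$, while in the multiplicative case $H^1$ is one-dimensional with Frobenius acting by a root of unity which equals $1$ exactly for $\wp\in S_0$. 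Thus $L(h^1(\cE^U),s)$ vanishes to order $|S_0\setminus U|$ at $s=0$ and $L^{*}(h^1(\cE^U),0)=\prod_\wp L^{*}(h^1((\cE_\wp)_{\red}),0)$. It therefore suffices to prove: for a connected reduced proper curve $Z$ over $\F_q$, the group $H^2_\cM(Z,\Z(1))$ is finitely generated of rank $1$ and $|H^2_\cM(Z,\Z(1))_\tors|=|L^{*}(h^1(Z),0)|$.

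\textbf{Step 2 (finite generation and rank).} By definition $H^2_\cM(Z,\Z(1))=\CH_0(Z)$. The localization sequence for the open immersion $Z^{\sm}\hookrightarrow Z$, whose complement is the finite set $Z^{\sing}$ of singular points, gives an exact sequence $\cO(Z^{\sm})^{\times}\xrightarrow{\partial}\Z^{Z^{\sing}}\to\CH_0(Z)\to\Pic(Z^{\sm})\to0$ (using $H^1_\cM(Z^{\sm},\Z(1))=\cO(Z^{\sm})^{\times}$, $H^0_\cM(Z^{\sing},\Z(0))=\Z^{Z^{\sing}}$ and $H^2_\cM(Z^{\sm},\Z(1))=\Pic(Z^{\sm})$, all three schemes being essentially smooth). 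As $Z^{\sm}$ is a smooth curve over $\F_q$, $\Pic(Z^{\sm})$ is finitely generated, hence so is $\CH_0(Z)$ (alternatively, $\CH_0(Z)$ is a direct summand of the finitely generated group $G_0(Z)$ by Proposition~\ref{prop:Z}, the complementary summand $H^0_\cM(Z,\Z(0))$ being free). For the rank: the degree map forces $\rank\CH_0(Z)\geq1$, while $\CH_0(Z)_\Q$ is one-dimensional because $Z$ is connected and, for each irreducible component $Z_i$, $\Pic^0$ of the normalization of $Z_i$ is finite over $\F_q$, so any zero-cycle can be moved rationally to a multiple of a fixed closed point by sliding successively along the components of a connecting chain.

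\textbf{Step 3 (torsion).} When $Z$ is smooth projective the sequence of Step 2 degenerates to $\CH_0(Z)=\Pic(Z)$, with torsion $\Jac(Z)(\F_q)$ of order $\prod_\alpha|1-\alpha|$ over the Frobenius eigenvalues $\alpha$ on $H^1_\etale(\overline Z,\Q_\ell)$ (Weil); since $H^1$ is then pure of weight $1$ no $\alpha$ equals $1$, so this order is $|L(h^1(Z),0)|=|L^{*}(h^1(Z),0)|$. When $Z$ is singular, its normalization $\widetilde Z$ is a disjoint union of smooth projective curves (projective lines over finite fields for the bad fibres occurring in $\cE^U$); I would compare the exact sequence of Step 2 for $Z$ with the analogous one for $\widetilde Z$ (they share the term $\cO(Z^{\sm})^{\times}$), so that $\CH_0(Z)_\tors$ is expressed through $\Pic(Z^{\sm})_\tors$, the torsion of $\Z^{Z^{\sing}}/\partial(\cO(Z^{\sm})^{\times})$, and the conductor data, and match the result with $L^{*}(h^1(Z),0)=\prod_{\alpha\neq1}(1-\alpha)$ using that, by Lemma~\ref{lem:h1_weight}, $H^1_\etale(\overline Z,\Q_\ell)$ is pure of weight $0$ and computed from the dual graph of $\overline Z$ with its Galois action --- the Frobenius-fixed part (dimension $1$ for $\wp\in S_0$, $0$ otherwise) accounting for the free rank $1$ rather than for the torsion. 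One then multiplies the fibrewise identities over $\wp\in C\setminus U$ and applies Step 1.

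\textbf{The main obstacle} is Step 3: determining $\CH_0(Z)_\tors$ exactly for singular $Z$, in particular its $p$-primary part (for which the supplementary de Rham--Witt statement proved in the Appendix is needed), and verifying precisely that the linear part (the $\Gm$- and $\Ga$-factors) of the generalized Jacobian of $Z$ contributes the free rank $1$ --- and, globally, the vanishing order $|S_0\setminus U|$ --- but nothing to the torsion, so that the torsion order is the leading coefficient $L^{*}$ and not the value $L$ of the $L$-function.
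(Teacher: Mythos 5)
Your Steps~1 and~2 mirror the paper's reduction: decompose $\cE^U=\coprod_{\wp\in C\setminus U}(\cE_\wp)_{\red}$, identify $H^2_\cM(-,\Z(1))$ with $\CH_0$, and use a localization sequence over the smooth locus; your degree-plus-sliding argument for rank~$1$ and your analysis of the local $L$-factors (trivial $H^1$ for additive type, a one-dimensional $H^1$ with Frobenius eigenvalue~$1$ exactly for split $I_n$) are sound. The genuine gap is Step~3, which you flag yourself: ``compare with the normalization and match the result'' is an outline, not a proof. The paper completes it by a direct case-by-case verification over the Kodaira--N\'eron--Tate types, and its one extra device is worth noting: it removes the section's image as well as the singular locus, setting $\cE_{\wp,(0)}=(\cE_{\wp,\red})_\sm\setminus\iota(\wp)$ and $\cE_{\wp,(1)}=\cE_\wp\setminus\cE_{\wp,(0)}$. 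The resulting sequence
$$
H^1_\cM(\cE_{\wp,(0)},\Z(1)) \to H^0_\cM(\cE_{\wp,(1)},\Z(0))
\to \CH_0(\cE_\wp) \to \Pic(\cE_{\wp,(0)}) \to 0
$$
then shows (except for non-split $I_n$ with an odd number of geometric components, handled separately) that $\CH_0(\cE_\wp)\cong\Z\oplus\Pic(\cE_{\wp,(0)})$, with the $\Z$ coming from $\iota_*$; and $\Pic(\cE_{\wp,(0)})$ is an explicitly computable finite group for each fiber type --- $\Pic^0$ of $E$ at a good place, trivial at split $I_n$ or additive places --- whose order one matches against the local $L^*$-factor. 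You did not identify this mechanism, and your appeal to the Appendix's de Rham--Witt proposition for the $p$-part is misdirected: Proposition~\ref{prop:dRW_main} concerns $H^0(X,\dRCWlog{2})$ on a \emph{smooth projective surface} and is used only in Theorem~\ref{thm:motfin}(1)(f); the $p$-torsion of $\CH_0(\cE_\wp)$ here is handled entirely by the elementary Picard-group computations above.
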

\begin{proof}
It suffices to prove the following claim: 
if $E$ has good reduction (\resp non-split multiplicative
reduction, \resp split multiplicative or additive reduction) 
at $\wp \in C_0$, then $H^2_\cM(\cE_\wp,\Z(1))$ is
a finitely generated abelian group of
rank one, and $|H^2_\cM(\cE_\wp,\Z(1))_\tors|$ 
equals $|\cE_\wp(\kappa(\wp))|$ (\resp rank two, \resp rank one).
We set $\cE_{\wp,(0)} = 
(\cE_{\wp,\red})_\sm \setminus \iota(\wp)$ and 
$\cE_{\wp,(1)} = \cE^U \setminus \cE_{\wp,(0)}$.
We then have an exact sequence
$$
\begin{array}{l}
H^1_\cM(\cE_{\wp,(0)},\Z(1)) \to H^0_\cM(\cE_{\wp,(1)},\Z(0)) \\
\to H^2_\cM(\cE_\wp,\Z(1)) \to \Pic(\cE_{\wp,(0)}) \to 0.
\end{array}
$$
First, suppose that $E$ does not have non-split 
multiplicative reduction at $\wp$ or
that $E$ has non-split multiplicative
reduction at $\wp$ and $\cE_\wp \otimes_{\kwp}\Fbar_q$ has
an even number of irreducible components. 
Then, using the classification of Kodaira, N\'eron, and Tate
(\cf \cite[10.2.1, p.~484--489]{Liu}) of singular 
fibers of $\cE \to C$,
we can verify the equality
$$
\begin{array}{rl}
& \Image [H^0_\cM(\cE_{\wp,(1)},\Z(0)) \to H^2_\cM(\cE_\wp,\Z(1))] \\
= & \Image [\iota_*: H^0_\cM(\Spec\, \kwp,\Z(0)) 
\to H^2_\cM(\cE_\wp,\Z(1))].
\end{array}
$$
This shows that the group $H^2_\cM(\cE_\wp,\Z(1))$ is isomorphic
to the direct sum of Picard group $\Pic(\cE_{\wp,(0)})$ and 
$H^0_\cM(\Spec\, \kwp,\Z(0)) \cong \Z$. 
In particular, we have
$H^2_\cM(\cE_\wp,\Z(1))_\tors \cong \Pic(\cE_{\wp,(0)})$,
from which we easily deduce the claim.

Next, suppose that $E$ has non-split multiplicative
reduction at $\wp$ and $\cE_\wp \otimes_{\kwp}\Fbar_q$ has
an odd number of irreducible components. In this case,
we can directly verify that the image of
$H^0_\cM(\cE_{\wp,(1)},\Z(0)) \to H^2_\cM(\cE_\wp,\Z(1))$
is isomorphic to $\Z \oplus \Z/2$ and
$\Pic(\cE_{\wp,(0)}) =0$.  The claim in this case follows.
\end{proof}

\begin{lem}\label{lem:aux2}
The diagram
$$
\begin{CD}
K_1(E) @>>> G_0(\cE^U)\\
@V{\iota^*}VV @V{\iota^{U *}}VV \\
K_1(k) @>>> K_0(C \setminus U)
\end{CD}
$$
is commutative.
\end{lem}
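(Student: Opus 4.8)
The plan is to identify both horizontal arrows as filtered colimits of boundary maps of Quillen's localization sequences, and then, for each fixed open $V$, to exhibit the corresponding square as part of a morphism of localization fibre sequences induced by derived pull‑back along the section $\iota$. Recall that the top arrow is the boundary map $K_1(E)\to G_0(\cE^U)$ coming (as in the Introduction) from the localization sequence $K_\bullet(\cE)\to K_\bullet(E)\to\bigoplus_\wp G_{\bullet-1}(\cE_\wp)\to K_{\bullet-1}(\cE)$ followed by the projection onto the summands $\wp\notin U$, and similarly the bottom arrow is the divisor‑type boundary map $K_1(k)\to K_0(C\setminus U)$ for the curve $C$.

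First I would dispose of the trivial case $U=C$, and then for each non‑empty open $V\subseteq U$ set $\cE_V=\cE\times_C V$ (open in $\cE$) and $Z_V=\cE\times_C(C\setminus V)$, whose scheme structure is immaterial for $G$‑theory. Writing $\iota_V\colon V\to\cE_V$ and $\iota_{(V)}\colon C\setminus V\to Z_V$ for the sections induced by $\iota$, Quillen's localization theorem provides, for each such $V$, the square
\[
\begin{CD}
K_1(\cE_V) @>\partial>> G_0(Z_V)\\
@V{\iota_V^*}VV @V{\iota_{(V)}^*}VV\\
K_1(V) @>\partial>> K_0(C\setminus V).
\end{CD}
\]
The square of the lemma is obtained from these by passing to the colimit over the (cofinal) affine opens $V\subseteq U$, using $K_1(E)=\varinjlim_V K_1(\cE_V)$, $K_1(k)=\varinjlim_V K_1(V)$ and $\varinjlim_V\iota_V^*=\iota^*$ (continuity of $G$‑theory), and then post‑composing the right‑hand column with the projections $G_0(Z_V)=\bigoplus_{\wp\in C\setminus V}G_0(\cE_\wp)\to G_0(\cE^U)$ and $K_0(C\setminus V)\to K_0(C\setminus U)$ onto the summands indexed by $C\setminus U$. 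These projections are compatible with the transition maps and with $\iota_{(V)}^*$, which acts componentwise and whose restriction over $C\setminus U$ is $\iota^{U*}$; hence it suffices to prove the displayed square commutes for each $V$.

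For this the crucial input is that the section is Tor‑independent from the closed fibres. Since $\cE$ is regular, the closed immersion $\iota\colon C\to\cE$ has finite Tor‑dimension, so $L\iota^*$ preserves bounded coherent complexes; the same holds for $\iota_{(V)}$, because the image of $\iota$ meets every fibre of $f$ in a regular point (a consequence of the regularity of $\cE$ together with $f\circ\iota=\id_C$). As $f$ is flat with $f\circ\iota=\id_C$ we have $L\iota^*\circ Lf^*=\id$ on $D^b_{\mathrm{coh}}(C)$, and since $\cO_{Z_V}\cong f^*\cO_{C\setminus V}$ by flat base change, this forces the Cartesian square with sides $\iota$, $\iota_{(V)}$ and the two closed immersions to be Tor‑independent. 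Therefore $L\iota^*$ carries the $G$‑theory localization fibre sequence $G(Z_V)\to G(\cE)\to G(\cE_V)$ to the fibre sequence $G(C\setminus V)\to G(C)\to G(V)$, compatibly with all maps — the square over the closed part commuting by Tor‑independent base change for $G$‑theory, the square over the open part because the two restrictions of $\iota$ agree. Passing to homotopy groups, the rung relating $K_1$ and $G_0$ is exactly the displayed square, so it commutes, and the lemma follows.

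The main obstacle, and the only genuinely geometric ingredient, is this Tor‑independence of $\iota$ with the closed fibres — equivalently, the compatibility of $\iota^*$ with the localization sequences. The rest is bookkeeping with filtered colimits and the naturality of Quillen's localization sequence under pull‑back by morphisms of finite Tor‑dimension.
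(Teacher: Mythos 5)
Your proof is correct, but it takes a genuinely different route from the paper's.

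The paper's argument is a generator‑by‑generator check: it observes that $K_1(E)$ is generated by the image of $f^*\colon K_1(k)\to K_1(E)$ together with the images of the push‑forwards $K_1(\kappa(x))\to K_1(E)$ from closed points $x\in E_0$, and then invokes only the two compatibilities that are already in Quillen's foundational paper — the localization sequence of $G$‑theory commutes with pull‑back along flat morphisms (used for $f^*$ and $f^{U*}$, together with $f\circ\iota=\id$) and with push‑forward along finite morphisms (used for $\pi_{x*}$ and its integral model $D_x\hookrightarrow\cE$). You instead prove the commutativity globally, without choosing generators, by exhibiting the whole square as the $\pi_1/\pi_0$ rung of a morphism of localization fibre sequences induced by derived pull‑back along the section. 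The essential geometric input — Tor‑independence of $\iota\colon C\to\cE$ with each closed fibre, obtained from $L\iota^*\circ Lf^*=\id$ and $\cO_{Z_V}\cong f^*\cO_{C\setminus V}$, together with the observation that $\iota$ factors through the smooth locus of $f$ (so $\iota_{(V)}$ has finite Tor‑dimension on $Z_V$) — is correct and is precisely what lets you move $L\iota^*$ across the closed‑immersion push‑forward. What your approach buys is that it is coordinate‑free and does not depend on a specific generating set for $K_1(E)$, so it would transfer without change to higher‑genus generic fibres; what the paper's approach buys is that it stays within the two compatibilities of the localization sequence that Quillen himself established, whereas yours needs the naturality of the localization fibre sequence under pull‑back by a (non‑flat) morphism of finite Tor‑dimension in a Tor‑independent square, which one typically takes from Thomason–Trobaugh's derived‑category formulation rather than from Quillen directly. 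Both proofs are sound; yours is slightly heavier on $K$‑theoretic machinery but arguably cleaner conceptually.
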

\begin{proof}
The group $K_1(E)$ is generated by the
image of $f^* :K_1(k) \to K_1(E)$ and the
image of $\bigoplus_{x \in E_0} K_1(\kappa(x))
\to K_1(E)$. The claim follows from the fact
that the localization sequence
in $G$-theory commutes with flat pull-backs
and finite push-forwards.
\end{proof}

\subsection{Proofs of Theorems \ref{thm:conseq1}, 
\ref{thm:conseq2}, and \ref{thm:conseq3}}
\label{subsec:appl_proofs}

\begin{lem}\label{lem:coker_partial}
For any non-empty open subscheme $U \subset C$, the cokernel
of the boundary map $\partial_U:H^2_\cM(\cEU,\Z(2))
\to H^1_\cM(\cE^U,\Z(1))$ is finite.
\end{lem}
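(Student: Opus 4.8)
The plan is to deduce the finiteness of $\Coker\,\partial_U$ from the localization sequence together with Theorem~\ref{surjectivity}.

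First I would invoke the localization sequence of higher Chow groups for the open immersion $\cE_U\hookrightarrow\cE$ with closed complement $\cE^U$ (which is of pure codimension one): it gives an exact sequence
\[
H^2_\cM(\cE_U,\Z(2))\xto{\partial_U}H^1_\cM(\cE^U,\Z(1))\xto{\iota_*}H^3_\cM(\cE,\Z(2)),
\]
so that $\Coker\,\partial_U$ is identified with the subgroup $\Image\,\iota_*$ of $H^3_\cM(\cE,\Z(2))$. Since $\cE$ is a smooth projective surface over $\F_q$, Theorem~\ref{thm:motfin}(1) says $H^3_\cM(\cE,\Z(2))$ is finite modulo a uniquely divisible subgroup, so its torsion subgroup is finite. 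Hence it suffices to prove that $\Coker\,\partial_U$ is a torsion group, i.e.\ that $\partial_U\otimes_\Z\Q$ is surjective.

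Next I would translate this into $K$-theory. Writing $\cE^U=\bigsqcup_{\wp\in C\setminus U}\cE_\wp$ (reduced), we have $H^1_\cM(\cE^U,\Z(1))=\bigoplus_\wp H^1_\cM(\cE_\wp,\Z(1))$, and since each fiber $\cE_\wp$ is proper over its residue field, each $H^3_\cM(\cE_\wp,\Z(2))$ is finite by Lemma~\ref{prop:AppB_main}; therefore Proposition~\ref{prop:Z} shows that $\chern'_{1,1}$ induces an isomorphism $G_1(\cE^U)_\Q\xto{\cong}H^1_\cM(\cE^U,\Z(1))_\Q$. Likewise, by Theorem~\ref{thm:motfin}(2) the only nonvanishing weight piece of $K_2(\cE_U)_\Q=\bigoplus_i H^{2i-2}_\cM(\cE_U,\Q(i))$ is $H^2_\cM(\cE_U,\Q(2))$, so $c_{2,2}$ gives an isomorphism $K_2(\cE_U)_\Q\xto{\cong}H^2_\cM(\cE_U,\Q(2))$. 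Under these isomorphisms, $\partial_U\otimes\Q$ is identified, up to sign, with the boundary map $\partial\colon K_2(\cE_U)_\Q\to G_1(\cE^U)_\Q$ in the $G$-theory localization sequence for $\cE_U\hookrightarrow\cE$; the required compatibility of Chern characters with this localization sequence is obtained exactly as in Lemma~\ref{lem:c11} and diagram~(\ref{eqn:CD_H43}), the only input beyond Lemma~\ref{lem:chern_comm} being Proposition~\ref{prop:Z} for the (possibly singular) fibers. Thus it remains to show that $\partial\otimes\Q$ is surjective. For this I would apply Theorem~\ref{surjectivity} with $S=C_0$: given $\xi\in G_1(\cE^U)_\Q=\bigoplus_{\wp\in C\setminus U}G_1(\cE_\wp)_\Q$, regard it as an element of $\bigoplus_{\wp\in C_0}G_1(\cE_\wp)_\Q$ supported on $C\setminus U$; by Theorem~\ref{surjectivity} there is $x\in K_2(E)_\Q$ with $\partial_2(x)=\xi$, so in particular $\partial^2_\wp(x)=0$ for every $\wp\in U_0$. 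The $G$-theory localization sequence for the generic fiber $E\hookrightarrow\cE_U$ (closed complement $\bigsqcup_{\wp\in U_0}\cE_\wp$), which stays exact after $\otimes\Q$, then lets us lift $x$ to some $\tilde x\in K_2(\cE_U)_\Q$. Finally, by excision the $\wp$-component ($\wp\in C\setminus U$) of the boundary $K_2(\cE_U)\to G_1(\cE^U)$ equals $\partial^2_\wp$ precomposed with the restriction $K_2(\cE_U)\to K_2(E)$; hence $\partial(\tilde x)=\xi$, which proves the surjectivity of $\partial\otimes\Q$.

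The main obstacle will be the bookkeeping of compatibilities rather than any new geometric input: one must match the motivic and $K$-theoretic boundary maps via Chern characters on the singular fibers (this is precisely where Proposition~\ref{prop:Z} is needed, since Lemma~\ref{lem:chern_comm} requires the closed subscheme to be essentially smooth), and one must match the localization sequences attached to the nested open subschemes $E\subset\cE_U\subset\cE$ (functoriality and excision for the localization triangle). Once Theorem~\ref{surjectivity} is granted, the argument above requires nothing further.
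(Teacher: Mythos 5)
Your proof is correct, and it relies on the same two inputs as the paper's: Theorem~\ref{surjectivity} and the compatibility of Chern class maps with the localization sequence (Lemma~\ref{lem:c11} together with Proposition~\ref{prop:Z} for the possibly singular fibers). The difference is in the bookkeeping around the localization sequence. The paper first shrinks $U$ so that $f_U\colon\cE_U\to U$ is smooth; then each $\cE_\wp$, $\wp\in U_0$, is a smooth proper curve over a finite field, so $G_1(\cE_\wp)_\Q=0$ and $G_2(\cE_\wp)_\Q=0$, whence $K_2(\cE_U)_\Q\xto{\cong}K_2(E)_\Q$, and Theorem~\ref{surjectivity} (applied with $S=C\setminus U$) plus Lemma~\ref{lem:aux1} give the surjectivity of $\partial_U\otimes\Q$ at once. (That shrinking $U$ is harmless follows, for instance, from the observation you make at the start: $\Coker\,\partial_U$ is identified with a subgroup of $H^3_\cM(\cE,\Z(2))$ which only grows as $U$ shrinks.) You keep $U$ arbitrary and compensate by applying Theorem~\ref{surjectivity} over the full set of places $S=C_0$: a preimage $x\in K_2(E)_\Q$ of $\xi$ then has vanishing boundary at every $\wp\in U_0$, so the exactness (after $\otimes\Q$) of the inductive-limit localization sequence $K_2(\cE_U)_\Q\to K_2(E)_\Q\to\bigoplus_{\wp\in U_0}G_1(\cE_\wp)_\Q$ lets you lift $x$ to $\tilde x\in K_2(\cE_U)_\Q$ hitting $\xi$. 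Both routes are equally short; yours trades the reduction step for the lifting step, and it makes explicit the functoriality/excision compatibilities of boundary maps that the paper's terse one-line proof leaves implicit. Your preliminary observation that $\Coker\,\partial_U$ lands in $H^3_\cM(\cE,\Z(2))$, so that torsion implies finite, is also the cleanest way to close the argument; the paper tacitly uses the finite generation of $H^1_\cM(\cE^U,\Z(1))$ for the same purpose. No gaps.
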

\begin{proof}
If suffices to prove the claim for sufficiently
small $U$, hence we assume that $\cE_U \to U$
is smooth. Since $K_2(\cEU)_\Q \to K_2(E)_\Q$ is an 
isomorphism in this case, the claim follows from 
Theorem~\ref{surjectivity} and Lemma~\ref{lem:aux1}.
\end{proof}

\begin{proof}[Proof of Theorem~\ref{thm:conseq3}]
Claims (1) and (2) follow from
Theorem~\ref{thm:motfin}, 
Proposition~\ref{prop:red_loc_seq}, and Lemma~\ref{lem:coker_partial}.
Proposition~\ref{prop:red_loc_seq}
gives an exact sequence
$$
\begin{array}{rl}
& 0 \to H^2_\cM(\cE,\Z(2))_\tors 
\to H^2_\cM(\cEU,\Z(2))^\cotors  \\
\xto{\partial^2_U} & H^1_\cM(\cE^U,\Z(1))
\to H^3_\cM(\cE,\Z(2))_\tors 
\to H^3_\cM(\cEU,\Z(2))^\cotors \\
\xto{\partial^3_U} & H^2_\cM(\cE^U,\Z(1))
\to \CH_0(\cE)
\to \CH_0(\cEU) \to 0.
\end{array}
$$
From Lemma~\ref{lem:coker_partial},
it follows that $\Coker\, \partial^2_{U}$ is a finite group,
which implies that the group $H^2_\cM(\cEU,\Z(2))^\cotors$
is of rank $|S_0 \setminus U|$.
By Theorem~\ref{thm:motfin},
$|H^2_\cM(\cEU, \Z(2))_\tors|$ equals
$$
\prod_{\ell \neq p} |H^1_\etale(\cEU,\Q_\ell/\Z_\ell(2))|.
$$
By Corollaries~\ref{cor:appl_UC} and \ref{cor:etaleU_1}, 
this equals 
$$
|T'_{U,(1)}| \cdot |L(h^1(C),-1)L(h^0(C\setminus U,-1)/(q-1)|.
$$ 
The above proves claim (3).

As noted in the proof of Theorem~\ref{thm:motfin} (1),
the group $\CH_0(\cE)$ is a finitely
generated abelian group of rank one and 
$\CH_0(\cEU)$ is finite if $U \neq C$.
By Lemma~\ref{lem:G0_order},
$H^2_\cM(\cE^U,\Z(1))$ is a finitely generated abelian group 
of rank $|C\setminus U|$,
hence the rank of $H^3_\cM(\cEU,\Z(2))^\cotors$ 
equals $\max(|C\setminus U|-1,0)$.

From the class field theory of varieties over finite fields
\cite[Theorem 1, p.~242]{Ka-Sa} (see also \cite[p.~283--284]{Co-Ra})
and Lemma~\ref{lem:appl_pi1}, it follows that
the push-forward map $\CH_0(\cE) \to \Pic(C)$ is an
isomorphism,
hence the homomorphism
$H^2_\cM(\cE^U,\Z(1)) \to \CH_0(\cE) \cong \Pic(C)$
factors through the push-forward map
$f^U_*:H^2_\cM(\cE^U,\Z(1)) \to H^0_\cM(C\setminus U,\Z(0))$.
By the surjectivity of $f^U_*$, we have isomorphisms
$$
\CH_0(\cE^U) \cong \Coker[H^0_\cM(C\setminus U,\Z(0)) \to \Pic(C)]
\cong \Pic(U),
$$ 
which proves claim (6).
Since the group $H^0_\cM(C\setminus U,\Z(0))$ is torsion free,
the image of $H^2_\cM(\cE^U,\Z(1))_\tors$ in
$\CH_0(\cE)$ is zero, thus we have an exact sequence
$$
\begin{array}{l}
0 \to \Coker\, \partial^2_U \to H^3_\cM(\cE,\Z(2))_\tors \\
\to H^3_\cM(\cEU,\Z(2))_\tors \to H^2_\cM(\cE^U,\Z(1))_\tors
\to 0.
\end{array}
$$
By Proposition~\ref{prop:red_loc_seq} and
Lemma~\ref{lem:etaleU_2}, the group 
$H^3_\cM(\cEU,\Z(2))_\tors$ is finite of order
$$
\frac{p^m |T'_{U,(1)}| \cdot |L(h^2(\cE),0) L^*(h^1(\cE^U),0)
L(h^0(C \setminus U),-1)|}{(q-1) |L(h^0(\Irr(\cE^U)),-1)|}
$$
for some $m \in \Z$. By Lemma~\ref{lem:G0_order},
the group 
$H^2_\cM(\cE^U,\Z(1))_\tors$ is finite of order
$|L^{*}(h^1(\cE^U),0)|$.
By Lemma~\ref{lem:appl_pi1}, the Picard scheme
$\Pic^o_{\cE/\F_q}$ is an abelian variety
and, 
in particular, 
$\Hom(\Pic^o_{\cE/\F_q},\Gm)= 0$,
hence by Theorem~\ref{thm:motfin} and 
Corollary~\ref{cor:appl_UC}, 
the group $H^3_\cM(\cE,\Z(2))_\tors$ is of order
$|L(h^2(\cE),0)|$.
Therefore, 
$$
\begin{array}{rl}
|\Coker\, \partial^2_U |
& = \frac{|H^3_\cM(\cE,\Z(2))_\tors|
\cdot |H^2_\cM(\cE^U,\Z(1))_\tors|}{
|H^3_\cM(\cEU,\Z(2))_\tors|} \\
& = \frac{p^{-m} (q-1) |L(h^0(\Irr(\cE^U)),-1)|}{
|T'_{U,(1)}| \cdot |L(h^0(C \setminus U),-1)|}.
\end{array}
$$
Since $|\Coker\, \partial^2_U |$ is prime to $p$,
we have $m=0$.
This proves claims (4) and (5) and 
completes the proof.
\end{proof}

\begin{proof}[Proof of Theorem~\ref{thm:conseq2}]
Claim (5) is clear. Claim (1) follows from
Corollary~\ref{cor:relative_cv} and Theorem~\ref{surjectivity}. 
We easily verify that 
$H^i_\cM(\cE^U,\Z(1))$ is zero for $i \le 0$.
By the localization sequence of higher Chow groups
(\cf \cite[Corollary (0.2), p.~537]{Bloch2}), we have 
$H^i_\cM(\cE,\Z(2)) \cong H^i_\cM(\cE_U,\Z(2))$
for $i \le 1$. Taking the inductive limit with respect to $U$,
we obtain claim (2).

By Corollary~\ref{cor:relative_cv},
we have an exact sequence
\begin{equation}
\label{eqn:longexact_conseq2}
\begin{array}{rl}
& 0 \to H^2_\cM(\cE,\Z(2))_\tors 
\xto{\alpha} H^2_\cM(E,\Z(2))^\cotors \\
\xto{\partial^2_{\cM,2}} &
 {\displaystyle \bigoplus_{\wp \in C_0}}
H^1_\cM(\cE_\wp,\Z(1)) \to H^3_\cM(\cE,\Z(2))_\tors
\to H^3_\cM(E,\Z(2))^\cotors  \\
\xto{\partial^3_{\cM,2}} &
{\displaystyle \bigoplus_{\wp \in C_0}} 
H^2_\cM(\cE_\wp,\Z(1))
\to \Pic(C) \to 0.
\end{array}
\end{equation}
Hence by Theorem~\ref{thm:motfin} and 
Corollary~\ref{cor:appl_UC}, the group 
$\Ker\, \partial^2_{\cM,2}$ is finite of order
$|L(h^{1}(C),-1)|$.
For a non-empty open subscheme $U \subset C$,
consider the group $\Coker\, \partial^2_U$
in the proof of Theorem~\ref{thm:conseq3}.
For two non-empty open subschemes $U',U \subset C$ 
with
$U' \subset U$, the homomorphism 
$\Coker\, \partial^2_U \to \Coker\, 
\partial^2_{U'}$ is injective since
both $\Coker\, \partial^2_U$ and
$\Coker\, \partial^2_{U'}$ canonically 
inject into $H^3_\cM(\cE,\Z(2))_\tors$.
Claim (3) follows from claim
(4) of Theorem~\ref{thm:conseq3} by taking the 
inductive limit.
Claim (4) follows from 
exact sequence (\ref{eqn:longexact_conseq2}) 
and Lemma~\ref{lem:appl_LofcE}.

From the localization sequence, it follows that the
push-forward homomorphism
$\bigoplus_{x \in E_0} H^2_\cM(\Spec\, \kappa(x),\Z(2))
\to H^4_\cM(E,\Z(3))$
is surjective, 
hence $H^4_\cM(E,\Z(3))$ is a torsion group
and claim (6) follows from Lemma~\ref{lem:coker_del43}.
This completes the proof.
\end{proof}

\begin{proof}[Proof of Theorem~\ref{thm:conseq1}]
Consider the restriction 
$\gamma: \Ker\, c_{2,3} \to H^2_\cM(E,\Z(2))$ of $c_{2,2}$
to $\Ker\, c_{2,3}$. By Lemma~\ref{lem:c12},
both $\Ker\, \gamma$ and $\Coker\, \gamma$ are
annihilated by the multiplication-by-$2$ map, 
which implies that the image of $\gamma$
contains $H^2_\cM(E,\Z(2))_\divi$ and that % the group
$\Ext^1_\Z(H^2_\cM(E,\Z(2))_\divi, \Ker\, \gamma)$
is zero. From this, it follows that 
the homomorphism $\gamma$ induces 
an isomorphism $(\Ker\, c_{2,3})_\divi \xto{\cong} 
H^2_\cM(E,\Z(2))_\divi$,
% In particular, the homomorphism
% $K_2(E)_\divi \to H^2_\cM(E,\Z(2))_\divi$ 
% induced by $c_{2,2}$ is surjective.
which shows that the homomorphism
$K_2(E)^\cotors \to H^2_\cM(E,\Z(2))^\cotors$ 
induced by $c_{2,2}$ is surjective with torsion kernel,
thus claim (1) follows from Theorem~\ref{thm:conseq2} (3).

Claim (3) follows from
Theorem~\ref{thm:conseq2} (1) and Lemma~\ref{lem:c12}.

For $\wp \in C_0$, let
$\iota_\wp :\Spec\, \kwp \to \cE_\wp$ 
denote the reduction at $\wp$ of the morphism 
$\iota:C\to \cE$. 
% Let $S' \subset C_0$ be an arbitrary 
% non-empty subset.
Diagram (\ref{eqn:CD_H43}) gives
an exact sequence
$$
\Coker\, \partial^4_{\cM,3}
\to \Coker \, \partial_2
\to \Coker \, \partial^2_{\cM,2} \to 0.
$$
By Lemma~\ref{lem:coker_del43}, we have
an isomorphism $\Coker\, \partial^4_{\cM,3}
\cong \F_q^\times$. 
By the construction of this isomorphism,
we see that the composition
$$
\F_q^\times \cong \Coker\, \partial^4_{\cM,3} 
\to \Coker \,\partial_2 \inj K_1(\cE)
\to K_1(\Spec\, \F_q) \cong \F_q^\times
$$
equals the identity, hence the map
$\Coker\, \partial^4_{\cM,3} \to \Coker \,\partial_2$ 
is injective. Then, claim (2) follows from
Theorem~\ref{thm:conseq2} (3).

From Proposition~\ref{prop:Z} and Lemmas~\ref{lem:c12},
\ref{lem:aux1}, and \ref{lem:aux2}, 
it follows that the homomorphism
$\partial_1 :K_1(E)^\cotors \to 
\bigoplus_{\wp \in C_0} G_0(\cE_\wp)$
is identified with the direct sum of the map
$\partial'_1: k^\times \to \bigoplus_{\wp \in C_0} 
H^0_\cM(\Spec\, \kwp, \Z(0)) \to 
\bigoplus_{\wp} H^0_\cM(\cE_\wp, \Z(0))$ and the map
$\partial^3_{\cM,2} : H^3_\cM(E,\Z(2))^\cotors \to
\bigoplus_{\wp} H^2_\cM(\cE_\wp,\Z(1))$.
We then have isomorphisms 
$$
\begin{array}{l}
\Ker\, \partial'_1 \cong \F_q^\times,\ 
\Coker\, \partial'_1 \cong \Pic(C) \oplus 
\bigoplus_{\wp} \Z^{|\Irr(\cE_\wp)|-1}, \\
\Ker\, \partial^3_{\cM,2} \cong
H^3_\cM(\cE,\Z(2))_\tors /\Coker\, \partial^2_{\cM,2},\ 
\Coker\, \partial^3_{\cM,2} \cong \Pic(C).
\end{array}
$$  
Claim (4) follows,
which completes the proof 
of Theorem~\ref{thm:conseq1}.
\end{proof}
\section{Results for $j \ge 3$}
\label{sec:BlKa}
%In this section we assume that the Bloch-Kato conjecture holds.
In this section, we obtain results for $j \ge 3$, 
generalizing the theorems of Section~\ref{conseq}.
The proofs here are simpler than those of Section~\ref{conseq}
in that we do not use 
tools such as the class field theory of Kato-Saito~\cite{Ka-Sa} or Theorem~\ref{surjectivity}.
We also refer the reader to Section~\ref{sec:intro high genus} for 
remarks concerning the contents of this section.
Finally, we note that the notation we use is 
as in Section \ref{conseq}. 

\subsection{Statements}
For integers $i,j$, consider the boundary map
$$
\partial^i_{\cM,j}: 
H^{i}_\cM(E,\Z(j))^\cotors \to \bigoplus_{\wp \in C_0} 
H^{i-1}_\cM(\cE_\wp,\Z(j-1)).
$$

\begin{thm}
\label{thm:conseq5}
Let $j \ge 3$ be an integer. 
%Suppose that Conjecture \ref{conj:BK} is true for $j$. 
\begin{enumerate}
\item For any $i \in \Z$, both
$\Ker\, \partial^i_{\cM,j}$ and $\Coker\, \partial^i_{\cM,j}$
are finite groups.
\item We have
$$
|\Ker \partial^i_{\cM,j}|=
\left\{ \begin{array}{ll} 1, & 
\text{ if }i \le 0 \text{ or }i \ge 5,\\
q^j-1, & \text{ if } i=1, \\
|L(h^1(C),1-j)|, & \text{ if } i=2, \\
\frac{|T'_{(j-1)}|\cdot |L(h^2(\cE),2-j)|}{q^{j-1}-1},
& \text{ if } i=3, \\
|L(h^1(C),2-j)|, & \text{ if }i=4.
\end{array}\right.
$$
Further, 
the group $\Ker \,\partial^1_{\cM,j}$ is cyclic 
of order $q^j-1$.
\item We have
$$
|\Coker\, \partial^i_{\cM,j}|=
\left\{ \begin{array}{ll} 1, & \text{ if }
i \le 1,\ i=3,\text{ or }i \ge 5,\\
\frac{q^{j-1}-1}{|T'_{(j-1)}|}, & 
\text{ if } i=2, \\
q^{j-2}-1
%wrong: |L(h^1(C),2-j)|, 
& \text{ if }i=4.
\end{array}\right.
$$
\item Let $U \subset C$ be a non-empty
open subscheme. Then, the group
$H^i_\cM(\cE_U,\Z(j))$ is finite modulo 
a uniquely divisible subgroup for any $i\in \Z$.
The group $H^i_\cM(\cE_U,\Z(j))$ is zero if $i\ge \max(6,j)$
and is finite for $(i,j)=(4,3),(5,3)$, $(4,4)$, $(5,4)$, or $(5,5)$.
\item $H^i_\cM(\cE_U,\Z(j))$ 
is uniquely divisible for $i \le 0$ or $6 \le i \le j$,
and $H^1_\cM(\cE_U,\Z(j))_\tors$ 
is cyclic of order $q^j -1$.
\item Suppose that $U=C$ (\resp $U \neq C$).
The group $H^2_\cM(\cE_U,\Z(j))_\tors$ 
is of order $|L(h^1(C),1-j)|$ (\resp of order
$$
\frac{|T'_{U,(j-1)}| 
\cdot |L(h^1(C),1-j)L(h^0(C\setminus U),1-j)|}{
q^{j-1}-1}).
$$
The group $H^3_\cM(\cE_U,\Z(j))_\tors$ 
is of order $|L(h^2(\cE),2-j)|$ (\resp of order
$$
\frac{|T'_{U,(j-1)}| 
\cdot |L(h^2(\cE),2-j) L(h^1(\cE^U),2-j) 
L(h^0(C\setminus U),1-j)|}{
(q^{j-1}-1) |L(h^0(\Irr(\cE^U)),1-j)|}).
$$
The group $H^4_\cM(\cE_U,\Z(j))_\tors$ 
is of order $|L(h^1(C),2-j)|$ (\resp of order
$$
\frac{|L(h^1(C),2-j)L(h^0(C\setminus U),2-j)|}{
q^{j-2}-1}).
$$
The group $H^5_\cM(\cE_U,\Z(j))_\tors$ 
is cyclic of order $q^{j-2}-1$ (\resp is zero).
\end{enumerate}
\end{thm}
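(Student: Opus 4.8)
The plan is to run the argument of Theorem~\ref{thm:conseq2} in the far simpler range $j\ge 3$, where every obstruction coming from the prime $p$ disappears. Since $j-1\ge 2>1=\dim\cE_\wp$ and $j\ge 3>2=\dim\cE$, the logarithmic de Rham--Witt sheaves $W_n\Omega^{j}_{\cE,\log}$ and $W_n\Omega^{j-1}_{\cE_\wp,\log}$ vanish, so $H^{\bullet}_\cM(\cE,\Z/p^{n}(j))=H^{\bullet}_\cM(\cE_\wp,\Z/p^{n}(j-1))=0$ and there is no $p$-primary contribution anywhere; and since the geometric Frobenius eigenvalues are algebraic integers, $1-\alpha q^{m}$ is a $p$-adic unit for every $m\ge 1$, so each $L$-value occurring in the statement --- all evaluated at integers $\le -1$ --- is prime to $p$, and the decorations $|-|^{(p')}$ of Corollaries~\ref{cor:appl_UC} and~\ref{cor:etaleU_1} and of Lemma~\ref{lem:etaleU_2} may be dropped. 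In particular neither Theorem~\ref{surjectivity} nor the class field theory of~\cite{Ka-Sa} is needed.

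First I would record the basic exact sequence. By Theorem~\ref{thm:motfin}(2), for $X$ a smooth surface over $\F_q$ and $j\ge 3$ the group $H^{i}_\cM(X,\Z(j))$ is, for \emph{every} $i$, finite modulo a uniquely divisible subgroup, with $H^{i}_\cM(X,\Z(j))_\tors\cong\bigoplus_{\ell\ne p}H^{i-1}_\etale(X,\Q_\ell/\Z_\ell(j))$ and no exceptional bidegree. By Lemma~\ref{lem:Quillen_Harder} each $H^{m}_\cM(\cE_\wp,\Z(j-1))$ is torsion, and the reduction used in its proof (to smooth affine curves and finite points, where $K$-theory is finitely generated by Quillen and Harder) shows it is finite, hence has no nonzero divisible subgroup; being finite, it is identified via the coefficient sequence~(\ref{eqn:shortexact_motcoh}) and the Geisser--Levine theorem (Theorem~\ref{conj:BK}, \cite{Ge-Le2}) with $\bigoplus_{\ell\ne p}H^{m-1}_\etale(\cE_\wp,\Q_\ell/\Z_\ell(j-1))$ in the degrees that matter. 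Applying Lemma~\ref{lem:abelgp} to the localization sequences for the closed fibres of $f:\cE\to C$ and passing to the inductive limit over the open $U\subset C$ (which identifies $\varinjlim_{U}H^{i}_\cM(\cE_U,\Z(j))$ with $H^{i}_\cM(E,\Z(j))$), one gets that $H^{i}_\cM(E,\Z(j))_\divi$ is uniquely divisible for all $i$ and a long exact sequence
\[
\cdots\to\bigoplus_{\wp}H^{i-2}_\cM(\cE_\wp,\Z(j-1))\to H^{i}_\cM(\cE,\Z(j))_\tors\to H^{i}_\cM(E,\Z(j))^\cotors\xto{\ \partial^{i}_{\cM,j}\ }\bigoplus_{\wp}H^{i-1}_\cM(\cE_\wp,\Z(j-1))\to\cdots,
\]
together with its analogue relating $\cE_U$, $\cE$ and the removed fibres $\cE^{U}$. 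Since $H^{i}_\cM(\cE,\Z(j))_\tors$ is finite, this already proves (1): $\Ker\partial^{i}_{\cM,j}$ is a quotient of $H^{i}_\cM(\cE,\Z(j))_\tors$, and $\Coker\partial^{i}_{\cM,j}$ is a subgroup of $H^{i+1}_\cM(\cE,\Z(j))_\tors$.

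For (4) and (5), Theorem~\ref{thm:motfin}(2)(a) directly gives $H^{i}_\cM(\cE_U,\Z(j))=0$ for $i\ge\max(6,j+1)$, the description at $(i,j)=(5,3),(5,4)$, and the finiteness at $(4,3)$, while Theorem~\ref{thm:motfin}(2)(b) gives the uniquely divisible range of (5) and the cyclic structure of $H^{1}_\cM(\cE_U,\Z(j))_\tors$. The remaining finiteness at $(4,4),(5,5)$ follows because the coniveau spectral sequence of the surface $\cE_U$ exhibits these groups as iterated extensions of torsion groups (using $K^{M}_{n}=0$ for a transcendence-degree-$\le 2$ field over $\F_q$ with $n\ge 4$, and the vanishing of $H^{0}_\cM$, $H^{2}_\cM$ of finite fields with twist $\ge 1$), so, being finite modulo a uniquely divisible subgroup, they are finite. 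Finally, improving $i\ge\max(6,j+1)$ to $i\ge\max(6,j)$ for $j\ge 6$ amounts to $H^{j}_\cM(\cE_U,\Z(j))=0$; this follows from the localization sequence for $\cE_U\subset\cE$ once $H^{j-1}_\cM(\cE_\wp,\Z(j-1))=0$ for the one-dimensional fibres (coniveau plus Bass--Tate vanishing of $K^{M}_{j-1}$ of a global function field) and $H^{j}_\cM(\cE,\Z(j))=0$, the latter because its coniveau spectral sequence makes it torsion while Theorem~\ref{thm:motfin}(2) makes it uniquely divisible.

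For (6), (2) and (3) I would pass entirely to \'etale cohomology. For (6), Theorem~\ref{thm:motfin}(2)(b) gives $H^{i}_\cM(\cE_U,\Z(j))_\tors\cong\bigoplus_{\ell\ne p}H^{i-1}_\etale(\cE_U,\Q_\ell/\Z_\ell(j))$; inserting Corollary~\ref{cor:appl_UC} when $U=C$ and Corollary~\ref{cor:etaleU_1} when $U\ne C$, rewriting $L(h^{2}(\cE^{U}),s)=L(h^{0}(\Irr(\cE^{U})),s-1)$ (because $H^{2}$ of a proper curve is spanned by the Tate-twisted fundamental classes of its irreducible components, as in the proof of Lemma~\ref{lem:h1_weight}), and dropping the $|-|^{(p')}$ decorations, produces exactly the stated orders. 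For (2) and (3) one chases the displayed long exact sequence: using the identification of each $H^{m}_\cM(\cE_\wp,\Z(j-1))$ with $\bigoplus_{\ell\ne p}H^{m-1}_\etale(\cE_\wp,\Q_\ell/\Z_\ell(j-1))$, the sequence becomes the $\bigoplus_{\ell\ne p}$ of the Galois-cohomology localization (Gysin) sequence for $\cE^{U}\hookrightarrow\cE\hookleftarrow\cE_U$, so the kernels and cokernels of the $\partial^{i}_{\cM,j}$ are computed by Poincar\'{e} duality together with Lemma~\ref{lem:appl_LofcE} and Corollary~\ref{cor:open_L} --- the vanishing of the relevant boundary maps being a weight argument as in the proof of Lemma~\ref{lem:appl_coker} --- which yields $q^{j}-1$, $|L(h^{1}(C),1-j)|$, $|T'_{(j-1)}|\cdot|L(h^{2}(\cE),2-j)|/(q^{j-1}-1)$, $|L(h^{1}(C),2-j)|$, $(q^{j-1}-1)/|T'_{(j-1)}|$ and $q^{j-2}-1$. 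The main obstacle is this last bookkeeping --- matching each $\partial^{i}_{\cM,j}$ with the corresponding \'etale boundary map and keeping exact track of the powers of $|T'_{(j-1)}|$ and of the $q^{*}-1$ factors --- although it is considerably lighter than in the $j=2$ case, there being no exceptional $(3,2)$ bidegree and none of the de Rham--Witt input of Section~\ref{conseq} and of the appendix.
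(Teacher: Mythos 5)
Your overall strategy coincides with the paper's: parts (4)--(6) from Theorem~\ref{thm:motfin}(2) together with Milnor $K$-theory vanishing and the \'etale computations of Corollaries~\ref{cor:appl_UC} and~\ref{cor:etaleU_1}, then (1)--(3) from the localization sequence once one additional boundary map is seen to vanish. Two of your steps, though, are either stated incorrectly or left too vague to close the argument.

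First, the assertion that $K^M_n=0$ for a transcendence-degree-$\le 2$ field over $\F_q$ with $n\ge 4$ is stronger than what is available. What Lemma~\ref{lem:Milnor_K} proves is that, for a curve $X$ over a global field $k'$ of positive characteristic, $K^M_n(k'(X))$ is torsion for $n\ge 2+\gon(X)$ and zero for $n\ge 3+\gon(X)$; since $\gon(E)=2$, this gives only that $K^M_4(k(E))$ is \emph{torsion}, with vanishing starting at $n=5$. Fortunately ``torsion'' is all your argument needs: torsion terms in the coniveau filtration combined with ``finite modulo uniquely divisible'' from Theorem~\ref{thm:motfin}(2) give finiteness at $(4,4)$ and $(5,5)$, and torsion combined with uniquely divisible gives the vanishing $H^j_\cM(\cE_U,\Z(j))=0$ for $j\ge 6$. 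So replace the false vanishing claim by an appeal to Lemma~\ref{lem:Milnor_K} with $\gon(E)=2$.

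Second, and more substantially, you never isolate the single non-formal vanishing that lets one extract the individual orders of $\Ker\,\partial^i_{\cM,j}$ and $\Coker\,\partial^i_{\cM,j}$ from the long exact sequence, rather than merely their alternating product. The paper states this separately as Lemma~\ref{lem:22_zero}: the push-forward $H^2_\cM(\cE^U,\Z(j-1))\to H^4_\cM(\cE,\Z(j))$ is zero. Its proof is not a weight argument: one composes with $f_*$ to reduce to $H^0_\cM(C\setminus U,\Z(j-2))=0$, and then shows that $f_*\colon H^4_\cM(\cE,\Z(j))_\tors\to H^2_\cM(C,\Z(j-1))_\tors$ is an isomorphism by passing to \'etale cohomology and invoking the $\pi_1^\ab(\cE)\cong\pi_1^\ab(C)$ isomorphism of Lemma~\ref{lem:appl_pi1} --- the one place in this section where the elliptic-fibration structure genuinely enters. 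Your phrase ``the vanishing of the relevant boundary maps being a weight argument as in the proof of Lemma~\ref{lem:appl_coker}'' neither identifies which map must vanish nor supplies the mechanism that makes it vanish; you should state and prove this vanishing explicitly before the final bookkeeping, which is otherwise the same computation as the paper's.
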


\begin{thm}\label{thm:conseq6}
The following statements hold.
\begin{enumerate}
\item 
The group $K_2(E)_\divi$ 
is uniquely divisible and
$c_{2,2}$ induces
an isomorphism
$K_2(E)_\divi \cong H^2_\cM(E,\Z(2))_\divi$.
\item 
The kernel of the boundary map
$\partial: K_2(E)^\red \to \bigoplus_{\wp \in C_0}
G_1(\cE_\wp)$ 
is a finite group
of order 
$|L(h^1(C),-1)|^2$.
%wrong: $|L(h^2(\cE),0) L(h^1(C),-1)|$.
\end{enumerate}
\end{thm}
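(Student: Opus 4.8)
The proof will rest on the short exact sequence
\[
0 \longrightarrow H^4_\cM(E,\Z(3)) \xto{\ \beta\ } K_2(E) \xto{\ c_{2,2}\ } H^2_\cM(E,\Z(2)) \longrightarrow 0
\]
of Lemma~\ref{lem:c12} (recall that $c_{2,3}\circ\beta$ is multiplication by $2$ and that $H^4_\cM(E,\Z(3))$ is torsion), together with the commutative diagram~(\ref{eqn:CD_H43}), which under the splitting $G_1(\cE_\wp)\cong H^1_\cM(\cE_\wp,\Z(1))\oplus H^3_\cM(\cE_\wp,\Z(2))$ of Proposition~\ref{prop:Z} presents $\partial_2$ as ``block upper triangular'' with diagonal blocks $\partial^4_{\cM,3}$ and (up to sign) $\partial^2_{\cM,2}$. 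Everything will then be extracted from the results of Section~\ref{conseq} and from Theorem~\ref{thm:conseq5}, with one new input.

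That new input --- and the reason the statement is placed in this section --- is the assertion that \emph{$H^4_\cM(E,\Z(3))$ has no nonzero divisible subgroup}. I would deduce it from the $j=3$ torsion-coefficient comparisons, one primary part at a time. For $\ell\neq p$, Geisser--Levine (the case $i=j$, which uses Theorem~\ref{conj:BK}) gives $H^3_\cM(E,\Q_\ell/\Z_\ell(3))\cong H^3_\etale(E,\Q_\ell/\Z_\ell(3))$, and the Hochschild--Serre spectral sequence for $E\to\Spec k$ exhibits the latter as an extension of a finite group (a subquotient of $H^2(k,-)$, bounded in terms of the Mordell--Weil torsion of $E$) by $H^1_\etale(k,\Q_\ell/\Z_\ell(2))$, which has trivial divisible part because $K_3$ of the global field $k$ is finite; hence $H^3_\cM(E,\Q_\ell/\Z_\ell(3))$ is reduced, so in the universal coefficient sequence~(\ref{eqn:shortexact_motcoh}) the divisible summand $H^3_\cM(E,\Z(3))\otimes\Q_\ell/\Z_\ell$ must vanish and $H^4_\cM(E,\Z(3))\{\ell\}\cong H^3_\cM(E,\Q_\ell/\Z_\ell(3))$ is reduced. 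For $\ell=p$ one has $H^i_\cM(\cE_U,\Z/p^n(3))\cong H^{i-3}_\Zar(\cE_U,W_n\Omega^3_{\cE_U,\log})=0$ because $\cE_U$ is a surface, so passing to the limit over $U$ gives $H^3_\cM(E,\Q_p/\Z_p(3))=0$ and hence $H^4_\cM(E,\Z(3))\{p\}=0$. (The same conclusion is available from the analysis underlying Theorem~\ref{thm:conseq5}.)

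Granting this, claim~(1) follows exactly as in the proof of Theorem~\ref{thm:conseq1}(1): the restriction $\gamma$ of $c_{2,2}$ to $\Ker c_{2,3}$ has kernel and cokernel killed by $2$ (since $c_{2,3}\circ\beta=2$), and since $H^2_\cM(E,\Z(2))_\divi$ is uniquely divisible by Theorem~\ref{thm:conseq2}(1) while $\Ext^1_\Z$ of a $\Q$-vector space into a torsion group of bounded exponent vanishes, $\gamma$ restricts to an isomorphism $(\Ker c_{2,3})_\divi\xrightarrow{\ \cong\ }H^2_\cM(E,\Z(2))_\divi$. Because $H^4_\cM(E,\Z(3))$ is reduced, $c_{2,3}$ annihilates $K_2(E)_\divi$, so $K_2(E)_\divi=(\Ker c_{2,3})_\divi$; thus $K_2(E)_\divi$ is uniquely divisible and $c_{2,2}$ carries it isomorphically onto $H^2_\cM(E,\Z(2))_\divi$, which is claim~(1).

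For claim~(2), the fibre groups $G_1(\cE_\wp)$, $H^1_\cM(\cE_\wp,\Z(1))$, $H^3_\cM(\cE_\wp,\Z(2))$ are finite, hence reduced, so $\partial_2$ kills $K_2(E)_\divi$, the map $\partial$ of the statement is its descent to $K_2(E)^\cotors$, and by~(1) the displayed sequence descends to an exact sequence $0\to H^4_\cM(E,\Z(3))\to K_2(E)^\cotors\to H^2_\cM(E,\Z(2))^\cotors\to 0$. Feeding this into diagram~(\ref{eqn:CD_H43}) and applying the snake lemma yields an exact sequence
\[
0\to\Ker\partial^4_{\cM,3}\to\Ker\partial\to\Ker\partial^2_{\cM,2}\xrightarrow{\ \delta\ }\Coker\partial^4_{\cM,3}\to\Coker\partial\to\Coker\partial^2_{\cM,2}\to 0 ,
\]
in which $\Ker\partial^4_{\cM,3}$ and $\Ker\partial^2_{\cM,2}$ are finite, both of order $|L(h^1(C),-1)|$, by Theorem~\ref{thm:conseq5}(2) (with $i=4$, $j=3$) and Theorem~\ref{thm:conseq2}(3). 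Since $\Image\delta=\Ker\bigl(\Coker\partial^4_{\cM,3}\to\Coker\partial\bigr)$ and the map $\Coker\partial^4_{\cM,3}\to\Coker\partial=\Coker\partial_2$ is injective --- shown in the proof of Theorem~\ref{thm:conseq1}(2) by identifying $\Coker\partial^4_{\cM,3}$ with $\F_q^\times$ via Lemma~\ref{lem:coker_del43}, or alternatively by comparing orders using Theorems~\ref{thm:conseq1}(2) and~\ref{thm:conseq2}(3),(6) --- we get $\delta=0$, so $\Ker\partial$ is an extension of $\Ker\partial^2_{\cM,2}$ by $\Ker\partial^4_{\cM,3}$ and has order $|L(h^1(C),-1)|^2$. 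The only genuinely non-formal step in all of this is the assertion of the second paragraph that $H^4_\cM(E,\Z(3))$ is reduced; once that $j=3$ input is secured, the remainder is diagram chasing and order bookkeeping already prepared in Section~\ref{conseq}.
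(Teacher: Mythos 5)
Your overall strategy coincides with the paper's: reduce claim~(1) to the fact that $H^4_\cM(E,\Z(3))$ is reduced (so that $K_2(E)_\divi\subset\Ker c_{2,3}$), then recycle the divisible-part isomorphism for $\Ker c_{2,3}$ already established in the proof of Theorem~\ref{thm:conseq1}; and for claim~(2), descend the exact sequence of Lemma~\ref{lem:c12} modulo divisible parts, feed it into diagram~(\ref{eqn:CD_H43}), and book-keep the orders. That part of your argument, including the snake-lemma chase and the identification of $\Ker\partial^4_{\cM,3}$ and $\Ker\partial^2_{\cM,2}$ with groups of order $|L(h^1(C),-1)|$, matches what the paper does (the paper feeds in exact sequence~(\ref{eqn:exactseq_43}) together with Theorems~\ref{thm:conseq1} and~\ref{thm:conseq2} instead of citing Theorem~\ref{thm:conseq5}(2), but the computation is the same).

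The one place where you take a genuinely different route --- and where I think you run into trouble --- is the direct torsion-coefficient argument for $(H^4_\cM(E,\Z(3)))_\divi=0$ at primes $\ell\neq p$. Two issues. First, the extension coming from Hochschild--Serre for $E\to\Spec k$ runs the other way: with $E_\infty^{0,3}=E_\infty^{3,0}=0$ one gets $0\to E_\infty^{2,1}\to H^3_\etale(E,\Q_\ell/\Z_\ell(3))\to E_\infty^{1,2}\to 0$, so the piece landing inside $H^1_\etale(k,\Q_\ell/\Z_\ell(2))$ is the \emph{quotient}, not the sub, and the potentially problematic term is $E_\infty^{2,1}$, a quotient of $E_2^{2,1}=H^2(k,E[\ell^\infty](2))$. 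Second, it is not obvious (and you do not justify) that this $E_\infty^{2,1}$ is finite: $H^2(k,E[\ell^\infty](2))=\varinjlim_n H^2(k,E[\ell^n](2))$ has finite layers but the colimit need not be finite, and a quotient of a reduced group need not be reduced, so simply invoking ``bounded in terms of Mordell--Weil torsion'' does not close the gap. The paper sidesteps all of this: it invokes the $j\ge3$ analogue of Corollary~\ref{cor:relative_cv} (stated in the proof of Theorem~\ref{thm:conseq5}) to identify $H^4_\cM(E,\Z(3))_\divi$ with $H^4_\cM(\cE,\Z(3))_\divi$, which vanishes because $H^4_\cM(\cE,\Z(3))$ is \emph{finite} by Theorem~\ref{thm:motfin}(2)(a). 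You mention this as an available alternative in your parenthetical remark, and it is in fact the route you should take; the $p$-primary part of your argument (vanishing of $W_n\Omega^3_{\cE_U,\log}$ for surfaces) is fine and agrees with what the paper uses.
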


\subsection{Lemmas}
\begin{lem}\label{lem:Milnor_K}
Let $X$ be a smooth projective geometrically 
connected curve over a global field $k'$. 
Let $k'(X)$ denote the function field of $X$.
Then, the Milnor $K$-group $K_n^M(k'(X))$
is torsion for $n\ge 2 + \gon(X)$, and
is of exponent two (\resp zero) for
$n \ge 3 + \gon(X)$ if $\chara(k')=0$ (\resp $\chara(k')>0$).
Here, $\gon(X)$ denotes the gonality of $X$, i.e.,
the minimal degree of morphisms from $X$ to $\mathbb{P}^1_{k'}$.
\end{lem}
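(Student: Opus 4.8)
The plan is to reduce the statement to Bass and Tate's description of the Milnor ring of a global field, by an induction on $d=\gon(X)$ built on their localization sequence for the Milnor $K$-theory of a one-variable rational function field.

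Recall the two inputs. For a global field $F$ one has (Bass--Tate): $K_n^M(F)$ is torsion for all $n\ge 2$, and for $n\ge 3$ it has exponent dividing $2$ when $\chara F=0$ and vanishes when $\chara F>0$; since every finite extension of a global field is again a global field, the same holds for all finite extensions of $k'$. The second input is the split exact sequence, valid for any field $F$ and any $n$,
\[
0\to K_n^M(F)\to K_n^M(F(u))\xrightarrow{\ \oplus_\pi\partial_\pi\ }\bigoplus_{\pi}K_{n-1}^M\bigl(F[u]/(\pi)\bigr)\to 0,
\]
where $\pi$ runs over the monic irreducible polynomials in $F[u]$ and each residue field $F[u]/(\pi)$ is finite over $F$. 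The base case $d=1$ is then immediate: $k'(X)=k'(u)$, every residue field $k'[u]/(\pi)$ is a global field, and the sequence gives that $K_n^M(k'(u))$ is torsion for $n\ge 3$ and of exponent dividing $2$ (resp.\ zero) for $n\ge 4$.

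For the inductive step, choose $t\in k'(X)$ realising the gonality, so that $k'(X)/k'(t)$ is finite of degree $d$ (in characteristic zero it is simple by the primitive element theorem; in characteristic $p$ one filters it into prime-degree simple steps, with the obvious cosmetic changes, the stronger vanishing available for global function fields making that case only easier). Writing $k'(X)=k'(t)[u]/(g)$ with $g$ monic irreducible of degree $d$, the field $k'(X)$ occurs as a residue field in the sequence above with $F=k'(t)$, so $K_{n-1}^M(k'(X))$ is a direct summand of $K_n^M\bigl(k'(t)(u)\bigr)$. One then studies this ambient group by the same sequence in the variable $u$ over $F=k'$ (equivalently, via the localization sequences on $\A^1_{k'}\times_{\Spec\,\F_q}\A^1_{k'}$): the fields feeding the summand attached to $g$ are finite extensions of $k'(t)$, hence function fields of curves over $k'$ whose gonality is bounded in terms of $\deg g=d$, so the induction hypothesis applies to them with the Milnor degree lowered by one. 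Tracking the two residue maps --- each of which both lowers the Milnor degree by one and passes to residue fields --- one finds that $K_n^M(k'(X))$ is torsion once $n\ge d+2$ and of exponent dividing $2$ (resp.\ zero) once $n\ge d+3$.

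The main obstacle is precisely this second analysis. The Bass--Tate decomposition of the two-variable rational function field $k'(t,u)$ involves residue fields of arbitrarily large gonality, so one cannot bound the whole group inductively and must genuinely isolate the contribution of the fixed polynomial $g$, arguing that only residue fields of gonality $\le d$ intervene in it. An alternative is to replace the step ``$K_{n-1}^M(k'(X))$ is a direct summand of $K_n^M(k'(t,u))$'' by a transfer argument along the degree-$d$ morphism $X\to\bP^1_{k'}$ together with the identification $H^0(\bP^1_{k'},\cK^M_n)=K_n^M(k')$ and the vanishing of the higher residue contributions; the subtlety there is that the transfer only yields exponent $2d$ a priori, so one must feed back the finer Bass--Tate structure (every high-degree Milnor symbol over a global field arises, through iterated residues, from the real places) in order to bring the exponent down to $2$.
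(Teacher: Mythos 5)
Your proposed induction on $\gon(X)$ does not close, and you say so yourself: once you embed $k'(X)$ as a residue field of $k'(t)(u)$ and try to unwind $K_n^M(k'(t,u))$ by a second localization sequence, you meet residue fields that are arbitrary finite extensions of $k'(t)$, whose gonality over $k'$ is not bounded by $d$. The sentence ``the fields feeding the summand attached to $g$ are finite extensions of $k'(t)$, hence function fields of curves over $k'$ whose gonality is bounded in terms of $\deg g=d$'' is precisely the step that fails, and your closing paragraph concedes this. The transfer variant is likewise left unfinished: $\mathrm{Nm}\circ\mathrm{res}$ only gives multiplication by $d$ on the base, so a priori you lose a factor of $d$ on the exponent, and the proposed ``feed back the finer Bass--Tate structure'' is not carried out. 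So as written this is not a proof.

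The paper takes a different and much shorter route that sidesteps the issue completely. Instead of trying to express $K_n^M(k'(X))$ homologically in terms of rational function fields, it applies the Bass--Tate generation lemma (\cite[Chapter~II, (2.1)]{Bass-Tate}) directly to the degree-$d$ extension $k'(X)/K$ with $K=k'(t)$: choosing a flag $K=V_1\subset V_2\subset\cdots\subset V_d=k'(X)$ of $K$-subspaces and manipulating Steinberg relations in $K_2^M$ along the flag, one shows that for $n\ge d-1$ the group $K_n^M(k'(X))$ is generated by symbols of the shape $\{v_d,v_{d-1},\ldots,v_2\}\cdot K_{n-d+1}^M(K)$ with $v_i\in V_i\setminus\{0\}$. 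This is a purely algebraic generation statement --- no induction on gonality, no analysis of which residue fields appear. The localization (Milnor) sequence is then used only once, for the one-variable field $K=k'(t)$, where all residue fields are global fields; this gives that $K_m^M(K)$ is torsion for $m\ge 3$ and of exponent two (resp.\ zero) for $m\ge 4$. Substituting $m=n-d+1$ yields the asserted bounds $n\ge d+2$ and $n\ge d+3$. Your base case $d=1$ coincides with this analysis of $K_m^M(k'(t))$, but the inductive machinery you build on top of it is the wrong tool; the flag-generation lemma is the missing ingredient.
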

\begin{proof}
The field $k'(X)$ is an extension of degree $\gon(X)$
of a subfield $K$ of the form $K=k'(t)$.
From the split exact sequence
$$
0 \to K_n^M(k') \to K_n^M(K) \to 
\bigoplus_{P} K_{n-1}^M(k'[t]/P) \to 0
$$
in \cite[Theorem 2.3, p.~325]{Milnor},
where $P$ runs over the irreducible monic polynomials
in $k'[t]$, and using \cite[Chapter II, (2.1), p.~396]{Bass-Tate},
we observe that $K_n^M(K)$ is torsion for $n\ge 3$ and
is of exponent two (\resp zero) for
$n \ge 4$ if $\chara(k')=0$ (\resp $\chara(k')>0$).

Next, consider a flag $K=V_1 \subset V_2 \subset \cdots 
\subset V_{\gon(X)}=k'(X)$
of $K$-subspaces of $k'(X)$ with $\dim_K V_i =i$.
For each $i$, we set $V^*_i = V_i \setminus \{0\}$.
Suppose $i \ge 2$ and consider two elements 
$\alpha,\beta \in V_i \setminus V_{i-1}$. 
Then, there exist $a,b \in K^\times$ such that
$\gamma = a \alpha + b \beta \in V_{i-1}$. 
If $\gamma =0$ (\resp $\gamma \neq 0$), 
then $\{a\alpha, b \beta \}=0$ 
(\resp $\{a\alpha /\gamma, b \beta /\gamma \}=0$) 
in $K_2^M(k'(X))$.
Expanding this equality, we obtain 
an expression for $\{\alpha, \beta\}$.
We observe that 
$\{\beta,\gamma \}$ belongs to the
subgroup of $K_2^M(k'(X))$ generated by $\{V^*_i, V^*_{i-1}\}$, 
hence for $n \ge \gon(X) -1$, 
the group $K_n^M(k'(X))$ is
generated by the image of
$\{V^*_{\gon(X)},\ldots,V^*_2 \} \times K^M_{n-\gon(X)+1}(K)$.
This proves the claim.
\end{proof}

\begin{lem}\label{lem:22_zero}
%Suppose that Conjecture \ref{conj:BK} is true for $j$. 
The push-forward homomorphism
$H^2_\cM(\cE^U,\Z(j-1))
\to H^4_\cM(\cE,\Z(j))$ is zero.
\end{lem}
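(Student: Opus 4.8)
The plan is to reduce the statement to an injectivity property of the proper push‑forward of $f\colon\cE\to C$, which will then follow from the existence of the section $\iota\colon C\to\cE$ together with a cardinality count supplied by Corollary~\ref{cor:appl_UC} and Lemma~\ref{lem:appl_pi1}. Throughout, recall $j\ge 3$ and that $f$ is non‑smooth.

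First I would observe that $\cE^U$ has dimension $\le 1$ while the weight $j-1$ is $\ge 2$, so by Lemma~\ref{lem:Quillen_Harder} the group $H^2_\cM(\cE^U,\Z(j-1))$ is torsion; hence the image of the push‑forward $\nu\colon H^2_\cM(\cE^U,\Z(j-1))\to H^4_\cM(\cE,\Z(j))$ along the codimension‑one closed immersion $\cE^U\inj\cE$ lies in $H^4_\cM(\cE,\Z(j))_\tors$. Next I would compose $\nu$ with the proper push‑forward $f_*\colon H^4_\cM(\cE,\Z(j))\to H^2_\cM(C,\Z(j-1))$. Since the structure morphism of $\cE^U$ over $C$ factors as $\cE^U\xto{f^U}C\setminus U\inj C$, functoriality of proper push‑forward identifies $f_*\circ\nu$ with $(C\setminus U\inj C)_*\circ(f^U)_*$; but $C\setminus U$ is a finite union of spectra of finite fields and $j-2\ge 1$, so $H^0_\cM(C\setminus U,\Z(j-2))=0$ (Quillen~\cite{Quillen}), whence $(f^U)_*=0$ and therefore $f_*\circ\nu=0$. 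Thus $\Image\nu\subseteq\Ker f_*\cap H^4_\cM(\cE,\Z(j))_\tors$, and it remains to prove that $f_*$ is injective on $H^4_\cM(\cE,\Z(j))_\tors$.

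For this, the identity $f\circ\iota=\id_C$ gives $f_*\circ\iota_*=\id$ on $H^2_\cM(C,\Z(j-1))$, so $f_*$ is surjective. By Theorem~\ref{thm:motfin}(2) one has $H^4_\cM(\cE,\Z(j))=H^4_\cM(\cE,\Z(j))_\divi\oplus H^4_\cM(\cE,\Z(j))_\tors$ with the divisible summand uniquely divisible (hence a $\Q$‑vector space) and the torsion summand finite; a $\Q$‑vector space admits no nonzero homomorphism to the torsion group $H^2_\cM(C,\Z(j-1))$, so $f_*$ restricts to a surjection $H^4_\cM(\cE,\Z(j))_\tors\surj H^2_\cM(C,\Z(j-1))$, and both groups are finite (the left by Theorem~\ref{thm:motfin}, the right by the computation below), so this is an isomorphism once the two orders are shown to agree. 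For the left‑hand order, Theorem~\ref{thm:motfin}(2)(b) gives $H^4_\cM(\cE,\Z(j))_\tors\cong\bigoplus_{\ell\ne p}H^3_\etale(\cE,\Q_\ell/\Z_\ell(j))$, which by Corollary~\ref{cor:appl_UC}(6) has order $|L(h^1(C),2-j)|^{(p')}$. For the right‑hand order, the exact sequence~(\ref{eqn:shortexact_motcoh}) applied to $C$ in bidegree $(2,j-1)$, together with Lemma~\ref{lem:Quillen_Harder} (so that $H^1_\cM(C,\Z(j-1))$ and $H^2_\cM(C,\Z(j-1))$ are torsion), yields $H^2_\cM(C,\Z(j-1))\cong H^1_\cM(C,\Q/\Z(j-1))$; the $p$‑part vanishes because $H^i_\cM(C,\Z/p^n(j-1))\cong H^{i-(j-1)}_\Zar(C,W_n\Omega^{j-1}_{C,\log})=0$, as $\Omega^{j-1}_C=0$ on the curve $C$ (\cite{Ge-Le1}), and for $\ell\ne p$ the theorem of Geisser--Levine~\cite{Ge-Le2} gives $H^1_\cM(C,\Q_\ell/\Z_\ell(j-1))\cong H^1_\etale(C,\Q_\ell/\Z_\ell(j-1))$, which by Lemma~\ref{lem:appl_pi1}(1) is isomorphic to $H^1_\etale(\cE,\Q_\ell/\Z_\ell(j-1))$; hence by Corollary~\ref{cor:appl_UC}(4) the order is again $|L(h^1(C),2-j)|^{(p')}$. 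The two orders coincide, so $f_*$ is injective on the torsion, $\Image\nu=0$, and the lemma follows. The step I expect to demand the most care is the last one — matching the two cardinalities — since it requires correctly threading the Tate twists through Lemma~\ref{lem:appl_pi1} and the $L$‑value identifications of Corollary~\ref{cor:appl_UC}; the two preliminary reductions (to the torsion subgroup, and to injectivity of $f_*$) are essentially formal, the only genuinely geometric inputs being the section and the $\pi_1$‑isomorphism of Lemma~\ref{lem:appl_pi1}.
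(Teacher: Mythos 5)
Your argument is correct, and it shares with the paper the opening reduction — factor $\nu$ through $f_*$, use $H^0_\cM(C\setminus U,\Z(j-2))=0$ to kill the composite, invoke Lemma~\ref{lem:Quillen_Harder} to confine the image to $H^4_\cM(\cE,\Z(j))_\tors$, and reduce to injectivity of $f_*$ there — but it establishes that injectivity by a different route. The paper proves outright that $f_{*,\tors}$ is an isomorphism by a three-row commutative diagram identifying both torsion groups with \'etale cohomology (via the Bockstein of $0\to\Z\to\Q\to\Q/\Z\to 0$, the cycle-class identification from Theorem~\ref{thm:motfin}(2)(b), and a weight argument), with Lemma~\ref{lem:appl_pi1} giving the bottom row; this relies on $f_*$ being compatible with all the vertical identifications. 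You instead obtain surjectivity of $f_{*,\tors}$ formally — the section $\iota$ makes $f_*$ split surjective, and a uniquely divisible group has no nonzero map to a torsion group — and then match cardinalities, showing both orders equal $|L(h^1(C),2-j)|^{(p')}$: on the $\cE$-side via Theorem~\ref{thm:motfin}(2)(b) and Corollary~\ref{cor:appl_UC}(6), and on the $C$-side via~(\ref{eqn:shortexact_motcoh}), Geisser--Levine, the vanishing of $\Omega^{j-1}_C$, Lemma~\ref{lem:appl_pi1}, and Corollary~\ref{cor:appl_UC}(4). Your route sidesteps the compatibility checks hidden in the paper's diagram at the cost of explicit $L$-value bookkeeping on both sides; the essential inputs (Lemma~\ref{lem:appl_pi1} and the \'etale-cohomology orders of Corollary~\ref{cor:appl_UC}) are the same. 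One point worth flagging in both versions: what is actually used is the isomorphism $H^1_\etale(C,\Q_\ell/\Z_\ell(m))\cong H^1_\etale(\cE,\Q_\ell/\Z_\ell(m))$ for \emph{twisted} coefficients, which rests on the geometric form of the $\pi_1^{\ab}$-isomorphism (delivered by the proof of Lemma~\ref{lem:appl_pi1} and the subsequent Remark), not merely the arithmetic statement of part~(1) as literally phrased.
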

\begin{proof}
Consider the composition
$$
H^2_\cM(\cE^U,\Z(j-1))
\to H^4_\cM(\cE,\Z(j)) \xto{f_*}
H^2_\cM(C,\Z(j-1))
$$ of push-forwards. 
This is the zero map since this factors through 
the group $H^0_\cM(C\setminus U,\Z(j-2))$, 
which is zero by the theorem presented by 
Geisser and Levine
 \cite[Corollary 1.2, p.~56]{Ge-Le2}. 
By Lemma~\ref{lem:Quillen_Harder}, 
the group $H^2_\cM(\cE^U,\Z(j-1))$ is torsion,
hence it suffices to show that
the homomorphism $f_{*,\tors}:
H^4_\cM(\cE,\Z(j))_\tors \to H^2_\cM(C,\Z(j-1))_\tors$ 
induced by $f_*$ is an isomorphism.
Next, consider the commutative diagram
$$
\begin{CD}
H^4_\cM(\cE,\Z(j))_\tors @>{f_{*,\tors}}>>
H^2_\cM(C,\Z(j-1))_\tors \\
@A{\cong}AA @A{\cong}AA \\
H^3_\cM(\cE,\Q/\Z(j)) @>>> 
H^1_\cM(C,\Q/\Z(j-1)) \\
@V{\cong}VV @V{\cong}VV \\
{\displaystyle \bigoplus_{\ell \neq p}}
H^3_\etale(\cEbar,\Q_\ell/\Z_\ell(j))^{\GFq} 
@>>>
{\displaystyle \bigoplus_{\ell \neq p}}
H^1_\etale(\Cbar,\Q_\ell/\Z_\ell(j-1))^{\GFq}.
\end{CD}
$$
Here, the horizontal arrows are push-forward maps,
the upper vertical arrows are boundary maps
obtained from the exact sequence
$0 \to \Z \to \Q \to \Q/\Z \to 0$, and the
lower vertical arrows are those obtained from
Theorem~\ref{thm:motfin}(2)(b)
(the same argument also applies to curves)
further using the weight argument.
The homomorphism at the bottom
is an isomorphism by Lemma~\ref{lem:appl_pi1},
hence $f_{*,\tors}$ is an isomorphism, as desired.
\end{proof}

\subsection{Proofs of theorems}
\begin{proof}[Proof of Theorem~\ref{thm:conseq5}]
%Let $j \ge 3$ and assume Conjecture~\ref{conj:BK} holds for $j$.
From Theorem~\ref{thm:motfin}(2) and Lemma~\ref{lem:Milnor_K},
claims (4) and (5) follow. 
Claim (6)
follows from Theorem~\ref{thm:motfin}(2) and
Corollary~\ref{cor:etaleU_1}.
Using an approach similar to that of the proof of
Corollary~\ref{cor:relative_cv}, we are able to show that
the pull-back map induces an isomorphism 
$H^i_\cM(\cE,\Z(j))_\divi \cong  H^i_\cM(E,\Z(j))_\divi$
for all $i \in \Z$, and that the localization sequence
induces a long exact sequence
\begin{equation}\label{eqn:last_loc_seq}
\begin{array}{rl}
\cdots & \to {\displaystyle \bigoplus_{\wp \in C_0}}
H^{i-2}_\cM(\cE_\wp,\Z(j-1)) \\
& \to H^i_\cM(\cE,\Z(j))_\tors \to  H^i_\cM(E,\Z(j))_\tors
\to \cdots.
\end{array}
\end{equation}
Using the theorem introduced by 
Rost and Voevodsky (i.e., Theorem~\ref{conj:BK} above), 
we observe that for any $\wp \in C_0$,
even if $\cE_\wp$ is singular,
the group $H^i_\cM(\cE_\wp,\Z(j-1))$
is finite for all $i$, is zero for $i \le 0$ or
$i \ge 4$, and is cyclic of order $q_\wp^{j-1}-1$,
where $q_\wp=|\kappa(\wp)|$ is the cardinality
of the residue field at $\wp$, 
for $i=1$.
From the exact sequence (\ref{eqn:last_loc_seq}) and 
Lemma~\ref{lem:22_zero}, we can deduce 
claims (1), (2), and (3) from claims 
(4), (5), and (6), thus completing the proof.
\end{proof}

\begin{proof}[Proof of Theorem~\ref{thm:conseq6}]
Let $U \neq C$. % and suppose that Conjecture \ref{conj:BK} is true for $j=3$. 
Then, by Lemmas~\ref{lem:coker_del43}
and \ref{lem:22_zero}, 
the following sequence is exact:
$$
0 \to H^4_\cM(\cE,\Z(3)) \to
H^4_\cM(\cE_U,\Z(3)) \xto{\partial}
H^3_\cM(\cE^U,\Z(2)) \xto{\alpha} \F_q^\times 
\to 1.
$$
Here,  $\partial$ and $\alpha$ are as in Lemma~\ref{lem:coker_del43},
and the second map is the pull-back.
By taking the inductive limit,
we obtain an exact sequence
\begin{equation}\label{eqn:exactseq_43}
\begin{array}{rl}
0 & \to H^4_\cM(\cE,\Z(3)) \to
H^4_\cM(E,\Z(3)) \\ 
& \xto{\partial^4_{\cM,3}}
\bigoplus_{\wp \in C_0} 
H^3_\cM(\cE_\wp,\Z(2)) \to \F_q^\times 
\to 1.
\end{array}
\end{equation}

By Theorem~\ref{thm:motfin}
and Corollary~\ref{cor:relative_cv}, 
the group $H^4_\cM(E,\Z(3))_\divi$ is zero,
hence using Lemma~\ref{lem:c12}, 
we obtain $K_2(E)_\divi \subset \Ker\, c_{2,3}$.
From the proof of Theorem~\ref{thm:conseq1},
we saw that map $c_{2,2}$ induces
an isomorphism
$(\Ker\, c_{2,3})_\divi \xto{\cong} H^2_\cM(E,\Z(2))_\divi$,
hence %the map 
$c_{2,2}$ induces an isomorphism
$K_2(E)_\divi \cong H^2_\cM(E,\Z(2))_\divi$, which proves
claim (1). Claim (2) follows from
Theorems~\ref{thm:conseq1} and \ref{thm:conseq2},
the commutative diagram (\ref{eqn:CD_H43}),
and exact sequence (\ref{eqn:exactseq_43}).
This completes the proof.
\end{proof}

%%%%%%%%%%%%%%%%%%%%%%%%%%%%%%%%%%%%%%%%%%%%%
%%%%%%%%%%% Appendix %%%%%%%%%%%%%%%%%%%%%%%%
%%%%%%%%%%%%%%%%%%%%%%%%%%%%%%%%%%%%%%%%%%%%%
\appendix
\section{A proposition on the $p$-part}\label{Appendix p-part}
The aim of this Appendix is to provide a proof of 
Proposition~\ref{prop:dRW_main} below, 
which we used in the proof of Theorem~\ref{thm:motfin}.
Nothing in this Appendix is new except for 
the definition of the Frobenius map on the inductive 
limit (not on the inverse limit) given in Section~\ref{mod. Frob}.  
A similar presentation has already been provided 
in the work of 
Milne \cite{Milne1} and Nygaard \cite{Nygaard}. 
\begin{prop}\label{prop:dRW_main}
Let $X$ be a smooth projective geometrically
connected surface over a finite field $\F_q$ of cardinal $q$ 
of characteristic $p$.
Let $\dRWlog{n}{i}$ denote the
logarithmic de Rham-Witt sheaf (\cf \cite[I, 5.7, p.~596]{Illusie}).
Then, the inductive limit 
$\varinjlim_n H^{0}_\etale(X,\dRWlog{n}{2})$ with 
respect to multiplication-by-$p$ is finite 
of order 
$|\Hom(\Pic^o_{X/\F_q},\Gm)|_p^{-1} 
\cdot |L(h^2(X),0)|_p^{-1}$.
Here, 
$\Hom(\Pic^o_{X/\F_q},\Gm)$ 
denotes the set of homomorphisms 
$\Pic^o_{X/\F_q}\to \Gm$ of $\F_q$-group schemes,
and $L(h^2(X),s)$ is the (Hasse-Weil) $L$-function
of $h^2(X)$. 
% and $|\ |_p$ denotes the $p$-adic 
% valuation normalized so that $|p|_p =p^{-1}$.
\end{prop}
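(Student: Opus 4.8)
The plan is to follow the strategy of Milne \cite{Milne1} and Nygaard \cite{Nygaard}, reducing the computation to the kernel of a ``$1-F$'' operator via a fundamental exact sequence of \'etale sheaves; the one new ingredient is the Frobenius on the inductive limit constructed in Section~\ref{mod. Frob}. Write $\nu_n=\dRWlog{n}{2}$, let $\underline p\colon\nu_n\to\nu_{n+1}$ be the multiplication-by-$p$ transition maps, and set $\nu_\infty=\varinjlim_n\nu_n$. Since $X$ is Noetherian, $H^0_\etale$ commutes with this filtered colimit, so the group in question is $H^0_\etale(X,\nu_\infty)$. As $2=\dim X$ is the top degree, every $2$-form on $X$ is closed, and the Cartier operator identifies $\nu_n$ with $\Ker(1-F)$ on $\dRW{n}{2}$; passing to the inductive limit along suitable transition maps (Section~\ref{mod. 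Frob}) yields a short exact sequence of \'etale sheaves
\[
0\to\nu_\infty\to\mathcal{W}\xrightarrow{\,1-F_\infty\,}\mathcal{W}\to 0,\qquad \mathcal{W}:=\varinjlim_n\dRW{n}{2},
\]
generalizing the Artin--Schreier--Witt sequence $0\to\Q_p/\Z_p\to\varinjlim_nW_n\cO_X\to\varinjlim_nW_n\cO_X\to 0$ used for $\pi_1$. Taking cohomology gives $H^0_\etale(X,\nu_\infty)=\Ker\bigl(1-F_\infty\colon M\to M\bigr)$, where $M:=H^0_\etale(X,\mathcal{W})=\varinjlim_nH^0_\etale(X,\dRW{n}{2})$.

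The next step is to read off $M$, with its $F_\infty$-action, from crystalline cohomology. By the slope spectral sequence $E_1^{s,t}=H^t_\etale(X,W\Omega^s_X)\Rightarrow H^{s+t}_\cris(X/W)$ and the structure theory of Illusie--Raynaud and Mazur for the $F$-crystal $H^2_\cris(X/W)$, the free part of $H^0_\etale(X,W\Omega^2_X)$ has rank the multiplicity of slope $2$ in $H^2_\cris(X/W)\otimes\Q$, and the de Rham--Witt Frobenius acts there as $q^{-2}$ times the ($q$-power) geometric Frobenius. For a $W(\F_q)$-linear endomorphism $\Phi$ of a finite free $W(\F_q)$-module $N$ with $1-\Phi$ injective after inverting $p$, one has $\#\Coker(1-\Phi\mid N)=|\det_{\Q_p}(1-\Phi\mid N\otimes\Q_p)|_p^{-1}$; here injectivity is automatic because $H^2_\cris(X/W)\otimes\Q$ is pure of weight $2$. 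Applying this to the slope-$2$ part, and using Poincar\'e duality on $H^2_\cris(X/W)$ (which pairs the slope-$2$ eigenvalue $\beta$ with the unit root $q^2/\beta$) together with the fact that positive-slope eigenvalues of $\Frob$ on $H^2$ contribute $p$-adic units to $\det(1-\Frob)$, one gets that the ``free'' part of $\Ker(1-F_\infty\mid M)$ has order exactly $|L(h^2(X),0)|_p^{-1}=|\det(1-\Frob\mid H^2_\cris(X/W)\otimes\Q)|_p^{-1}$.

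The remaining factor $|\Hom(\Pic^o_{X/\F_q},\Gm)|_p^{-1}$ must come from the torsion in the slope spectral sequence: the discrepancy between $H^0_\etale(X,\dRW{n}{2})$ and $H^0_\etale(X,W\Omega^2_X)/p^n$, equivalently the $p^n$-torsion of $H^1_\etale(X,W\Omega^2_X)$ entering the universal-coefficient sequences for the de Rham--Witt complex. By the logarithmic de Rham--Witt duality of Milne ($\dim X=2$), this is dual to the relevant $V$-torsion of the Hodge--Witt cohomology of $\cO_X$, which the theory of the Artin--Mazur (formal Picard/Brauer) group, together with Lemma~\ref{lem:appl_pi1}-type arguments, identifies with the $\F_q$-linear dual of the maximal infinitesimal multiplicative-type subgroup scheme of $\Pic^o_{X/\F_q}$ --- that is, with the finite $p$-group $\Hom(\Pic^o_{X/\F_q},\Gm)$, which is nonzero precisely when $\Pic^o_{X/\F_q}$ is non-reduced. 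On this finite piece $1-F_\infty$ is bijective, so by the snake lemma (applied with $\Q_p/\Z_p$-coefficients to $0\to N_{\mathrm{free}}\to N\to N_{\mathrm{tors}}\to0$) it enters $\Ker(1-F_\infty\mid M)$ with its full order, multiplying the factor found above.

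The step I expect to be the main obstacle is this last one: one must run the slope spectral sequence of $X$ integrally rather than only modulo torsion --- controlling the $V$-torsion and the ``domino'' phenomena of Illusie--Raynaud in $H^\bullet_\etale(X,W\Omega^\bullet_X)$, which can be genuinely nontrivial when $X$ is not Hodge-symmetric, not ordinary, or supersingular --- and then match the resulting torsion contribution, with the correct multiplicity, against $\Hom(\Pic^o_{X/\F_q},\Gm)$. For this I would follow Milne \cite{Milne1} and Nygaard \cite{Nygaard} closely, using the compatibility of $F_\infty$ with the slope and weight filtrations established in Section~\ref{mod. Frob} and the fact that only the finitely many bidegrees $(s,t)$ with $s+t\le 4$ matter for a surface.
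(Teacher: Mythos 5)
Your outline has the right shape --- pass to the inductive limit of \'etale sheaves, obtain a kernel of a $1-F$ operator, and feed the answer through crystalline cohomology and Dieudonn\'e theory --- but there are two genuine gaps where your argument as written would not close.

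\textbf{The inductive/projective limit conflation.} After reducing to $\Ker(1-F_\infty\mid M)$ with $M=\varinjlim_n H^0_\etale(X,\dRW{n}{2})$, you try to ``read off $M$ from crystalline cohomology'' by invoking the slope spectral sequence, which computes the \emph{projective} limit $H^0(X,W\Omega^2_X)=\varprojlim_n H^0(X,\dRW{n}{2})$, not the inductive limit $M$. Your sentence about ``the free part of $H^0_\etale(X,W\Omega^2_X)$'' is talking about the wrong object, and the later acknowledgment of ``the discrepancy between $H^0_\etale(X,\dRW{n}{2})$ and $H^0_\etale(X,W\Omega^2_X)/p^n$'' does not resolve it, because $\dRW{n}{2}$ is $W\Omega^2_X$ modulo the Illusie filtration $\mathrm{Fil}^n=V^n W\Omega^2_X+dV^n W\Omega^1_X$, not modulo $p^n$. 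The paper sidesteps this entirely by using Ekedahl--Illusie duality (Lemma~\ref{lem:A5} and isomorphism (\ref{eqn:dRW_isom})): it identifies $H^0(X,\dRCW{\delta})$ with the Pontryagin dual of a \emph{projective} limit $\wt{H}^\delta(X,W\cO_X)$ built out of the low-degree Hodge--Witt groups, so that the kernel of $1-F'$ becomes the Pontryagin dual of a cokernel on something the slope spectral sequence does compute. Without this duality, your ``$M$ and its $F_\infty$-action'' is never actually determined.

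\textbf{The snake-lemma step is self-contradictory.} You assert ``on this finite piece $1-F_\infty$ is bijective, so by the snake lemma ... it enters $\Ker(1-F_\infty\mid M)$ with its full order.'' If $1-F_\infty$ were bijective on the finite piece $N_\tors$, then $\Ker$ and $\Coker$ of $1-F_\infty$ on $N_\tors$ are both zero, and the snake lemma gives $\Ker(1-F_\infty\mid M)\cong\Ker(1-F_\infty\mid N_{\mathrm{free}})$ with no contribution from the torsion; this is the opposite of what you need. The correct statement is that on the finite $\sigma$-module $T$ (the $V$-torsion of $H^\delta(X,W\cO_X)$), the endomorphism $1-\sigma$ has kernel and cokernel both of order $|T^\sigma|$, and the cokernel on the free (slope-zero) part has order $|L(h^\delta(X),0)|_p^{-1}$; this is exactly Proposition~\ref{prop:dRW_sub}. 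The final identification $|T^\sigma|=|\Hom(\Pic^o_{X/\F_q},\Gm)|$ comes not from a ``maximal infinitesimal multiplicative-type subgroup'' heuristic but from the Dieudonn\'e-module description $T\cong\Hom_{W}(M(\Pic^o_{X/\F_q}/\Pic^o_{X/\F_q,\red}),K/W)$ in Illusie II, Remarque 6.4, followed by the standard duality between $T_\sigma$ and $\Hom(\Pic^o,\Gm)$. (Your invocation of Lemma~\ref{lem:appl_pi1}, which is specific to elliptic surfaces, is out of place for the general $X$ of this proposition.)
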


\subsection{The de Rham-Witt complex}
In this Appendix, let $k$ be a perfect field 
of characteristic $p$.
Let $X$ be a scheme of dimension $\delta$
that is proper over $\Spec\, k$.
For $i,n \in \Z$, let $\dRW{n}{\bullet}$ 
denote the de Rham-Witt complex 
(\cf \cite[I, 1.12, p.~548]{Illusie}) of the ringed topos 
of schemes over $X$ with Zariski topology.
We let $R :\dRW{n}{i} \to \dRW{n-1}{i}$,
$F :\dRW{n}{i} \to \dRW{n-1}{i}$,
and $V :\dRW{n}{i} \to \dRW{n+1}{i}$
denote the restriction, the Frobenius, and
the Verschiebung, respectively.
For each $i \in \Z$, the sheaf
$\dRW{n}{i}$ has a canonical structure of coherent 
$W_n \cO_X$-module,
which enables us to regard $\dRW{n}{i}$ as
an \'{e}tale sheaf. 
Therefore, in this Appendix, 
we focus on the category of \'{e}tale sheaves
on schemes over $X$.
\subsection{Logarithmic de Rham-Witt sheaves}
For $n \in \Z$, let 
$\dRWlog{n}{i} \subset \dRW{n}{i}$ denote the
logarithmic de Rham-Witt sheaf (\cf \cite[I, 5.7, p~.596]{Illusie}).
% \footnote{In the beginning of \cite[I, 5.7]{Illusie}, 
% Illusie assumed that $p \neq 2$.
% Should I impose these conditions?}
%
\begin{lem}
The homomorphism $V:\dRW{n}{i} \to
\dRW{n+1}{i}$ sends $\dRWlog{n}{i}$ into
$\dRWlog{n+1}{i}$.
\end{lem}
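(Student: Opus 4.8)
The plan is to reduce the assertion to a computation on the standard local generators of $\dRWlog{n}{i}$. Recall from \cite[I, 5.7, p.~596]{Illusie} that $\dRWlog{n}{i}$ is, by definition, the subsheaf of $\dRW{n}{i}$ for the \'etale topology that is locally generated by the sections $d\log[x_1]_n\wedge\cdots\wedge d\log[x_i]_n$, where $x_1,\dots,x_i$ are local units of $\cO_X$ and $[x]_n\in W_n\cO_X$ denotes the Teichm\"uller representative of $x$. Since $V\colon\dRW{n}{i}\to\dRW{n+1}{i}$ is additive and $\dRWlog{n+1}{i}$ is a subsheaf, it suffices to show that $V$ carries each such generator into $\dRWlog{n+1}{i}$; hence it is enough to evaluate $V$ on one symbol.

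I would carry out this computation using three elementary identities of the de Rham--Witt complex, all contained in \cite[I, \S\S1--2]{Illusie}: (a) the Frobenius $F\colon\dRW{n+1}{\bullet}\to\dRW{n}{\bullet}$ is a homomorphism of graded rings with $F\big(d\log[x]_{n+1}\big)=d\log[x]_n$; (b) the projection formula $\omega\cdot V(\eta)=V\big(F(\omega)\cdot\eta\big)$ for $\omega$ at level $n+1$ and $\eta$ at level $n$, whose special case $\eta=1$ reads $V\big(F(\omega)\big)=\omega\cdot V(1)$; and (c) since $X$ lies over $\F_p$, multiplication by $p$ on $W_{n+1}\cO_X$ equals $VF$, so that $p\cdot 1=VF(1)=V(1)$ in $W_{n+1}\cO_X$. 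Applying $F$ termwise and using (a), the generator $d\log[x_1]_n\wedge\cdots\wedge d\log[x_i]_n$ equals $F(\omega)$ for $\omega=d\log[x_1]_{n+1}\wedge\cdots\wedge d\log[x_i]_{n+1}$; then (b) and (c) give
\[ V\big(d\log[x_1]_n\wedge\cdots\wedge d\log[x_i]_n\big)=\omega\cdot V(1)=p\cdot\big(d\log[x_1]_{n+1}\wedge\cdots\wedge d\log[x_i]_{n+1}\big). \]
Finally, $d\log$ is additive in its argument, so $p\cdot d\log[x_1]_{n+1}=d\log[x_1^p]_{n+1}$, and the right-hand side equals $d\log[x_1^p]_{n+1}\wedge d\log[x_2]_{n+1}\wedge\cdots\wedge d\log[x_i]_{n+1}$, which is again one of the defining local generators of $\dRWlog{n+1}{i}$. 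This will prove the lemma.

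I do not expect a genuine obstacle: the work is entirely in keeping track of the truncation levels and in quoting the correct forms of $F\,d\log=d\log$, the projection formula, and $p=VF$ over an $\F_p$-base. The only point worth double-checking is identity (c), namely that in characteristic $p$ one has $V(1)=p\cdot 1$ (a nonzero element for $n\ge 1$) rather than $V(1)$ being the Teichm\"uller lift $[p]=0$; this is a standard feature of Witt vectors over an $\F_p$-algebra, where $FV=VF$ equals multiplication by $p$.
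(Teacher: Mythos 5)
Your proof is correct and follows essentially the same route as the paper's: lift a section of $\dRWlog{n}{i}$ to level $n+1$, observe that $R$ and $F$ agree there, and use $VF=p$ to conclude $V$ lands in $p\cdot\dRWlog{n+1}{i}\subset\dRWlog{n+1}{i}$. The paper packages this as $Vx=VRy=VFy=py$ for a lift $y$ of $x$, while you unwind the same chain on the local generators $d\log[x_1]_n\wedge\cdots\wedge d\log[x_i]_n$ (deriving $VF=p$ from the projection formula and $V(1)=p$); the final rewriting of $p\cdot d\log[x_1]_{n+1}$ as $d\log[x_1^p]_{n+1}$ is a nice touch but not needed, since $\dRWlog{n+1}{i}$ is already closed under multiplication by $p$.
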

\begin{proof}
Let $x \in \dRWlog{n}{i}$ be an \'{e}tale local section.
By the definition of $\dRWlog{n}{i}$, there exists
an \'{e}tale local section $y\in \dRWlog{n+1}{i}$ such that $x=Ry$.
We observe that $Ry=Fy$, hence $Vx=VRy=VFy=py \in
\dRWlog{n+1}{i}$.
\end{proof}
Let $\dRCW{i}$ denote the inductive limit
$\dRCW{i} = \varinjlim_{n,\, V} \dRW{n}{i}$ with respect
to $V$. The above lemma enables us to define 
the inductive limit 
$\dRCWlog{i} = \varinjlim_{n,\, V} \dRWlog{n}{i}$.

\subsection{Modified Frobenius operator}\label{mod. Frob}
In this subsection we define an operator 
$F': \dRCW{i} \to \dRCW{i}$ such that
the sequence 
\begin{equation}\label{eqn:dRW_exact}
0 \to \dRCWlog{i} \to \dRCW{i} \xto{1-F'}
\dRCW{i} \to 0
\end{equation}
is exact.

For $n \ge 0$, let $\dRWt{n}{i}$ denote the
cokernel 
%(in the category of \'{e}tale sheaves of abelian groups) 
of the homomorphism 
$V^{n}: \Omega^i_X =\dRW{1}{i} \to \dRW{n+1}{i}$.
The homomorphisms $R$, $F$ and $V$ on $\dRW{n+1}{i}$ 
induce homomorphisms on $\dRWt{n}{i}$, which we denote
using the same notation. 
If $n \ge 1$, the homomorphisms 
$R, F:\dRW{n+1}{i} \to \dRW{n}{i}$
factor through the canonical surjection
$\dRW{n+1}{i} \to \dRWt{n}{i}$.
% (For $R$, this is valid for any $n \in \Z$.)
%
We let $\wt{R}, \wt{F} :\dRWt{n}{i} \to \dRW{n}{i}$ 
denote the induced homomorphisms. Then, 
both $\wt{R}$ and $\wt{F}$ commute with $R$, $F$ and $V$.
For $n\ge 0$, we let $\dRWtlog{n}{i}$ denote 
the image of $\dRWlog{n+1}{i}$ by the canonical 
surjection $\dRW{n+1}{i} \to \dRWt{n}{i}$.
The restriction of $\wt{R}:\dRWt{n}{i} \to \dRW{n}{i}$
to $\dRWtlog{n}{i}$ gives a surjective homomorphism
$\wt{R}_{\log} : \dRWtlog{n}{i} \to \dRWlog{n}{i}$.

\begin{lem}\label{dRW_lem1}
The homomorphisms $\wt{R}$, $\wt{R}_{\log}$
induce isomorphisms 
$$
\varinjlim_{n\ge 0,\, V}
\dRWt{n}{i} \cong \dRCW{i},\ 
\varinjlim_{n\ge 0,\, V} \dRWtlog{n}{i} 
\cong \dRCWlog{i}.
$$
\end{lem}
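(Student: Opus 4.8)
The plan is to prove Lemma~\ref{dRW_lem1} by a direct cofinality argument at the level of the inductive systems, and then to reduce the isomorphism for the logarithmic subsheaves to the non-logarithmic case using the surjectivity of $\wt{R}_{\log}$ together with compatibility of all the maps with $V$. First I would make precise the two inductive systems in play: $\{\dRW{n+1}{i}\}_{n}$ with transition maps $V$, whose colimit is $\dRCW{i}$ by definition, and $\{\dRWt{n}{i}\}_{n}$ with the induced transition maps $V$, where $\dRWt{n}{i}=\Coker(V^n:\Omega^i_X\to\dRW{n+1}{i})$. The canonical surjections $\dRW{n+1}{i}\to\dRWt{n}{i}$ commute with $V$ and induce a map on colimits $\dRCW{i}\to\varinjlim_{n,V}\dRWt{n}{i}$; separately, the maps $\wt{R}:\dRWt{n}{i}\to\dRW{n}{i}$ also commute with $V$, so they induce a map $\varinjlim_{n,V}\dRWt{n}{i}\to\varinjlim_{n,V}\dRW{n}{i}=\dRCW{i}$ (after re-indexing, which is harmless for a colimit over $V$). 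I would then check that the two composites are the identity on the respective colimits: on representatives at level $n$, the composite $\dRW{n+1}{i}\to\dRWt{n}{i}\xrightarrow{\wt{R}}\dRW{n}{i}$ is exactly $R$, and since in the $V$-colimit $R$ and $V$ are mutually inverse up to shift (indeed $FV=VF=p$, $RV=VR$, and $R$ is the structural shift down), $R$ induces the identity on $\dRCW{i}$; conversely $\wt{R}$ followed by the surjection $\dRW{n}{i}\to\dRWt{n-1}{i}$ is the map induced by $R$ on the $\dRWt{}{i}$-system, which is again the identity in the colimit. This yields the first isomorphism.

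For the second isomorphism I would restrict the whole diagram to the logarithmic subsheaves. The key point is that $V$ carries $\dRWlog{n}{i}$ into $\dRWlog{n+1}{i}$ (the Lemma stated just before), so $\varinjlim_{n,V}\dRWtlog{n}{i}$ and $\dRCWlog{i}=\varinjlim_{n,V}\dRWlog{n}{i}$ are both defined, and the surjection $\dRWlog{n+1}{i}\to\dRWtlog{n}{i}$ and the map $\wt{R}_{\log}:\dRWtlog{n}{i}\to\dRWlog{n}{i}$ are the restrictions of the corresponding maps on the ambient sheaves. Hence the same two composites, now with logarithmic entries, are still the identity — this is automatic because everything sits inside the non-logarithmic sheaves where we have already checked the identities, and the relevant maps preserve the logarithmic subsheaves. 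The only genuinely logarithmic input needed is that $\wt{R}_{\log}$ is well defined and surjective onto $\dRWlog{n}{i}$, which is recorded above as the restriction of $\wt R$; surjectivity is not even needed for the isomorphism statement, only well-definedness.

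The step I expect to be the main obstacle is checking cleanly that, in the $V$-colimit, the restriction map $R$ (respectively $\wt R$) induces the \emph{identity} rather than merely an isomorphism, and that the re-indexing by one degree does not introduce a sign or a discrepancy. Concretely, one must verify that $RV=VR$ and that $V\circ(\text{surjection})\circ \wt R$ agrees with $R$ on $\dRW{n+1}{i}$, and then invoke that a colimit over a sequence of maps is unchanged by pre- or post-composing each transition with an automorphism of the diagram commuting with the transitions. I would handle this by writing out the commuting square
$$
\begin{CD}
\dRW{n+2}{i} @>{R}>> \dRW{n+1}{i} \\
@V{V}VV @VV{V}V \\
\dRW{n+1}{i} @>{R}>> \dRW{n}{i}
\end{CD}
$$
and observing that $R$ therefore defines an endomorphism of the $V$-system; since on the colimit $V$ becomes invertible with inverse induced by $R$ (as $RV$ is the identity on each term after the shift), the induced endomorphism is the identity. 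The logarithmic case then follows formally. This is routine once the bookkeeping is set up, so I would not belabor it beyond exhibiting the key commuting squares.
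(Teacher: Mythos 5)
Your setup is fine, and you correctly identify that both composites are the map induced by $R$. But the pivotal claim --- that $R$ induces the identity on $\dRCW{i}$ because ``$RV=VR$ and $R$ is the structural shift down'' --- is false, and you yourself flag this step as the one you were unsure about. The relation $RV=VR$ only makes $R$ an endomorphism of the $V$-inductive system; it does not make $R$ the inverse (let alone the identity) of $V$ in the colimit, since $RV\neq\mathrm{id}$ and $VR\neq\mathrm{id}$ at any finite level. Already for $i=0$ and $X=\Spec\,\F_p$ one has $\dRW{n}{0}=W_n(\F_p)\cong\Z/p^n$ with $V$ equal to multiplication by $p$, so $\dRCW{0}\cong\Q_p/\Z_p$, and the map induced by $R$ (reduction $\Z/p^{n+1}\to\Z/p^n$) on the colimit is multiplication by $p$ on $\Q_p/\Z_p$, which is not injective. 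Thus your two composites are each ``multiplication by $p$'' rather than the identity, your candidate map is not an inverse of $\varinjlim\wt R$, and the argument does not establish the isomorphism.

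What you are missing is exactly the nonformal input that the paper uses to prove injectivity. Surjectivity is indeed clear level by level (and your remark that surjectivity of $\wt R_{\log}$ is ``not even needed'' is also off: once the first isomorphism is known, injectivity of the restriction is automatic, but surjectivity of $\varinjlim\wt R_{\log}$ does use the level-wise surjectivity of $\wt R_{\log}$ recorded just before the lemma). For injectivity the paper shows that $V(\Ker\wt R)=0$, so that every element of the kernel at a finite level dies after a single application of $V$ and therefore vanishes in the $V$-colimit. This requires Illusie's description of $\Ker(R:\dRW{n+1}{i}\to\dRW{n}{i})$ (which identifies $\Ker\wt R$ as the image of $dV^n$ in $\dRWt{n}{i}$) together with the de Rham--Witt relation $Vd=pdV$. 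Your argument uses none of this beyond $RV=VR$; a purely formal argument of the kind you propose would prove the analogous statement for an arbitrary inductive system with a commuting ``shift down,'' and that statement is false, as the $\Q_p/\Z_p$ example shows.
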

\begin{proof}
Surjectivity here is clear.
From \cite[I, Proposition 3.2, p.~568]{Illusie},
it follows that the kernel of $\wt{R}$ equals the image of
the composition
$\dRW{1}{i} \xto{dV^n} \dRW{n+1}{i}
\surj \dRWt{n}{i}$. Since $Vd=pdV$,
we have $V(\Ker\, \wt{R})=0$. This
proves the injectivity.
\end{proof}

We observe that $\dRWtlog{n}{i}$
is contained in the kernel of
$\wt{R}-\wt{F}:\dRWt{n}{i} \to \dRW{n}{i}$,
hence
\begin{equation}\label{eqn:dRW_complex}
0 \to \dRWtlog{n}{i} \to 
\dRWt{n}{i} \xto{\wt{R}-\wt{F}} \dRW{n}{i}
\to 0
\end{equation}
is a complex.
\begin{lem}\label{dRW_lem2}
The inductive limit 
$$
0 \to \varinjlim_{n \ge 0,\, V} 
\dRWtlog{n}{i} \to \varinjlim_{n \ge 0,\, V} 
\dRWt{n}{i} \to \dRCW{i} \to 0
$$
of (\ref{eqn:dRW_complex}) with respect to $V$
is exact.
\end{lem}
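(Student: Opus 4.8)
The plan is to obtain the exactness of the colimit sequence formally from the exactness of filtered colimits of \'etale sheaves, once the finite-level obstructions to exactness of (\ref{eqn:dRW_complex}) have been identified and shown to be annihilated by the Verschiebung $V$ used as transition map. First I would record that (\ref{eqn:dRW_complex}) is, for every $n$, a complex which is exact at its left-hand term, since $\dRWtlog{n}{i}$ is by construction a subsheaf of $\dRWt{n}{i}$; and that $V$ is compatible with all three terms of (\ref{eqn:dRW_complex}) --- it carries $\dRWtlog{n}{i}$ into $\dRWtlog{n+1}{i}$ by the already-established compatibility of $V$ with the logarithmic subsheaves together with the surjectivity of $\wt{R}_{\log}$, and compatibility with the other two terms is immediate from $RV=VR$, $FV=VF$. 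Since filtered colimits of abelian \'etale sheaves are exact, the colimit of (\ref{eqn:dRW_complex}) over $n$ is again a complex, exact at its left-hand term; it therefore remains to prove exactness in the middle and surjectivity on the right.

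For surjectivity on the right I would fall back on the fundamental exact sequence of the de Rham--Witt complex on the \'etale site (Illusie \cite{Illusie}; Illusie--Raynaud; this is the shape used in \cite{CSS}, \cite{Gr-Su}, \cite{Milne1}), according to which, at level $n$, the operator $\wt{R}-\wt{F}$ (coming from $R-F\colon W_{n+1}\Omega^i_X\to W_n\Omega^i_X$, which factors through $\dRWt{n}{i}$) is surjective onto $\dRW{n}{i}$ after one divides the target by a subsheaf of $d$-$V$ type, roughly $dV^{n-1}\Omega^{i-1}_X$. The relation $Vd=p\,dV$ then shows, exactly as in the proof of Lemma~\ref{dRW_lem1}, that this subsheaf is killed by $V$ and so becomes zero in $\varinjlim_{n,\,V}$; surjectivity of $\varinjlim_{n,\,V}(\wt{R}-\wt{F})$ follows.

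For exactness in the middle I would take, over an \'etale $U\to X$, a section $x$ of $\dRWt{n}{i}$ with $(\wt{R}-\wt{F})x=0$ and compare, at each level $n$, the kernel of $\wt{R}-\wt{F}\colon\dRWt{n}{i}\to\dRW{n}{i}$ with $\dRWtlog{n}{i}$: the former contains the latter because logarithmic sections satisfy $F=R$, and the quotient is again governed by a $d$-$V$ term, hence (by $Vd=p\,dV$ once more) is annihilated by $V$. Consequently $Vx\in\dRWt{n+1}{i}$ lies in $\dRWtlog{n+1}{i}$ modulo a section that vanishes further along the colimit, so the class of $x$ in $\varinjlim_{n,\,V}\dRWt{n}{i}$ lies in the image of $\varinjlim_{n,\,V}\dRWtlog{n}{i}$. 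Together with Lemma~\ref{dRW_lem1} this proves Lemma~\ref{dRW_lem2}; transporting $\wt{F}$ across the isomorphisms of Lemma~\ref{dRW_lem1} then exhibits the colimit sequence as (\ref{eqn:dRW_exact}) with $F'$ the modified Frobenius of Section~\ref{mod. Frob}.

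The hard part will be locating the two finite-level defects --- the cokernel of $\wt{R}-\wt{F}$ at level $n$, and the gap between $\dRWtlog{n}{i}$ and $\ker(\wt{R}-\wt{F})$ --- in precisely the form in which $Vd=p\,dV$ forces them to die in the $V$-colimit. This is the whole reason for working with the inductive limit $\dRCW{i}$ rather than the projective limit, where the Mittag--Leffler phenomena dealt with in \cite{CSS} would intervene; the genuine content is the careful bookkeeping of $R$, $F$, $V$ and of the logarithmic subsheaves through the truncations $\dRWt{n}{i}$, combined with citing the correct \'etale-site form of Illusie's fundamental exact sequence.
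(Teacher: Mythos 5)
Your proof is correct and takes essentially the same route as the paper: both appeal to the argument behind Illusie's Th\'eor\`eme I.5.7.2 to control the kernel (and implicitly the cokernel) of $\wt R - \wt F$ at finite level, and both annihilate the resulting $dV$-type defects in the $V$-colimit via the relation $Vd = p\,dV$, exactly as in the proof of Lemma~\ref{dRW_lem1}. The paper's version is terser---it records only the containment $\ker(R-F) \subset \dRWlog{n+1}{i} + \Ker R$ and then invokes Lemma~\ref{dRW_lem1}---whereas you spell out left, middle, and right exactness separately, but the underlying ideas coincide.
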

\begin{proof}
The argument in the proof of 
\cite[I, Th\'{e}or\`{e}me 5.7.2, p.~597]{Illusie}
shows that the kernel of $R-F:\dRW{n+1}{i}
\to \dRW{n}{i}$ is contained in
$\dRWlog{n+1}{i} + \Ker\, R$, hence the
claim follows from Lemma~\ref{dRW_lem1}.
\end{proof}

The inductive limit of $\wt{F}:\dRWt{n}{i}
\to \dRWt{n+1}{i}$ gives the endomorphism
$F' : \dRCW{i} \cong
\varprojlim_{n \ge 1,\, V}
\dRWt{n}{i} \to \dRCW{i}$. By Lemmas~\ref{dRW_lem1}
and~\ref{dRW_lem2}, we have a canonical
exact sequence (\ref{eqn:dRW_exact}).

\subsection{The duality}
% Suppose further that $X$ is proper.
Let $H^*(X,\dRW{n}{i})$ denote the cohomology groups
of $\dRW{n}{i}$ with respect to the Zariski topology.

The trace map $\Tr: H^\delta(X,\dRW{n}{\delta})\cong W_n(\F_q)$
is defined in \cite[4.1.3, p.~49]{Illusie2}. This commutes with 
homomorphisms $R$, $F$ and $V$.
For $0 \le i,j \le \delta$, the product 
$m:\dRW{n}{i} \times \dRW{n}{\delta-i} \to
\dRW{n}{\delta}$ gives a $W_n(k)$-bilinear 
paring 
$$
\begin{small}
(\ ,\ ): H^j(X,\dRW{n}{i}) \times
H^{\delta-j}(X,\dRW{n}{\delta-i}) 
\to H^\delta(X,\dRW{n}{\delta}) \xto{\Tr}W_n(k).
\end{small}
$$
By \cite[Corollary 4.2.2, p.~51]{Illusie2}, this pairing is perfect.

Since $m \circ(\id \otimes V) = 
V \circ m \circ (F \otimes \id)$,
the diagram
$$
\begin{CD}
\dRW{n+1}{i} @. \times @. \dRW{n+1}{\delta -i} @>>> W_{n+1}(k) \\
@V{F}VV @. @A{V}AA @A{V}AA \\
\dRW{n}{i} @. \times @. \dRW{n}{\delta -i} @>>> W_{n}(k) \\
\end{CD}
$$
is commutative, hence this induces an isomorphism
\begin{equation}\label{eqn:dRW_isom}
H^{\delta -j}(X,\dRCW{\delta -i}) \cong
\varinjlim_n \Hom_{W_n(k)}(H^j(X,\dRW{n}{i}),W_n(k)),
\end{equation}
where the transition map 
% $\Hom_{W_n(k)}(H^j(X,\dRW{n}{i}),W_n(k)) 
% \to \Hom_{W_{n+1}(k)}(H^j(X,\dRW{n+1}{i}),W_{n+1}(k))$ 
in the inductive limit of the right hand side is
given by $f \mapsto V\circ f\circ F$.
We endow each $H^j(X,\dRW{n}{i})$ with the
discrete topology. We set
$H^j(X,W'\Omega^i_X) = \varprojlim_{n,\, F}
H^j(X, \dRW{n}{i})$ and endow it with the induced topology.
Next, we turn $H^j(X,W'\Omega^i_X)$ into a $W(k)$-module
by letting $a\cdot (b_n) = (\sigma^{-n}(a) b_n)$
for $a\in W(k)$, $b_n \in H^j(X,\dRW{n}{i})$.
We set $D= \varinjlim_{n,\, V}W_{n}(k)$ and
endow it with the discrete topology.
We turn $D$ into a $W(k)$-module
by letting $a\cdot c_n = \sigma^{-n}(a) c_n$
for $a\in W(k)$, $c_n \in W_n(k)$.
Then, the right hand side of (\ref{eqn:dRW_isom}) equals
$\Hom_{W(k),\cont}(H^j(X,W'\Omega^i_X),D)$.
The homomorphism $R:H^j(X, \dRW{n}{i}) \to H^j(X, \dRW{n-1}{i})$
induces an endomorphism 
$R' :H^j(X,W'\Omega^i_X) \to H^j(X,W'\Omega^i_X)$.
The Frobenius endomorphism $\sigma:W_n(k) \to W_n(k)$
induces an endomorphism $\sigma : D \to D$.

\begin{lem}\label{lem:A5}
Under the isomorphism (\ref{eqn:dRW_isom}),
the endomorphism 
\[
F' :H^{\delta -j}(X,\dRCW{\delta -i})\to
H^{\delta -j}(X,\dRCW{\delta -i})\]
is identified with
the endomorphism of
$\Hom_{W(k),\cont}(H^j(X,W'\Omega^i_X),D)$,
that sends a homomorphism 
$f: H^j(X,W'\Omega^i_X) \to D$ to 
the homomorphism $\sigma \circ f \circ R'$.
\end{lem}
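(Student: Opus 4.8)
The plan is to reduce both sides of the claimed identification to finite level $n$ and to check that everything is compatible with the transition maps that assemble the isomorphism (\ref{eqn:dRW_isom}).

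Recall first that (\ref{eqn:dRW_isom}) is simply the filtered colimit over $n$ of the perfect Serre-type pairings $(\ ,\ )_n\colon H^j(X,\dRW{n}{i})\times H^{\delta-j}(X,\dRW{n}{\delta-i})\to W_n(k)$ of Illusie, formed with transition maps $V$ on the second factor and $V$ on the coefficients. The identity $m\circ(\id\otimes V)=V\circ m\circ(F\otimes\id)$ shows that, under the induced isomorphism $H^{\delta-j}(X,\dRW{n}{\delta-i})\cong\Hom_{W_n(k)}(H^j(X,\dRW{n}{i}),W_n(k))$, the Verschiebung $V$ on the left corresponds to $f\mapsto V\circ f\circ F$ on the right, which is exactly the transition map appearing in (\ref{eqn:dRW_isom}). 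Hence a class in $H^{\delta-j}(X,\dRCW{\delta-i})$ is represented by some $\omega\in H^{\delta-j}(X,\dRW{n}{\delta-i})$ with $n\gg 0$, and the corresponding continuous functional on $H^j(X,W'\Omega^i_X)=\varprojlim_{n,F}H^j(X,\dRW{n}{i})$ is $\alpha\mapsto(\bar\alpha_n,\omega)_n\in W_n(k)\subset D$, where $\bar\alpha_n$ denotes the $n$-th component of $\alpha$; a short computation using $m(x,Vy)=V\,m(Fx,y)$ together with the fact that $\Tr$ commutes with $V$ shows this value is independent of $n$ in the colimit $D$, so the dictionary is consistent.

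With this dictionary in hand, the lemma reduces to a single finite-level compatibility. Using the presentation $\dRCW{\delta-i}\cong\varinjlim_{n,V}\dRWt{n}{\delta-i}$ of Lemma~\ref{dRW_lem1}, the operator $F'$ is computed from $\widetilde F$, so what must be shown is that pushing $\omega$ through $\widetilde F$ (and then to the colimit) yields the functional $\alpha\mapsto\sigma\big((R\bar\alpha_{n},\ \omega)\big)$; that is, the trace pairing intertwines $\widetilde F$ on the modified de Rham--Witt sheaf $\dRWt{n}{\delta-i}$ with precomposition by $R$ on $\dRW{n}{i}$ and postcomposition by the Witt Frobenius $\sigma$ on the coefficients. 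Granting Lemma~\ref{dRW_lem1}, this rests on two facts already in Illusie: $F$ is a homomorphism of the de Rham--Witt complex compatible with the product $m$ and with $R$, which accounts for the appearance of $R'$; and $\Tr$ commutes with $F$, while on $W_n(k)$ one has $F=\sigma\circ R$ because $k$ is perfect, which is precisely where the twist by $\sigma$ in the statement comes from. Finally one observes that $f\mapsto\sigma\circ f\circ R'$ on $\Hom_{W(k),\cont}(H^j(X,W'\Omega^i_X),D)$ is the colimit of these finite-level operators, so it agrees with $F'$.

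The main obstacle is bookkeeping rather than anything conceptual: $F$ lowers the Witt length while $V$ raises it, and the auxiliary sheaves $\dRWt{n}{i}=\dRW{n+1}{i}/V^n\Omega^i_X$ straddle levels $n$ and $n+1$, so one must be scrupulous about which finite stage each class lives at when comparing $F'$ --- built from $\widetilde F$ through Lemma~\ref{dRW_lem1} --- with $\sigma\circ(\ )\circ R'$, which is built from $R$ acting on the $F$-limit $H^j(X,W'\Omega^i_X)$ together with $\sigma$ acting on the $V$-colimit $D$. Once the indexing is aligned, the verification of the finite-level square is routine.
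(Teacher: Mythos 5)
Your unwinding is correct and fills in what the paper calls ``immediate'': the chain $(\alpha_n,F\hat\omega)_n=\Tr\,F\,m(\alpha_{n+1},\hat\omega)=F\bigl((\alpha_{n+1},\hat\omega)_{n+1}\bigr)=\sigma\bigl((R\alpha_{n+1},\omega)_n\bigr)$ rests on precisely the three facts you cite (multiplicativity of $F$, $\Tr$ commuting with $R,F,V$, and $F=\sigma R$ on $W_n(k)$ for perfect $k$), and passing to the colimit over $n$ identifies $F'$ with $f\mapsto\sigma\circ f\circ R'$. Your displayed formula should read $\sigma\bigl((R\bar\alpha_{n+1},\omega)_n\bigr)$ rather than $\sigma\bigl((R\bar\alpha_n,\omega)\bigr)$ so that the pairing sits at level $n$ where $\omega$ lives, but this is exactly the bookkeeping you flag and does not affect the argument.
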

\begin{proof}
The proof here is immediate from the definition of
the isomorphism (\ref{eqn:dRW_isom})
and the module $D$.
\end{proof}
\subsection{The degree zero case}\label{sec:A5}
We are primarily concerned with the case in which $i=0$.
We denote $H^j(X,W'\Omega^0_X)$ by $H^j(X,W'\cO_X)$.
Recall that $F:\dRW{n}{0} \to \dRW{n-1}{0}$ equals
the composition $W_n \cO_X \xto{\sigma} W_n \cO_X \xto{R}
W_{n-1}\cO_X$. From \cite[II, Proposition 2.1, p.~607]{Illusie},
it follows that 
$H^j(X,W\Omega^i_X) \to \varprojlim_{n,\, R} H^j(X,\dRW{n}{i})$ 
is an isomorphism, hence $H^j(X,W'\cO_X)$ is isomorphic
to the projective limit
$$
\wt{H}^j(X,W\cO_X) =
\varprojlim[\cdots \xto{\sigma} 
H^j(X,W\cO_X) \xto{\sigma} H^j(X,W\cO_X)].
$$
The endomorphism $\sigma : H^j(X,W\cO_X) \to H^j(X,W\cO_X)$
induces an automorphism $\sigma:\wt{H}^j(X,W\cO_X) \xto{\cong}
\wt{H}^j(X,W\cO_X)$. We observe here that
the endomorphism $R'$ on $H^j(X,W'\cO_X)$ corresponds to
the endomorphism $\sigma^{-1}$ on $\wt{H}^j(X,W\cO_X)$.

Let $K =\Frac\, W(k)$ denote the field of fractions of $W(k)$.
The homomorphism $\sigma^n/p^n : W_n(k) \to K/W(k)$
for each $n \ge 1$ 
induces a canonical isomorphism $D \cong K/W(k)$
of $W(k)$-modules that commutes with the action of $\sigma$.

\subsection{Proof of Proposition~\ref{prop:dRW_main}}
Suppose that $k= \F_q$. Then 
by Lemma~\ref{lem:A5}, $H^{0}(X,\dRCW{d})$
is isomorphic to the Pontryagin dual of $\wt{H}^{\delta}(X,W\cO_X)$,
hence the group 
$$
H^{0}(X,\dRCWlog{\delta})
\cong \Ker[H^{0}(X,\dRCW{\delta})\xto{1-F'} H^{0}(X,\dRCW{\delta})]
$$
is isomorphic to the 
Pontryagin dual of the cokernel of 
$1-\sigma^{-1}$ on $\wt{H}^{\delta}(X,W\cO_X)$.

\begin{prop}\label{prop:dRW_sub}
Let $k=\F_q$ be a finite field and $X$ 
be a scheme of dimension $\delta$
that is smooth and projective over $\Spec\, k$. 
Suppose that the $V$-torsion part $T$ of 
$H^{\delta}(X,W\cO_X)$ is finite.
Then, $H^{0}(X,\dRCW{\delta})$ is a finite group of
order $|T^{\sigma}| \cdot |L(h^\delta (X),0)|_p^{-1}$.
Here, $T^\sigma$ denotes the $\sigma$-invariant part of
$T$.
\end{prop}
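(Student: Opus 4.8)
The plan is to compute $H^0(X,\dRCW{\delta})$ via the duality of Section~\ref{sec:A5}. Recall that $H^0(X,\dRCW{\delta})$ is isomorphic, by Lemma~\ref{lem:A5} and the identification $D \cong K/W(k)$, to the Pontryagin dual of $\wt H^\delta(X,W\cO_X)$, and that under this duality $H^0(X,\dRCWlog{\delta}) = \Ker[1-F']$ corresponds to $\Coker[1-\sigma^{-1} : \wt H^\delta(X,W\cO_X) \to \wt H^\delta(X,W\cO_X)]$. So the first step is to control $M := H^\delta(X,W\cO_X)$ as a module over $W(k)$ equipped with its $\sigma$-semilinear Frobenius. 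By the hypothesis, the $V$-torsion subgroup $T \subset M$ is finite, and $M/T$ is a finitely generated $W(k)$-module (indeed free) on which $F$ acts; this is the ``finite-type'' part of the classical structure theory of $H^\delta(X,W\cO_X)$ (Illusie, Serre). Then $\wt H^\delta(X,W\cO_X) = \varprojlim_{\sigma} M$ sits in a short exact sequence $0 \to \varprojlim_\sigma T \to \wt H^\delta(X,W\cO_X) \to \varprojlim_\sigma (M/T) \to 0$, and since $T$ is finite while $\sigma$ is bijective on $T$ (it is the restriction of an automorphism of the perfect ground ring's Witt vectors acting semilinearly — here one must check $\sigma$ is actually an automorphism of $T$, which follows because $T$ is finite and $\sigma$ is injective on it), we get $\varprojlim_\sigma T \cong T$.

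Next I would compute the cokernel of $1-\sigma^{-1}$ on $\wt H^\delta(X,W\cO_X)$ by dévissage along the above short exact sequence. On the sub-object $T$, $1-\sigma^{-1}$ acts on a finite group, so its cokernel has order equal to the order of its kernel, namely $|T^\sigma|$. On the quotient $\varprojlim_\sigma(M/T)$, which up to isogeny is the ``slope $<1$'' part of the $F$-isocrystal $H^\delta(X,W\cO_X)\otimes \Q$, the operator $1-\sigma^{-1}$ has cokernel whose order is governed by the characteristic polynomial of Frobenius. Concretely: $\wt H^\delta(X,W\cO_X)\otimes_{W(k)} K$, with the operator $\sigma^{-1}$, is dual to the unit-root (slope $0$) part of $H^\delta_{\cris}$, and the $p$-adic valuation of $\det(1-\sigma^{-1})$ (equivalently $\det(1-F \mid \text{unit root part})$) equals $|L(h^\delta(X),0)|_p^{-1}$ by the cohomological formula for the $L$-function at $s=0$ together with the Newton–Hodge decomposition (the slopes $\geq 1$ contribute nothing $p$-adically to $L(h^\delta(X),0) = \det(1-\Frob\mid H^\delta)$ evaluated $p$-adically since those eigenvalues are $p$-adic units times... wait — rather, it is precisely the slope-zero eigenvalues of Frobenius on $H^\delta$ that are $p$-adic units, and it is the positive-slope eigenvalues whose product has $p$-adic valuation contributing; I would sort out this sign/slope bookkeeping carefully). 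Assembling the two contributions via the snake lemma along $0 \to T \to \wt H^\delta \to \varprojlim_\sigma(M/T)\to 0$ gives that $\Coker[1-\sigma^{-1}]$ is finite of order $|T^\sigma|\cdot|L(h^\delta(X),0)|_p^{-1}$, and dualizing yields the claim.

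The main obstacle I expect is the precise matching, on the nose and not just up to $p$-adic units, of $\det(1-\sigma^{-1})$ on the finitely generated part with the $p$-adic absolute value of the $L$-value $L(h^\delta(X),0)$: this requires invoking the comparison between $H^\delta(X,W\cO_X)$ (Serre's Witt-vector cohomology) and the unit-root subisocrystal of crystalline cohomology, the Manin–Katz description of slopes, and the fact that $|L(h^\delta(X),0)|_p^{-1}$ picks out exactly the contribution of the positive-slope part of $H^\delta_{\cris}$ — equivalently, the product of those Frobenius eigenvalues that are not $p$-adic units. A secondary technical point is handling the $\varprojlim$ and the continuity carefully, i.e.\ checking that $\Coker$ commutes appropriately with the projective limit over $\sigma$ (Mittag-Leffler: the transition maps $\sigma$ are surjective on $M/T$ modulo torsion once one passes to the relevant lattice, and $T$ is finite), so that the snake lemma argument is legitimate. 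Once these inputs are in place the combinatorics of the two exact sequences is routine.
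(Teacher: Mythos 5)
Your proposal is correct and follows essentially the same route as the paper: dualize to reduce to computing $\Coker(1-\sigma)$ on $\wt H^\delta(X,W\cO_X)$, use Illusie's identification of $\wt H^\delta(X,W\cO_X)\otimes\Q_p$ with the slope-zero part of crystalline cohomology, and separate out the finite torsion $T$ to get the factor $|T^\sigma|$ (the paper's proof is just a compressed version of your d\'evissage). On the flagged slope bookkeeping, your first instinct was the right one: the positive-slope Frobenius eigenvalues $\alpha_i$ satisfy $\mathrm{val}_p(\alpha_i)>0$, so $1-\alpha_i$ is a $p$-adic unit and contributes nothing to $|L(h^\delta(X),0)|_p$; it is exactly the slope-zero (unit-root) eigenvalues that contribute, which is what matches $\det(1-\sigma)$ on the slope-zero part.
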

\begin{proof}
By the argument above, the order of
$H^{0}(X,\dRCW{\delta})$ equals the order of the
cokernel of $1-\sigma$ on $\wt{H}^{\delta}(X,W\cO_X)$
if it is finite.
The torsion subgroup of $\wt{H}^{\delta}(X,W\cO_X)$ is finite
since it injects into $T$.
By \cite[II, Corollaire 3.5, p.~616]{Illusie},
$\wt{H}^{\delta}(X,W\cO_X)\otimes_{\Z_p} \Q_p$
is isomorphic to the slope-zero part of
$H^{\delta}_{\cris} (X/W(k))\otimes_{\Z_p} \Q_p$,
hence the claim follows.
\end{proof}

\begin{proof}[Proof of Proposition~\ref{prop:dRW_main}]
Let the notation be as above and suppose that $\delta=2$. 
Then, by \cite[II, Remarque 6.4, p.~641]{Illusie}, 
the module $T$ in the above proposition
is canonically isomorphic to the group
$$
\Hom_{W(\F_q)}
(M(\Pic_{X/\F_q}^o /\Pic_{X/\F_q,\red}^o),K/W(\F_q)),
$$
where $M(\ )$ 
denotes the contravariant Dieudonn\'{e} module functor.
In particular, $T$ is a finite group.
Let $T_\sigma$ denote the $\sigma$-coinvariants of $T$.
Then, by Dieudonn\'{e} theory (\cf \cite{Demazure}),
$\Hom_{W(\F_q)}(T_\sigma, K/W(\F_q))$ is canonically
isomorphic to $\Hom(\Pic_{X/\F_q}^o,\Gm)$,
hence the claim follows from 
Proposition~\ref{prop:dRW_sub}.
\end{proof}

\nocite{*}
\bibliographystyle{cdraifplain}
\bibliography{K1K2bib01}

\def\cprime{$'$} \def\cprime{$'$}
\def\bysame{\leavevmode ---------\thinspace}
\makeatletter\if@francais\providecommand{\og}{<<~}\providecommand{\fg}{~>>}
\else\gdef\og{``}\gdef\fg{''}\fi\makeatother
\def\cdrandname{\&}
\providecommand\cdrnumero{no.~}
\providecommand{\cdredsname}{eds.}
\providecommand{\cdredname}{ed.}
\providecommand{\cdrchapname}{chap.}
\providecommand{\cdrmastersthesisname}{Memoir}
\providecommand{\cdrphdthesisname}{PhD Thesis}
\begin{thebibliography}{10}

\bibitem{SGA1}
\emph{Rev\^etements \'etales et groupe fondamental}, Springer-Verlag, Berlin,
  1971, S{\'e}minaire de G{\'e}om{\'e}trie Alg{\'e}brique du Bois Marie
  1960--1961 (SGA 1), Dirig{\'e} par Alexandre Grothendieck. Augment{\'e} de
  deux expos{\'e}s de M. Raynaud, Lecture Notes in Mathematics, Vol. 224,
  xxii+447~pages.

\bibitem{SGA6}
\emph{Th\'eorie des intersections et th\'eor\`eme de {R}iemann-{R}och}, Lecture
  Notes in Mathematics, Vol. 225, Springer-Verlag, Berlin, 1971, S{\'e}minaire
  de G{\'e}om{\'e}trie Alg{\'e}brique du Bois-Marie 1966--1967 (SGA 6),
  Dirig{\'e} par P. Berthelot, A. Grothendieck et L. Illusie. Avec la
  collaboration de D. Ferrand, J. P. Jouanolou, O. Jussila, S. Kleiman, M.
  Raynaud et J. P. Serre, xii+700~pages.

\bibitem{Abhyankar}
{\scshape S.~S. Abhyankar}, {\og Resolution of singularities of arithmetical
  surfaces\fg}, in \emph{Arithmetical {A}lgebraic {G}eometry ({P}roc. {C}onf.
  {P}urdue {U}niv., 1963)}, Harper \& Row, New York, 1965, p.~111-152.

\bibitem{BMS}
{\scshape H.~Bass, J.~Milnor {\normalfont \cdrandname}~J.-P. Serre}, {\og
  Solution of the congruence subgroup problem for {${\rm SL}_{n}\,(n\geq 3)$}
  and {${\rm Sp}_{2n}\,(n\geq 2)$}\fg}, \emph{Inst. Hautes \'Etudes Sci. Publ.
  Math.} (1967), \cdrnumero 33, p.~59-137.

\bibitem{Bass-Tate}
{\scshape H.~Bass {\normalfont \cdrandname}~J.~Tate}, {\og The {M}ilnor ring of
  a global field\fg}, in \emph{Algebraic {$K$}-theory, {II}: ``{C}lassical''
  algebraic {$K$}-theory and connections with arithmetic ({P}roc. {C}onf.,
  {S}eattle, {W}ash., {B}attelle {M}emorial {I}nst., 1972)}, Springer, Berlin,
  1973, p.~349-446. Lecture Notes in Math., Vol. 342.

\bibitem{Bloch2}
{\scshape S.~Bloch}, {\og The moving lemma for higher {C}how groups\fg},
  \emph{J. Algebraic Geom.} \textbf{3} (1994), \cdrnumero 3, p.~537-568.

\bibitem{Bloch}
{\scshape S.~Bloch}, {\og Algebraic cycles and higher {$K$}-theory\fg},
  \emph{Adv. in Math.} \textbf{61} (1986), \cdrnumero 3, p.~267-304.

\bibitem{CKY}
{\scshape M.~{Chida}, S.~{Kondo} {\normalfont \cdrandname}~T.~{Yamauchi}}, {\og
  {On the rational $K_2$ of a curve of $\mathrm{GL}_{2}$ type over a global
  field of positive characteristic}\fg}, \emph{{J. K-Theory}} \textbf{14}
  (2014), \cdrnumero 2, p.~313-342 (English).

\bibitem{Co-Ra}
{\scshape J.-L. Colliot-Th{\'e}l{\`e}ne {\normalfont \cdrandname}~W.~Raskind},
  {\og On the reciprocity law for surfaces over finite fields\fg}, \emph{J.
  Fac. Sci. Univ. Tokyo Sect. IA Math.} \textbf{33} (1986), \cdrnumero 2,
  p.~283-294.

\bibitem{CSS}
{\scshape J.-L. Colliot-Th{\'e}l{\`e}ne, J.-J. Sansuc {\normalfont
  \cdrandname}~C.~Soul{\'e}}, {\og Torsion dans le groupe de {C}how de
  codimension deux\fg}, \emph{Duke Math. J.} \textbf{50} (1983), \cdrnumero 3,
  p.~763-801.

\bibitem{SGA412}
{\scshape P.~Deligne}, \emph{Cohomologie \'etale}, Lecture Notes in
  Mathematics, Vol. 569, Springer-Verlag, Berlin, 1977, S{\'e}minaire de
  G{\'e}om{\'e}trie Alg{\'e}brique du Bois-Marie SGA 4${1{\o}er 2}$, Avec la
  collaboration de J. F. Boutot, A. Grothendieck, L. Illusie et J. L. Verdier,
  iv+312pp~pages.

\bibitem{Demazure}
{\scshape M.~Demazure}, \emph{Lectures on {$p$}-divisible groups}, Lecture
  Notes in Mathematics, Vol. 302, Springer-Verlag, Berlin, 1972, v+98~pages.

\bibitem{Dol}
{\scshape I.~V. Dolga{\v{c}}ev}, {\og The {E}uler characteristic of a family of
  algebraic varieties\fg}, \emph{Mat. Sb. (N.S.)} \textbf{89(131)} (1972),
  p.~297-312, 351.

\bibitem{Ge}
{\scshape T.~Geisser}, {\og Motivic cohomology, $k$-theory and topological
  cyclic homology\fg}, in \emph{Handbook of $K$-theory}, vol.~1, Springer,
  Berlin, 2005, p.~193-234.

\bibitem{Ge2}
{\scshape T.~{Geisser}}, {\og {Tate's conjecture, algebraic cycles and rational
  $K$-theory in characteristic $p$.}\fg}, \emph{{$K$-Theory}} \textbf{13}
  (1998), \cdrnumero 2, p.~109-122 (English).

\bibitem{Ge-Le1}
{\scshape T.~Geisser {\normalfont \cdrandname}~M.~Levine}, {\og The
  {$K$}-theory of fields in characteristic {$p$}\fg}, \emph{Invent. Math.}
  \textbf{139} (2000), \cdrnumero 3, p.~459-493.

\bibitem{Ge-Le2}
\bysame , {\og The {B}loch-{K}ato conjecture and a theorem of
  {S}uslin-{V}oevodsky\fg}, \emph{J. Reine Angew. Math.} \textbf{530} (2001),
  p.~55-103.

\bibitem{Gillet}
{\scshape H.~Gillet}, {\og Riemann-{R}och theorems for higher algebraic
  {$K$}-theory\fg}, \emph{Adv. in Math.} \textbf{40} (1981), \cdrnumero 3,
  p.~203-289.

\bibitem{Grayson}
{\scshape D.~R. Grayson}, {\og Finite generation of {$K$}-groups of a curve
  over a finite field (after {D}aniel {Q}uillen)\fg}, in \emph{Algebraic
  {$K$}-theory, {P}art {I} ({O}berwolfach, 1980)}, Lecture Notes in Math., vol.
  966, Springer, Berlin, 1982, p.~69-90.

\bibitem{Grayson2}
{\scshape D.~R. {Grayson}}, {\og {Weight filtrations via commuting
  automorphisms}\fg}, \emph{{$K$-Theory}} \textbf{9} (1995), \cdrnumero 2,
  p.~139-172 (English).

\bibitem{Gr-Su}
{\scshape M.~Gros {\normalfont \cdrandname}~N.~Suwa}, {\og Application
  d'{A}bel-{J}acobi {$p$}-adique et cycles alg\'ebriques\fg}, \emph{Duke Math.
  J.} \textbf{57} (1988), \cdrnumero 2, p.~579-613.

\bibitem{EGA3}
{\scshape A.~Grothendieck}, {\og \'{E}l\'ements de g\'eom\'etrie alg\'ebrique.
  {III}. \'{E}tude cohomologique des faisceaux coh\'erents. {I}\fg},
  \emph{Inst. Hautes \'Etudes Sci. Publ. Math.} (1961), \cdrnumero 11, p.~167.

\bibitem{Hanamura}
{\scshape M.~{Hanamura}}, {\og {Homological and cohomological motives of
  algebraic varieties}\fg}, \emph{{Invent. Math.}} \textbf{142} (2000),
  \cdrnumero 2, p.~319-349 (English).

\bibitem{Harder}
{\scshape G.~Harder}, {\og Die {K}ohomologie {$S$}-arithmetischer {G}ruppen
  \"uber {F}unktionenk\"orpern\fg}, \emph{Invent. Math.} \textbf{42} (1977),
  p.~135-175.

\bibitem{Illusie2}
{\scshape L.~Illusie}, {\og Finiteness, duality, and {K}\"unneth theorems in
  the cohomology of the de {R}ham-{W}itt complex\fg}, in \emph{Algebraic
  geometry ({T}okyo/{K}yoto, 1982)}, Lecture Notes in Math., vol. 1016,
  Springer, Berlin, 1983, p.~20-72.

\bibitem{Illusie}
{\scshape L.~Illusie}, {\og Complexe de de\thinspace {R}ham-{W}itt et
  cohomologie cristalline\fg}, \emph{Ann. Sci. \'Ecole Norm. Sup. (4)}
  \textbf{12} (1979), \cdrnumero 4, p.~501-661.

\bibitem{Kahn}
{\scshape B.~Kahn}, {\og Algebraic $k$-theory, algebraic cycles and arithmetic
  geometry\fg}, in \emph{Handbook of $K$-theory}, vol.~1, Springer, Berlin,
  2005, p.~351-428.

\bibitem{Kato}
{\scshape K.~Kato}, {\og A generalization of local class field theory by using
  {$K$}-groups. {II}\fg}, \emph{Proc. Japan Acad. Ser. A Math. Sci.}
  \textbf{54} (1978), \cdrnumero 8, p.~250-255.

\bibitem{Ka-Sa}
{\scshape K.~Kato {\normalfont \cdrandname}~S.~Saito}, {\og Unramified class
  field theory of arithmetical surfaces\fg}, \emph{Ann. of Math. (2)}
  \textbf{118} (1983), \cdrnumero 2, p.~241-275.

\bibitem{Ka-Ma}
{\scshape N.~M. Katz {\normalfont \cdrandname}~B.~Mazur}, \emph{Arithmetic
  moduli of elliptic curves}, Annals of Mathematics Studies, vol. 108,
  Princeton University Press, Princeton, NJ, 1985, xiv+514~pages.

\bibitem{KY}
{\scshape S.~{Kondo} {\normalfont \cdrandname}~S.~{Yasuda}}, {\og {On the
  second rational $K$-group of an elliptic curve over global fields of positive
  characteristic.}\fg}, \emph{{Proc. Lond. Math. Soc. (3)}} \textbf{102}
  (2011), \cdrnumero 6, p.~1053-1098 (English).

\bibitem{RRwoD}
\bysame , {\og {The Riemann-Roch theorem without denominators in motivic
  homotopy theory}\fg}, \emph{{J. Pure Appl. Algebra}} \textbf{218} (2014),
  \cdrnumero 8, p.~1478-1495 (English).

\bibitem{KY:Two}
\bysame , {\og {On two higher Chow groups of schemes over a finite field.}\fg},
  \emph{{Doc. Math., J. DMV}} \textbf{20} (2015), p.~737-752 (English).

\bibitem{Levine}
{\scshape M.~Levine}, \emph{Mixed motives}, Mathematical Surveys and
  Monographs, vol.~57, American Mathematical Society, Providence, RI, 1998,
  x+515~pages.

\bibitem{Levine2}
\bysame , {\og Techniques of localization in the theory of algebraic
  cycles\fg}, \emph{J. Algebraic Geom.} \textbf{10} (2001), \cdrnumero 2,
  p.~299-363.

\bibitem{Levine4}
\bysame , {\og {Mixed motives}\fg}, in \emph{{Handbook of $K$-theory.\ Vol.\ 1
  and 2}}, Berlin: Springer, 2005, p.~429-521 (English).

\bibitem{Levine3}
\bysame , {\og The homotopy coniveau tower\fg}, \emph{J. Topol.} \textbf{1}
  (2008), \cdrnumero 1, p.~217-267.

\bibitem{Lipman}
{\scshape J.~Lipman}, {\og Desingularization of two-dimensional schemes\fg},
  \emph{Ann. Math. (2)} \textbf{107} (1978), \cdrnumero 1, p.~151-207.

\bibitem{Liu}
{\scshape Q.~Liu}, \emph{Algebraic geometry and arithmetic curves}, Oxford
  Graduate Texts in Mathematics, vol.~6, Oxford University Press, Oxford, 2002,
  Translated from the French by Reinie Ern{\'e}, Oxford Science Publications,
  xvi+576~pages.

\bibitem{Me-Su1}
{\scshape A.~S. Merkur{\cprime}ev {\normalfont \cdrandname}~A.~A. Suslin}, {\og
  {$K$}-cohomology of {S}everi-{B}rauer varieties and the norm residue
  homomorphism\fg}, \emph{Dokl. Akad. Nauk SSSR} \textbf{264} (1982),
  \cdrnumero 3, p.~555-559.

\bibitem{Me-Su2}
\bysame , {\og The group {$K_3$} for a field\fg}, \emph{Izv. Akad. Nauk SSSR
  Ser. Mat.} \textbf{54} (1990), \cdrnumero 3, p.~522-545.

\bibitem{Milne1}
{\scshape J.~S. Milne}, {\og Duality in the flat cohomology of a surface\fg},
  \emph{Ann. Sci. \'Ecole Norm. Sup. (4)} \textbf{9} (1976), \cdrnumero 2,
  p.~171-201.

\bibitem{Milne}
\bysame , {\og Values of zeta functions of varieties over finite fields\fg},
  \emph{Amer. J. Math.} \textbf{108} (1986), \cdrnumero 2, p.~297-360.

\bibitem{Milne2}
\bysame , {\og {Motives over finite fields.}\fg}, in \emph{{Motives.
  Proceedings of the summer research conference on motives, held at the
  University of Washington, Seattle, WA, USA, July 20-August 2, 1991}},
  Providence, RI: American Mathematical Society, 1994, p.~401-459 (English).

\bibitem{Milnor}
{\scshape J.~Milnor}, {\og Algebraic {$K$}-theory and quadratic forms\fg},
  \emph{Invent. Math.} \textbf{9} (1969/1970), p.~318-344.

\bibitem{MorelVoevodsky}
{\scshape F.~{Morel} {\normalfont \cdrandname}~V.~{Voevodsky}}, {\og {$\mathbb
  A^1$-homotopy theory of schemes}\fg}, \emph{{Publ. Math., Inst. Hautes
  \'Etud. Sci.}} \textbf{90} (1999), p.~45-143 (English).

\bibitem{Muller-Stach2}
{\scshape S.~J. {M\"uller-Stach}}, {\og {Constructing indecomposable motivic
  cohomology classes on algebraic surfaces.}\fg}, \emph{{J. Algebr. Geom.}}
  \textbf{6} (1997), \cdrnumero 3, p.~513-543 (English).

\bibitem{Muller-Stach1}
\bysame , {\og {Algebraic cycle complexes: Basic properties}\fg}, in \emph{{The
  arithmetic and geometry of algebraic cycles. Proceedings of the NATO Advanced
  Study Institute, Banff, Canada, June 7--19, 1998}}, Dordrecht: Kluwer
  Academic Publishers, 2000, p.~285-305 (English).

\bibitem{Nagata}
{\scshape M.~Nagata}, {\og Imbedding of an abstract variety in a complete
  variety\fg}, \emph{J. Math. Kyoto Univ.} \textbf{2} (1962), p.~1-10.

\bibitem{Ne-Su}
{\scshape Y.~P. Nesterenko {\normalfont \cdrandname}~A.~A. Suslin}, {\og
  Homology of the general linear group over a local ring, and {M}ilnor's
  {$K$}-theory\fg}, \emph{Izv. Akad. Nauk SSSR Ser. Mat.} \textbf{53} (1989),
  \cdrnumero 1, p.~121-146.

\bibitem{Nygaard}
{\scshape N.~O. Nygaard}, {\og Slopes of powers of {F}robenius on crystalline
  cohomology\fg}, \emph{Ann. Sci. \'Ecole Norm. Sup. (4)} \textbf{14} (1981),
  \cdrnumero 4, p.~369-401 (1982).

\bibitem{Ogg}
{\scshape A.~P. Ogg}, {\og Elliptic curves and wild ramification\fg},
  \emph{Amer. J. Math.} \textbf{89} (1967), p.~1-21.

\bibitem{Oguiso}
{\scshape K.~Oguiso}, {\og An elementary proof of the topological {E}uler
  characteristic formula for an elliptic surface\fg}, \emph{Comment. Math.
  Univ. St. Paul.} \textbf{39} (1990), \cdrnumero 1, p.~81-86.

\bibitem{Pushin}
{\scshape O.~Pushin}, {\og Higher {C}hern classes and {S}teenrod operations in
  motivic cohomology\fg}, \emph{$K$-Theory} \textbf{31} (2004), \cdrnumero 4,
  p.~307-321.

\bibitem{Quillen}
{\scshape D.~{Quillen}}, {\og On the cohomology and {$K$}-theory of the general
  linear groups over a finite field\fg}, \emph{Ann. of Math. (2)} \textbf{96}
  (1972), p.~552-586.

\bibitem{Quillen2}
\bysame , {\og {Finite generation of the groups $K_i$ of rings of algebraic
  integers}\fg}, {Algebr. K-Theory I, Proc. Conf. Battelle Inst. 1972, Lect.
  Notes Math. 341, 179-198 (1973).}, 1973.

\bibitem{Riou}
{\scshape J.~{Riou}}, {\og {Algebraic $K$-theory, $A^1$-homotopy and
  Riemann-Roch theorems}\fg}, \emph{{J. Topol.}} \textbf{3} (2010), \cdrnumero
  2, p.~229-264 (English).

\bibitem{Shioda}
{\scshape T.~Shioda}, {\og On the {M}ordell-{W}eil lattices\fg}, \emph{Comment.
  Math. Univ. St. Paul.} \textbf{39} (1990), \cdrnumero 2, p.~211-240.

\bibitem{Sil2}
{\scshape J.~H. {Silverman}}, \emph{{Advanced topics in the arithmetic of
  elliptic curves.}}, New York, NY: Springer-Verlag, 1994 (English), xiii +
  525~pages.

\bibitem{Soule2}
{\scshape C.~Soul{\'e}}, {\og Groupes de {C}how et {$K$}-th\'eorie de
  vari\'et\'es sur un corps fini\fg}, \emph{Math. Ann.} \textbf{268} (1984),
  \cdrnumero 3, p.~317-345.

\bibitem{Suslin}
{\scshape A.~{Suslin}}, {\og {On the Grayson spectral sequence}\fg}, in
  \emph{{Number theory, algebra, and algebraic geometry. Collected papers
  dedicated to the 80th birthday of Academician Igor' Rostislavovich
  Shafarevich. Transl. from the Russian}}, Moskva: Maik Nauka/Interperiodika,
  2003, p.~202-237 (English).

\bibitem{BirchTate}
{\scshape J.~Tate}, {\og Symbols in arithmetic\fg}, in \emph{Actes du
  {C}ongr\`es {I}nternational des {M}ath\'ematiciens ({N}ice, 1970), {T}ome 1},
  Gauthier-Villars, Paris, 1971, p.~201-211.

\bibitem{To}
{\scshape B.~Totaro}, {\og Milnor {$K$}-theory is the simplest part of
  algebraic {$K$}-theory\fg}, \emph{$K$-Theory} \textbf{6} (1992), \cdrnumero
  2, p.~177-189.

\bibitem{Voevodsky2}
{\scshape V.~Voevodsky}, {\og Motivic cohomology groups are isomorphic to
  higher {C}how groups in any characteristic\fg}, \emph{Int. Math. Res. Not.}
  (2002), \cdrnumero 7, p.~351-355.

\bibitem{Voe_Milnor}
\bysame , {\og Motivic cohomology with {${\bf Z}/2$}-coefficients\fg},
  \emph{Publ. Math. Inst. Hautes \'Etudes Sci.} (2003), \cdrnumero 98,
  p.~59-104.

\bibitem{Voevodsky}
{\scshape V.~Voevodsky, A.~Suslin {\normalfont \cdrandname}~E.~M. Friedlander},
  \emph{Cycles, transfers, and motivic homology theories}, Annals of
  Mathematics Studies, vol. 143, Princeton University Press, Princeton, NJ,
  2000, vi+254~pages.

\bibitem{Weibel}
{\scshape C.~{Weibel}}, {\og {Algebraic $K$-theory of rings of integers in
  local and global fields.}\fg}, in \emph{{Handbook of $K$-theory. Vol. 1 and
  2}}, Berlin: Springer, 2005, p.~139-190 (English).

\end{thebibliography}
\end{document}